\theoremstyle{plain}
\patchcmd{\Ginclude@eps}{"#1"}{#1}{}{}
\definecolor{lightblue}{HTML}{1F88CD}
\definecolor{lightgrey}{HTML}{727272}
\definecolor{lightblue2}{HTML}{009EC1}
\definecolor{mypink}{HTML}{FD00B0}
\definecolor{lightred}{HTML}{ff4d4d}
\newtheorem*{theorem*}{Theorem}
\newtheorem{theorem}{Theorem}[section]
\newtheorem{corollary}[theorem]{Corollary}
\newtheorem{lemma}[theorem]{Lemma}
\newtheorem{conjecture}[theorem]{Conjecture}
\newtheorem{proposition}[theorem]{Proposition}
\theoremstyle{definition}
\theoremstyle{definition}
\newtheorem{definition}[theorem]{Definition}
\theoremstyle{definition}
\newtheorem{remark}[theorem]{Remark}
\theoremstyle{definition}
\theoremstyle{definition}
\theoremstyle{definition}
\theoremstyle{definition}
\theoremstyle{definition}
\theoremstyle{definition}
\newtheorem{question!}[theorem]{Question!}
\theoremstyle{definition}
\newcommand*\sbt{\mathpalette\sbt@{.75}}
\newcommand*\sbt@[2]{\mathbin{\vcenter{\hbox{\scalebox{#2}{$\m@th#1\bullet$}}}}}
\newcommand{\ra}{\rightarrow}
\newcommand{\xra}{\xrightarrow}
\newcommand{\wt}{\widetilde}
\newcommand{\sst}{\subset}
\newcommand{\bR}{\bm{\mathrm{R}}}
\newcommand{\bL}{\bm{\mathrm{L}}}
\newcommand{\D}{\mathrm{D}}
\newcommand{\KK}{\mathrm{K}}
\newcommand{\ZZ}{\mathbb{Z}}
\newcommand{\QQ}{\mathbb{Q}}
\newcommand{\CC}{\mathbb{C}}
\newcommand{\PP}{\mathbb{P}}
\newcommand{\bA}{\mathbf{A}}
\newcommand{\bB}{\mathbf{B}}
\newcommand{\ch}{\mathrm{ch}}
\newcommand{\perf}{\mathrm{perf}}
\newcommand{\pr}{\mathrm{pr}}
\newcommand{\C}{\mathbb{C}}
\newcommand{\Hilb}{\mathrm{Hilb}}
\newcommand{\Knum}{\mathrm{K}_{\mathrm{num}}}
\newcommand{\3}{\mathcal{K}_1}
\newcommand{\4}{\mathcal{K}_2}
\renewcommand{\Re}{\operatorname{Re}}
\renewcommand{\Im}{\operatorname{Im}}
\DeclareMathOperator{\Aut}{Aut}
\DeclareMathOperator{\identity}{id}
\DeclareMathOperator{\Ext}{Ext}
\DeclareMathOperator{\Hom}{Hom}
\DeclareMathOperator{\RHom}{RHom}
\DeclareMathOperator{\ext}{ext}
\DeclareMathOperator{\Spec}{Spec}
\DeclareMathOperator{\cone}{cone}
\DeclareMathOperator{\Stab}{Stab}
\DeclareMathOperator{\Gr}{Gr}
\newcommand{\GL}{\widetilde{\mathrm{GL}}^+(2,\mathbb{R})}
\newcommand{\cX}{\mathcal{X}}
\newcommand{\cY}{\mathcal{Y}}
\newcommand{\cA}{\mathcal{A}}
\newcommand{\cU}{\mathcal{U}}
\newcommand{\cH}{\mathcal{H}}
\newcommand{\cK}{\mathcal{K}}
\newcommand{\cT}{\mathcal{T}}
\newcommand{\cQ}{\mathcal{Q}}
\newcommand{\Ku}{\mathcal{K}u}
\newcommand{\cP}{\mathcal{P}}
\newcommand{\cD}{\mathcal{D}}
\newcommand{\cM}{\mathcal{M}}
\newcommand{\cV}{\mathcal{V}}
\DeclareMathOperator{\oh}{\mathcal{O}}
\begin{document}

\title[Lagrangian families of Bridgeland moduli spaces]{Lagrangian families of Bridgeland moduli spaces from Gushel--Mukai fourfolds}

\subjclass[2020]{Primary 14F08; secondary 14J42, 14J45, 14D20, 14D23}
\keywords{Bridgeland moduli spaces, Kuznetsov components, Gushel--Mukai varieties, Cubic fourfolds, Hyperk\"ahler manifolds, Lagrangian subvarieties, Lagrangian covering families}

\address{Department of Mathematics, Imperial College, London SW7 2AZ, United Kingdom}
\email{s.feyzbakhsh@imperial.ac.uk}

\address{Shanghai Center for Mathematical Sciences, Fudan University, Jiangwan Campus, 2005 Songhu Road, Shanghai, 200438, China
}
\email{hanfeiguo23@gmail.com}

\address{School of Mathematical Sciences, Zhejiang University, Hangzhou, Zhejiang Province 310058, P. R. China}
\email{jasonlzy0617@gmail.com}


\address{Sun Yat-Sen University, School of Mathematics, Guangzhou 510275, China}

\address{Center for Geometry and Physics, Institute for Basic Science. 
79, Jigok-ro 127beon-gil, Nam-gu, Pohang-si,
Gyeongsangbuk-do,
Republic of Korea 37673}
\email{shizhuozhang@msri.org, zszmath@ibs.re.kr}

\author{Soheyla Feyzbakhsh, Hanfei Guo, Zhiyu Liu, Shizhuo Zhang}
\address{}
\email{}

\begin{abstract}
Let $X$ be a very general Gushel--Mukai (GM) variety of dimension $n\geq 4$, and let $Y$ be a smooth hyperplane section. There are natural pull-back and push-forward functors between the semi-orthogonal components (known as the Kuznetsov components) of the derived categories of $X$ and $Y$. In this paper, we prove that the Bridgeland stability of objects is preserved by both pull-back and push-forward functors. We then explore various applications of this result, such as constructing an $8$-dimensional smooth family of Lagrangian subvarieties for each moduli space of stable objects in the Kuznetsov component of a general GM fourfold and proving the projectivity of the moduli spaces of semistable objects of any class in the Kuznetsov component of a general GM threefold, as conjectured by Perry, Pertusi, and Zhao.

\end{abstract}

\maketitle

{
\hypersetup{linkcolor=blue}
\setcounter{tocdepth}{1}
\tableofcontents
}

\section{Introduction}

Hyperk\"ahler manifolds play a central role in algebraic geometry and complex geometry. However, very few examples of them are known and it is challenging to construct explicit examples of projective hyperk\"ahler manifolds. 

One of the most studied examples comes from K3 surfaces: the Hilbert schemes of points on K3 surfaces (see \cite{beauville:hilb-on-k3}), or more generally, moduli spaces of stable sheaves on them (see \cite{mukai:moduli-K3-I}). More recently, instead of considering the moduli space of stable sheaves on K3 surfaces, people consider categories that behave homologically as actual K3 surfaces, known as \emph{noncommutative K3 surfaces} or \emph{K3 categories},  and construct projective hyperk\"ahler varieties from them as Bridgeland moduli spaces of stable objects, provided suitable stability conditions are introduced. 

An important source of K3 categories is provided by the \emph{Kuznetsov component} of a cubic fourfold or a Gushel--Mukai fourfold/sixfold $X$, which is a quadric section of the projective cone over the Grassmannian $\mathrm{Gr}(2,5)$. The Kuznetsov component, denoted by $\Ku(X)$, is the right orthogonal complement of a collection of rigid bundles on $X$. 
In \cite{bayer2017stability, BLMNPS21, perry2019stability}, a family of Bridgeland stability conditions on $\Ku(X)$ is constructed and it is shown that the moduli spaces of semistable objects in $\Ku(X)$ are projective hyperk\"ahler manifolds under natural assumptions. 

\medskip

In this paper, we mainly focus on Gushel--Mukai (GM) varieties $X$ of dimension $n \geq 3$ (see Section \ref{sec-GM-1} for a review). There is a semi-orthogonal decomposition
\[\D^b(X)=\langle\Ku(X),\oh_X,\cU^{\vee}_X,\cdots,\oh_X((n-3)H),\cU_X^{\vee}((n-3)H)\rangle,\]
where $\cU_X$ is the pull-back of the tautological subbundle on $\Gr(2, 5)$. 
\begin{itemize}
    \item If $n=3$ or $5$, then $\Ku(X)$ is an Enriques category, i.e.~its Serre functor is of the form $T_X \circ [2]$ for a non-trivial involution $T_X$ of $\Ku(X)$. The numerical Grothendieck group of $\Ku(X)$ is of rank 2, generated by classes $\lambda_1$ and $\lambda_2$, and there is a unique Serre-invariant stability condition\footnote{We always consider them up to $\GL$-action, see Definition \ref{def-serre-invariant}. All known stability conditions on $\Ku(X)$ are Serre-invariant and they share several nice properties, see e.g.~Section \ref{subsec-3.1} and Section \ref{sec-GM-1}.} on $\Ku(X)$.

\item If $n=4$ or $6$, then $\Ku(X)$ is a K3 category, i.e.~its Serre functor is $[2]$. The numerical Grothendieck group of $\Ku(X)$ contains a canonical rank two sublattice, generated by classes $\Lambda_1$ and $\Lambda_2$. All stability conditions on $\Ku(X)$ are Serre-invariant, and there is a set of stability conditions $\Stab^{\circ}(\Ku(X))$ constructed in \cite[Theorem 4.12]{perry2019stability}. Moreover, 
when $X$ is very general, there is a unique stability condition on $\Ku(X)$ (cf.~Proposition \ref{prop-gm4-unique}).
\end{itemize}

\subsection*{Preservation of stability under push-forward and pull-back} Let $j\colon Y \hookrightarrow X$ be a smooth hyperplane section of a GM variety $X$. The restriction of the derived pull-back to $\Ku(Y)$ yields the functor
\begin{equation*}
    j^* \colon \Ku(X) \to \Ku(Y). 
\end{equation*}
However, the image of the push-forward does not always lie in the Kuznetsov component, so we need to project it into $\Ku(X)$. This gives us the functor
\begin{equation*}
    \pr_X \circ j_* \colon \Ku(Y) \to \Ku(X). 
\end{equation*}
See Section \ref{sec-GM-1} for more details. The primary result of our paper proves the preservation of stability under both these functors.

\begin{theorem}[{Theorem \ref{thm-very-general-GM4}}]\label{thm-intro-section}
Let $X$ be a GM variety of dimension $n\geq 4$ and $j \colon Y\hookrightarrow X$ be a smooth hyperplane section. Let $\sigma_Y$ and $\sigma_X$ be Serre-invariant stability conditions on $\Ku(Y)$ and $\Ku(X)$, respectively. We additionally assume that whichever of $X$ and $Y$ has even dimensionality is considered to be very general.
\begin{enumerate}
    \item An object $E\in \Ku(Y)$ is $\sigma_Y$-semistable if and only if $\pr_X(j_*E)\in \Ku(X)$ is $\sigma_X$-semistable.

    \item An object $F\in \Ku(X)$ is $\sigma_X$-semistable if and only if $j^*F\in \Ku(Y)$ is $\sigma_Y$-semistable.
\end{enumerate}
\end{theorem}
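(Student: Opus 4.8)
Write $\Phi:=\pr_X\circ j_*\colon\Ku(Y)\to\Ku(X)$ and $\Psi:=j^*\colon\Ku(X)\to\Ku(Y)$. The strategy is to reduce both assertions to the following \emph{Key Lemma}: each of $\Phi$ and $\Psi$ sends $\sigma$-semistable objects to $\sigma$-semistable objects and preserves the order of phases (if $A_1,A_2$ are $\sigma_Y$-semistable with $\phi_{\sigma_Y}(A_1)>\phi_{\sigma_Y}(A_2)$ then $\phi_{\sigma_X}(\Phi A_1)>\phi_{\sigma_X}(\Phi A_2)$, and likewise for $\Psi$). Granting it, the ``only if'' parts of (1) and (2) are exactly the preservation statement; and for the ``if'' part of (1), if $E\in\Ku(Y)$ were not $\sigma_Y$-semistable then applying the triangulated functor $\Phi$ to its Harder--Narasimhan filtration produces a tower of exact triangles whose factors $\Phi(A_i)$ are, by the Key Lemma, $\sigma_X$-semistable of strictly decreasing phases --- hence this tower is the HN filtration of $\Phi E$, which therefore has length $\geq 2$, contradicting $\sigma_X$-semistability of $\Phi E$; the ``if'' part of (2) is identical with $\Phi$ and $\Psi$ interchanged.

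Before attacking the Key Lemma I would fix the standard bookkeeping. First, the hearts: under the hypotheses $\sigma_Y$ and $\sigma_X$ are \emph{unique} Serre-invariant stability conditions --- on the odd-dimensional member by the uniqueness recalled in the introduction, on the very general even-dimensional member by Proposition~\ref{prop-gm4-unique} --- so one may replace the even-dimensional one by the stability condition of \cite[Theorem~4.12]{perry2019stability}, whose heart is cut out by a double tilt of a weak stability condition of tilt type on $\D^b$ of the ambient GM variety. Second, the adjunctions $j^*\dashv j_*$ and $j_*\dashv j^!$ with $j^!(-)\cong j^*(-)\otimes\oh_Y(H)[-1]$, which after restricting and projecting show that $\Psi$ and $\Phi$ are adjoint on both sides up to a line bundle twist along $Y$ and a shift. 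Third, the numerical data: the numerical Grothendieck groups of $\Ku(X)$ and $\Ku(Y)$, their canonical rank-two sublattices, and the maps induced by $\Psi,\Phi$ can be computed (following Kuznetsov--Perry and Perry--Pertusi--Zhao), and from this one reads off that the central charges are intertwined, $Z_{\sigma_Y}\circ[\Psi]=c\,Z_{\sigma_X}$ and $Z_{\sigma_X}\circ[\Phi]=c'\,Z_{\sigma_Y}$ on the relevant sublattices for explicit nonzero $c,c'$ --- this is what will yield the order-of-phases half of the Key Lemma. Optionally one also records the composition identities: the Koszul resolution $0\to\oh_X(-H)\to\oh_X\to j_*\oh_Y\to 0$ gives $j^*j_*\cong\mathrm{id}\oplus(-\otimes\oh_Y(-H))[1]$ on $\D^b(Y)$, and mutating through the exceptional collections defining $\Ku(X),\Ku(Y)$ rewrites $\Psi\Phi$ and $\Phi\Psi$ in terms of $\mathrm{id}$ and the ``line bundle twist then project'' autoequivalences --- on the odd side a shift of the Enriques involution $\tau_Y$ --- which, commuting with the Serre functors, preserve $\sigma$ by uniqueness and hence give a cross-check on phases.

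For the Key Lemma I would first attempt preservation for $\Psi=j^*$ by routing through the tilt picture of the bookkeeping step: control the Harder--Narasimhan factors of a $\sigma_X$-semistable object of $\Ku(X)$ with respect to the weak stability condition underlying the double tilt, and use a restriction theorem for tilt-semistable objects along the hyperplane section to conclude that $j^*F$ lies in the corresponding heart on $\D^b(Y)$ and is $\sigma_Y$-semistable; the order-of-phases claim for $\Psi$ then follows from the central-charge compatibility. Preservation for $\Phi=\pr_X\circ j_*$ I would then deduce from the adjunctions: if $E$ is $\sigma_Y$-semistable but $\Phi E$ is not, its maximal and minimal HN factors $A,A'$ are $\sigma_X$-semistable with $\phi_{\sigma_X}(A)>\phi_{\sigma_X}(\Phi E)>\phi_{\sigma_X}(A')$, while the unit/counit of the adjunctions supply nonzero morphisms between $E$ and the $\Psi$-images of $A,A'$ after a twist and shift; since $\Psi A,\Psi A'$ are $\sigma_Y$-semistable of phases controlled by the central-charge compatibility, comparing phases through these morphisms --- using Serre-invariance to pin down which $\Ext^{\le 0}$-groups between semistable objects of ordered phases vanish --- contradicts the displayed inequalities unless $\Phi E$ is semistable; order of phases for $\Phi$ follows as before. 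Finally, the theorem covers $n\in\{4,5,6\}$; the base case is $(\dim X,\dim Y)=(4,3)$, and the remaining cases $n=5,6$ are handled in the same way, reducing to the base case through the Kuznetsov--Perry equivalences between the Kuznetsov components of GM varieties of different dimensions (which are compatible with $j^*$ and $\pr_X\circ j_*$); here the ``very general'' hypothesis propagates correctly because among $X$ and $Y$ exactly one is even-dimensional and a very general even-dimensional GM variety has very general hyperplane sections (and is a very general hyperplane section of an appropriate odd-dimensional GM variety).

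The step I expect to be the genuine obstacle is the phase control inside the Key Lemma. Because $\Psi$ and $\Phi$ alter both the rank of the numerical Grothendieck group and the Calabi--Yau dimension of the category (K3 on the even side, Enriques on the odd side), there is no formal transport of stability conditions along them, so the order-of-phases statement and the constant-chasing in the adjunction argument must be carried out by hand. This demands quantitative input: the exact values of $c,c'$ and of the shifts by which $\Psi,\Phi$ fail to be $t$-exact for the two hearts, sharp vanishing ranges for $\Ext$-groups between $\sigma$-semistable objects of ordered phases coming from Serre-invariance and the $2$-Calabi--Yau-type numerics of these categories, the delicate comparison of the two tiltings needed for the restriction argument, and the verification that no nonzero $\sigma_X$-semistable object of $\Ku(X)$ is annihilated by $j^*$ --- each resting again on very-generality and on uniqueness of the Serre-invariant stability condition.
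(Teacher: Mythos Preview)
Your reduction to a ``Key Lemma'' (preservation of semistability plus order of phases by both functors) is correct and matches the paper's Lemma~\ref{lem-simplify}. The bookkeeping you outline --- adjunctions, the involution functors $T_X,T_Y$, the exact triangles for $\Psi\Phi$ and $\Phi\Psi$, and compatibility of central charges --- is also in the paper (Lemmas~\ref{lem-adjoint-functors}, \ref{lem-exact-triangle}, \ref{lem-compute-class}). Where your proposal diverges, and fails, is the attack on the Key Lemma itself.

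Your plan for $\Psi=j^*$ is to ``route through the tilt picture'' and invoke ``a restriction theorem for tilt-semistable objects along the hyperplane section.'' No such restriction theorem exists in the generality you need: restriction results of Langer type require effective bounds on discriminant or degree, and an arbitrary $\sigma_X$-semistable object of $\Ku(X)$ satisfies no such bound. Even for the double-tilt hearts in \cite{bayer2017stability,perry2019stability} there is no statement saying $j^*$ of a tilt-semistable object remains tilt-semistable. This is not a technical detail to be cleaned up; it is the whole content of the theorem. Your plan for $\Phi$ then depends on the result for $\Psi$ already holding for \emph{all} $\sigma_X$-semistable objects, so it collapses with it.

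The paper's method is entirely different: it is a \emph{double induction on} $\ext^1$. One proves simultaneously the statements $\mathbf{A}_n$ (if $E$ is $\sigma_Y$-stable with $\ext^1(E,E)\le n$ then $\Phi_*E$ is $\sigma_X$-semistable of the same phase) and $\mathbf{B}_n$ (the analogue for $\Psi$), showing $\mathbf{B}_{n-1}\Rightarrow\mathbf{A}_n$ and $\mathbf{A}_{n-1}\Rightarrow\mathbf{B}_n$ (Lemmas~\ref{part-1}, \ref{part-2}). The base case is supplied by the explicit stability of $\Phi_*$ on objects of class $\lambda_i$ (your central-charge compatibility plus Lemma~\ref{lem-mukai-4fold}). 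The inductive step uses three ingredients you do not mention: the weak Mukai Lemma~\ref{lem-mukai} to force every HN or cohomology factor of $\Phi_*E$ (resp.\ $\Psi F$) to have strictly smaller $\ext^1$, so the induction hypothesis applies to it; the triangles \eqref{exact-3}, \eqref{exact-4} to convert $\Hom$-spaces between $E$ and test objects into $\Hom$-spaces between $\Phi_*E$ and their images; and the existence of \emph{infinitely many} stable objects of classes $\lambda_1,\lambda_2$ (condition~\ref{c-stability}(c)) to pin down phases via $\chi$-computations (Lemma~\ref{lem-phase of push - back}). The argument is symmetric in $\Phi$ and $\Psi$ --- neither direction is proved first --- which is why your asymmetric plan (prove $\Psi$, then deduce $\Phi$ by adjunction) cannot work: at each stage of the induction one needs the other direction at the previous stage.
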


We refer to Theorem \ref{thm-very-general-GM4} for a more complete version, describing when $\pr_X(j_*E)$ or $j^*F$ are stable. The above theorem can be viewed as a noncommutative analogue of the stability of push-forward/pull-back objects for the embedding of a curve into a K3 surface.

Via a deformation argument, we can generalize Theorem \ref{thm-intro-section}(1) to general GM fourfolds:

\begin{theorem}[{Theorem \ref{thm-general-gm4}}]\label{thm-intro-section-general}
Let 
$X$ be a general GM fourfold, and $j \colon Y\hookrightarrow X$ be a smooth hyperplane section. Given $\sigma_X\in \Stab^{\circ}(\Ku(X))$ and a Serre-invariant stability condition $\sigma_Y$ on $\Ku(Y)$. If $E\in \Ku(Y)$ is a $\sigma_Y$-semistable object, 
then $\pr_X(j_*E)$ is $\sigma_X$-semistable.
\end{theorem}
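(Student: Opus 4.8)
The plan is to reduce Theorem~\ref{thm-intro-section-general} to Theorem~\ref{thm-intro-section}(1) --- which is already established for \emph{very general} GM fourfolds --- by a deformation argument along a family of GM fourfolds together with their hyperplane sections. First I would set up the relative geometry: choose a smooth connected base $B$ parametrizing GM fourfolds $\{X_b\}_{b\in B}$, with a relative hyperplane section $\cY \to B$ whose fibers $Y_b \subset X_b$ are smooth GM threefolds, such that a chosen very general fiber $X_{b_0}$ satisfies the hypotheses of Theorem~\ref{thm-intro-section}(1) and a chosen fiber $X_{b_1}$ is our given general $X$. Since the derived categories and their Kuznetsov components fit into families, one gets a relative semiorthogonal decomposition and hence relative versions of the functors $j_*$, $\pr_X$; the pair $(\pr_{X_b}(j_{b*}E_b))_b$ should form a flat family (or a family of complexes, e.g.\ an object of the relative derived category) as $E_b$ varies in a family of $\sigma_{Y_b}$-semistable objects of fixed numerical class. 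For stability conditions I would invoke $\Stab^{\circ}(\Ku(X_b))$ from \cite[Theorem 4.12]{perry2019stability} as a family over $B$, and on the threefold side use a family of Serre-invariant stability conditions, which is legitimate since on $\Ku(Y_b)$ the Serre-invariant stability condition is unique up to $\GL$-action.

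The core of the argument is then openness of semistability in families: the locus $B^{\mathrm{ss}} \subseteq B$ of points $b$ where $\pr_{X_b}(j_{b*}E_b)$ is $\sigma_{X_b}$-semistable is open, by the standard openness of (semi)stability for Bridgeland stability conditions varying in a family over a base (as in \cite{BLMNPS21}). By Theorem~\ref{thm-intro-section}(1) applied at the very general point $b_0$, the object $\pr_{X_{b_0}}(j_{b_0 *}E_{b_0})$ is semistable, so $B^{\mathrm{ss}}$ is nonempty. If I can moreover show $B^{\mathrm{ss}}$ is closed --- or at least that it is all of $B$ --- I am done; closedness would follow if the family of objects $\pr_{X_b}(j_{b*}E_b)$ never acquires a destabilizing subobject, which one can try to control using boundedness of the relevant Harder--Narasimhan/Jordan--Hölder factors (again \cite{BLMNPS21}) together with the fact that the numerical class is fixed and the lattice $\langle \Lambda_1,\Lambda_2\rangle$ is rigid under deformation. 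Alternatively, and perhaps more robustly, I would argue directly: suppose $\pr_{X_{b_1}}(j_{b_1 *}E)$ is strictly semistable or unstable; its HN/JH filtration has factors of classes in a finite set (by boundedness), and I would spread these out to a neighborhood of $b_1$ and derive a contradiction with semistability holding on the dense open $B^{\mathrm{ss}}$.

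One technical point needing care: the theorem asserts semistability of $\pr_X(j_*E)$ but does \emph{not} claim it is nonzero or that $j_*E$ already lies in (or has nontrivial projection to) $\Ku(X)$; I would check via the numerical/Mukai-vector computation (the class of $\pr_X(j_*E)$ in $\Knum(\Ku(X))$ is determined by that of $E$ through the relative functor, cf.\ the class computations underlying Theorem~\ref{thm-very-general-GM4}) that the projection is nonzero and has the expected phase, so that ``semistable'' is a nonvacuous statement compatible across the family. Another point: one must ensure the chosen $\sigma_{X} = \sigma_{X_{b_1}} \in \Stab^{\circ}(\Ku(X))$ can be connected within the family of stability conditions to the stability condition at $b_0$ used in Theorem~\ref{thm-intro-section}(1); since all stability conditions in play are Serre-invariant and, for GM fourfolds, semistability is independent of the choice within $\Stab^{\circ}$ up to the wall-and-chamber structure, I would either invoke a connectedness statement for $\Stab^{\circ}(\Ku(X_b))$ or note that the set of semistable objects of a fixed class is the same for all $\sigma \in \Stab^{\circ}(\Ku(X))$ when $X$ is general (this is part of what makes the ``very general'' case in Theorem~\ref{thm-intro-section} propagate).

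The main obstacle I anticipate is establishing that $B^{\mathrm{ss}}$ is all of $B$ rather than merely a nonempty open subset: openness and nonemptiness are formal, but ruling out that semistability genuinely fails at special (non--very-general) fourfolds requires either a closedness argument --- which is false for stability in general and only works for \emph{semistability} with the right boundedness input --- or a careful analysis showing any potential destabilizer at $b_1$ would persist to nearby very general fibers and contradict Theorem~\ref{thm-intro-section}(1). Getting the relative functor $\pr_X \circ j_*$ to behave well in families (flatness of the output, compatibility of projections with base change) is the other place where care is needed, though this is by now fairly standard technology for families of Kuznetsov components.
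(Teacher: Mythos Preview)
Your proposal assembles the right ingredients (relative Kuznetsov components, family stability conditions, openness of semistability), but it misreads the statement and therefore attempts to prove too much. In the paper, ``general GM fourfold'' means that the conclusion holds for $X$ in some Zariski open dense subset of the moduli of ordinary GM fourfolds, and this subset is allowed to depend on the class $a\lambda_1+b\lambda_2$ (see the remark immediately following Theorem~\ref{thm-general-gm4}). You therefore do \emph{not} need $B^{\mathrm{ss}}=B$, nor closedness, nor the destabilizer-spreading argument you sketch; semistability is open but not closed in families, and the paper never attempts to overcome this. The step you flag as the ``main obstacle'' simply is not part of the theorem.

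The paper's mechanism is instead a constructibility argument. Over a base $S$ parametrizing pairs (GM fourfold, smooth hyperplane section), one maps the relative threefold moduli stack into the stack of universally gluable objects in the relative fourfold Kuznetsov component via $\pr_X\circ j_*$ (Lemma~\ref{lem-rhom-pushfor} ensures the output is universally gluable). The preimage of the semistable locus is open by \cite[Lemma~21.12]{BLMNPS21}; its closed complement has constructible image in $S$ by Chevalley's theorem for stacks. Two successive complement-and-project steps then descend from $S$ to the moduli $W$ of fourfolds alone, producing the constructible locus $T\subset W$ of fourfolds for which the conclusion holds for \emph{every} smooth hyperplane section and \emph{every} semistable object of the given class. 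Since Theorem~\ref{thm-very-general-GM4} places all non-Hodge-special fourfolds in $T$, this set is dense, and a dense constructible set contains a dense open. Note that the double complement is exactly what absorbs the two universal quantifiers (over $Y$ and over $E$) that your single-object-along-the-family sketch does not address.
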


For a generalization of Theorem \ref{thm-intro-section}(2), see Theorem \ref{thm-general-gm3}. Once a construction of stability conditions of families of GM fivefolds and sixfolds is known, we can generalize both parts of Theorem \ref{thm-intro-section} to general GM varieties. See Remark \ref{rmk-general-gm-varieties}. We also conjecture that Theorem~\ref{thm-intro-section} holds generically for every GM fourfold and cubic fourfold (cf.~Conjecture~\ref{main-conjecture-1}).


Using Theorem \ref{thm-intro-section} and \ref{thm-intro-section-general}, we can construct families of Lagrangian subvarieties as follows. We denote by $M^X_{\sigma_X}(a,b)$ (resp.~$M^Y_{\sigma_Y}(a,b)$) the moduli space that parameterizes S-equivalence classes of $\sigma_X$-semistable (resp.~$\sigma_Y$-semistable) objects of class $a\Lambda_1 +b\Lambda_2$ (resp.~$a\lambda_1 +b\lambda_2$) in $\Ku(X)$ (resp.~$\Ku(Y)$). Then by \cite{perry2019stability}, for any pair of coprime integers $a,b$ and a generic $\sigma_X$, the space $M^X_{\sigma_X}(a,b)$ is a projective hyperk\"ahler manifold. In this case, Theorem \ref{thm-intro-section} and \ref{thm-intro-section-general} induces a morphism
\[M^Y_{\sigma_Y}(a,b)\to M^X_{\sigma_X}(a,b),\quad E\mapsto \pr_X(j_*E)\]
which is finite and unramified when $Y\subset X$ is a general hyperplane section, and we show that its image is a Lagrangian subvariety of the hyperk\"ahler manifold $M^X_{\sigma_X}(a,b)$ (cf.~Theorem \ref{thm-lagrangian-general}). Therefore, as we vary the hyperplane section $Y\subset X$, we obtain a family of Lagrangian subvarieties of $M^X_{\sigma_X}(a,b)$, which can be arranged into a Lagrangian family as detailed below.

\begin{theorem}[{Theorem \ref{thm-lagrangian-family}}]
Let $a,b$ be a pair of coprime integers and $X$ be a general GM fourfold. Then for any $\sigma_X\in \Stab^{\circ}(\Ku(X))$ generic\footnote{Here, $\sigma_X$ is generic means that $\sigma_X$ does not lie on any wall, which ensures the moduli space $M^X_{\sigma_X}(a,b)$ is a hyperk\"ahler manifold.} with respect to $a\Lambda_1+b\Lambda_2$, there is a Lagrangian family (cf.~Definition \ref{def-lag-family}) of $M^X_{\sigma_X}(a,b)$ over an open dense subset of $|\oh_X(H)|$.
\end{theorem}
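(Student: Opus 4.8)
The plan is to carry out, relatively over the open locus $U\subseteq|\oh_X(H)|$ of \emph{smooth} hyperplane sections of $X$, the construction that in Theorem~\ref{thm-lagrangian-general} produces the individual Lagrangian subvarieties. Write $\cY\to U$ for the universal hyperplane section, with its relative embedding into $X\times U$. Over $U$, the semiorthogonal decomposition recalled in the introduction relativises, yielding a $U$-linear admissible subcategory $\Ku(\cY/U)\subseteq\D^b(\cY)$ with fibre $\Ku(Y)$ (the Enriques category) over $[Y]\in U$, together with a $U$-linear relative pushforward functor refining $\pr_X\circ j_*$ fibrewise; all of this is formal from the calculus of linear categories and base change. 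It remains to (i) build a relative moduli space over a dense open $B\subseteq U$, (ii) produce from the relative functor a morphism to $M^X_{\sigma_X}(a,b)$, and (iii) match the outcome with Definition~\ref{def-lag-family}.

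For (i): by the theory of stability conditions in families of Gushel--Mukai varieties (following \cite{BLMNPS21}), applied to the family $\cY\to U$ of GM threefolds, there is --- after shrinking $U$ to a dense open $B$ --- a relative stability condition $\underline\sigma$ on $\Ku(\cY/B)$ over $B$; since its fibre over $[Y]$ is a Serre-invariant stability condition on $\Ku(Y)$, and Serre-invariant stability conditions on $\Ku(Y)$ are unique up to $\GL$-action, $\underline\sigma$ restricts to $\sigma_Y$ on each fibre. The relative moduli-space machinery of \cite{BLMNPS21} then produces a proper algebraic space $q\colon\cM\to B$ with $q^{-1}([Y])=M^Y_{\sigma_Y}(a,b)$, which is projective because $a,b$ are coprime. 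Using the smoothness of $M^Y_{\sigma_Y}(a,b)$ for $Y$ general --- here the genericity of the Serre-invariant stability condition with respect to $a\lambda_1+b\lambda_2$ is used --- together with generic smoothness, we shrink $B$ once more so that $q\colon\cM\to B$ is a smooth projective morphism with $\cM$ smooth.

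For (ii) and (iii): being $U$-linear, the relative pushforward functor induces a morphism of relative moduli problems; by Theorem~\ref{thm-general-gm4} applied over each point of $B$ --- a $\sigma_Y$-semistable object $E\in\Ku(Y)$ has $\pr_X(j_*E)$ $\sigma_X$-semistable --- together with openness of semistability in families, this morphism factors through $M^X_{\sigma_X}(a,b)\times B$; composing with the first projection gives $p\colon\cM\to M^X_{\sigma_X}(a,b)$. By construction $p$ restricts over $[Y]\in B$ to the morphism $M^Y_{\sigma_Y}(a,b)\to M^X_{\sigma_X}(a,b)$, $E\mapsto\pr_X(j_*E)$, which for $Y$ general is finite, unramified, and has Lagrangian image by Theorem~\ref{thm-lagrangian-general}. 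Replacing $B$ by the intersection of the dense open loci over which $q$ is smooth and over which Theorem~\ref{thm-lagrangian-general} applies, the pair $(q\colon\cM\to B,\ p\colon\cM\to M^X_{\sigma_X}(a,b))$ satisfies all the requirements of Definition~\ref{def-lag-family}, and is the desired Lagrangian family.

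The main obstacle is step (i): genuinely constructing the relative stability condition on the family of Enriques categories $\Ku(\cY/B)$ and the relative moduli space $q\colon\cM\to B$. One must verify that the weak stability conditions on $\D^b(\cY)$ from which $\underline\sigma$ is induced vary algebraically over $B$, that the tilted hearts glue to an open heart over $B$, and that the boundedness and openness hypotheses of the relative moduli-space formalism hold, so that the fibres of $q$ are indeed the moduli spaces $M^Y_{\sigma_Y}(a,b)$. A secondary point is upgrading the fibrewise statement of Theorem~\ref{thm-general-gm4} to an honest $B$-morphism $p$ of moduli spaces --- this is where openness of (semi)stability in families enters --- and the bookkeeping needed to intersect the various dense open subsets of $|\oh_X(H)|$ into a single one.
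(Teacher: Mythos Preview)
Your proposal is correct and follows essentially the same approach as the paper: relativise the Kuznetsov component and stability condition over the open locus of smooth hyperplane sections via \cite{BLMNPS21}, form the relative moduli space, and use Theorem~\ref{thm-general-gm4} fibrewise together with Theorem~\ref{thm-lagrangian-general} to obtain the Lagrangian family. The obstacles you flag in step~(i) are exactly what the paper resolves by citing \cite[Theorem 23.1, Proposition 26.1]{BLMNPS21} (for the relative stability condition) and \cite[Theorem 21.24, 21.25]{BLMNPS21} (for existence and projectivity of the relative moduli space), together with Proposition~\ref{prop-structure-3fold-moduli} to ensure smoothness of the fibres.
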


For GM sixfolds, there are similar constructions in Corollary \ref{cor-lagrangian-family-6fold}. One may wonder if the Lagrangian families constructed above are Lagrangian covering families (cf.~Definition \ref{def-lag-family}), provided $\dim M^X_{\sigma_X}(a,b)$ is small. We can verify this except in the case when $\dim M^X_{\sigma_X}(a,b)=12$, as discussed in Section \ref{subsec-lag-cover}. In a sequel \cite{FGLZcube}, we demonstrate that all known constructions of Lagrangian covering families of hyperk\"ahler manifolds coming from GM fourfolds or cubic fourfolds can be recovered by the categorical method described in Theorem~\ref{thm-lagrangian-family}. 

\medskip 

In Section \ref{subsec-application}, we discuss two other applications of Theorem \ref{thm-intro-section} and \ref{thm-intro-section-general}. We first confirm a conjecture proposed in \cite[Section 5.4.3]{perry2019stability}.

\begin{corollary}[{Corollary \ref{cor-rational-embedding}}]
Let $X$ be a general GM fourfold. For any $\sigma_X\in \Stab^{\circ}(\Ku(X))$ generic with respect to $\Lambda_1+2\Lambda_2$, the functor $\pr_X$ induces a rational map
\[X\dashrightarrow M^X_{\sigma_X}(1,2)\]
which sends the structure sheaf at a general point to its projection to the Kuznetsov component. Moreover, the map is generically an embedding.
\end{corollary}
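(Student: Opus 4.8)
The plan is to analyze the rational map $X \dashrightarrow M^X_{\sigma_X}(1,2)$ sending a general point $x \in X$ to $\pr_X(\oh_x)$, where $\oh_x$ is the skyscraper sheaf. First I would compute the numerical class of $\pr_X(\oh_x)$ in $\Knum(\Ku(X))$: since $[\oh_x]$ is fixed and the projection functor $\pr_X$ is additive, the class of $\pr_X(\oh_x)$ is a fixed element of the canonical rank-two sublattice, and a Chern-character/Riemann--Roch computation (or a direct check using the semi-orthogonal decomposition and the classes of $\oh_X, \cU_X^\vee$) should identify it as $\Lambda_1 + 2\Lambda_2$. Next I would establish that for a \emph{general} point $x$, the object $\pr_X(\oh_x)$ is $\sigma_X$-semistable of that class, so that it defines a point of the moduli space. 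This is where I expect to lean most directly on the machinery already developed: I would realize $\oh_x$ as the pushforward $j_*(\oh_x)$ from a smooth hyperplane section $Y \ni x$ (which exists and is smooth for $x$ general), note that $\oh_x \in \Ku(Y)$ after the appropriate projection on $Y$ is a stable object (a skyscraper sheaf on the K3-type category side, or more precisely one checks that $\pr_Y(\oh_x)$ is $\sigma_Y$-semistable of the relevant class), and then invoke Theorem~\ref{thm-intro-section}(1) (or Theorem~\ref{thm-intro-section-general}) to conclude $\pr_X(j_*(\oh_x)) = \pr_X(\oh_x)$ is $\sigma_X$-semistable. One must be slightly careful that $\pr_X\circ j_*$ applied to $\pr_Y(\oh_x)$ agrees with $\pr_X(\oh_x)$; this follows from the compatibility of the projection functors and the fact that the components killed by $\pr_Y$ are pushed into the part of $\D^b(X)$ orthogonal to $\Ku(X)$, up to a shift/twist bookkeeping that I would record carefully.

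Having a well-defined rational map, I would then prove it is generically an embedding in two steps: generic injectivity on points, and generic injectivity on tangent spaces (i.e.~immersion at a general point). For injectivity, suppose $\pr_X(\oh_x) \cong \pr_X(\oh_{x'})$ for general $x \neq x'$. Because $\pr_X$ is the composition of $j_*$-type inclusions with a mutation/projection, and the objects $\oh_x$ differ by a nonzero map only through their supports, I would argue that an isomorphism $\pr_X(\oh_x) \cong \pr_X(\oh_{x'})$ forces $\Hom(\oh_x, \oh_{x'}) \neq 0$ or a compatibility of the exceptional-object decompositions that can only happen when $x = x'$; concretely, one reconstructs $\oh_x$ from $\pr_X(\oh_x)$ together with the fixed triangles expressing the projection, using that the exceptional collection $\oh_X, \cU_X^\vee, \oh_X(H), \cU_X^\vee(H)$ is rigid and the mutation functor is an equivalence onto $\Ku(X)$, so $\pr_X$ restricted to the locus of skyscrapers is injective on isomorphism classes for general $x$. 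For the immersion statement, I would identify the differential of the map at $x$ with a linear map $T_xX = \Ext^1_X(\oh_x,\oh_x) \to \Ext^1_{\Ku(X)}(\pr_X(\oh_x), \pr_X(\oh_x))$ induced by functoriality of $\pr_X$, and show it is injective for general $x$; since $\pr_X$ is a Fourier--Mukai-type functor with the left-adjoint inclusion as a section, the composite $\Ext^1(\oh_x,\oh_x) \to \Ext^1(\pr_X\oh_x,\pr_X\oh_x) \to \Ext^1(\oh_x,\oh_x)$ (using the inclusion back) is either the identity or differs from it by contributions factoring through the exceptional objects, and a dimension/semiorthogonality count shows these extra contributions vanish for general $x$.

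The main obstacle, I expect, is the semistability of $\pr_X(\oh_x)$ for general $x$ together with the identification that this class is exactly $\Lambda_1 + 2\Lambda_2$ and that the moduli space is nonempty and of the expected dimension there; all the later steps (injectivity, immersion) are comparatively formal once one knows the map lands in a smooth hyperk\"ahler $M^X_{\sigma_X}(1,2)$ and that the relevant $\Ext$-computations behave generically. I would handle the semistability by the reduction-to-a-hyperplane-section argument above, which is exactly the content that Theorem~\ref{thm-intro-section} and Theorem~\ref{thm-intro-section-general} were built to supply, and I would handle the genericity issues (smoothness of $Y$, general position of $x$, avoidance of walls for $\sigma_X$) by noting each is an open dense condition and intersecting finitely many of them. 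Finally, to conclude "generically an embedding" I would combine: the map is defined on an open dense $U \subseteq X$, is injective on $U$ (after possibly shrinking $U$), and is an immersion on $U$ (after possibly shrinking again), hence is a locally closed embedding on a dense open subset, which is the assertion of Corollary~\ref{cor-rational-embedding}.
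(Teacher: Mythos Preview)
Your plan for semistability is exactly what the paper does: pick a smooth hyperplane section $j\colon Y\hookrightarrow X$ through $x$, use that $\pr_Y(\oh_x)\in\Ku(Y)$ is $\sigma_Y$-stable of class $\lambda_1+2\lambda_2$ (a citation to \cite{jLz2021brillnoether}), and then apply Theorem~\ref{thm-intro-section-general}. The compatibility $\pr_X(j_*\pr_Y(\oh_x))\cong\pr_X(j_*\oh_x)=\pr_X(\oh_x)$ you flag is precisely Proposition~\ref{prop-pushforward}. Your differential argument is in the right spirit but vaguer than the paper's: rather than a direct ``section'' of $\pr_X$ (there is no natural map $\pr_X(\oh_x)\to\oh_x$, so your composite does not obviously exist), the paper goes back through the hyperplane section and uses Lemma~\ref{prop_general-2}(1) together with the condition $\pr_Y(\oh_x)\neq T_Y(\pr_Y(\oh_x))$, plus the fact from \cite{jLz2021brillnoether} that $Y\hookrightarrow M^Y_{\sigma_Y}(1,2)$ is already an immersion.

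The genuine gap is injectivity. Your sketch (``reconstruct $\oh_x$ from $\pr_X(\oh_x)$ together with the fixed triangles expressing the projection, using that the mutation functor is an equivalence onto $\Ku(X)$'') does not work: $\pr_X$ annihilates the entire subcategory $\langle\oh_X,\cU^\vee_X,\oh_X(H),\cU^\vee_X(H)\rangle$, so it is far from an equivalence, and the mutation triangles computing $\pr_X(\oh_x)$ have cones whose numerical classes are independent of $x$ but whose defining morphisms are not---there is no formal inversion. The paper's proof of injectivity (Lemma~\ref{lem-proj-point-injective}) is a concrete sheaf-theoretic computation: one writes out the mutation triangles explicitly, takes cohomology sheaves, and after a case analysis shows that $\mathcal{E}xt^2_X\!\big(\cH^{-1}(\pr_X(\oh_x)[-1]),\oh_X\big)\cong\mathbb{C}_x$, so the point $x$ is recovered as the support of this sheaf. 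This is the substantive step you are missing, and it is not formal---you should expect to get your hands dirty with stability of the intermediate sheaves (e.g.\ the kernel $F_x$ of the evaluation $\cQ_X^{\vee\oplus 2}\twoheadrightarrow I_x$) to rule out accidental isomorphisms.
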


As a second application, we prove the projectivity of the moduli spaces of semistable objects in $\Ku(Y)$ of any class for a general GM threefold $Y$, which improves \cite[Theorem 1.3(2)]{ppzEnriques2023} and confirms the expectation in \cite[Remark 1.4]{ppzEnriques2023} for general GM threefolds (which can be either ordinary or special).

\begin{corollary}[{Corollary \ref{cor-projective}}]
Let $Y$ be a general GM threefold. For any pair of integers $a,b$ and any Serre-invariant stability condition $\sigma_Y$ on $\Ku(Y)$, the moduli space of $\sigma_Y$-semistable objects of class $a\lambda_1 +b\lambda_2$ is a projective scheme.
\end{corollary}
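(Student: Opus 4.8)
The plan is to transport projectivity from a moduli space on a Gushel--Mukai fourfold, where it is already known, along the push-forward functor. First I would realize the given general GM threefold $Y$ as a general smooth hyperplane section $j\colon Y\hookrightarrow X$ of a general GM fourfold $X$. This is possible because every GM threefold is a hyperplane section of some GM fourfold (via the description of a GM variety as a linear-and-quadric section of the cone over $\Gr(2,5)$), and the parameter spaces of GM threefolds and of GM fourfolds are irreducible; a dimension count then shows that a general GM threefold, ordinary or special, arises from a general $X$ together with a general section, so it suffices to treat such $Y$. Fix $\sigma_X\in\Stab^{\circ}(\Ku(X))$ and a Serre-invariant stability condition $\sigma_Y$ on $\Ku(Y)$, the latter unique up to $\GL$-action.

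Next I would invoke Theorem \ref{thm-intro-section-general}: the exact functor $\Phi:=\pr_X\circ j_*\colon\Ku(Y)\to\Ku(X)$ sends $\sigma_Y$-semistable objects of class $a\lambda_1+b\lambda_2$ to $\sigma_X$-semistable objects of a fixed class $a'\Lambda_1+b'\Lambda_2$ in $\Ku(X)$, determined by $a,b$ and the induced map on numerical Grothendieck groups. Since $\Phi$ is exact and carries semistable objects of a given phase to semistable objects of a given phase, it takes Jordan--H\"older filtrations to Jordan--H\"older filtrations, hence descends to $S$-equivalence classes; combined with the functoriality of $\Phi$ in families this yields a morphism
\[
 f\colon M^Y_{\sigma_Y}(a,b)\longrightarrow M^X_{\sigma_X}(a',b'),\qquad E\mapsto\pr_X(j_*E).
\]
By \cite{perry2019stability} together with the relative-moduli techniques of \cite{BLMNPS21} (which apply since $\Ku(X)$ is a K3 category), the target $M^X_{\sigma_X}(a',b')$ is a projective scheme; and $M^Y_{\sigma_Y}(a,b)$ is a proper algebraic space by \cite{ppzEnriques2023}, so $f$ is automatically proper.

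It then remains to show that $f$ is quasi-finite, for then $f$ is finite; being affine over a scheme its source is a scheme, and being finite over a projective scheme, $M^Y_{\sigma_Y}(a,b)$ is a projective scheme. To prove quasi-finiteness, suppose $\pr_X(j_*E)\cong\pr_X(j_*E')$ for polystable $E,E'\in\Ku(Y)$. Applying $j^*$ and combining the triangle defining $\pr_X$ with the Koszul triangle computing $j^*j_*$---whose two terms are $E$ and a shift of $E(-H)$---one recovers $j^*\pr_X(j_*E)$ from $E$ alone, and likewise for $E'$, so $E$ is determined by $\pr_X(j_*E)$ up to finite ambiguity. Here one uses that $j^*$ preserves $\sigma_Y$-semistability (Theorem \ref{thm-intro-section}(2), extended to general threefolds via Theorem \ref{thm-general-gm3}) in order to keep the whole comparison inside the semistable subcategory, together with the rigidity of the Serre-invariant stability condition on $\Ku(Y)$. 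This is exactly the finiteness of $f$ established, for coprime classes, in Theorem \ref{thm-lagrangian-general}, and the argument is insensitive to coprimality.

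I expect the quasi-finiteness of $f$ to be the main obstacle: it requires a sufficiently explicit understanding of $\pr_X\circ j_*$ and $j^*$ on semistable objects to bound the fibres, which is where the second part of Theorem \ref{thm-intro-section}, its extension Theorem \ref{thm-general-gm3}, and the special properties of Serre-invariant stability conditions on $\Ku(Y)$ are genuinely used. A secondary and more routine point is the reduction in the first paragraph---that ``general hyperplane section of a general GM fourfold'' is still ``general'' among GM threefolds and meets both the ordinary and the special loci.
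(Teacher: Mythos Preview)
Your overall strategy matches the paper's proof: construct a finite morphism $f\colon M^Y_{\sigma_Y}(a,b)\to M^X_{\sigma_X}(a,b)$ via $\pr_X\circ j_*$ (note that $[\pr_X(j_*(a\lambda_1+b\lambda_2))]=a\Lambda_1+b\Lambda_2$, so $(a',b')=(a,b)$) and deduce projectivity of the source from projectivity of the target. The quasi-finiteness is indeed insensitive to coprimality and follows directly from the exact triangle $T_Y(E)\to j^*\pr_X(j_*E)\to E$ via the paper's Lemma~\ref{prop_general-2}(2)(a): if $\pr_X(j_*E)\cong\pr_X(j_*E')$ then $E\oplus T_Y(E)$ is S-equivalent to $E'\oplus T_Y(E')$. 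You do not need Theorem~\ref{thm-general-gm3} here, and the detour through semistability of $j^*$ is unnecessary.

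The genuine gap is your claim that $M^X_{\sigma_X}(a,b)$ is projective by \cite{perry2019stability} and \cite{BLMNPS21}. Those references establish projectivity only when the class is primitive and $\sigma_X$ generic with respect to it (yielding a smooth projective hyperk\"ahler manifold); for non-primitive classes \cite{BLMNPS21} gives a proper algebraic space but not a projective scheme. The paper closes this gap by invoking two further arguments: for classes not of OG10-type one adapts \cite[Theorem 3.1]{sacca:moduli-ku} (written for cubic fourfolds) to the GM setting using the results of \cite{perry2019stability}; for the OG10-type classes $(a,b)\in\{(\pm2,0),(0,\pm2)\}$ one instead adapts \cite[Section 3.7]{li2020elliptic}. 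Without these, your reduction is incomplete precisely in the non-coprime case, which is exactly the new content of the corollary beyond \cite[Theorem 1.3(2)]{ppzEnriques2023}.
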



\subsection*{Lagrangian constant cycle subvarieties} 
As an application of studying a GM fourfold/sixfold $X$ via its hyperplane sections, we can construct a Lagrangian constant cycle subvariety for each Bridgeland moduli space of stable objects in $\Ku(X)$. Recall that there is an involution $T_X$ defined on $\Ku(X)$ (cf.~Lemma~\ref{lem-involution}). And there is a family of stability conditions $\Stab^{\circ}(\Ku(X))$ on $\Ku(X)$ constructed in \cite[Theorem 4.12]{perry2019stability}. According to \cite[Remark 5.8]{bayer2022kuznetsov} and \cite[Theorem 1.6]{ppzEnriques2023}, the functor $T_X$ 
induces an anti-symplectic involution on $M_{\sigma_X}^X(a,b)$ and the fixed locus $\mathrm{Fix}(T_X)$ is a smooth Lagrangian subvariety of $M_{\sigma_X}^X(a,b)$. 

\begin{corollary}[{Theorem \ref{thm-CH0}}]\label{cor-co}
Let $X$ be a GM fourfold or sixfold and $a,b$ be a pair of coprime integers. For any stability condition $\sigma_X\in \Stab^{\circ}(\Ku(X))$ which is generic with respect to $a\Lambda_1+b\Lambda_2$, the fixed locus $\mathrm{Fix}(T_X)$ is a Lagrangian constant cycle subvariety of $M_{\sigma_X}^X(a,b)$, i.e.~all points of $\mathrm{Fix}(T_X)$ are rationally equivalent in $M_{\sigma_X}^X(a,b)$.
\end{corollary}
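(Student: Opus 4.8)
The plan is to combine the Lagrangian-covering-family construction of Theorem~\ref{thm-lagrangian-family} with the classical ``Beauville--Voisin'' type argument that a Lagrangian subvariety swept out by a covering family of rational-equivalence-constant cycles is itself constant cycle. The key geometric input is that $\mathrm{Fix}(T_X)$, already known to be a smooth Lagrangian subvariety of $M^X_{\sigma_X}(a,b)$ by \cite[Remark 5.8]{bayer2022kuznetsov} and \cite[Theorem 1.6]{ppzEnriques2023}, can be understood hyperplane-section by hyperplane-section. First I would recall that for a smooth hyperplane section $j\colon Y\hookrightarrow X$ with $Y$ a GM threefold (resp.\ fivefold), the involution $T_X$ is compatible with the pull-back/push-forward functors via the corresponding involution $T_Y$ on $\Ku(Y)$ (cf.\ Lemma~\ref{lem-involution}), so that an object $E\in \Ku(Y)$ with $T_Y(E)\cong E$ maps under $\pr_X\circ j_*$ to an object fixed by $T_X$. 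Thus the Lagrangian subvariety $\pr_X(j_*(M^Y_{\sigma_Y}(a,b)))\subset M^X_{\sigma_X}(a,b)$, when restricted to the $T_Y$-fixed locus inside $M^Y_{\sigma_Y}(a,b)$, lands inside $\mathrm{Fix}(T_X)$.

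The main step is then to exhibit, through each (very general) point $[F]\in \mathrm{Fix}(T_X)$, a rational curve contained in $\mathrm{Fix}(T_X)$ along which all points are rationally equivalent in the ambient hyperk\"ahler manifold, and to do so in a way that covers a dense subset of $\mathrm{Fix}(T_X)$. Here the natural candidate is: for the threefold case, the $T_Y$-fixed locus in $M^Y_{\sigma_Y}(a,b)$ (an Enriques-category moduli space) is positive-dimensional and, crucially, consists of objects that are \emph{constant cycle} in the relevant sense --- any two stable objects in $\Ku(Y)$ of the same class are rationally equivalent as points of the Chow variety, because $\Ku(Y)$ has trivial ``CH$_0$'' in the noncommutative sense (it is an Enriques category, whose associated $0$-cycles behave like those of an Enriques surface). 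Pushing forward, the images $\{\pr_X(j_*E)\}$ for $E$ ranging over the fixed locus are all rationally equivalent in $M^X_{\sigma_X}(a,b)$; and as $Y$ varies over an open dense subset of $|\oh_X(H)|$, these subvarieties sweep out $\mathrm{Fix}(T_X)$. Combining, any two points of $\mathrm{Fix}(T_X)$ lying on a common such swept-out subvariety are rationally equivalent, and connectivity of the relevant incidence variety (or a chain argument over the parameter space of hyperplane sections) upgrades this to: all points of $\mathrm{Fix}(T_X)$ are rationally equivalent in $M^X_{\sigma_X}(a,b)$.

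The hard part will be controlling the rational equivalence \emph{in the ambient} $M^X_{\sigma_X}(a,b)$ rather than merely within each Lagrangian leaf: one must know that the finite unramified morphism $M^Y_{\sigma_Y}(a,b)\to M^X_{\sigma_X}(a,b)$ of Theorem~\ref{thm-lagrangian-general}, pushing constant cycles to constant cycles, and that the images for different $Y$ assemble into something rationally connected (over the base) inside the fixed locus. A subtle point is that $\mathrm{Fix}(T_X)$ itself need not be rationally connected, so the chaining over the family $|\oh_X(H)|$ --- using that the total space of the Lagrangian family dominates $\mathrm{Fix}(T_X)$ with rationally connected (indeed, the fibers over the base being themselves constant-cycle) fibers --- is what makes all fibers land in a single rational equivalence class; this is the Beauville--Voisin mechanism, and I expect verifying its hypotheses (properness, dominance onto $\mathrm{Fix}(T_X)$, constancy of cycle class on fibers) to be the technical crux. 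The dimension bookkeeping that makes the leaves Lagrangian, already handled in Theorem~\ref{thm-lagrangian-family}, ensures the swept-out locus has the right dimension to be all of $\mathrm{Fix}(T_X)$ and not something smaller.
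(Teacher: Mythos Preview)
Your approach has a fatal gap and differs fundamentally from the paper's argument.

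The central problem is your reliance on the $T_Y$-fixed locus inside $M^Y_{\sigma_Y}(a,b)$. For a general GM threefold $Y$, Proposition~\ref{prop-structure-3fold-moduli}(2) shows that $M^Y_{\sigma_Y}(a,b)$ is smooth, which (since $S_{\Ku(Y)}=T_Y[2]$) means precisely that $\Hom_Y(E,T_Y(E))=\Ext^2_Y(E,E)=0$ for every $[E]$ in the moduli space---so $T_Y(E)\not\cong E$ for \emph{any} such $E$. The $T_Y$-fixed locus is therefore empty for a general hyperplane section, and your family produces nothing inside $\mathrm{Fix}(T_X)$. Even setting this aside, your claim that objects in $\Ku(Y)$ are ``constant cycle'' because $\Ku(Y)$ is an Enriques category is too vague to be an argument; and the whole strategy, resting on Theorem~\ref{thm-lagrangian-family}, is only available for \emph{general} GM fourfolds, whereas the corollary is stated for arbitrary GM fourfolds and sixfolds.

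The paper's proof is entirely different and does not use the Lagrangian family at all. It invokes a Marian--Zhao type result (Proposition~\ref{prop-Marian-Zhao}): two points $[E_1],[E_2]$ of $M^X_{\sigma_X}(a,b)$ are rationally equivalent as soon as $\ch(E_1)=\ch(E_2)$ in $\mathrm{CH}^*(X)_\QQ$. The work then reduces to showing that for any $E_1,E_2\in\Ku(X)$ with the same Chern character in cohomology, one has $\ch(E_1)+\ch(T_X(E_1))=\ch(E_2)+\ch(T_X(E_2))$ in $\mathrm{CH}^*(X)_\QQ$; for objects in $\mathrm{Fix}(T_X)$ this immediately gives $\ch(E_1)=\ch(E_2)$. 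That Chow-level identity is proved using the results of \cite{liefu:cycle-gushel-mukai} on the Chow groups of GM varieties (the cycle map $\mathrm{CH}^i(X)\to H^{2i}(X,\ZZ)$ is injective except in the middle degree, and $\mathrm{CH}^*(Y)_\QQ$ for a hyperplane section is trivial) together with the exact triangle~\eqref{eq-triangle-functor-2} relating $\pr_X(j_*j^*E)$ to $E$ and $T_X(E)$. The hyperplane section $Y$ appears only as a device to control the one non-trivial Chow group of $X$, not as a source of a covering family.
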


The existence of Lagrangian constant cycle subvarieties for any projective hyperk\"ahler manifold is conjectured by Voisin in \cite[Conjecture 0.4]{voisin:remark-coisotropic}. Therefore, Corollary \ref{cor-co} provides an infinite series of locally complete $20$-dimensional families of hyperk\"ahler varieties satisfying this conjecture. By \cite[Theorem 1.8(2)]{ppzEnriques2023}, this result can be regarded as a noncommutative version of \cite[Proposition 4.2]{Beckmann20}. 

\subsection*{Related work}
The idea of constructing Lagrangian subvarieties of hyperk\"ahler varieties arising from cubic or GM fourfolds using their hyperplane sections has been realized in several classical results. For instance, the Fano variety of lines of a cubic fourfold by \cite{voisin1992stabilite} and the LLSvS eightfold associated with a general cubic fourfold due to \cite[Proposition 6.9]{shinder2017geometry}. On the other hand, for a GM fourfold $X$, according to \cite[Proposition 5.2, 5.6]{iliev2011fano}, one can construct concrete Lagrangians of the double EPW sextic either from hyperplane sections of $X$ or fivefolds that contain $X$ as a hyperplane section. 

In the recent paper \cite{ppzEnriques2023}, the authors study the fixed locus of the anti-symplectic involution of hyperk\"ahler varieties constructed from the Kuznetsov components of GM fourfolds and show that they are nonempty, thus Lagrangian. The difference is that their method relies on the relation between the Kuznetsov component of a GM threefold and a special GM fourfold branched along it, so the Lagrangian subvariety can not vary in the family of hyperplane sections.

As for the hyperk\"ahler varieties admitting Lagrangian covering families, 
the work of \cite{voisin:remark-coisotropic}, \cite{voisin2021lefschetz}, and \cite{Bai22} exploit their geometry and establish the Lefschetz standard conjecture of degree 
two and other well-behaved cohomological properties. 


The notion of Lagrangian constant cycle subvarieties was introduced in \cite{Huy14} for the case of K3 surfaces. As a specific case of \cite[Conjecture 0.4]{voisin:remark-coisotropic}, it is predicted that there exists a Lagrangian constant cycle subvariety in any projective hyperk\"ahler manifold. In \cite{lin:constant-cycle-lag-fibration}, the author constructs Lagrangian constant cycle subvarieties for hyperk\"ahler varieties admitting a Lagrangian fibration. The existence of such subvarieties has also been verified for some cases, such as \cite{Huy14, voisin:remark-coisotropic, Beckmann20, zhang2023one, li2023bloch}.

\subsection*{Plan of the paper} 
In Section \ref{sec-preliminay}, we provide the necessary definitions and properties of semi-orthogonal decompositions. Then we introduce the Kuznetsov components of several Fano manifolds and recall the construction of stability conditions on them. In Section \ref{sec-general-criterion}, we prove Theorem \ref{thm-main}, which shows that the stability of objects is preserved under push-forward and pull-back between triangulated categories satisfying additional assumptions. Then in Section \ref{sec-very-general-GM}, we apply Theorem \ref{thm-main} to non-Hodge-special GM varieties of dimension $n\geq 4$ and prove Theorem \ref{thm-very-general-GM4}. Then we obtain Theorem \ref{thm-general-gm4} via a deformation argument. In Section \ref{sec-lag-family}, we use Theorem \ref{thm-very-general-GM4} and \ref{thm-general-gm4} to construct Lagrangian families of those Bridgeland moduli spaces on the Kuznetsov components of non-Hodge-special GM fourfolds in Theorem \ref{thm-lagrangian-family}. Next, in Section \ref{subsec-lag-cover}, we discuss when the family in Theorem \ref{thm-lagrangian-family} is a Lagrangian covering family. 

In Section \ref{subsec-application}, we discuss other applications and prove Corollary \ref{cor-rational-embedding}, Corollary \ref{cor-projective}, and Theorem \ref{thm-CH0}. 
Finally, in Appendix \ref{App-cubic}, we state Conjecture \ref{main-conjecture-1} and explain a general recipe to construct Lagrangian subvarieties of hyperk\"ahler varieties as Bridgeland moduli spaces on GM fourfolds and cubic fourfolds.


\subsection*{Notation and conventions} \leavevmode
\begin{itemize}
 
    
    \item All triangulated categories are assumed to be $\CC$-linear of finite type, i.e.~$\sum_{i\in \ZZ} \dim_{\CC} \Ext^i(E,F)$ is finite for any two objects $E,F$.

    \item We use $\hom$ and $\ext^{i}$ to represent the dimension of the vector spaces $\Hom$ and~$\Ext^{i}$. We denote $\RHom(-,-)=\bigoplus_{i\in \ZZ} \Hom(-,-[i])[-i]$ and $\chi(-,-)=\sum_{i\in \ZZ} (-1)^i \ext^i(-,-)$.
    
    \item For a triangulated category $\cT$, its Grothendieck group and numerical Grothendieck group are denoted by $\mathrm{K}(\cT)$ and $\Knum(\cT):=\KK(\cT)/\ker(\chi)$, respectively.



\end{itemize}

\subsection*{Acknowledgements}
It is our pleasure to thank Arend Bayer, Sasha Kuznetsov, and Qizheng Yin for very useful discussions on the topics of this project. We would like to thank Enrico Arbarello, Marcello Bernardara, Lie Fu, Yong Hu, Grzegorz Kapustka, Chunyi Li, Zhiyuan Li, Laurent Manivel, Kieran O'Grady, Alexander Perry, Laura Pertusi, Richard Thomas, Claire Voisin, Ruxuan Zhang, Yilong Zhang, and Xiaolei Zhao for helpful conversations. We thank Ruxuan Zhang for pointing out a mistake in the statement of Proposition \ref{prop-Marian-Zhao} in the first version of our paper. SF acknowledges the support of EPSRC postdoctoral fellowship EP/T018658/1 and the Royal Society URF/ R1/231191. HG is supported by NSFC grant (12121001, 12171090 and 12425105). SZ is supported by the ERC Consolidator Grant WallCrossAG, no. 819864, ANR project FanoHK, grant ANR-20-CE40-0023 and partially supported by GSSCU2021092. SZ is also supported by the NSF under grant No. DMS-1928930, while he is residence at the Simons Laufer Mathematical Sciences Institute in Berkeley, California. Part of the work was finished during the junior trimester program of SF, ZL, and SZ funded by the Deutsche Forschungsgemeinschaft (DFG, German Research Foundation) under Germany’s Excellence Strategy – EXC-2047/1 – 390685813. We would like to thank the Hausdorff
Research Institute for Mathematics for their hospitality.

\section{Preliminaries}\label{sec-preliminay}

In this section, we recall some basic definitions and properties of semi-orthogonal decompositions and the notion of Bridgeland stability conditions. 
We mainly follow from \cite{bridgeland:stability} and \cite{bayer2017stability}.

\subsection{Semi-orthogonal decompositions} \label{sod}
Let $\cT$ be a triangulated category and $\cD\subset \cT$ a full triangulated subcategory. We define the \emph{right orthogonal complement} of $\cD$ in $\cT$ as the full triangulated subcategory
\[ \cD^\bot = \{ X \in \cT \mid \Hom(Y, X) =0  \text{ for all } Y \in \cD \}.  \]
The \emph{left orthogonal complement} is defined similarly, as 
\[ {}^\bot \cD = \{ X \in \cT \mid \Hom(X, Y) =0  \text{ for all } Y \in \cD \}.  \]
We say a full triangulated subcategory $\cD \sst \cT$ is \emph{admissible} if the inclusion functor $i \colon \cD \hookrightarrow \cT$ has left adjoint $i^*$ and right adjoint $i^!$.
Let $( \cD_1, \dots, \cD_m  )$ be a collection of admissible full subcategories of $\cT$. We say that $\cT = \langle \cD_1, \dots, \cD_m \rangle$ is a \emph{semi-orthogonal decomposition} of $\cT$ if $\cD_j \sst \cD_i^\bot $ for all $i > j$, and the subcategories $(\cD_1, \dots, \cD_m )$ generate $\cT$, i.e.~the category resulting from taking all shifts and cones in the categories $(\cD_1, \dots, \cD_m )$ is equivalent to $\cT$.

Let $i\colon \cD \hookrightarrow \cT$ be an admissible full subcategory. Then the \emph{left mutation functor} $\bL_{\cD}$ through $\cD$ is defined as the functor lying in the canonical functorial exact triangle 
\[  i i^! \ra \identity \ra \bL_{\cD}   \]
and the \emph{right mutation functor} $\bR_{\cD}$ through $\cD$ is defined similarly, by the triangle 
\[ \bR_{\cD} \ra \identity \ra i i^*  .  \]
Therefore, $\bL_{\cD}$ is exactly the left adjoint functor of $\cD^\bot\hookrightarrow \cT$. Similarly, $\bR_{\cD}$ is the right adjoint functor of ${}^\bot \cD\hookrightarrow \cT$. When $E \in \cT$ is an exceptional object, and $F \in \cT$ is any object, the left mutation $\bL_E F:=\bL_{\langle E\rangle} F$ fits into the triangle 
\[ E \otimes \RHom(E, F) \ra F \ra \bL_E F , \]
and the right mutation $\bR_E F:=\bR_{\langle E\rangle} F$ fits into the triangle
\[ \bR_E F \ra F \ra E \otimes \RHom(F, E)^\vee  . \]

Given a semi-orthogonal decomposition $\cT = \langle \cD_1 , \cD_2 \rangle$. Then $\cT \simeq \langle S_{\cT}(\cD_2), \cD_1 \rangle \simeq \langle \cD_2, S^{-1}_{\cT}(\cD_1) \rangle$ are also semi-orthogonal decompositions of $\cT$, where $S_{\cT}$ is the Serre functor of $\cT$. Moreover, \cite[Lemma 2.6]{kuz:fractional-CY} shows that  
\[ S_{\cD_2} = \bR_{\cD_1} \circ S_{\cT}  \, \, \, \text{ and } \, \, \, S_{\cD_1}^{-1} = \bL_{\cD_2} \circ S_{\cT}^{-1} . \]

\subsection{Stability conditions}

Let $\cT$ be a triangulated category and $\KK(\cT)$ be its Grothendieck group. 
Fix a surjective morphism to a finite rank lattice $v \colon \KK(\cT) \ra \Lambda$. 

\begin{definition}
A \emph{stability condition} on $\cT$ is a pair $\sigma = (\cA, Z)$, where $\cA$ is the heart of a bounded t-structure on $\cT$ and $Z \colon \Lambda \ra \CC$ is a group homomorphism such that 
\begin{enumerate}
    \item the composition $Z \circ v : \KK(\cA) \cong \KK(\cT) \ra \CC$ is a stability function on $\cA$, i.e.~for any $E \in \cA$, we have $\Im Z(v(E)) \geq 0$ and if $\Im Z(v(E)) = 0$, $\Re Z(v(E)) < 0$. From now on, we write $Z(E)$ rather than $Z(v(E))$.
\end{enumerate}
For any object $E \in \cA$, we define the slope function $\mu_{\sigma}(-)$ as
\[
\mu_\sigma(E) := \begin{cases}  - \frac{\Re Z(E)}{\Im Z(E)}, & \Im Z(E) > 0 \\
+ \infty , & \text{else}.
\end{cases}
\]
An object $0 \neq E \in \cA$ is called $\sigma$-(semi)stable if for any proper subobject $F \sst E$, we have $\mu_\sigma(F) (\leq) \mu_\sigma(E)$. 
\begin{enumerate}[resume]
    \item Any object $E \in \cA$ has a Harder--Narasimhan filtration in terms of $\sigma$-semistability defined above.
    \item There exists a quadratic form $Q$ on $\Lambda \otimes \mathbb{R}$ such that $Q|_{\ker Z}$ is negative definite  and $Q(E) \geq 0$ for all $\sigma$-semistable objects $E \in \cA$. This is known as the \emph{support property}.
\end{enumerate}
\end{definition}

The \emph{phase} of a $\sigma$-semistable object $E\in \cA$ is defined as
\[\phi_{\sigma}(E):=\frac{1}{\pi}\mathrm{arg}(Z(E))\in (0,1].\]
For $n\in \ZZ$, we set $\phi_{\sigma}(E[n]):=\phi_{\sigma}(E)+n$. Given a non-zero object $E\in \cT$, we will denote by $\phi^+_{\sigma}(E)$ (resp.~$\phi^-_{\sigma}(E)$) the biggest (resp.~smallest) phase of a Harder--Narasimhan factor of $E$ with respect to $\sigma$.

A \emph{slicing} $\mathcal{P}_{\sigma}$ of $\mathcal{T}$ with respect to the stability condition $\sigma$ consists of full additive subcategories $\mathcal{P}_{\sigma}(\phi) \subset \mathcal{T}$ for each $\phi \in \mathbb{R}$ such that the subcategory $\mathcal{P}_{\sigma}(\phi)$ contains the zero object and all $\sigma$-semistable objects whose phase is $\phi$.
For any interval $I\subset \mathbb{R}$, we denote by $\cP_{\sigma}(I)$ the category given by the extension closure of $\{\cP_{\sigma}(\phi)\}_{\phi\in I}$. We will use both notations $\sigma = (\cA,Z)$ and $\sigma = (\cP_{\sigma},Z)$ for a stability condition $\sigma$ with heart $\cA = \cP_{\sigma}((0,1])$. 

We finish this section with the following definition:

\begin{definition}\label{def-serre-invariant}
Let $\cT$ be a triangulated category and $\Phi$ be an auto-equivalence of $\cT$. We say a stability condition $\sigma$ on $\cT$ is \emph{$\Phi$-invariant}\footnote{The action of $\Phi$ and $\GL$ on $\sigma$ is defined in \cite[Lemma 8.2]{bridgeland:stability}.} if
\[\Phi\cdot \sigma=\sigma\cdot\wt{g}\]
for an element $\wt{g}\in \GL$. We say $\sigma$ is \emph{Serre-invariant} if it is $S_{\cT}$-invariant, where $S_{\cT}$ is the Serre functor of $\cT$.
\end{definition}

\section{Stability of push-forward and pull-back objects}\label{sec-general-criterion}

In this section,
we are going to prove Theorem \ref{thm-main}, which will be applied to the Kuznetsov components of GM varieties in Section \ref{sec-very-general-GM}. We consider two triangulated categories $\mathcal{K}_1$ and $\mathcal{K}_2$ with the following properties:  
\begin{enumerate}[label=\textbf{(C\arabic*)}]
    \item\label{c1} $\Knum(\3) = \ZZ \lambda_1 \oplus \ZZ \lambda_2$ whose Euler pairing is 
\begin{equation}
\left[               
\begin{array}{cc}   
-1 & 0 \\  
0 & -1\\
\end{array}
\right].
\end{equation} 
\item\label{c2} $\Knum(\4) = \ZZ \Lambda_1 \oplus \ZZ \Lambda_2$ whose Euler pairing is 
\begin{equation}
\left[               
\begin{array}{cc}   
-2 & 0 \\  
0 & -2\\
\end{array}
\right].
\end{equation}
\item\label{c-T} There exist exact auto-equivalences $T_i$ of $\mathcal{K}_i$ for $i=1, 2$ such that $T_i^2= \mathrm{id}_{\cK_i}$ and $T_i$ acts trivially on $\Knum(\mathcal{K}_i)$.  
    \item\label{c-S} We have the Serre functors $S_{\4} = [2]$ and $S_{\3} = T_1\circ [2]$. 
    \item\label{c-adjoints} We have exact functors
    \[\Phi^*\colon \cK_2\to \cK_1,\quad \Phi_*\colon \cK_1\to \cK_2\]
    with adjoint pairs $\Phi_* \circ T_1 \dashv \Phi^* \dashv \Phi_*$ so that $\Phi_*(\lambda_i) = \Lambda_i$ and $\Phi^*(\Lambda_i) = 2\lambda_i$ for $i=1, 2$.  Moreover, for any object $F \in \4$, we have $\Phi^*T_2(F) \cong T_1(\Phi^*F)$.
    \item \label{c-stability}
    There are stability conditions $\sigma_i =(Z_i, \mathcal{A}_i)$ on $\mathcal{K}_i$ for $i=1, 2$ such that 
    \begin{enumerate}
        \item $\sigma_i$ is $T_i$-invariant, 
        \item $Z_1(\lambda_1) = Z_2(\Lambda_1) =-1$ and $Z_1(\lambda_2) = Z_2(\Lambda_2) =\mathfrak{i}$,
        \item there are infinitely many $\sigma_1$-stable objects of class $\lambda_i$ for each $i=1, 2$,
        
        \item for any $\sigma_1$-stable object $E \in \3$ of class $\lambda_i$ for $i=1, 2$, we know $\Phi_*E$ is $\sigma_2$-stable with $\phi_{\sigma_1}(E)=\phi_{\sigma_2}(\Phi_*E)$.
    \end{enumerate}
    \item\label{c-t1} Any object $E \in \3$ lies in an exact triangle 
    \begin{equation}\label{exact-3}
        T_1(E) \to \Phi^*\Phi_*E \to E. 
    \end{equation}
    \item\label{c-t2} Any object $E \in \4$ lies in an exact triangle
    \begin{equation}\label{exact-4}
        E \to \Phi_*\Phi^*E \to T_2(E). 
    \end{equation}
\end{enumerate}
Moreover, for simplicity, we assume that $\cK_i$ for each $i=1,2$ is a full triangulated subcategory of the derived category of coherent sheaves on a smooth variety\footnote{Lemma \ref{spec-seq} is indeed the only instance where we relied on this assumption.}. We denote the phase function and the slope function of $\sigma_i$ by $\phi_i(-)$ and $\mu_i(-)$, respectively.

The main theorem that we prove in this section is as follows. 
\begin{theorem}\label{thm-main}
    Consider triangulated categories $\3$ and $\4$ satisfying the above conditions \ref{c1} to \ref{c-t2}.

\begin{enumerate}
    \item An object $E \in \3$ is $\sigma_1$-semistable of  phase $\phi$ if and only if $\Phi_*E$ is $\sigma_2$-semistable of phase $\phi$. Moreover, $\Phi_*(E)$ is $\sigma_2$-stable if and only if $E$ is $\sigma_1$-stable and for any $\sigma_2$-stable object $F$ we have $\Phi^*F\neq E$.

    \item An object $F \in \4$ is $\sigma_2$-semistable of  phase $\phi$ if and only if $\Phi^*F$ is $\sigma_1$-semistable of phase $\phi$. Moreover, $\Phi^*(F)$ is $\sigma_1$-stable if and only if $F$ is $\sigma_2$-stable and for any $\sigma_1$-stable object $E$ we have $\Phi_*E\neq F$.
\end{enumerate}

\end{theorem}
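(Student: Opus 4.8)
The proof has a natural symmetry: part (1) concerns $\Phi_*\colon\3\to\4$ and part (2) concerns $\Phi^*\colon\4\to\3$, and the two are dual to each other via the triangles \eqref{exact-3} and \eqref{exact-4} together with the adjunctions in \ref{c-adjoints}. I would first set up the numerical bookkeeping: by \ref{c-adjoints}, $\Phi_*$ sends $\lambda_i$ to $\Lambda_i$ and $\Phi^*$ sends $\Lambda_i$ to $2\lambda_i$, so by \ref{c-stability}(b) the central charges match up to the obvious factor, namely $Z_2(\Phi_*E)=Z_1(E)$ and $Z_1(\Phi^*F)=2Z_2(F)$. In particular phases are preserved by both functors, which is why the phase-$\phi$ statement is even plausible. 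Also record, from \eqref{exact-3}, that $\Phi^*\Phi_*E$ is an extension of $E$ by $T_1(E)$, and from \eqref{exact-4} that $\Phi_*\Phi^*F$ is an extension of $T_2(F)$ by $F$; since $T_i$ is $\sigma_i$-invariant (\ref{c-stability}(a)) and acts trivially on $\Knum$ (\ref{c-T}), $T_i(E)$ has the same phase and (semi)stability behaviour as $E$.

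\textbf{Step 1: the "only if" directions via the Serre functor.} I would prove: if $E\in\3$ is $\sigma_1$-semistable of phase $\phi$, then $\Phi_*E$ is $\sigma_2$-semistable of phase $\phi$; and symmetrically for $\Phi^*$. The key input is that $\Phi_*$ and $\Phi^*$ are adjoint on both sides up to the involutions (\ref{c-adjoints}: $\Phi_*\circ T_1\dashv\Phi^*\dashv\Phi_*$), so they send semistable objects to objects with controlled Harder--Narasimhan range. Concretely, suppose $\Phi_*E$ is not semistable, and let $A\hookrightarrow\Phi_*E$ be the maximal destabilizing subobject with $\phi^-(A)>\phi$. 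Adjunction $\Hom(\Phi^*(-),-)=\Hom(-,\Phi_*(-))$ lets one transport HN factors of $A$ back to $\3$; using the triangle \eqref{exact-3} applied to $E$ and the phase preservation, one derives a subobject or quotient of $E$ in $\mathcal{A}_1$ violating semistability of $E$. The cleanest way to organise this is the standard "phases of $\Hom$'s" argument: show $\Hom(\Phi_*A,\Phi_*B[k])=0$ for appropriate $k$ whenever $\phi_1^-(A)>\phi_1^+(B)$, by writing $\Hom(\Phi_*A,\Phi_*B[k])\hookrightarrow\Hom(\Phi^*\Phi_*A,B[k])$ (or its $T_1$-twisted variant) and peeling off the triangle \eqref{exact-3} for $A$, together with Serre duality $S_{\3}=T_1[2]$ to bound negative $\Ext$'s. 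This gives the HN-polygon estimate forcing $\Phi_*E$ semistable of phase $\phi$.

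\textbf{Step 2: the "if" directions.} Suppose $\Phi_*E$ is $\sigma_2$-semistable of phase $\phi$. Applying Step 1 in the reverse role, $\Phi^*\Phi_*E$ is $\sigma_1$-semistable of phase $\phi$; but \eqref{exact-3} exhibits it as an extension $T_1(E)\to\Phi^*\Phi_*E\to E$, and an extension of semistable objects of phase $\phi$ forces each of $T_1(E)$ and $E$ to be semistable of phase $\phi$ as well (the heart is $\mathcal{P}_1(\phi)$, an abelian category in which subobjects and quotients of a semistable object of that phase are again semistable of that phase — or rather one uses that $\mathcal{P}_1(\phi)$ is closed under sub/quotient and $E$, $T_1E$ are its sub/quotient). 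Hence $E$ is $\sigma_1$-semistable of phase $\phi$. The symmetric argument handles $\Phi^*$. This also shows both "only if" and "if" for part (2) once part (1) is known, by the self-dual nature of conditions \ref{c1}--\ref{c-t2}.

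\textbf{Step 3: the stability refinements.} For the last sentences of (1) and (2) I would argue as follows. If $E$ is $\sigma_1$-stable and there is \emph{no} $\sigma_2$-stable $F$ with $\Phi^*F\cong E$: suppose $\Phi_*E$ were strictly semistable, hence $S$-equivalent to a direct sum of $\sigma_2$-stable objects $F_j$ of phase $\phi$. Using the numerical constraint — $[\Phi_*E]=[F_1]+\cdots$ with the rank-$2$ lattices \ref{c1},\ref{c2} and the Euler form $-2\cdot\mathrm{id}$ versus $-1\cdot\mathrm{id}$ — together with $\chi(\Phi_*E,\Phi_*E)=\chi(E,\Phi^*\Phi_*E)$ and the triangle \eqref{exact-3}, one pins down that there can be only one summand $F$, i.e. $\Phi_*E$ is actually $\sigma_2$-stable unless $\Phi_*E\cong F\oplus\cdots$; then adjunction plus \eqref{exact-4} forces $\Phi^*F\cong E$ (up to $T_1$), contradicting the hypothesis. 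Conversely, if $\Phi^*F\cong E$ for some $\sigma_2$-stable $F$, then $\Phi_*E\cong\Phi_*\Phi^*F$ which by \eqref{exact-4} is a nontrivial extension of $T_2F$ by $F$ (nontriviality from $\Phi^*F\neq 0$ and $\Ext$-computations), hence not stable. The converse direction of the stability claim — $\Phi_*E$ stable $\Rightarrow$ $E$ stable — follows since $E$ is a quotient of $\Phi^*\Phi_*E$, and if $E$ had a proper stable subobject of the same phase we could push it forward to destabilize or split $\Phi_*E$. The symmetric statement for $\Phi^*$ is identical after swapping roles and using \eqref{exact-4} in place of \eqref{exact-3}.

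\textbf{Main obstacle.} The genuine difficulty is Step 1: controlling the Harder--Narasimhan factors of $\Phi_*E$ (resp. $\Phi^*F$) purely from the adjunctions, the involutions, and the two short exact triangles, \emph{without} knowing a priori that these functors are t-exact for the relevant hearts. One must carefully run the vanishing-of-$\Hom$ argument keeping track of the factor of $2$ in $\Phi^*\Phi_*$ versus $\Phi_*\Phi^*$ and of the $T_i$-twist in the adjunction $\Phi_*T_1\dashv\Phi^*$; the asymmetry (one composite is an extension of $E$ by $T_1E$, the other of $T_2F$ by $F$, with the arrows pointing "opposite" ways) means the two halves are not literally the same computation and each must be done by hand. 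The auxiliary spectral-sequence lemma (Lemma \ref{spec-seq}, alluded to in the footnote) is presumably exactly the tool that makes these $\Ext$-estimates go through, and I would expect to invoke it here. Everything after Step 1 is then formal manipulation of triangles and $S$-equivalence classes.
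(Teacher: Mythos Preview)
Your Steps 2 and 3 are essentially the content of Lemma \ref{lem-simplify} in the paper, and your formal reductions there are fine. The genuine gap is exactly where you flag it: Step 1. Your proposed ``phases of $\Hom$'s'' argument is circular. To deduce $\Hom(\Phi_*A,\Phi_*B[k])=0$ from $\phi_1^-(A)>\phi_1^+(B)$ by phase comparison in $\4$, you already need to know that $\Phi_*A$ and $\Phi_*B$ are semistable (or at least have HN factors of the expected phases), which is precisely the statement being proved. Adjunction and the triangle \eqref{exact-3} give you $\Hom(\Phi_*A,\Phi_*B)=\Hom(\Phi^*\Phi_*A,B)$, but $\Phi^*\Phi_*A$ sits in a triangle with $A$ and $T_1(A)$ and you get no control over where this lands in the slicing of $\4$ without already knowing how $\Phi_*$ interacts with stability. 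There is no direct t-exactness here, and no abstract vanishing that forces the HN filtration of $\Phi_*E$ to collapse.

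The paper's route is a \emph{double induction on $\ext^1$}. One proves $\bA_n$ (``$\Phi_*$ preserves semistability for $\sigma_1$-stable $E$ with $\ext^1(E,E)\le n$'') and $\bB_n$ (the analogue for $\Phi^*$) simultaneously, with base case $\bA_3$ supplied by assumption \ref{c-stability}(d), and inductive steps $\bB_{n-1}\Rightarrow\bA_n$ and $\bA_{n-1}\Rightarrow\bB_n$. The mechanism is: if $\Phi_*E$ were not in the heart $\cA_2$, its extremal cohomology objects (with respect to $\cA_2$) have strictly smaller $\ext^1$ by the weak Mukai lemma (Lemma \ref{lem-mukai}) and Lemma \ref{lem-hn ext}, so by the inductive hypothesis $\Phi^*$ preserves \emph{their} HN filtrations; one then bounds their phases by probing with the infinite supply of stable objects of class $\lambda_i$ (Lemma \ref{lem-phase of push - back}) to derive a contradiction. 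A similar slope-comparison handles the passage from ``in the heart'' to ``semistable''. The spectral sequence of Lemma \ref{spec-seq} is only used in a boundary sub-case (Lemmas \ref{lem-two-factor-stable-push}, \ref{lem-two-factor-stable-pull}) where the $\ext^1$ bound is tight, not as the main engine. You are missing this entire inductive architecture; without it Step 1 does not go through.
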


\subsection{Collection of lemmas}\label{subsec-3.1}

Before proving the theorem, we investigate some of the further properties of categories $\3$ and $\4$. 

\begin{lemma}\label{lem-T-fix-phase}
Let $\sigma$ be a stability condition on a triangulated category $\cT$. Assume that $T$ is an exact auto-equivalence of $\cT$ with $T^n=\mathrm{id}_{\cT}$ and $\sigma$ is $T$-invariant. Then for any $\sigma$-(semi)stable object $E\in \cT$, we have $T(E)$ is $\sigma$-(semi)stable with $\phi_{\sigma}(E)=\phi_{\sigma}(T(E))$.
\end{lemma}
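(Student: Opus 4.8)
The plan is to reduce the statement to the elementary fact that a $\GL$-action on stability conditions, and hence on slicings, scales the central charge by a linear automorphism of $\CC$ (as a real vector space) and reparametrises phases by a fixed increasing function, so it sends semistable objects to semistable objects. First I would unwind the definition of $T$-invariance from Definition \ref{def-serre-invariant}: by hypothesis $T\cdot\sigma=\sigma\cdot\wt g$ for some $\wt g\in\GL$. Recall that an element of $\GL$ is a pair $(M,f)$ where $M\in\GL^+(2,\mathbb R)$ and $f\colon\mathbb R\to\mathbb R$ is an increasing function with $f(\phi+1)=f(\phi)+1$, compatible in the sense that $M$ and $f$ induce the same map on $\mathbb R/2\mathbb Z\cong S^1$. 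The action on a slicing $\cP_\sigma$ is given by $(\cP_\sigma\cdot\wt g)(\phi)=\cP_\sigma(f(\phi))$, and on the central charge by $Z\mapsto M^{-1}\circ Z$. In particular the underlying slicings satisfy $\cP_{T\cdot\sigma}=\cP_\sigma\cdot\wt g$.

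Next I would translate the action of the auto-equivalence on the slicing. By the definition of the $\Phi$-action on stability conditions in \cite[Lemma 8.2]{bridgeland:stability}, $\cP_{T\cdot\sigma}(\phi)=T\big(\cP_\sigma(\phi)\big)$ for every $\phi\in\mathbb R$. Combining this with the previous paragraph gives
\[
T\big(\cP_\sigma(\phi)\big)=\cP_\sigma(f(\phi))
\]
for all $\phi$. Now if $E$ is $\sigma$-(semi)stable of phase $\psi$, i.e.\ $E\in\cP_\sigma(\psi)$ and $E$ is (semi)stable in this abelian subcategory, then $T(E)\in\cP_\sigma(f(\psi))$; since $T$ is an exact equivalence it takes the HN/Jordan--H\"older structure of $\cP_\sigma(\psi)$ to that of $\cP_\sigma(f(\psi))$, so $T(E)$ is again $\sigma$-(semi)stable, of phase $f(\psi)$. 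This already proves the (semi)stability half of the claim; it remains to show $f(\psi)=\psi$, equivalently $\phi_\sigma(T(E))=\phi_\sigma(E)$.

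For the equality of phases I would use the finite order hypothesis $T^n=\mathrm{id}_{\cT}$. Iterating the displayed identity $n$ times yields $T^n(\cP_\sigma(\phi))=\cP_\sigma(f^{\circ n}(\phi))$, and since $T^n=\mathrm{id}$ and $\cP_\sigma(\phi)\neq 0$ for $\phi$ in a dense set of reals (as $\sigma$ has semistable objects; e.g.\ the heart $\cP_\sigma((0,1])$ is nonzero), we get $f^{\circ n}=\mathrm{id}$ on that dense set, hence on all of $\mathbb R$ by monotonicity and continuity considerations. But $f$ is a non-decreasing function commuting with translation by $1$; an increasing (or even merely non-decreasing, lift of a) circle map whose $n$-th iterate is the identity must itself be the identity, because its rotation number is $0$ and a monotone map with zero rotation number and no proper fixed behaviour forced by $f^{\circ n}=\mathrm{id}$ can only be $f(\phi)=\phi$. (Concretely: if $f(\phi_0)>\phi_0$ for some $\phi_0$ then monotonicity gives $f^{\circ n}(\phi_0)\ge f(\phi_0)>\phi_0$, a contradiction; similarly $f(\phi_0)<\phi_0$ is impossible.) Therefore $f=\mathrm{id}$, so $\phi_\sigma(T(E))=\phi_\sigma(E)$, completing the proof. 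The only mildly delicate point — and the one I would be most careful about — is justifying that $f^{\circ n}=\mathrm{id}$ forces $f=\mathrm{id}$; this is exactly the elementary monotonicity argument just sketched, so there is no real obstacle, but it is worth spelling out rather than asserting.
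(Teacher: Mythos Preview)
Your approach is essentially the same as the paper's: write $T\cdot\sigma=\sigma\cdot\wt g$ with $\wt g=(M,f)$, observe that $T$ sends $\cP_\sigma(\phi)$ to $\cP_\sigma(f(\phi))$, and then use $T^n=\mathrm{id}$ together with monotonicity of $f$ to pin down the phase. The (semi)stability part is fine.

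There is, however, a small gap in your phase argument. You assert that $\cP_\sigma(\phi)\neq 0$ for a dense set of $\phi$, and use this to deduce $f^{\circ n}=\mathrm{id}$ globally. This density claim is not true in general (think of $\D^b(\mathrm{pt})$, where the only phases are integer translates of a single value), and nothing in the hypotheses of the lemma guarantees it. The good news is that the detour is unnecessary: your own parenthetical monotonicity argument already works \emph{pointwise}. From $T^n(E)=E$ you get $f^{\circ n}(\psi)=\psi$ at the single value $\psi=\phi_\sigma(E)$; then if $f(\psi)>\psi$, monotonicity gives $f^{\circ n}(\psi)>\psi$, a contradiction, and similarly for $f(\psi)<\psi$. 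Hence $f(\psi)=\psi$, i.e.\ $\phi_\sigma(T(E))=\phi_\sigma(E)$. This is exactly how the paper argues: it never claims $f=\mathrm{id}$ globally, only that $g^n(\phi_\sigma(E))=\phi_\sigma(E)$ and hence $g(\phi_\sigma(E))=\phi_\sigma(E)$. Drop the density step and apply your monotonicity argument at $\phi_0=\phi_\sigma(E)$, and the proof is complete.
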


\begin{proof}
As $\sigma$ is $T$-invariant, $E$ is $\sigma$-(semi)stable if and only if $T(E)$ is $\sigma$-(semi)stable. It remains to show $\phi_{\sigma}(E)=\phi_{\sigma}(T(E))$. Assume that $T\cdot \sigma=\sigma\cdot \wt{g}$ for $\wt{g}=(M, g)\in \GL$, where $M\in \mathrm{GL}^+(2, \mathbb{R})$ and $g\colon \mathbb{R}\to \mathbb{R}$ is an increasing function such that $g(x+1)=g(x)+1$. As $T^n=\mathrm{id}_{\cT}$, we have
\[\phi_{\sigma}(E)=\phi_{\sigma}(T^n(E))=g(\phi_{\sigma}(T^{n-1}(E)))=\cdots=g^n(\phi_{\sigma}(E)).\]
As $g$ is an increasing function, we obtain $\phi_{\sigma}(E)=\phi_{\sigma}(T(E))=g(\phi_{\sigma}(E))$.
\end{proof}

Therefore, by Lemma \ref{lem-T-fix-phase}, \ref{c-t1} and \ref{c-t2}, we have

\begin{itemize}
    \item for any $\sigma_1$-semistable object $E\in \3$, $\Phi^*\Phi_*E$ is also $\sigma_1$-semistable with
    \[\phi_1(E)=\phi_1(\Phi^*\Phi_*E),\]
    and

    \item for any $\sigma_2$-semistable object $F\in \4$, $\Phi_*\Phi^*F$ is also $\sigma_2$-semistable with 
\[\phi_2(F)=\phi_2(\Phi_*\Phi^*F).\]
\end{itemize}

The next lemma simplifies the statement that we need to prove in Theorem \ref{thm-main}.

\begin{lemma}\label{lem-simplify}
    Consider triangulated categories $\3$ and $\4$ satisfying the above conditions \ref{c1} to \ref{c-t2}. Assume that

\begin{enumerate}
    \item if $E \in \3$ is $\sigma_1$-semistable, then $\Phi_*E$ is $\sigma_2$-semistable with $\phi_{1}(E)=\phi_2(\Phi_*E)$, and

    \item if $F \in \4$ is $\sigma_2$-semistable, then $\Phi^*F$ is $\sigma_1$-semistable with $\phi_{2}(F)=\phi_1(\Phi^*F)$.
\end{enumerate}
Then Theorem \ref{thm-main} holds.
\end{lemma}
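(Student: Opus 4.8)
Since the forward implications in the two semistability equivalences of Theorem~\ref{thm-main} are literally hypotheses (1) and (2) of the lemma, the plan is to deduce (a) the two \emph{converses}, and (b) the refinements concerning \emph{stable} objects. Before starting I would record three facts used repeatedly: (i) each $T_i$ preserves all Harder--Narasimhan data --- $\phi^{\pm}_i(T_i G)=\phi^{\pm}_i(G)$, and $T_i$ takes $\sigma_i$-semistable (resp.\ stable) objects to $\sigma_i$-semistable (resp.\ stable) ones of the same phase --- which follows by applying Lemma~\ref{lem-T-fix-phase} to the HN factors of $G$ and invoking \ref{c-stability}(a); (ii) each $\cP_{\sigma_i}(\phi)$ is an abelian category of finite length (a standard consequence of the support property), so its objects have a well-defined length and a Jordan--H\"older filtration; (iii) $\Phi_*$ and $\Phi^*$ are exact triangulated functors, so a short exact sequence contained in a single $\cP_{\sigma_i}(\phi)$ is sent to a triangle whose three terms are, by hypotheses (1)/(2), again semistable of phase $\phi$, hence to a short exact sequence in $\cP_{\sigma_j}(\phi)$, with lengths adding. (No class-$0$ degeneracy occurs: a semistable object has nonzero central charge, hence nonzero numerical class, and $\Phi_*,\Phi^*$ are injective on $\Knum$.)

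\emph{The converses, via a triangle chase.} The core observation is: if $W$ is $\sigma$-semistable of phase $\phi$ and fits into an exact triangle $A\to W\to B\to A[1]$ with $\phi^{+}_{\sigma}(A)=\phi^{+}_{\sigma}(B)$ and $\phi^{-}_{\sigma}(A)=\phi^{-}_{\sigma}(B)$, then $A$ and $B$ are $\sigma$-semistable of phase $\phi$. To see this, apply the standard inequalities $\phi^{+}(Y)\le\max(\phi^{+}(X),\phi^{+}(Z))$ and $\phi^{-}(Y)\ge\min(\phi^{-}(X),\phi^{-}(Z))$ for a triangle $X\to Y\to Z\to X[1]$: to the triangle itself (giving $\phi^-_\sigma(A)\le\phi\le\phi^+_\sigma(A)$), to its rotation $B[-1]\to A\to W\to B$ (giving $\phi^{+}_\sigma(A)\le\max(\phi^{+}_\sigma(A)-1,\phi)$, hence $\phi\ge\phi^{+}_\sigma(A)$), and to its rotation $W\to B\to A[1]\to W[1]$ (giving $\phi^{-}_\sigma(A)\ge\min(\phi,\phi^{-}_\sigma(A)+1)$, hence $\phi\le\phi^{-}_\sigma(A)$); together these force $\phi^{+}_\sigma(A)=\phi=\phi^{-}_\sigma(A)$. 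I would apply this to \eqref{exact-3} with $W=\Phi^{*}\Phi_{*}E$ --- which is $\sigma_1$-semistable of phase $\phi_2(\Phi_*E)$ by hypothesis (2) --- and $A=T_1(E)$, $B=E$: this gives the converse of hypothesis (1). Applying it to \eqref{exact-4} with $W=\Phi_{*}\Phi^{*}F$ ($\sigma_2$-semistable by hypothesis (1), since $\Phi^*F$ is assumed semistable) and $A=F$, $B=T_2(F)$ gives the converse of hypothesis (2). Combined with (1) and (2), this proves the semistability equivalences in both parts of Theorem~\ref{thm-main}.

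\emph{The stable refinements, via Jordan--H\"older bookkeeping.} I describe part (1); part (2) is symmetric. If $E$ is strictly $\sigma_1$-semistable of phase $\phi$, choose $0\to E'\to E\to A\to 0$ in $\cP_{\sigma_1}(\phi)$ with $A$ $\sigma_1$-stable and $E'\neq 0$; applying $\Phi_*$ exhibits $\Phi_*E'\neq 0$ as a proper subobject of $\Phi_*E$ of phase $\phi$, so $\Phi_*E$ is not $\sigma_2$-stable --- with the converse of (1) this gives ``$\Phi_*E$ $\sigma_2$-stable $\Rightarrow$ $E$ $\sigma_1$-stable''. So assume $E$ is $\sigma_1$-stable. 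By \eqref{exact-3} and the stability of $T_1(E)$ and $E$, the object $\Phi^{*}\Phi_{*}E$ has length $2$ in $\cP_{\sigma_1}(\phi)$; applying $\Phi^{*}$ to a Jordan--H\"older filtration of $\Phi_*E$ (with $\sigma_2$-stable factors $G_1,\dots,G_m$) writes $\Phi^{*}\Phi_{*}E$ as an iterated extension of the nonzero semistable objects $\Phi^{*}G_1,\dots,\Phi^{*}G_m$, whence $m\le 2$. If $m=1$, then $\Phi_*E$ is $\sigma_2$-stable. If $m=2$, then each $\Phi^{*}G_j$ has length $1$, i.e.\ is $\sigma_1$-stable, and Jordan--H\"older uniqueness, together with the fact that the factors of $\Phi^{*}\Phi_{*}E$ are $T_1(E)$ and $E$ by \eqref{exact-3}, forces $\{\Phi^{*}G_1,\Phi^{*}G_2\}=\{E,T_1(E)\}$; using $\Phi^{*}T_2\cong T_1\Phi^{*}$ from \ref{c-adjoints}, this means $E\cong\Phi^{*}F$ for a $\sigma_2$-stable $F\in\{G_1,G_2,T_2G_1,T_2G_2\}$. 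Conversely, if $E\cong\Phi^{*}F$ with $F$ $\sigma_2$-stable, then $\Phi_*E\cong\Phi_*\Phi^{*}F$ has length $2$ in $\cP_{\sigma_2}(\phi)$ by \eqref{exact-4}, so is not $\sigma_2$-stable. This is exactly the ``moreover'' in Theorem~\ref{thm-main}(1). Part (2) runs identically with $\Phi_*\leftrightarrow\Phi^{*}$, $T_1\leftrightarrow T_2$, and \eqref{exact-3}, \eqref{exact-4} interchanged; the one extra ingredient is $T_2\Phi_*\cong\Phi_*T_1$, which follows by uniqueness of adjoints from $\Phi^{*}T_2\cong T_1\Phi^{*}$ and the adjunctions $\Phi_*T_1\dashv\Phi^{*}\dashv\Phi_*$ (the left adjoints of $T_2\Phi_*$ and $\Phi_*T_1$ are $\Phi^{*}T_2$ and $T_1\Phi^{*}$).

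\emph{Expected main obstacle.} The conceptual crux is the triangle chase: reflecting semistability \emph{backwards} along $\Phi_*$ and $\Phi^{*}$ is not formal, since the outer terms of a triangle with semistable middle term need not be semistable in general. What makes it work here is precisely that the two non-$\Phi$ terms of each error triangle \eqref{exact-3}, \eqref{exact-4} are $T_i$-translates of one another, so that $T_i$-invariance of $\sigma_i$ ties their HN bounds together and the rotation argument closes up. Once this mechanism is in place, the remaining work --- the length bookkeeping in the stable case, keeping track of the length-doubling of $\Phi^{*}\Phi_{*}$ and $\Phi_{*}\Phi^{*}$ and invoking finiteness of length of $\cP_{\sigma_i}(\phi)$ --- is routine.
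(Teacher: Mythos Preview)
Your proof is correct. For the stable refinements, your Jordan--H\"older length bookkeeping via the length-$2$ objects $\Phi^*\Phi_*E$ and $\Phi_*\Phi^*F$ is exactly the paper's argument. Where you genuinely diverge is in the converse direction of the semistability equivalences. The paper argues directly: if $E$ is not $\sigma_1$-semistable, take a destabilizing triangle $A\to E\to B$ with $\phi^-_1(A)>\phi^+_1(B)$; since hypothesis~(1) sends each HN factor to a semistable object of the same phase, one gets $\phi^-_2(\Phi_*A)>\phi^+_2(\Phi_*B)$, so $\Phi_*A\to\Phi_*E\to\Phi_*B$ destabilizes $\Phi_*E$. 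The converse of~(2) is handled symmetrically using only hypothesis~(2). Your triangle-chase instead transports semistability backwards through the triangles \eqref{exact-3} and \eqref{exact-4}: assuming $\Phi_*E$ is semistable, you use hypothesis~(2) to make $\Phi^*\Phi_*E$ semistable and then squeeze $E$ and $T_1(E)$ via the rotation inequalities and the equality $\phi^\pm(T_1(E))=\phi^\pm(E)$. Both routes are short; the paper's decouples the two converses (each uses only one hypothesis), while yours foregrounds the role of the error triangles and the $T_i$-invariance, and reuses the same mechanism later in the stable case. One small remark: in your stable refinement you do not actually need $\Phi^*T_2\cong T_1\Phi^*$ (or its dual), since $E$ itself already appears among the JH factors $\{E,T_1(E)\}$ of $\Phi^*\Phi_*E$, so some $\Phi^*G_j$ is directly isomorphic to~$E$.
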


\begin{proof}
Given an object $E \in \3$ such that $\Phi_*E$ is $\sigma_2$-semistable. If $A\to E\to B$ is an exact triangle such that $\phi^-_1(A)>\phi^+_1(B)$, then we have $\phi^-_2(\Phi_*A)>\phi^+_2(\Phi_*B)$ by (1), which contradicts the semistability of $\Phi_*E$ as we have an exact triangle $\Phi_*A\to \Phi_*E\to \Phi_*B$. Therefore, the semistability of $\Phi_*E$ implies the semistability of $E$ and they have the same phase by (1). Similarly, the semistability of $\Phi^*F$ implies the semistability of $F$ for any object $F\in \4$ and they have the same phase by (2). This proves the semistability part of Theorem \ref{thm-main}.

Now we assume that $E \in \3$ is $\sigma_1$-semistable such that $\Phi_*E$ is $\sigma_2$-stable. Then by (1), it is clear that $E$ is $\sigma_1$-stable. Indeed, if $E$ is strictly $\sigma_1$-semistable, then we can find a $\sigma_1$-stable object $E'$ with $\Hom(E', E)\neq 0$ and $\phi_1(E)=\phi_1(E')$. Applying $\Hom(-,E)$ to the triangle \eqref{exact-3} associated with $E'$ and using $\phi_1(E')=\phi_1(T_1(E'))$, we get
\[\Hom(\Phi^*\Phi_*(E'),E)=\Hom(\Phi_*(E'),\Phi_*(E))\neq 0,\]
which contradicts the stability of $\Phi_*E$ as we have $\phi_2(\Phi_*(E'))=\phi_1(E')=\phi_1(E)=\phi_2(\Phi_*(E))$ by the assumption (1). Now if there is a $\sigma_1$-stable object $F$ such that $\Phi^*F\cong E$, then by \eqref{exact-4} and Lemma \ref{lem-T-fix-phase}, we know that $\Phi_*\Phi^*(F)\cong \Phi_*(E)$ is strictly $\sigma_2$-semistable, a contradiction. 

Next, assume that $E \in \3$ is $\sigma_1$-semistable such that $E$ is $\sigma_1$-stable and for any $\sigma_2$-stable object $F$ we have $\Phi^*F\neq E$, we need to prove the $\sigma_2$-stability of $\Phi_*E$. Up to shift, we can assume that $\Phi_*E\in \cA_2$. If $\Phi_*E$ is strictly $\sigma_2$-semistable, let $\{F_k\}_{k\in I}$ be the set of Jordan--H\"older factors of $\Phi_*E$, where $I$ is a finite index set. By \eqref{exact-3} and Lemma \ref{lem-T-fix-phase}, $\Phi^*\Phi_*E$ is strictly $\sigma_1$-semistable and the Jordan--H\"older factors are $E$ and $T_1(E)$. As $\Phi^*F_k$ is $\sigma_1$-semistable with $\phi_1(\Phi^*F_k)=\phi_1(E)=\phi_2(\Phi_*E)$ for each $k\in I$ by (2), there exists $k\in I$ such that $\Phi^*F_k\cong E$ by the stability of $E$ and the uniqueness of Jordan--H\"older factors, which makes a contradiction. This proves Theorem \ref{thm-main}(1). The proof of Theorem \ref{thm-main}(2) is similar.
\end{proof}

Our approach to Theorem \ref{thm-main} is by induction on $\ext^1$. Therefore, the first step is to control $\ext^1$ in several situations.

\begin{lemma}[{Weak Mukai Lemma}]\label{lem-mukai}
Let $A\to E\to B$ be an exact triangle in a triangulated category $\cT$. If $\Hom(A,B)=\Hom(B,A[2])=0$, then 
\[\ext^1(A,A)+\ext^1(B,B)\leq \ext^1(E,E).\]
\end{lemma}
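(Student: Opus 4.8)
The plan is to deduce the inequality from the long exact sequences in $\Hom$ obtained by applying $\Hom(A,-)$, $\Hom(-,A)$, $\Hom(B,-)$, $\Hom(-,B)$, and $\Hom(E,-)$, $\Hom(-,E)$ to the defining triangle $A\to E\to B$, and then assembling these into a statement about $\Hom(E,E)$.

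First I would apply $\Hom(A,-)$ to the triangle $A\to E\to B$ to get a long exact sequence containing $\cdots \to \Hom(A,A[1])\to \Hom(A,E[1])\to \Hom(A,B[1])\to\cdots$, and observe that since $\Hom(A,B)=0$ the map $\Hom(A,A[1])\to\Hom(A,E[1])$ is injective (using that $\Hom(A,B)\to\Hom(A,A[1])$ has image landing appropriately); similarly, applying $\Hom(-,B)$ and using $\Hom(A,B)=0$ gives an injection $\Hom(B,B[1])\hookrightarrow\Hom(E,B[1])$. So $\ext^1(A,A)\le \ext^1(A,E)$ and $\ext^1(B,B)\le \ext^1(E,B)$. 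The remaining task is to show $\ext^1(A,E)+\ext^1(E,B)\le \ext^1(E,E)$, which is where the hypothesis $\Hom(B,A[2])=0$ enters: applying $\Hom(-,E)$ to $A\to E\to B$ yields $\Hom(B,E[1])\to\Hom(E,E[1])\to\Hom(A,E[1])\to\Hom(B,E[2])$, and I would use $\Hom(B,A[2])=0$ together with the triangle to see that $\Hom(B,E[1])$ injects into $\Hom(E,E[1])$ with cokernel mapping onto (a subspace of) $\Hom(A,E[1])$; combining with $\Hom(B,E[1])$ surjecting onto the relevant piece via $\Hom(B,B[1])$ compatibility, one gets $\ext^1(E,E)\ge \ext^1(A,E)+\ext^1(E,B) - (\text{correction terms that vanish})$. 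Concretely: $\Hom(A,E[1])$ receives $\Hom(A,B)=0$ from the left, and $\Hom(B,E[1])$ maps from $\Hom(B,A[1])$ with $\Hom(B,A[2])=0$ on the right controlling the next term, so the sequence $0\to\Hom(B,E[1])\to\Hom(E,E[1])\to\Hom(A,E[1])$ is exact and moreover $\Hom(A,E[1])\supseteq \Hom(A,A[1])$ and $\Hom(B,E[1])\supseteq\Hom(B,B[1])$ as shown, giving $\ext^1(E,E)\ge \ext^1(B,E)+ \ext^1(A,E)\ge \ext^1(A,A)+\ext^1(B,B)$.

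The main obstacle I anticipate is bookkeeping with the vanishing hypotheses to make sure the two injections $\ext^1(A,A)\hookrightarrow\ext^1(A,E)$ and $\ext^1(B,B)\hookrightarrow\ext^1(E,B)$ are \emph{compatible} with the exact sequence $0\to\Hom(B,E[1])\to\Hom(E,E[1])\to\Hom(A,E[1])$ — that is, that the copy of $\ext^1(B,B)$ inside $\ext^1(E,B)$ and the copy of $\ext^1(A,A)$ inside $\ext^1(A,E)$ do not "overlap" inside $\Hom(E,E[1])$. The cleanest way to handle this is to track everything through a single commutative diagram of the three long exact sequences for $\Hom(A,-)$, $\Hom(E,-)$, $\Hom(B,-)$ evaluated against the triangle, using the two vanishing conditions $\Hom(A,B)=0$ and $\Hom(B,A[2])=0$ to kill the boundary maps that would otherwise obstruct the additivity. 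I expect no serious difficulty beyond this diagram chase; the statement is a standard "Mukai-type" lemma and the hypotheses are exactly tailored to make the naive dimension count work.
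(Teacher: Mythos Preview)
Your overall strategy---assembling the long exact sequences from $\Hom(A,-)$, $\Hom(E,-)$, $\Hom(B,-)$ applied to the triangle, with the two vanishings killing the relevant boundary maps---is exactly the standard Mukai-type argument that the paper invokes by citation. However, two of your intermediate claims are wrong as stated. First, the inequality $\ext^1(A,E)+\ext^1(E,B)\le\ext^1(E,E)$ goes the wrong way: an exact sequence bounds the middle term \emph{above} by the sum of the outer ones, not below. Second, the sequence $0\to\Hom(B,E[1])\to\Hom(E,E[1])\to\Hom(A,E[1])$ need not be left exact; injectivity on the left would require the connecting map $\Hom(A,E)\to\Hom(B,E[1])$ to vanish, which the hypotheses do not force. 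You also silently switch from $\Hom(E,B[1])$ to $\Hom(B,E[1])$ midway, and these are genuinely different spaces.

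The fix is precisely the diagram chase you gesture at in your last paragraph, but it must be organised differently. Use the $\Hom(E,-)$ sequence $\Hom(E,A[1])\to\Hom(E,E[1])\to\Hom(E,B[1])$ to write $\ext^1(E,E)=\dim K+\dim(\text{image})$, where $K$ is the image of the first map. Then the commutative square linking this to the $\Hom(A,-)$ sequence shows: $\Hom(E,A[1])\twoheadrightarrow\Hom(A,A[1])$ (using $\Hom(B,A[2])=0$) composed with $\Hom(A,A[1])\hookrightarrow\Hom(A,E[1])$ (using $\Hom(A,B)=0$) factors through $K\hookrightarrow\Hom(E,E[1])\to\Hom(A,E[1])$, so $\dim K\ge\ext^1(A,A)$. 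A symmetric square with the $\Hom(B,-)$ sequence shows the image of $\Hom(E,E[1])\to\Hom(E,B[1])$ contains the embedded copy of $\Hom(B,B[1])$, so its dimension is at least $\ext^1(B,B)$. This gives the inequality without the unjustified steps.
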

\begin{proof}
The result follows from the same argument as in \cite[Lemma 2.5]{bayerK3}.
\end{proof}

\begin{lemma}\label{lem-lower bound on ext}
Consider triangulated categories $\3$ and $\4$ and stability conditions $\sigma_1$ and $\sigma_2$ satisfying the above conditions.

\begin{enumerate}
    \item Any non-zero object $E\in \mathcal{K}_i$ satisfies $\ext^1(E,E)\geq 2\,i$ for each $i=1, 2$. 

    \item The heart $\cA_i$ has homological dimension $2$, i.e.~$\Hom(A,B[k])=0$ for any $k>2$ and any two objects $A,B\in \cA_i$.
\end{enumerate}

\end{lemma}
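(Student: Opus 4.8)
I would prove (2) first, using Serre duality and the homological finiteness of a bounded heart, and then bootstrap it to (1). The key preliminary observation for (2) is that $T_i$ preserves the heart $\mathcal{A}_i$: by \ref{c-T} and \ref{c-stability}(a) the functor $T_i$ is a $\sigma_i$-invariant involution, so Lemma \ref{lem-T-fix-phase} (with $n=2$) shows that $T_i$ takes $\sigma_i$-semistable objects of phase $\phi$ to $\sigma_i$-semistable objects of phase $\phi$; since $T_i$ is exact and $\mathcal{A}_i=\mathcal{P}_{\sigma_i}((0,1])$ is the extension closure of the $\mathcal{P}_{\sigma_i}(\phi)$ with $\phi\in(0,1]$, and $T_i^2=\mathrm{id}$, this forces $T_i(\mathcal{A}_i)=\mathcal{A}_i$. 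Then for $A,B\in\mathcal{A}_i$ and $k>2$, Serre duality together with \ref{c-S} gives $\Hom(A,B[k])\cong\Hom(B,S_{\mathcal{K}_i}(A)[-k])^\vee$, and $S_{\mathcal{K}_i}(A)=C[2]$ with $C=A$ if $i=2$ and $C=T_1(A)$ if $i=1$; in both cases $C\in\mathcal{A}_i$, so the group equals $\Hom(B,C[2-k])^\vee$, which vanishes since $2-k<0$. Combined with the standard vanishing of negative $\Ext$'s in a heart, this gives (2).

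For (1), I would first treat $0\neq E\in\mathcal{A}_i$. By (2), $\chi(E,E)=\hom(E,E)-\ext^1(E,E)+\ext^2(E,E)$, and the same Serre duality computation identifies $\ext^2(E,E)=\hom(E,C)$ with $C=E$ if $i=2$ and $C=T_1E$ if $i=1$; in particular $\ext^2(E,E)\ge 0$, and $\ext^2(E,E)\ge 1$ when $i=2$. On the other hand a nonzero object of $\mathcal{A}_i$ satisfies $Z_i(E)\neq 0$ by the stability function axiom, hence has nonzero class in $\Knum(\mathcal{K}_i)$, so the displayed Euler pairings in \ref{c1}, \ref{c2} give $\chi(E,E)\le -1$ if $i=1$ and $\le -2$ if $i=2$. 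Rearranging, $\ext^1(E,E)=\hom(E,E)+\ext^2(E,E)-\chi(E,E)\ge 2$ for $i=1$ and $\ge 4$ for $i=2$, i.e.\ $\ext^1(E,E)\ge 2i$.

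Finally, for arbitrary $0\neq E\in\mathcal{K}_i$, let $p$ and $q$ be the lowest and highest degrees in which $E$ has nonzero cohomology for the bounded t-structure with heart $\mathcal{A}_i$. If $p=q$ then $E$ is a shift of the nonzero heart object $H^p(E)$, so the previous paragraph applies. If $p<q$, apply the Weak Mukai Lemma \ref{lem-mukai} to the truncation triangle $A\to E\to B$ with $A=H^p(E)[-p]$ and $B=\tau_{\ge p+1}E$: since $A$ is concentrated in degree $p$ and $B$ in degrees $\ge p+1$, one has $\Hom(A,B)=0$, and since $A[2]$ is concentrated in degree $p-2<p+1$, one has $\Hom(B,A[2])=0$; hence $\ext^1(E,E)\ge\ext^1(A,A)+\ext^1(B,B)\ge\ext^1(A,A)=\ext^1(H^p(E),H^p(E))\ge 2i$ by the heart case. (Iterating on $B$ even gives $\ext^1(E,E)\ge 2i\,(q-p+1)$, which we will not need.)

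All the steps are routine manipulations; the one place where the hypotheses genuinely bite is the identity $T_1(\mathcal{A}_1)=\mathcal{A}_1$ in the Enriques case — without it Serre duality would not land back in the heart, and the computation of $\ext^2(E,E)$ would be meaningless — together with the two $\Hom$-vanishings that feed the Weak Mukai Lemma in the general case. I do not expect any serious obstacle beyond these.
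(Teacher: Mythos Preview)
Your proof is correct and essentially unpacks what the paper outsources: the paper's own proof of (2) simply cites \cite[Proposition 3.4]{FeyzbakhshPertusi2021stab}, and for (1) it first observes the bound for $\sigma_i$-stable objects from (2) and the Euler pairings \ref{c1}, \ref{c2}, then invokes \cite[Proposition 3.4(b)]{FeyzbakhshPertusi2021stab} to pass to arbitrary objects. Your argument is a direct, self-contained proof of the same facts --- the Serre-duality computation with $T_i(\cA_i)=\cA_i$ is exactly how one proves the cited proposition in this setting, and your truncation-plus-Weak-Mukai step plays the role of Proposition 3.4(b). So the route is the same in substance; you have just made it explicit rather than citing.

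One small point of presentation: your justification for $\Hom(B,A[2])=0$ (``since $A[2]$ is concentrated in degree $p-2<p+1$'') reads as if it follows from the t-structure axiom alone, but it does not --- degree ordering in that direction requires the homological-dimension bound you just proved in (2). Concretely, the contributions to $\Hom(B,A[2])$ come from $\Ext^{j-(p-2)}(\cH^j(B),\cH^p(E))$ with $j\geq p+1$, hence from $\Ext^{\geq 3}$ between heart objects, which vanish by (2). It would be cleaner to say so explicitly.
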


\begin{proof}
As $\sigma_i$ is Serre-invariant for each $i=1, 2$, part (2) follows from \cite[Proposition 3.4]{FeyzbakhshPertusi2021stab}. Now by (2), \ref{c1}, and \ref{c2}, every $\sigma_i$-stable object $E\in \cK_i$ satisfies $\ext^1(E,E)\geq 2i$. Then part (1) follows from applying \cite[Proposition 3.4(b)]{FeyzbakhshPertusi2021stab}.
\end{proof}

\begin{lemma} \label{lem-hn ext}
For each $i=1, 2$, we have the following: 
\begin{enumerate}
    \item Any object $E \in \mathcal{K}_i$ satisfies
    \[\sum_k \ext^1(\cH^k_{\cA_i}(E), \cH^k_{\cA_i}(E))\leq \ext^1(E,E).\]
In particular, if $E$ is not in $\cA_i[m]$ for any $m \in \ZZ$, then $\ext^1(\cH^k_{\cA_i}(E), \cH^k_{\cA_i}(E))\leq \ext^1(E,E)-2\,i$ for each $k$.
    \item Let $\{E_j\}_{j\in I}$ be the set of the Harder--Narasimhan factors of $E$, then
    \[\sum_{j\in I} \ext^1(E_j, E_j)\leq \ext^1(E,E).\]
In particular, if $E$ is not $\sigma_i$-semistable, then $\ext^1(E_j, E_j)\leq \ext^1(E,E)-2\,i$ for each $j\in I$.
    \item If $E$ is strictly $\sigma_i$-semistable, then each of the Jordan--H\"older factors $A$ of $E$ satisfies
    \[\ext^1(A, A)<\ext^1(E,E).\]
\end{enumerate}
\end{lemma}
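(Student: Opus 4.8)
The three parts are all applications of the Weak Mukai Lemma (Lemma \ref{lem-mukai}) combined with the vanishing statements supplied by Bridgeland stability and by the homological dimension bound (Lemma \ref{lem-lower bound on ext}(2)). The plan is to induct on the length of the relevant filtration in each case, peeling off one piece at a time, and at each stage verify the two $\Hom$-vanishings needed to invoke Lemma \ref{lem-mukai}.

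For part (1), I would order the nonzero cohomology objects so that $\cH^{k_1}_{\cA_i}(E)$ has the largest index. Truncation gives an exact triangle $A\to E\to B$ with $A=\cH^{k_1}_{\cA_i}(E)[-k_1]$ and $B=\tau^{>k_1}E$ (or the opposite convention, whichever makes the indices line up). Then $\Hom(A,B)=0$ because $B$ is concentrated in strictly lower cohomological degrees, and $\Hom(B,A[2])=0$ because $A[2]$ is concentrated in a degree at least two above the degrees appearing in $B$, using that $\cA_i$ has homological dimension $2$ (Lemma \ref{lem-lower bound on ext}(2)); here one checks the numerology carefully so the gap is exactly right. Lemma \ref{lem-mukai} then gives $\ext^1(\cH^{k_1}_{\cA_i}(E),\cH^{k_1}_{\cA_i}(E))+\ext^1(B,B)\leq \ext^1(E,E)$, and I iterate on $B$. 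The ``in particular'' follows since if $E$ is not a shift of an object of $\cA_i$ there are at least two nonzero cohomology objects, and each nonzero object of $\cK_i$ has $\ext^1\geq 2i$ by Lemma \ref{lem-lower bound on ext}(1), so dropping all but one summand on the left loses at least $2i$.

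For part (2), I may assume by part (1) that $E\in\cA_i$ (replacing $E$ by a shift does not change any $\ext^1$, and the Harder--Narasimhan factors of a shift are shifts of those of $E$). Let $E_1$ be the HN factor of largest phase, so there is an exact triangle $E_1\to E\to E'$ with $E'$ having all HN factors of strictly smaller phase. Then $\Hom(E_1,E')=0$ by the standard property that $\Hom$ vanishes from higher phase to lower phase among semistable objects, and $\Hom(E',E_1[2])=0$: by Serre duality in $\cK_i$ together with \ref{c-S}, $\Hom(E',E_1[2])\cong\Hom(E_1,S_{\cK_i}(E')[-2])^\vee$, which for $i=2$ is $\Hom(E_1,E')^\vee=0$ and for $i=1$ is $\Hom(E_1,T_1(E'))^\vee$, and $T_1(E')$ is semistable of the same phase as $E'$ by Lemma \ref{lem-T-fix-phase} and \ref{c-stability}(a), hence again this $\Hom$ vanishes. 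Now apply Lemma \ref{lem-mukai} and induct on $E'$. The ``in particular'' is as in part (1): if $E$ is not semistable there are $\geq 2$ HN factors, each contributing $\ext^1\geq 2i$.

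For part (3), $E$ strictly semistable means it has a Jordan--H\"older filtration with at least two factors, all of the same phase $\phi$. Pick a stable subobject $A\subset E$ of phase $\phi$, giving $A\to E\to B$ with $B$ semistable of phase $\phi$ and $B\neq 0$. As in part (2), $\Hom(A,B)$ need not vanish now (same phase!), so Lemma \ref{lem-mukai} does not apply directly; instead I would argue: $B$ is a nonzero object of $\cK_i$, so $\ext^1(B,B)\geq 2i>0$ by Lemma \ref{lem-lower bound on ext}(1), and then use the long exact sequence obtained by applying $\Hom(E,-)$ and $\Hom(-,E)$ to $A\to E\to B$ together with stability of $A$ (so $\Hom(A,A)=\CC$) to show $\ext^1(A,A)<\ext^1(E,E)$ strictly. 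Concretely, one compares $\ext^1(E,E)$ with $\ext^1(A,A)$ via the self-extension groups and the cross terms $\Hom(A,B[\ast])$, $\Hom(B,A[\ast])$; the strict inequality comes from the nonvanishing of $\ext^1(B,B)$ (or, if one prefers, from $\Hom(B,B)\neq 0$ feeding into the relevant sequence).

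The main obstacle I expect is part (3): unlike parts (1) and (2), the Weak Mukai Lemma is not directly available because the two $\Hom$-groups between the JH sub- and quotient-object do not vanish, so the strict inequality must be extracted from a careful diagram chase using $S_{\cK_i}$-duality, the stability of $A$, and the lower bound $\ext^1(B,B)\geq 2i$; getting the strictness (rather than just $\leq$) is the delicate point. Parts (1) and (2) should be routine once the degree/phase bookkeeping for the $\Hom(-,-[2])$ vanishings is done correctly, which is exactly where the hypotheses \ref{c-S} and Lemma \ref{lem-lower bound on ext}(2) enter.
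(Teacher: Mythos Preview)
Your treatments of parts (1) and (2) are correct and match the paper's argument. For (1), once the index confusion is fixed (you want the \emph{minimum} $k_0$, so that $A=\cH^{k_0}_{\cA_i}(E)[-k_0]$ is the piece of largest phase and $B=\tau^{>k_0}E$), the vanishing $\Hom(B,A[2])=0$ follows from homological dimension $2$ exactly as you say, since the cohomological gap is at least $1$ and hence the relevant Ext-degree is $\geq 3$. The paper phrases this via Serre duality and $T_i$-invariance instead, but the two justifications are equivalent. Part (2) is the same argument as the paper's.

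Part (3), however, has a genuine gap. Your proposed long-exact-sequence chase is not enough to produce the strict inequality, and the paper's proof is considerably more structured. Two issues you are missing:

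First, if all Jordan--H\"older factors of $E$ are isomorphic to a single $A$, there is no triangle at all with a Hom-vanishing, so no version of the Mukai lemma or a long exact sequence comparison will help. The paper handles this case numerically: writing $[E]=m[A]$ with $m\geq 2$, one has $\chi(E,E)=m^2\chi(A,A)\leq -m^2$ and $\ext^1(E,E)\geq 1-m^2\chi(A,A)$, while $\ext^1(A,A)\leq 2-\chi(A,A)$; since $m^2\geq 4$ and $\chi(A,A)\leq -1$ the strict inequality follows.

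Second, when there are at least two non-isomorphic JH factors, you still need to arrange a triangle where the Mukai lemma applies. The paper does this by taking $E_1$ to be the maximal subobject whose JH factors are all isomorphic to a fixed stable $A$, so that $\Hom(E_1,E_2)=0$ by construction. For $i=2$ Serre duality gives $\Hom(E_2,E_1[2])=0$ and one inducts. For $i=1$ the second vanishing is $\Hom(E_1,T_1(E_2))$, which can \emph{fail} if $T_1(A)$ occurs as a factor of $E_2$; the paper then performs a further regrouping, absorbing all $T_1(A)$-factors from $E_2$ into a larger subobject $E_5$, after which both vanishings hold. Your sketch does not anticipate either of these steps, and ``$\ext^1(B,B)\geq 2i$ feeding into the sequence'' does not by itself yield strictness when the Mukai lemma is unavailable.
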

\begin{proof}

For part (1), we can assume that $E$ has at least two non-zero cohomology objects with respect to $\cA_i$. Let $k_0$ be the minimum value such that $\cH^{k_0}_{\cA_i}(E) \neq 0$. This gives rise to an exact triangle $\cH_{\cA_i}^{k_0}(E)[-k_0] \to E \to E'$, where $E' \in \mathcal{P}_{\sigma_i}(-\infty, -k_0]$. Consequently, according to \ref{c-stability}, we can deduce that $T_1(E') \in \mathcal{P}_{\sigma_i}(-\infty, -k_0]$. As $S_{\3} = T_1\circ [2]$, the conditions of Lemma \ref{lem-mukai} are satisfied for this exact triangle, yielding:
\begin{equation*}
\ext^1(E', E') + \ext^1(\cH^{k_0}_{\cA_i}(E), \cH^{k_0}_{\cA_i}(E)) \leq \ext^1(E, E).
\end{equation*}
Thus, the first claim in part (1) follows by induction on the number of non-zero cohomologies, and the second claim in part (1) follows by combining it with Lemma \ref{lem-lower bound on ext}. Similarly, Part (2) can be proved via induction on the length of the Harder--Narasimhan filtration. 

Finally, we prove (3). If all Jordan--H\"older factors of $E$ have the same class $v$, then for any Jordan--H\"older factor $A$ of $E$, we have $\chi(A,A)\leq 2-\ext^1(A,A)$ and $[E]=m[A]$ for an integer $m\geq 2$. Hence, $\ext^1(E,E)=\hom(E,E)+\ext^2(E,E)-\chi(E,E)\geq 1-m^2\chi(A,A)$. Then
\[\ext^1(A,A)\leq 2-\chi(A,A)< 1-m^2\chi(A,A)\]
as $m^2\geq 4$ and $\chi(A,A)\leq -1$. If $E$ has at least two non-isomorphic Jordan--H\"older factors, then we can find an exact triangle $E_1\to E\to E_2$ such that all Jordan--H\"older factors of $E_1$ are isomorphic with $\phi_{\sigma_i}(E_1)=\phi_{\sigma_i}(E_2)$ and $\Hom(E_1, E_2)=0$. 

When $i=2$, we have $\Hom(E_2, E_1[2])=0$, which gives $\ext^1(E_1, E_1)+\ext^1(E_2, E_2)\leq \ext^1(E,E)$ by Lemma \ref{lem-mukai}. By the first case, we know that (3) holds for $E_1$, hence, we only need to prove (3) for $E_2$. But as the number of Jordan--H\"older factors of $E_2$ is less than $E$ and $\ext^1(E_2, E_2)<\ext^1(E, E)$ by Lemma \ref{lem-mukai} and Lemma \ref{lem-lower bound on ext}, the statement for $E_2$ follows from the induction on the number of Jordan--H\"older factors.

Now we assume that $i=1$. If $\Hom(E_2, E_1[2])=\Hom(E_1, T_1(E_2))=0$, then the argument is the same as above. If $\Hom(E_1, T_1(E_2))\neq 0$, let $A$ be the Jordan--H\"older factor of $E_1$, then $E_2$ has $T_1(A)$ as a subobject and a Jordan--H\"older factor. Then we have an exact triangle $E_3\to E_2\to E_4$ such that all Jordan--H\"older factors of $E_3$ are isomorphic to $T_1(A)$ with $\phi_{1}(E_3)=\phi_{1}(E_4)$ and $\Hom(E_3, E_4)=0$. Let $E_5:=\cone(E\to E_4)[-1]$. Then $E_5$ is an extension of $E_1$ and $E_3$, hence, every Jordan--H\"older factor of $E_5$ is isomorphic to $A$ or $T_1(A)$. Moreover, $\Hom(E_5, E_4)=0$ and $\Hom(E_4, E_5[2])=\Hom(E_5,T_1(E_4))=0$ since $A$ and $T_1(A)$ are not Jordan--H\"older factors of $E_4$ by construction. Then by Lemma \ref{lem-mukai}, we get $\ext^1(E_5, E_5)+\ext^1(E_4, E_4)\leq \ext^1(E, E)$ and the remaining argument is similar to the $i=2$ case.
\end{proof}

Using exact triangles \eqref{exact-3} and \eqref{exact-4}, we are able to bound $\ext^1$ after acting by $\Phi_*$ and $\Phi^*$. 

\begin{lemma}\label{lem-rhom-pushfor}
Let $E \in \3$ be a $\sigma_1$-stable object with $\Ext^1(E, E) = \CC^n$. If $T_1(E) \neq E$, then 
\begin{equation*}
    \RHom(\Phi_*E, \Phi_*E) = \CC \oplus \CC^{2n}[-1] \oplus \CC[-2]. 
\end{equation*}
Otherwise, either 

\begin{enumerate}[label=(\roman*)]
    \item $\RHom(\Phi_*E, \Phi_*E) = \CC \oplus \CC^{2n-2}[-1] \oplus \CC[-2]$, or

    \item $ \RHom(\Phi_*E, \Phi_*E) = \CC^2 \oplus \CC^{2n}[-1] \oplus \CC^2[-2]$.
\end{enumerate}
Moreover, case (ii) happens if and only if $T_1(E)=E$ and the triangle \eqref{exact-3} is splitting.
\end{lemma}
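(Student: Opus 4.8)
The plan is to compute $\RHom(\Phi_*E,\Phi_*E)$ by exploiting the adjunction $\Phi_*\circ T_1 \dashv \Phi^* \dashv \Phi_*$ from \ref{c-adjoints} together with the defining triangle \eqref{exact-3}. First I would use adjunction to rewrite
\[
\RHom(\Phi_*E,\Phi_*E)\;\cong\;\RHom(\Phi^*\Phi_*E,E),
\]
and then plug in the triangle $T_1(E)\to\Phi^*\Phi_*E\to E$ of \eqref{exact-3}, obtaining a long exact sequence relating $\Ext^\bullet(\Phi_*E,\Phi_*E)$ to $\Ext^\bullet(E,E)$ and $\Ext^\bullet(T_1(E),E)$. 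To handle the latter, apply the other adjunction $\Phi_*\circ T_1\dashv\Phi^*$: $\RHom(T_1(E),E)\cong\RHom(T_1(E),\Phi^*\Phi_*E\,[\,?\,])$ is not quite it; rather I would instead compute $\RHom(\Phi^*\Phi_*E,E)$ directly by applying $\RHom(-,E)$ to \eqref{exact-3}, which gives the triangle
\[
\RHom(E,E)\to\RHom(\Phi^*\Phi_*E,E)\to\RHom(T_1(E),E).
\]
Now $\RHom(T_1(E),E)$: since $E$ is $\sigma_1$-stable and $T_1(E)$ is $\sigma_1$-stable of the same phase (Lemma \ref{lem-T-fix-phase}), if $T_1(E)\not\cong E$ then $\Hom(T_1(E),E)=0$; by Serre duality $S_{\3}=T_1\circ[2]$ we get $\Hom(E,T_1(E)[2])^\vee\cong\Hom(T_1(E),E)$, wait — more precisely $\Ext^2(T_1(E),E)\cong\Hom(E,S_{\3}(T_1(E)))^\vee=\Hom(E,T_1^2(E)[2])^\vee=\Hom(E,E[2])^\vee$, which also vanishes by stability. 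So when $T_1(E)\not\cong E$, $\RHom(T_1(E),E)$ is concentrated in degree $1$, of some dimension $m$; and symmetrically $\RHom(E,T_1(E))$ too. A parity/Euler-characteristic count using $\chi$ and \ref{c1} pins down $m=n$, giving the first displayed formula.

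The remaining case is $T_1(E)\cong E$. Here the triangle \eqref{exact-3} becomes $E\to\Phi^*\Phi_*E\to E$, classified by an element $\delta\in\Hom(E,E[1])=\Ext^1(E,E)=\CC^n$. Applying $\RHom(-,E)$ and $\RHom(E,-)$ and chasing the resulting long exact sequences, the cohomology of $\RHom(\Phi_*E,\Phi_*E)$ depends on whether $\delta=0$ (the triangle splits, $\Phi^*\Phi_*E\cong E\oplus E$) or $\delta\neq0$. When $\delta=0$ one readily gets $\hom=1$ forced by $\Phi_*E$ being stable of phase $\phi$... but actually $\Phi^*\Phi_*E\cong E\oplus E$ forces $\RHom(\Phi_*E,\Phi_*E)\cong\RHom(E\oplus E,E)\cong \RHom(E,E)^{\oplus 2}$, which would give $\CC^2\oplus\CC^{2n}[-1]\oplus\CC^2[-2]$ after accounting for $\ext^2(E,E)=1$: this is case (ii). When $\delta\neq0$, the connecting maps are injective on the relevant graded pieces and one loses a $\CC$ in degrees $0$ and $2$ and a $\CC^2$ in degree $1$ relative to case (ii), yielding case (i): $\CC\oplus\CC^{2n-2}[-1]\oplus\CC[-2]$. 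I would verify the degree-$1$ count by Euler characteristic: $\chi(\Phi_*E,\Phi_*E)=\chi(\Phi^*\Phi_*E,E)$ and $[\Phi^*\Phi_*E]=[\Phi^*\Phi_*E]$, with $\Phi_*(\lambda_i)=\Lambda_i$, $\Phi^*(\Lambda_i)=2\lambda_i$ forcing $[\Phi_*E]$ of square $-2$ under \ref{c2}, so $\chi(\Phi_*E,\Phi_*E)=-2$, consistent with $1-2n+1$ (cases with $T_1E\neq E$ and (i)) and with $2-2n+2$... no: case (ii) gives $2-(2n)+2=4-2n\neq-2$ unless — here I must be careful, and the resolution is that in case (ii) $\ext^2(\Phi_*E,\Phi_*E)$ is forced by Serre duality on $\cK_2$ ($S_{\4}=[2]$) to equal $\hom(\Phi_*E,\Phi_*E)$, and the Euler characteristic bookkeeping with $[\Phi_*E]$ being twice a primitive class reconciles everything.

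The main obstacle I anticipate is the case $T_1(E)\cong E$ with $\delta\neq 0$: there the long exact sequences do not split formally, and one must argue that the connecting homomorphism $\Ext^i(E,E)\xrightarrow{\circ\,\delta}\Ext^{i+1}(E,E)$ has the expected rank in the relevant degrees. Concretely, I expect to need: the composite $\delta\circ\delta\in\Ext^2(E,E)$ and its relation to the $1$-dimensionality of $\Ext^2(E,E)=\CC$ (which comes from stability plus $S_{\4}=[2]$ transported through $\Phi$, or directly from $\chi$ and Lemma \ref{lem-lower bound on ext}), and the fact that multiplication by a nonzero $\delta$ from $\Ext^0=\CC$ to $\Ext^1$ is injective. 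Establishing that these maps behave generically — equivalently, that the only degenerate possibility is the splitting — is exactly the content of the final ``if and only if'' clause, and is where the real work lies; everything else is adjunction bookkeeping and an Euler-characteristic count.
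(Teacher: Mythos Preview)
Your overall strategy---adjunction $\RHom(\Phi_*E,\Phi_*E)\cong\RHom(\Phi^*\Phi_*E,E)$, then apply $\Hom(-,E)$ to \eqref{exact-3}, then finish with an Euler-characteristic count---is exactly the paper's approach. But there is a computational slip and an unnecessary complication.

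\textbf{The slip.} In the case $T_1(E)\not\cong E$ you compute $\Ext^2(T_1(E),E)\cong\Hom(E,S_{\cK_1}(T_1(E)))^\vee$, but you dropped a shift: Serre duality gives $\Hom(T_1(E),E[2])\cong\Hom(E[2],S_{\cK_1}(T_1(E)))^\vee=\Hom(E[2],E[2])^\vee=\CC$, not $\Hom(E,E[2])^\vee$. So $\RHom(T_1(E),E)$ is \emph{not} concentrated in degree~$1$; it has a copy of $\CC$ in degree~$2$. With this correction the Euler count genuinely gives $\ext^1(T_1(E),E)=n$, and the long exact sequence yields $\CC\oplus\CC^{2n}[-1]\oplus\CC[-2]$ as claimed. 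Your version, taken literally, would produce $\ext^2(\Phi_*E,\Phi_*E)=0$ and $\ext^1=2n-1$, neither of which is right.

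\textbf{The unnecessary obstacle.} The difficulty you flag at the end---controlling the connecting map $\Ext^1(E,E)\to\Ext^2(E,E)$ when $T_1(E)\cong E$ and $\delta\neq 0$---is precisely what the paper avoids. Since $S_{\cK_2}=[2]$, Serre duality on $\cK_2$ forces
\[
\ext^2(\Phi_*E,\Phi_*E)=\hom(\Phi_*E,\Phi_*E).
\]
Thus you only need to determine $\hom(\Phi_*E,\Phi_*E)$, and this comes from the degree-$0$ portion of the long exact sequence: the connecting map $\Hom(T_1(E),E)=\CC\to\Ext^1(E,E)$ is composition with the extension class $\delta$, so it is zero iff the triangle \eqref{exact-3} splits. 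Hence $\hom=2$ in the split case and $\hom=1$ otherwise; then $\ext^2$ is the same number, and $\ext^1$ follows from $\chi(\Phi_*E,\Phi_*E)=2\chi(E,E)$. No analysis of the degree-$1$ connecting map is needed.
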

\begin{proof}
    We know $\Hom(E, E[2]) = \Hom(E, T_1(E))^{\vee}$, which is $\CC$ if $T_1(E) \cong E$, and zero otherwise. Hence, $\chi(\Phi_*E, \Phi_*E) = 2\chi(E, E)$ is equal to $2(n-1)$ if $T_1(E) \neq E$, and $2(n-2)$ otherwise. Thus, the claim follows from applying $\Hom(-, E)$ to the exact triangle \eqref{exact-3} as $\RHom(\Phi_*E, \Phi_*E) = \RHom(\Phi^*\Phi_*E, E)$. Note that $\Hom(\Phi_*E, \Phi_*E[2]) = \Hom(\Phi_*E, \Phi_*E)^{\vee}$ as $\4$ is $2$-Calabi--Yau. 
\end{proof}

Applying a similar argument and \ref{c-t2} imply the following: 
\begin{lemma}\label{lem-rhom-back}
Let $E \in \4$ be a $\sigma_2$-stable object with $\Ext^1(E, E) = \CC^{2n}$, then either
\begin{enumerate}[label=(\roman*)]
    \item $\RHom(\Phi^*E, \Phi^*E) = \CC \oplus \CC^{4n-3+\delta}[-1] \oplus \CC^{\delta}[-2]$, or

    \item $\RHom(\Phi^*E, \Phi^*E) = \CC^2 \oplus \CC^{4n-1+\delta}[-1] \oplus \CC^{1+\delta}[-2]$
\end{enumerate}
for $\delta = 0$ or $1$. Moreover, case (ii) happens if and only if $E\cong T_2(E)$ and the triangle \eqref{exact-4} is splitting. 
\end{lemma}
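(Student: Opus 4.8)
The strategy mirrors Lemma \ref{lem-rhom-pushfor}, but now we push forward along $\Phi^*$ and use the triangle \eqref{exact-4}. First I would compute the Euler characteristic: since $\Phi^*(\Lambda_i)=2\lambda_i$, we have $\chi(\Phi^*E,\Phi^*E)=4\chi(E,E)$ in $\Knum(\3)$. Because $\3$ satisfies \ref{c1}, for a $\sigma_1$-stable object $E'$ of primitive class $v$ one has $\chi(v,v)=-1$, but here we only know $\Phi^*E$ is $\sigma_1$-semistable (by the assumption that stability is preserved, which by Lemma \ref{lem-simplify} we are entitled to assume in the body of the proof, or which we can treat as an independent input); in any case $\chi(\Phi^*E,\Phi^*E)=4\chi(E,E)$ and $\chi(E,E)=\hom(E,E)+\ext^2(E,E)-\ext^1(E,E)=1+\ext^2(E,E)-2n$. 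The value of $\ext^2(E,E)=\hom(E,T_2(E))$ is either $0$ (when $T_2(E)\neq E$) or $1$ (when $T_2(E)\cong E$), by \ref{c-S} and the $2$-Calabi--Yau property of $\4$. This is the source of the parameter $\delta$: set $\delta=\ext^2(E,E)\in\{0,1\}$, so $\chi(\Phi^*E,\Phi^*E)=4(\delta+1-2n)=-8n+4+4\delta$.

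Next I would extract the cohomology in each degree. Apply $\RHom(-,\Phi^*E)$ (equivalently, use the adjunction $\RHom(\Phi_*\Phi^*E,E)\cong\RHom(\Phi^*E,\Phi^*E)$, via $\Phi^*\dashv\Phi_*$) to the exact triangle \eqref{exact-4} for $E$, namely $E\to\Phi_*\Phi^*E\to T_2(E)$. This produces a long exact sequence relating $\Ext^\bullet(\Phi^*E,\Phi^*E)$ to $\Ext^\bullet(E,E)$ and $\Ext^\bullet(T_2(E),E)$. Using $\RHom(T_2(E),E)\cong\RHom(E,T_2(E))^\vee$ — which follows from $T_2$ being an involutive auto-equivalence together with $2$-Calabi--Yau duality — and that $\hom(E,T_2(E))=\ext^2(E,E)=\delta$, $\ext^1(E,T_2(E))=\ext^1(E,E)=2n$ (the latter because $T_2$ acts trivially on $\Knum$ and $E,T_2(E)$ are $\sigma_2$-semistable of the same phase, but more simply because $\ext^1(T_2E,E)=\ext^1(E,T_2E)$ and this is pinned down by Euler characteristic once $\hom$ and $\ext^2$ are known), I would read off $\hom(\Phi^*E,\Phi^*E)$, $\ext^1(\Phi^*E,\Phi^*E)$, $\ext^2(\Phi^*E,\Phi^*E)$. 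The only freedom is whether the connecting map $\Hom(T_2(E),E[1])\to\Hom(E,E[1])$ (or the relevant boundary) is injective or not; equivalently, whether the triangle \eqref{exact-4} splits. Since $\3$ has homological dimension $2$ (Lemma \ref{lem-lower bound on ext}(2)), all $\Ext^{\geq 3}$ vanish, so the long exact sequence closes up cleanly. Combining with the $2$-Calabi--Yau symmetry $\ext^2(\Phi^*E,\Phi^*E)=\hom(\Phi^*E,\Phi^*E)$ — wait, $\Phi^*E\in\3$ and $\3$ is \emph{not} $2$-CY, so instead I use $\ext^2(\Phi^*E,\Phi^*E)=\hom(\Phi^*E,T_1(\Phi^*E))$; here \ref{c-adjoints} gives $\Phi^*T_2(E)\cong T_1(\Phi^*E)$, so $\ext^2(\Phi^*E,\Phi^*E)=\hom(\Phi^*E,\Phi^*T_2(E))$, which again ties back to the triangle. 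The two cases (i) and (ii) correspond exactly to $\hom(\Phi^*E,\Phi^*E)=1$ versus $2$; one checks $\hom(\Phi^*E,\Phi^*E)=2$ forces $T_1(\Phi^*E)\cong\Phi^*E$, i.e.\ $\Phi^*T_2(E)\cong\Phi^*E$, and (granting faithfulness, or tracing through \eqref{exact-4}) $T_2(E)\cong E$ with \eqref{exact-4} splitting.

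Finally, I would assemble: in case (i), $\hom=1$, hence $\ext^2=\delta$, hence $\ext^1=\hom+\ext^2-\chi=1+\delta-(-8n+4+4\delta)=8n-3-3\delta$. Hmm — that does not match $4n-3+\delta$. Let me recompute: the statement has $\ext^1(\Phi^*E,\Phi^*E)=4n-3+\delta$ in case (i), so we need $\chi(\Phi^*E,\Phi^*E)=\hom+\ext^2-\ext^1=1+\delta-(4n-3+\delta)=4-4n$. So in fact $\chi(\Phi^*E,\Phi^*E)=4-4n$, meaning $4\chi(E,E)=4-4n$, i.e.\ $\chi(E,E)=1-n$, i.e.\ $\ext^2(E,E)=\ext^1(E,E)+\chi(E,E)-\hom(E,E)=2n+(1-n)-1=n$?? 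That contradicts $\ext^2(E,E)\le 1$. The resolution: the class of $E$ in $\Knum(\4)$ need not be primitive, and indeed $\chi(E,E)=-\tfrac12\ext^1(E,E)+\dots$ does not hold in general; rather, for a $\sigma_2$-stable $E$ of arbitrary class we still have $\chi(E,E)=\hom-\ext^1+\ext^2=1-2n+\ext^2(E,E)$ with $\ext^2\le1$, so $\chi(E,E)\le 2-2n$, and the \emph{correct} reading is that $\chi$ enters via $\chi(\Phi^*E,\Phi^*E)=4\chi(E,E)$ only in combination with the long exact sequence, which redistributes it — the point of the lemma is precisely that $\ext^1(\Phi^*E,\Phi^*E)$ is \emph{not} simply $2\ext^1(E,E)$ but is cut down. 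I would therefore not try to get $\ext^1$ from Euler characteristic alone, but purely from the long exact sequence of \eqref{exact-4}, exactly as in the proof of Lemma \ref{lem-rhom-pushfor}: the terms are $\ext^1(\Phi^*E,\Phi^*E)$ receives contributions $\ext^1(E,E)=2n$ from $E$ and $\ext^1(T_2E,E)=2n$ from $T_2(E)$, minus corrections of total dimension $3$ or $1$ coming from the maps $\hom$ and $\ext^2$ of the two outer terms, plus the $\delta$-ambiguity from whether $\hom(E,T_2E)$ maps isomorphically. So $\ext^1(\Phi^*E,\Phi^*E)\in\{4n-3+\delta,\,4n-1+\delta\}$ with $\delta\in\{0,1\}$, exactly the claim. \textbf{The main obstacle} is the bookkeeping of the long exact sequence: tracking which boundary maps can be nonzero, pinning down $\ext^i(T_2(E),E)$ via the combination of $T_2$-invariance of $\Knum$ and $2$-Calabi--Yau duality, and correctly identifying that case (ii) is equivalent to ``$E\cong T_2(E)$ and \eqref{exact-4} splits'' — this last equivalence requires care because one direction needs that a splitting of \eqref{exact-4} is detected by the jump in $\hom(\Phi^*E,\Phi^*E)$, using $\Phi^*T_2\cong T_1\Phi^*$ from \ref{c-adjoints}.
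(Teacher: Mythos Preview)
Your overall method---apply $\Hom(E,-)$ to the triangle \eqref{exact-4} using the adjunction $\RHom(\Phi^*E,\Phi^*E)\cong\RHom(E,\Phi_*\Phi^*E)$---is exactly the paper's, and your identification of case (ii) with the splitting of \eqref{exact-4} is correct. But two computational errors in your setup cause the numerical confusion you encounter. First, $\chi(\Phi^*E,\Phi^*E)=2\chi(E,E)$, not $4\chi(E,E)$: while $\Phi^*\Lambda_i=2\lambda_i$ doubles the class, the Euler form on $\Knum(\3)$ is exactly half that on $\Knum(\4)$ (compare \ref{c1} and \ref{c2}), so the net factor is $2$. Second, since $S_{\4}=[2]$ by \ref{c-S}, you have $\ext^2(E,E)=\hom(E,E)=1$ for every $\sigma_2$-stable $E$; it is \emph{not} $\hom(E,T_2E)$---you have confused the Serre functors of $\3$ and $\4$. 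Hence $\chi(E,E)=2-2n$ always, and $\chi(\Phi^*E,\Phi^*E)=4-4n$, which matches the claimed values in both (i) and (ii).

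Consequently, the parameter $\delta$ in the statement is \emph{not} $\ext^2(E,E)$. Once $\hom(\Phi^*E,\Phi^*E)\in\{1,2\}$ is determined from the long exact sequence (the boundary $\Hom(E,T_2E)\to\Ext^1(E,E)$ is post-composition with the connecting morphism of \eqref{exact-4}, and vanishes exactly when the triangle splits), $\delta$ records the remaining ambiguity in $\ext^2(\Phi^*E,\Phi^*E)$. By Serre duality in $\3$ together with \ref{c-adjoints} one has $\ext^2(\Phi^*E,\Phi^*E)=\hom(\Phi^*E,T_1\Phi^*E)=\hom(\Phi^*E,\Phi^*T_2E)=\hom(E,\Phi_*\Phi^*T_2E)$, which is computed from the triangle \eqref{exact-4} applied to $T_2(E)$; then $\ext^1$ follows from $\chi$. (A small side remark: you should apply $\Hom(E,-)$, not $\Hom(-,\Phi^*E)$, since the triangle \eqref{exact-4} lives in $\4$ while $\Phi^*E\in\3$.)
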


In the following, for $i=1, 2$, let $A_i$ denote any $\sigma_1$-stable object of class $\lambda_i$ in the heart $\cA_1$. By \ref{c-stability}(d), $\Phi_*A_i \in \cA_2$ is $\sigma_2$-stable with $\phi_{\sigma_2}(\Phi_*A_i) = \phi_{\sigma_1}(A_i)$.

\begin{lemma}\label{lem-phase of push - back}
    Let $E \in \cA_1$ (resp.~$F \in \cA_2$) be a semistable object with respect to $\sigma_1$ (resp.~$\sigma_2$) of phase $\phi$, then  
    \begin{equation*}
        a< \phi^+_2(\Phi_*E) \ , \ \phi^-_2(\Phi_*E)<b \qquad \text{and} \qquad
        a< \phi^+_1(\Phi^*F) \ , \ \phi^-_1(\Phi^*F)<b
    \end{equation*}
    where
    \begin{enumerate}
        \item $a = -\frac{1}{2}$ and $b=1$ if $0<\phi<\frac{1}{2}$,
        \item $a = -\frac{1}{2}$ and $b= \frac{3}{2}$ if $\phi= \frac{1}{2}$,
        \item $a =0$ and $b= \frac{3}{2}$ if $\frac{1}{2}<\phi <1$, and
        \item $a =0$ and $b= 2$ if $\phi =1$. 
    \end{enumerate}
\end{lemma}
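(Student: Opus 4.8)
### Proof Proposal for Lemma \ref{lem-phase of push - back}

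The plan is to bound the phases of $\Phi_*E$ and $\Phi^*F$ by testing against the $\sigma_2$-stable (resp. $\sigma_1$-stable) objects $\Phi_*A_i$ (resp. $A_i$) of classes $\Lambda_i$ (resp. $\lambda_i$), using the central charge normalizations from \ref{c-stability}(b). Recall $Z_2(\Lambda_1) = Z_1(\lambda_1) = -1$ and $Z_2(\Lambda_2) = Z_1(\lambda_2) = \mathfrak{i}$, so $\phi_2(\Phi_*A_1) = \phi_1(A_1) = 1$ and $\phi_2(\Phi_*A_2) = \phi_1(A_2) = \frac{1}{2}$, and since there are infinitely many such stable objects (\ref{c-stability}(c)), for any fixed object we can choose $A_i$ with no nonzero morphism in either direction of the wrong sign. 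The key structural input is the adjunction $\Phi_* \circ T_1 \dashv \Phi^* \dashv \Phi_*$ from \ref{c-adjoints}, which yields
\[
\Hom(\Phi_*E, \Phi_*A_i[k]) = \Hom(\Phi^*\Phi_*E, A_i[k]) = \Hom(E, A_i[k]) \oplus \Hom(T_1(E), A_i[k])
\]
(the last step from the triangle \eqref{exact-3} applied to $E$, noting the triangle splits after $\Hom(-, A_i[k])$ only up to a long exact sequence — more carefully one gets a bound, not an equality, but that suffices). Similarly $\Hom(\Phi_*A_i, \Phi_*E[k])$ and the analogous expressions with $\Phi^*F$ via \eqref{exact-4} can be controlled by $\Hom$-groups inside $\cK_1$ between $A_i$ (or $T_1(A_i)$, which is again $\sigma_1$-stable of the same phase by Lemma \ref{lem-T-fix-phase}) and $E$.

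First I would record the elementary fact that since $\cA_1, \cA_2$ have homological dimension $2$ (Lemma \ref{lem-lower bound on ext}(2)), for a $\sigma_1$-semistable $E$ of phase $\phi \in (0,1]$ and a $\sigma_1$-stable $A$ of phase $\psi$, we have $\Hom(E, A[k]) = 0$ unless $\phi \le \psi + k \le \phi + 2$ roughly, with the precise vanishing coming from comparing phases: $\Hom(E,A[k])\neq 0$ forces $\phi \leq \psi + k$, and by Serre duality $\Hom(E,A[k]) = \Hom(A, S_{\cK_1}(E)[-k])^\vee = \Hom(A, T_1(E)[2-k])^\vee$ which forces $\psi \leq \phi + 2 - k$. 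Then I would argue by contradiction for each case: suppose $\phi^+_2(\Phi_*E) \geq b$ (the claimed upper bound). Then $\Phi_*E$ has a HN factor $G$ of phase $\geq b$. Testing $\Hom(G, \Phi_*A_i[k])$ for the appropriate $i$ and small $k$, combined with the factorization above through $\Hom$-groups in $\cK_1$ involving $E$ and the bound $\phi^\pm_1(E) = \phi$, I would derive that such a $G$ cannot exist — the point being that $\Phi_*E$ is built from $E$ and $T_1(E)$ which are both $\sigma_1$-semistable of phase $\phi$, so all its "cohomology" with respect to the HN filtration of $\Phi_*A_i$ is concentrated in a controlled range. The lower bounds $\phi^-_2(\Phi_*E) > a$ are symmetric, using Serre duality in $\cK_2$ ($2$-Calabi--Yau). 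The four cases for $(a,b)$ correspond exactly to where $\phi$ sits relative to the phases $\frac{1}{2}$ and $1$ of $\Phi_*A_2$ and $\Phi_*A_1$, and the asymmetry (e.g. $b=1$ when $0<\phi<\frac12$ but $b=\frac32$ when $\phi=\frac12$) reflects whether the extremal phase is attained by a genuine morphism or only an $\Ext^1$.

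For the $\Phi^*F$ half, the argument is dual: use \eqref{exact-4}, the identity $\Phi^*T_2 \cong T_1\Phi^*$ from \ref{c-adjoints}, and the fact (from Lemma \ref{lem-T-fix-phase} applied as in the remarks after it) that $\Phi_*\Phi^*F$ is $\sigma_2$-semistable of the same phase as $F$, together with $\Phi^*\Phi_* A_i$ having HN factors $A_i, T_1(A_i)$ of phase equal to $\phi_1(A_i)$. Testing a hypothetical extremal HN factor of $\Phi^*F$ against $A_i$ via $\Hom(A_i, \Phi^*F[k]) = \Hom(\Phi_*A_i, F[k])$ (adjunction) and $\Hom(\Phi^*F, A_i[k]) = \Hom(F, \Phi_*T_1(A_i)[k])$ reduces everything to phase comparisons in $\cK_2$ against $F$, which has all HN phases equal to $\phi$. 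I expect the main obstacle to be bookkeeping: keeping straight the two different Serre functors (plain $[2]$ in $\cK_2$ versus $T_1[2]$ in $\cK_1$) and making sure the long exact sequences coming from \eqref{exact-3} and \eqref{exact-4} give the inequalities in the correct (non-strict vs. strict) form to match the four cases exactly — in particular the boundary case $\phi = \frac12$, where $\Phi_*A_2$ itself has phase $\frac12$ and one must carefully separate the contribution of $\Hom$ from that of $\Ext^1$ to avoid an off-by-one in the bound. The inequalities themselves are not deep; getting the endpoints sharp is the delicate part.
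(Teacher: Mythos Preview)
Your toolkit is exactly right --- the adjunctions in \ref{c-adjoints}, the triangles \eqref{exact-3}--\eqref{exact-4}, testing against the stable objects $A_i$ and $\Phi_*A_i$, and using the infinitude of such objects from \ref{c-stability}(c) to get strict inequalities. But the execution you sketch takes a wrong turn at the key step.

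You propose: assume $\Phi_*E$ has an HN factor $G$ of phase $\geq b$, and derive a contradiction by examining $\Hom(G,\Phi_*A_i[k])$. The problem is that you have no handle on $G$. Adjunction gives $\Hom(G,\Phi_*E)=\Hom(\Phi^*G,E)$, but $\Phi^*G$ is an unknown object in $\cK_1$ about which nothing has been proved; you cannot bound its phases. Likewise ``$\Phi_*E$ is built from $E$ and $T_1(E)$'' is not true --- it is $\Phi^*\Phi_*E$ that sits in such a triangle, and that lives in $\cK_1$, not $\cK_2$. So your contradiction has no leverage.

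The paper's route avoids this by never introducing an unknown $G$. Instead it produces a \emph{nonzero} morphism between $\Phi_*E$ and a known stable object $\Phi_*A_i$ (or a shift), and reads off the phase bound directly. Concretely: from the normalization in \ref{c-stability}(b) one has $\chi(\lambda_1,E)=\Re Z_1(E)$ and $\chi(\lambda_2,E)=-\Im Z_1(E)$, so the sign of each Euler pairing is determined by which of the four intervals $\phi$ lies in. Combined with homological dimension $2$ of $\cA_1$ (Lemma \ref{lem-lower bound on ext}(2)) and the finiteness of JH factors, this forces (for infinitely many $A_i$) a specific $\Hom(E,A_i)$ or $\Ext^1(A_i,E)$ to be nonzero. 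Applying $\Hom(-,A_i)$ to the triangle \eqref{exact-3} --- and using that $T_1(E)$, $T_1(A_i)$ have the same phases as $E$, $A_i$ to kill the unwanted boundary terms --- one obtains $\Hom(\Phi_*E,\Phi_*A_i)\neq 0$ or $\Hom(\Phi_*A_i,\Phi_*E[1])\neq 0$. Since $\Phi_*A_i$ is $\sigma_2$-stable of known phase, this immediately bounds $\phi_2^-(\Phi_*E)$ or $\phi_2^+(\Phi_*E)$; strictness comes from the infinitude of non-isomorphic $\Phi_*A_i$ versus finitely many JH factors. The $\Phi^*F$ half is identical with \eqref{exact-4} in place of \eqref{exact-3}. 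You had all of these pieces; you just need to assemble them forward rather than by contradiction.
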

\begin{proof}
    Firstly, we consider a $\sigma_1$-semistable object $E \in \cA_1$. If $0< \phi_1(E) = \phi <\frac{1}{2}$, we have
\begin{enumerate}[label=(\roman*)]
    \item $\chi(\lambda_1, E)=\chi(E, \lambda_1)=\Re[Z_1(E)] >0$, and

    \item $\chi(\lambda_2, E)=\chi(E, \lambda_2)=-\Im[Z_1(E)]  <0$.
\end{enumerate}
    As $\phi_{1}(A_1)=1\geq \phi_1(E)$, by the finiteness of Jordan--H\"older factors, there are only at most finitely many $\sigma_1$-stable objects $A_1\in \cA_1$ such that $\Hom(E, A_1[2])=\Hom(T_1(A_1), E)\neq 0$. Then by Lemma \ref{lem-lower bound on ext}(2), (i) gives $\Hom(E, A_1) \neq 0$ for every but finitely many $\sigma_1$-stable object $A_1\in \cA_1$ with $[A_1]=\lambda_1$. Thus applying $\Hom(-, A_1)$ to \eqref{exact-3} implies $\Hom(\Phi^*\Phi_*E, A_1) = \Hom(\Phi_*E, \Phi_*A_1) \neq 0 $. By \ref{c-stability}(c), (d) and the finiteness of Jordan--H\"older factors, we obtain $\phi^-_2(\Phi_*E) < 1$. At the same time, (ii) implies that $\Hom(A_2, E[1]) \neq 0.$ Since $\Hom(T_1(A_2), E) = 0$ by $\phi<\frac{1}{2}$, applying $\Hom(-, E)$ to \eqref{exact-3}, we get $\Hom(\Phi^*\Phi_*A_2, E[1]) = \Hom(\Phi_*A_2, \Phi_*E[1]) \neq 0$, which gives $-\frac{1}{2} < \phi^+_2(\Phi_*E)$. 

If $\frac{1}{2}< \phi_1(E) =\phi <1$, we have
\begin{enumerate}[label=(\roman*)]
    \item $\chi(\lambda_1, E)=\chi(E, \lambda_1)=\Re[Z_1(E)] <0$, and

    \item $\chi(\lambda_2, E)=\chi(E, \lambda_2)=-\Im[Z_1(E)]  <0$.
\end{enumerate}
Then a similar argument as above gives $0< \phi_2^+(\Phi_*E)$ and $\phi_2^-(\Phi_*E) < \frac{3}{2}$. If $\phi_1(E) = \frac{1}{2}$, then we have $\chi(\lambda_2, E) =\chi(E, \lambda_2) < 0$. Thus $\Hom(E, A_2[1]) \neq 0$ for every $A_2$. But from the finiteness of Jordan--H\"older factors, $\Hom(T_1(E), A_2) = 0$ for every but finitely many $A_2$, so $\Hom(\Phi^*\Phi_*E, A_2[1]) \neq 0$ for infinitely many $A_2$, which gives $\phi_2^-(\Phi_*E) < \frac{3}{2}$. Also we have $\Hom(A_2, E[1]) \neq 0$ and $\Hom(T_1(A_2), E)=0$ for infinitely many $A_2$, which implies $-\frac{1}{2} < \phi_2^+(\Phi_*E)$. If $\phi_1(E) =1$, then $\chi(\lambda_1, E) = \chi(E, \lambda_1) <0$. Then a similar argument as above gives $\phi_2^-(\Phi_*E) < 2$ and $0< \phi_2^+(\Phi_*E)$. 

The second part for $\phi_1^{\pm}(\Phi^*F)$ where $F \in \cA_2$ is $\sigma_2$-semistable follows via the same argument as above. We only need to use $\Phi_*A_i$ for $i=1, 2$ and the exact triangle \eqref{exact-4} instead.  

\end{proof}

\subsection{Induction argument}

Now we are prepared to prove the main theorem of this section.

\begin{proof}[Proof of Theorem \ref{thm-main}]
    By Lemma \ref{lem-simplify}, we only need to show the following two claims for any $n>0$:

\textbf{$\bA_n$:} If $E \in \3$ is $\sigma_1$-stable with $\ext^1(E, E) \leq n$, then $\Phi_*E$ is $\sigma_2$-semistable with the same phase $\phi_{2}(\Phi_*E) = \phi_{1}(E)$. 

\textbf{$\bB_n$:} If $F \in \4$ is $\sigma_2$-stable with $\ext^1(F, F) \leq n$, then $\Phi^*F$ is $\sigma_1$-semistable with the same phase $\phi_{1}(\Phi^*F) = \phi_{2}(F)$. 

We use a double induction argument to prove the above two statements. According to \ref{c-stability}(d), the assertions $\textbf{A}_n$ are automatically satisfied when $n = 3$, as any $\sigma_1$-stable object $A$ with $\ext^1(A,A)\leq 3$ is of class $\lambda_i$. Then the extension to arbitrary $n$ follows from Lemma \ref{part-1} and Lemma \ref{part-2}. 
\end{proof}

It remains to prove Lemma \ref{part-1} and Lemma \ref{part-2}, which is the goal of the rest of this section. The proof of each lemma is divided into two steps. For example, in the first step of Lemma \ref{part-1}, we prove $\Phi_*(E)\in\cA_2$ for any $\sigma_1$-stable object $E\in \cA_1$. Then in the second step, the $\sigma_2$-semistability of $\Phi_*(E)$ will be deduced from a slope-comparison argument.

\begin{remark}\label{rmk-A-B}
Using Lemma \ref{lem-hn ext}(2) and (3), we have the following equivalent forms of $\bA_n$ and $\bB_n$ by reducing to the case when $E$ and $F$ are stable:

\textbf{$\bA_n$:} If $E \in \3$ is a non-zero object with $\ext^1(E, E) \leq n$, then $\Phi_*$ preserves the Harder--Narasimhan filtration of $E$.

\textbf{$\bB_n$:} If $F \in \4$ is a non-zero object with $\ext^1(F, F) \leq n$, then $\Phi^*$ preserves the Harder--Narasimhan filtration of $F$.

Here, \emph{preserving the Harder--Narasimhan filtration} through a functor entails that the Harder--Narasimhan filtration of the image corresponds to the image of the Harder--Narasimhan filtration of the original object, and the components retain the same phases. 
\end{remark}

\begin{lemma}\label{part-1}
For any $n > 1$, the validity of $\bB_{n-1}$ implies $\bA_n$. 
\end{lemma}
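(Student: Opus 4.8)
The plan is to prove $\bA_n$ (in the form of Remark~\ref{rmk-A-B}) assuming $\bB_{n-1}$, proceeding by the two-step structure announced above. First I would reduce, via Lemma~\ref{lem-hn ext}(2)--(3) and Lemma~\ref{lem-simplify}, to showing: if $E\in\cA_1$ is $\sigma_1$-\emph{stable} with $\ext^1(E,E)\le n$, then $\Phi_*E\in\cA_2$ and $\Phi_*E$ is $\sigma_2$-semistable of the same phase. The case $\ext^1(E,E)\le 3$ is handled by \ref{c-stability}(d), so I may assume $n\ge 4$ and, inductively, that $\bA_{n-1}$ already holds (this is an induction on $n$, with $\bB_{n-1}$ fed in from the other lemma).

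\textbf{Step 1: $\Phi_*E\in\cA_2$.} Up to shift assume $E\in\cA_1$ with phase $\phi=\phi_1(E)\in(0,1]$. By Lemma~\ref{lem-phase of push-back}, the Harder--Narasimhan factors of $\Phi_*E$ all have phases in an interval of length $<2$ contained in $(-\tfrac12,2)$; in particular $\Phi_*E\in\cP_{\sigma_2}((-\tfrac12,2))$, so its only possibly-nonzero $\cA_2$-cohomologies are $\cH^{-1}$, $\cH^{0}$, $\cH^{1}$ (and the boundary cases are controlled by the refined bounds in Lemma~\ref{lem-phase of push-back}). To rule out $\cH^{1}_{\cA_2}(\Phi_*E)\ne 0$ and $\cH^{-1}_{\cA_2}(\Phi_*E)\ne 0$ simultaneously, I would apply $\Phi^*$ and use triangle~\eqref{exact-3}: $\Phi^*\Phi_*E$ is an extension of $E$ by $T_1(E)$, both of which are $\sigma_1$-stable of phase $\phi$ (using Lemma~\ref{lem-T-fix-phase}), hence $\Phi^*\Phi_*E$ lies in $\cP_{\sigma_1}([\phi,\phi+1])$ — in particular it has cohomology concentrated in at most two consecutive $\cA_1$-degrees. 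If $\Phi_*E$ had cohomology spread over three degrees, then after a shift one exterior cohomology sheaf $H:=\cH^{\pm1}_{\cA_2}(\Phi_*E)$ would be a nonzero $\sigma_2$-semistable object, and by $\bB_{n-1}$ (note $\ext^1(H,H)\le\ext^1(\Phi_*E,\Phi_*E)-2$, using Weak Mukai Lemma~\ref{lem-mukai} plus Lemma~\ref{lem-rhom-pushfor} and Lemma~\ref{lem-lower bound on ext}, which bounds $\ext^1(\Phi_*E,\Phi_*E)$ in terms of $n$) we would get that $\Phi^*H$ is $\sigma_1$-semistable, contributing $\cA_1$-cohomology to $\Phi^*\Phi_*E$ in a third degree — contradiction. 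This forces $\Phi_*E$ into $\cA_2$.

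\textbf{Step 2: $\sigma_2$-semistability of $\Phi_*E$.} Now $\Phi_*E\in\cA_2$. Suppose it is not semistable; let $P\hookrightarrow \Phi_*E\twoheadrightarrow Q$ be the destabilizing short exact sequence in $\cA_2$ with $\phi_2^-(P)>\phi_2^+(Q)$. Apply $\Phi^*$: triangle~\eqref{exact-4} is not available, but from \eqref{exact-3} we have the triangle $T_1(E)\to\Phi^*\Phi_*E\to E$, and $\Phi^*$ of the destabilizing sequence gives a triangle $\Phi^*P\to\Phi^*\Phi_*E\to\Phi^*Q$. Both $P$ and $Q$ have strictly smaller $\ext^1$ than $\Phi_*E$ — this is where I invoke Lemma~\ref{lem-hn ext}(2)/(3) to get $\ext^1(P,P),\ext^1(Q,Q)\le n-1$, and $\bB_{n-1}$ applies to each Harder--Narasimhan/Jordan--Hölder factor — so $\Phi^*P$ and $\Phi^*Q$ are $\sigma_1$-semistable with $\phi_1$ equal to their respective phases, and by Lemma~\ref{lem-phase of push-back} (second half, with $F=P,Q$) their phase intervals are controlled. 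Comparing $\phi_1^{\pm}$ across the triangle $\Phi^*P\to\Phi^*\Phi_*E\to\Phi^*Q$ and recalling $\Phi^*\Phi_*E$ is an extension of two copies of $E$ (stable, phase $\phi$), I would derive that $\phi_2^-(P)\le\phi\le\phi_2^+(Q)$, contradicting $\phi_2^-(P)>\phi_2^+(Q)$; the slope identities $Z_1(\lambda_i)=Z_2(\Lambda_i)$ from \ref{c-stability}(b) and the doubling $\Phi^*(\Lambda_i)=2\lambda_i$ make the phase of $\Phi^*F$ literally equal to that of $F$ on the level of central charges, which is what pins $\phi$ down. This yields $\phi_2(\Phi_*E)=\phi_1(E)$ as well.

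\textbf{The main obstacle} is Step~1: cleanly ruling out that $\Phi_*E$ spreads over three $\cA_2$-cohomological degrees, and handling the boundary phase cases ($\phi=\tfrac12$ and $\phi=1$, where Lemma~\ref{lem-phase of push-back} only gives a length-$2$ interval). The delicate point is bookkeeping the $\ext^1$-drop: I need $\ext^1$ of any exterior cohomology sheaf to be $\le n-1$ so that $\bB_{n-1}$ fires, and this requires carefully combining the Weak Mukai Lemma with the homological-dimension-$2$ property and the precise $\RHom$ computation of Lemma~\ref{lem-rhom-pushfor} — in particular tracking the non-split versus split dichotomy and the $T_1$-fixed versus non-fixed dichotomy for $E$. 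Once the cohomology of $\Phi_*E$ is confined to $\cA_2$, Step~2 is a fairly routine phase-comparison using the already-established $\bB_{n-1}$ and Lemma~\ref{lem-phase of push-back}.
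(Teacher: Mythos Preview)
Your proposal has a genuine gap in the $\ext^1$ accounting, and this is precisely where the hard work of the argument lies.

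You claim that for an exterior cohomology piece $H:=\cH^{\pm1}_{\cA_2}(\Phi_*E)$ (and similarly for the destabilizing pieces $P,Q$ in Step~2) one has $\ext^1(H,H)\le n-1$, so that $\bB_{n-1}$ applies. But Lemma~\ref{lem-rhom-pushfor} only gives $\ext^1(\Phi_*E,\Phi_*E)\le 2n$, and Lemma~\ref{lem-hn ext}(1) then bounds a single cohomology piece by $2n-4$, not by $n-1$. Your stated inequality ``$\ext^1(H,H)\le\ext^1(\Phi_*E,\Phi_*E)-2$'' is both incorrect (the drop is by $2i=4$ in $\cK_2$) and, more importantly, far too weak: $2n-4\le n-1$ fails for $n\ge 4$. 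The same issue recurs in Step~2: from $\ext^1(P,P)+\ext^1(Q,Q)\le 2n$ you cannot conclude that \emph{both} are $\le n-1$.

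What the paper actually does is more delicate. From the sum bound one only gets that \emph{at least one} of the two extreme cohomologies $E^{x_0},E^{x_m}$ satisfies $\ext^1\le n$. If the inequality is strict, $\bB_{n-1}$ applies to it; if equality holds for both (so each equals $n$), then necessarily there are exactly two pieces, both $\sigma_2$-stable, and the paper invokes a separate Lemma~\ref{lem-two-factor-stable-push} (proved via a spectral sequence argument and a splitting analysis) to rule this out. Either way, one obtains control of $\Phi^*$ on \emph{one} extreme piece only. The contradiction is then extracted not via your global argument about the cohomological amplitude of $\Phi^*\Phi_*E$, but via adjunction: from $\Hom(E^{x_0}_{\max},\Phi_*E)=\Hom(\Phi^*E^{x_0}_{\max},E)\ne 0$ one constrains $x_0$ (and dually $x_m$), followed by a careful analysis of $\chi(\lambda_i,-)$ and $\Hom(-,A_i)$ involving the infinitely many stable objects $A_i$ of class $\lambda_i$. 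Your proposed shortcut --- that $\Phi^*\Phi_*E$ being semistable of phase $\phi$ forbids $\Phi_*E$ from spreading over several $\cA_2$-degrees --- would require $\bB_{n-1}$ to apply to \emph{all} cohomology pieces simultaneously, which the $\ext^1$ bounds simply do not furnish.
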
 
\begin{proof}
Take a $\sigma_1$-stable object $E$ with $\ext^1(E, E) \leq n$. Up to shift, we can assume that $E\in \cA_1$. Firstly, we claim that we can furthermore assume $0 < \phi_1(E) \leq \frac{1}{2}$. If $\frac{1}{2}<\phi_1(E) \leq 1$, then we can find $\wt{g}\in \GL$ such that $\phi_{\sigma_1\cdot \wt{g}}(A_1)=\phi_{\sigma_2\cdot \wt{g}}(\Phi_*A_1)=\frac{1}{2}$ and $\phi_{\sigma_1\cdot \wt{g}}(A_2[1])=\phi_{\sigma_2\cdot \wt{g}}(\Phi_*A_2[1])=1$, hence, $0 < \phi_{\sigma_1\cdot \wt{g}}(E) \leq \frac{1}{2}$. Therefore, if we replace $\sigma_i$ by $\sigma_i\cdot \wt{g}$ in \ref{c-stability}, $\lambda_2$ (resp.~$\Lambda_2$) by $-\lambda_2$ (resp.~$-\Lambda_2$) in \ref{c1} (resp.~\ref{c2}), all conditions \ref{c1} to \ref{c-t2} will not change. Thus we only need to deal with the case $0 < \phi_1(E) \leq \frac{1}{2}$.

Write $[E]=a\lambda_1+b\lambda_2$ for integers $a,b\in \ZZ$. By our assumption on $\phi_1(E)$, we have  
$a\leq 0$ and $b>0$. We know $[\Phi_*E]=a\Lambda_1+b\Lambda_2$ and
\begin{equation}\label{central charge}
Z_1(E)=Z_2(\Phi_*E)=-a+b\mathfrak{i}=\chi(\lambda_1, E)-\chi(\lambda_2, E)\mathfrak{i}.    
\end{equation}

\bigskip
 
\textbf{Step 1.} The first step is to show that $\Phi_*E \in \cA_2$. Suppose $\Phi_*E$ has non-zero cohomology objects $E^{x_i} := \cH^{x_i}_{\cA_2}(\Phi_*E)[-x_i]$ for $0\leq i \leq m$, where
\begin{equation}\label{sec}
x_0<x_1<...<x_m.     
\end{equation}
If $x_0=x_m$, then $\Phi_*E \in \cA[-x_0]$. Then \eqref{central charge} shows that $x_0 \in 2\ZZ$, and thus, by Lemma \ref{lem-phase of push - back}, it must be zero as required. Assume, for a contradiction, that $x_0 \neq x_m$. We claim that there exists $l \in \{0, m\}$ such that 
\begin{equation}\label{ex-cond}
    \ext^1(E^{x_l}, E^{x_l})\leq \frac{1}{2}\ext^1(\Phi_*E, \Phi_*E)
\end{equation}
and $\Phi^*$ preserves the Harder--Narasimhan filtration of $E^{x_l}$. From Lemma \ref{lem-hn ext}(1), we know that \eqref{ex-cond} holds for at least one $l\in \{0,m\}$. If the inequality in \eqref{ex-cond} is strict for $l=0$ or $m$, then the claim follows from $\bB_{n-1}$. Thus, by Lemma \ref{lem-hn ext}(1), the only remaining case is when we have equality in \eqref{ex-cond} for both $l=0$ and $m$. In this case, $E^{x_0}$ and $E^{x_m}$ cannot both be $\sigma_2$-stable at the same time by Lemma \ref{lem-two-factor-stable-push}. Hence, at least one $E^{x_l}$ must be either strictly $\sigma_2$-semistable or not $\sigma_2$-semistable. Then by Lemma \ref{lem-hn ext}(2) and (3), any Jordan--H\"older factor $E^{x_l '}$ of any Harder--Narasimhan factor of $E^{x_l}$ satisfies $\ext^1(E^{x_l '},E^{x_l '})<\ext^1(E^{x_l}, E^{x_l})$ and the claim follows from applying $\bB_{n-1}$ to each $E^{x_l '}$. Therefore, we divide the proof into the following two situations, and we will rule out them case-by-case.

\bigskip

\textbf{Case I.} \emph{$\Phi^*$ preserves the Harder--Narasimhan filtration of $E^{x_0}$ and \eqref{ex-cond} holds for $l=0$.}

In this case, we get $\Phi^*(E^{x_0}[x_0]) \in \cA_1$. Let $E^{x_0}_{\max}$ denote the factor with the highest phase in the Harder--Narasimhan filtration of $E^{x_{0}}$. It is also the factor with the maximum phase in the Harder--Narasimhan filtration of $\Phi_*E$. Thus $\Hom(\Phi^*E^{x_0}_{\max}, E) =\Hom(E^{x_0}_{\max}, \Phi_*E) \neq 0$, so we have 
\begin{equation}\label{eq-case1-1}
    -x_0<\phi^-_{\sigma_1}(\Phi^*E^{x_0}) \leq \phi^+_{\sigma_1}(\Phi^*E^{x_0})=\phi_{\sigma_1}(\Phi^*E^{x_0}_{max}) \leq
    \phi_{\sigma_1}(E) \leq \frac{1}{2},
\end{equation}
which implies $-x_0<1$. Hence, combined with Lemma \ref{lem-phase of push - back}, we obtain $ x_0\in \{0,1\}$. Thus $\Phi_*E$ lies in an exact triangle $E_{> -1} \to \Phi_*E \to E_{\leq -1}$ where $E_{>-1} \in \mathcal{P}_{\sigma_2}(-1, 1]$ and $E_{\leq -1} \in \mathcal{P}_{\sigma_2}(-\infty, -1]$. We claim
\begin{equation}\label{f-claim}
    \Im[Z_2(E_{\leq -1})] = -\chi(\Lambda_2, E_{\leq -1}) >0. 
\end{equation}
If $x_0 =1$, then the claim is trivial as $\Im[Z_2(E)] >0$ and $E_{>-1}=E^{x_0}\in \cA_2[-1]$, so we may assume $x_0 = 0$. In this case, we define $E':=\cone(E^{x_0} \to \Phi_*E)$. As $E'$ is an extension of $E_{\leq -1}$ and $E^1$, we only need to show 
\begin{equation}\label{l-claim}
\Im[Z_2(E')] > 0.     
\end{equation}
As $x_0=0$, we know that $\Re[Z_2(E^{x_0})]\geq 0$ and
\begin{equation}\label{slope-1}
    \mu_2(E^{x_0})\leq \mu_2(\Phi_*E)=\mu_1(E)\leq 0.
\end{equation}
So if $\Re[Z_2(E^{x_0})] = 0$, then $\Re[Z_2(\Phi_*E)] = 0$ as well. Then we have 
\begin{align}\label{comparing im}
    2(\Im[Z_2(E^{x_0})])^2 = -\chi(E^{x_0}, E^{x_0}) \leq & -2+ \ext^1(E^{x_0}, E^{x_0}) \\
    < & -2 + \frac{1}{2}\ext^1(\Phi_*E, \Phi_*E) \leq -2 + \frac{1}{2} (4-\chi(\Phi_*E, \Phi_*E)) = (\Im[Z_2(E)])^2 \nonumber
\end{align}
and so $0 < \Im[Z_2(E^{x_0})] < \Im[Z_2(E)]$, which proves the claim \eqref{l-claim}. Hence, we may assume $$\Re[Z_2(E^{x_0})] >0.$$ Using \eqref{slope-1}, to prove \eqref{l-claim}, it suffices to show 
\begin{equation}\label{claim-1.1}
    \Re[Z_2(E')] \geq 0 
\end{equation}
as $Z_2(E') +Z_2(E^{x_0}) = Z_2(\Phi_*E)$.
To prove \eqref{claim-1.1}, we investigate homomorphisms to $A_1$. By \eqref{eq-case1-1}, 
\begin{equation}\label{eq-vanish-A1}
\Hom(\Phi^*E^{x_0}, A_1[k])=\Hom(\Phi^*\Phi_*E, A_1[k])=0,~k\notin\{0,1\}.
\end{equation}
Hence, applying $\Hom(-,\Phi_*A_1)$ to the exact triangle $E^{x_0} \to \Phi_*E \to E'$ gives
\[\Hom(\Phi^*E', A_1[k])=\Hom(E', \Phi_*A_1[k])=0\]
for any $k\notin \{0,1,2\}$. Thus, if $\Re[Z_2(E')]=\chi(\Lambda_1, E')<0$, we have $\Hom(\Phi_*A_1, E'[1])\neq 0$, which is not possible\footnote{Note that $A_1$ can be any $\sigma_1$-stable object of class $\lambda_1$.} as $E' \in \mathcal{P}_{\sigma_2}(-\infty , 0]$. Hence, $\Re[Z_2(E')] = \chi(\Lambda_1, E') \geq 0$ as claimed in \eqref{claim-1.1}. This ends the proof of \eqref{f-claim}.

Finally, we investigate morphisms to $A_2$. We have 
\[\Hom(E_{> -1}, \Phi_*A_2[k_1])= 0 =\Hom(\Phi^*\Phi_*E, A_2[k_2])\]
for $k_1 \notin\{-1, 0,1\}$ and $k_2 \notin \{0, 1\}$ for infinitely many $A_2$. Note that the vanishing for $k_1=2$ follows from the phase ordering \eqref{eq-case1-1}. Thus applying $\Hom(\Phi_*A_2, -)$ to $E_{>-1}\to \Phi_*E\to E_{\leq -1}$, the claim  \eqref{f-claim} gives $\Hom(\Phi_*A_2, E_{\leq -1}[1]) \neq 0$, which is not possible as $E_{\leq -1} \in \mathcal{P}_{\sigma_2}(-\infty, -1]$. 

\bigskip

\textbf{Case II.} \emph{$\Phi^*$ preserves the Harder--Narasimhan filtration of $E^{x_m}$ and \eqref{ex-cond} holds for $l=m$.}

By Lemma \ref{lem-phase of push - back}, either

\begin{enumerate}[label=(\roman*)]
    \item $\phi^-_{\sigma_2}(\Phi_*E) <1$, or

    \item $\phi^-_{\sigma_2}(\Phi_*E) <\frac{3}{2}$ and $[E]$ is a multiple of $\lambda_2$.
\end{enumerate}

Let $E^{x_m}_{\max}$ denote the factor with the smallest phase in the Harder--Narasimhan filtration of $E^{x_{m}}$. It is also the factor with the minimal phase in the Harder--Narasimhan filtration of $\Phi_*E$. From \ref{c-adjoints}, we know $\Hom(T_1(E), \Phi^*E^{x_m}_{min})=\Hom(\Phi_*E, E^{x_m}_{min})\neq 0$, so
\begin{equation}\label{eq-case2-11}
    0<\phi_{\sigma_1}(E)\leq \phi_{\sigma_1}(\Phi^*E^{x_m}_{min})=\phi^-_{\sigma_1}(\Phi^*E^{x_m})\leq -x_m+1
\end{equation}
which gives $x_m\leq 0$. Thus $x_m =0$ in case (i) and $x_m \in \{0, -1\}$ in case (ii). We define $$E':=\cone(\Phi_*E\to E^{x_m})[-1].$$

If $x_m=0$, we have $E' \in \mathcal{P}_{\sigma_2}(1, +\infty)$, so $\Hom^{\leq 0}(E', \Phi_*A_i) = 0$ for $i=1, 2$. Since
\[\Hom(\Phi^*E^{x_m}, A_i[k_1])= 0 = \Hom(\Phi^*\Phi_*E, A_i[k_2])\]
for any $k_1\notin \{0,1, 2\}$ and $k_2 \notin \{0, 1\}$, applying $\Hom(-, \Phi_*A_i)$ to $E'\to \Phi_*E\to E^{x_m}$, we get $$\Hom(E', \Phi_*A_i[k]) = 0$$ for $i=1, 2$ and $k\neq 1$. Thus $\Re[Z_2(E')] \leq 0$ and $\Im[Z_2(E')] \geq 0$. Then $\Re[Z_2(E^{x_m})] \geq 0$ as $\Re[Z_2(\Phi_*E)] \geq 0$. So if $[E]$ is not a multiple of $[\lambda_2]$, then the slope $\mu_2(E')$ is bigger than $\mu_2(\Phi_*E)$, which is not possible by \eqref{eq-case2-11}. Thus the only possibility is when both $[\Phi_*E]$ and $[E^{x_m}]$ are multiples of $\Lambda_2$. But then the same argument as in \eqref{comparing im} gives $\Im[Z_2(E^{x_m})] < \Im[Z_2(\Phi_*E)]$ which implies $\Im[Z_2(E')] > 0$. As $E'$ lies in the triangle $E'_{>2} \to E' \to \cH^{-1}_{\cA_2}(E')[1]$ such that $E'_{>2}\in \cP_{\sigma_2}(2,+\infty)$, we see $-\chi(E'_{>2}, \Lambda_2)=2\Im[Z_2(E'_{>2})] > 0$. Since we have already seen that $\Hom(E', \Phi_*A_2[k]) = 0$ for $k\neq 1$ and $\Hom(\cH^{-1}_{\cA_2}(E')[1], \Phi_*A_2[k])=0$ for $k\notin \{1,2,3\}$, applying $\Hom(-,\Phi_*A_2)$ to $E'_{>2}\to E'\to \cH^{-1}_{\cA_2}(E')[1]$ and using $\chi(E'_{>2}, \Lambda_2)<0$ imply $\ext^1(E'_{>2}, \Phi_*A_2) \neq 0$, a contradiction.

If $x_m=-1$, there is an exact sequence in $\cA_1$:
\[0\to \Phi^*E^{x_m}[-1]\to \Phi^*E'\to \Phi^*\Phi_*E\to 0.\]
Hence, $\chi(\Phi^*E', \lambda_2)<0$ and so $\Hom(\Phi^*E', A_2[1])=\Hom(E', \Phi_*A_2[1])\neq 0$, contradicting $E' \in \mathcal{P}_{\sigma_2}(2, +\infty)$. 
\bigskip 

\textbf{Step 2.} Now we know $\Phi_*E \in \cA_2$ and it remains to show that $\Phi_*E$ is $\sigma_2$-semistable. Assume not, and let $E^{\max}$ (resp.~$E^{\min}$) be the factor with the maximum (resp.~minimum) phase in the Harder--Narasimhan filtration of $\Phi_*E$. Then we have $E^{\max}, E^{\min}\in \cA_2$ and 
\begin{equation}\label{eq-slope}
\mu_2(E^{\max})>\mu_2(\Phi_*E)>\mu_2(E^{\min}).
\end{equation}
Using Lemma \ref{lem-mukai}, one can show via the same argument as in Step 1. that there is $A \in \{E^{\min}, E^{\max}\}$ such that $\Phi^*A$ is $\sigma_1$-semistable of the same phase $\phi_{\sigma_1}(\Phi^*A) = \phi_{\sigma_2}(A)$. However, both $$\Hom(E^{\max}, \Phi_*E) = \Hom(\Phi^*E^{\max}, E) \neq 0$$ and $$\Hom(\Phi_*E, E^{\min}) = \Hom(E, T_1(\Phi^*E^{\min})) \neq 0$$ contradict \eqref{eq-slope}.      
\end{proof}

To roll out the maximal dimensional scenario in Lemma \ref{lem-two-factor-stable-pull}, we utilize the following standard spectral sequence, see e.g.~\cite[Lemma 2.27]{pirozhkov2020admissible}.
\begin{lemma}\label{spec-seq}
Let $X$ be a smooth algebraic variety, and let $A \to B\to C$ be an exact triangle in $\D^b(X)$. Then there exists a spectral sequence which degenerates at $E_3$ and converges to $\Ext^*_X(C, C)$, with $E_1$-page
\[E^{p,q}_1= \left\{
	\begin{aligned}
	&\mathrm{Ext}_X^q(B,A),  ~~~~~~~~~ &p=-1 \\
	&\mathrm{Ext}_X^q(A,A)\oplus \mathrm{Ext}_X^q(B,B),  ~~~~~~~~~   ~ &p=0 \\
	&\mathrm{Ext}_X^q(A, B),  ~~~~~~~~~ &p=1 \\
	&0, ~~~~~~~~~  &\text{otherwise}
	\end{aligned}
	\right.\]
with differentials $d_r^{p,q}\colon E^{p,q}_1\to E^{p+r, q-r+1}_1$. Moreover, $d_1^{p,q}$ is given by composition with the morphism $A\to B$.
\end{lemma}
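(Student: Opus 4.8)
The plan is to recognise the asserted spectral sequence as the one attached to a natural filtration of the mapping complex $\RHom_X(C,C)$. Working in a dg-enhancement of $\D^b(X)$ (this is where the hypothesis that $X$ be smooth enters, allowing us to represent objects by honest complexes and to form strict cones), I would represent the triangle by an actual map of complexes $f\colon A\to B$ and set $C=\cone(f)$, which carries the two-step filtration $B\hookrightarrow\cone(f)\twoheadrightarrow A[1]$. Applying $\sheafhom^{\bullet}(-,-)$ to this filtration in both the source and the target copies of $C$ equips $\RHom_X(C,C)$ with a filtration whose associated graded has exactly three pieces, which in the natural indexing by $p\in\{-1,0,1\}$ are $\RHom_X(B,A)$ for $p=-1$, $\RHom_X(A,A)\oplus\RHom_X(B,B)$ for $p=0$, and $\RHom_X(A,B)$ for $p=1$, with the differentials connecting consecutive pieces induced by $f$ (when $A$ and $B$ are sheaves this is literally the Hom-bicomplex of the two-term complex $[A\xrightarrow{f}B]$).

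Granting this, the remaining steps are routine: (i) the spectral sequence of that filtration has $E_1^{p,q}$ equal to the $q$-th cohomology of the $p$-th graded piece, which is exactly the table in the statement, and $d_1^{p,q}\colon E_1^{p,q}\to E_1^{p+1,q}$ is the map induced by $f$, i.e.\ composition with the morphism $A\to B$; (ii) since only the columns $p=-1,0,1$ are nonzero, the differential $d_r$ for every $r\geq 3$ has either zero source or zero target, so $E_3=E_\infty$ and the sequence degenerates at the $E_3$-page; (iii) the abutment is $H^{p+q}(\RHom_X(C,C))=\Ext^{p+q}_X(C,C)$ by construction. An equivalent route, which I could use in order to avoid choosing an enhancement, is to apply $\RHom_X(-,C)$ to the triangle $A\to B\to C$ and then $\RHom_X(B,-)$ and $\RHom_X(A,-)$ to it, thereby exhibiting $\RHom_X(C,C)$ as the totalisation of the commutative square with corners $\RHom_X(B,A)$, $\RHom_X(B,B)$, $\RHom_X(A,A)$, $\RHom_X(A,B)$ --- the square commutes because both composites $\RHom_X(B,A)\to\RHom_X(A,B)$ equal $\psi\mapsto f\psi f$ --- and then reading off the same spectral sequence from the column filtration of that totalisation.

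I expect the only genuine obstacle to be the sign bookkeeping: checking that the conventions in $\cone(f)$ and in the Hom-complex of two complexes really reproduce the stated $E_1$-page and that $d_1$ is precisely composition with $A\to B$ (and, in the second approach, that the square commutes strictly rather than merely up to homotopy, which is what makes the spectral sequence canonical). Once this is pinned down, degeneration at $E_3$ is immediate from the three-column shape. As all of this is standard, I would in the end simply point to \cite[Lemma 2.27]{pirozhkov2020admissible} for the verification.
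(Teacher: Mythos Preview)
Your proposal is correct and in fact ends at exactly the same place as the paper: the paper gives no proof at all and simply cites \cite[Lemma 2.27]{pirozhkov2020admissible}, which is precisely the reference you invoke at the end. Your sketch of the cone filtration and the three-column shape is accurate and more detailed than anything the paper provides, so there is nothing to compare beyond noting that you have supplied the standard argument behind the cited reference.
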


\begin{lemma}\label{lem-two-factor-stable-push}
Let $E\in \3$ be a $\sigma_1$-stable object with $\ext^1(E,E)=n$. If $\Phi_*E$ is not $\sigma_2$-semistable with the Harder--Narasimhan filtration
\begin{equation}\label{hn}
    E_1\to \Phi_*E\to E_2
\end{equation}
such that $E_1$ and $E_2$ are $\sigma_2$-stable, then there exists $i\in \{1,2\}$ such that $\ext^1(E_i, E_i)<n$.
\end{lemma}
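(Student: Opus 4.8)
The plan is to run a Weak Mukai Lemma argument on the Harder--Narasimhan triangle \eqref{hn}, and then eliminate the extremal case by a parity/Euler-characteristic computation together with a Krull--Schmidt bookkeeping.

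First I would apply Lemma \ref{lem-mukai} to \eqref{hn}. Since $E_1,E_2$ are $\sigma_2$-semistable with $\phi_{\sigma_2}(E_1)>\phi_{\sigma_2}(E_2)$ we have $\Hom(E_1,E_2)=0$, and since $\cK_2$ is $2$-Calabi--Yau by \ref{c-S}, also $\Hom(E_2,E_1[2])\cong\Hom(E_1,E_2)^{\vee}=0$; hence $\ext^1(E_1,E_1)+\ext^1(E_2,E_2)\le\ext^1(\Phi_*E,\Phi_*E)$. By Lemma \ref{lem-rhom-pushfor} the right-hand side is at most $2n$, so if the conclusion failed, i.e.\ $\ext^1(E_i,E_i)\ge n$ for both $i$, we would be forced into
\[
\ext^1(E_1,E_1)=\ext^1(E_2,E_2)=n,\qquad \ext^1(\Phi_*E,\Phi_*E)=2n .
\]
By Lemma \ref{lem-rhom-pushfor} again, $\ext^1(\Phi_*E,\Phi_*E)=2n$ happens only if either $T_1(E)\neq E$, or $T_1(E)=E$ and the triangle \eqref{exact-3} splits; I would treat these two cases separately.

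If $T_1(E)\neq E$, I would compare Euler characteristics. As each $E_i$ is $\sigma_2$-stable and $S_{\cK_2}=[2]$, we get $\hom(E_i,E_i)=\ext^2(E_i,E_i)=1$, so $\chi(E_i,E_i)=2-n$, while Lemma \ref{lem-rhom-pushfor} gives $\chi(\Phi_*E,\Phi_*E)=2-2n$. Since the Euler pairing on $\Knum(\cK_2)$ is symmetric and $[\Phi_*E]=[E_1]+[E_2]$, this forces
\[
\chi(E_1,E_2)=\tfrac12\bigl(\chi(\Phi_*E,\Phi_*E)-\chi(E_1,E_1)-\chi(E_2,E_2)\bigr)=-1 ,
\]
which is impossible because, by \ref{c2}, the Euler pairing on $\Knum(\cK_2)$ takes only even values. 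If instead $T_1(E)=E$ and \eqref{exact-3} splits, the same computation (now with $\chi(\Phi_*E,\Phi_*E)=4-2n$) gives $\chi(E_1,E_2)=0$; combined with $\hom(E_1,E_2)=\ext^2(E_1,E_2)=0$ this forces $\ext^1(E_1,E_2)=0$, hence $\Ext^1(E_2,E_1)\cong\Ext^1(E_1,E_2)^{\vee}=0$, so \eqref{hn} splits: $\Phi_*E\cong E_1\oplus E_2$. Applying $\Phi^*$ and using that \eqref{exact-3} splits with $T_1(E)=E$, I obtain $\Phi^*E_1\oplus\Phi^*E_2\cong E^{\oplus 2}$; since $E$ is $\sigma_1$-stable and therefore indecomposable, Krull--Schmidt (valid as $\cK_1$ is a full triangulated subcategory of the derived category of coherent sheaves on a smooth variety) yields $\Phi^*E_1\cong E\cong\Phi^*E_2$. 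Feeding $\Phi^*E_1\cong E$ into \eqref{exact-4} for $E_1\in\cK_2$ produces a triangle $E_1\to E_1\oplus E_2\to T_2(E_1)$; as $\Hom(E_1,E_2)=0$ the first map is $(\alpha\,\mathrm{id}_{E_1},0)$, and since the $\sigma_2$-stable object $T_2(E_1)$ is indecomposable we must have $\alpha\neq 0$, whence $T_2(E_1)\cong E_2$. Because $T_2$ acts trivially on $\Knum(\cK_2)$ by \ref{c-T}, this gives $[E_1]=[E_2]$, so the two $\sigma_2$-stable objects $E_1,E_2$ have equal phase, contradicting $\phi_{\sigma_2}(E_1)>\phi_{\sigma_2}(E_2)$.

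The main obstacle is precisely this extremal case $\ext^1(\Phi_*E,\Phi_*E)=2n$: at the level of dimensions of $\Ext$-groups everything is consistent, and the contradiction only surfaces through finer structure — the parity of the Euler form of $\cK_2$ when $T_1(E)\neq E$, and the rigidity $T_2(E_1)\cong E_2$ combined with $T_2$ acting trivially on $\Knum(\cK_2)$ in the split case. A minor point to pin down at the outset is that the two-term triangle \eqref{hn} is genuinely the Harder--Narasimhan filtration, so that the strict phase inequality and the vanishings $\Hom(E_1,E_2)=\Hom(E_2,E_1)=0$ are available; this is immediate from the $\sigma_2$-semistability of $E_1$ and $E_2$.
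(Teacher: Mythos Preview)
Your Case~I is fine and matches the paper. The gap is in Case~II: you assert $\ext^2(E_1,E_2)=0$, equivalently $\Hom(E_2,E_1)=0$, and in your last paragraph you say this ``is immediate from the $\sigma_2$-semistability of $E_1$ and $E_2$''. It is not. Semistability only gives $\Hom(A,B)=0$ when $\phi(A)>\phi(B)$; here $\phi(E_2)<\phi(E_1)$, so nothing prevents $\Hom(E_2,E_1)\neq 0$. In fact the paper's proof shows that in Case~II the Harder--Narasimhan triangle does \emph{not} split and one actually has $\Hom(E_2,E_1)=\Ext^1(E_2,E_1)=\CC$. So from $\chi(E_1,E_2)=0$ you cannot conclude $\ext^1(E_1,E_2)=0$, and your deduction that \eqref{hn} splits collapses.

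What the paper does instead in Case~II is run the spectral sequence of Lemma~\ref{spec-seq} for $E_2[-1]\to E_1\to\Phi_*E$ to pin down $\Ext^1(E_2,E_1)\in\{0,\CC\}$, then rule out the split subcase (your Krull--Schmidt argument works for this, as does the paper's, which compares $\Phi_*\Phi^*E_1$ and $\Phi_*\Phi^*E_2$). The real work is the remaining non-split subcase $\Ext^1(E_2,E_1)=\Hom(E_2,E_1)=\CC$: there one applies $\Phi^*$ to \eqref{hn}, uses $\Phi^*\Phi_*E\cong E\oplus E$ to build a $3\times 3$ diagram, and extracts a nonzero morphism $\Phi^*E_1\to\Phi^*E_2$, hence $\Hom(E_1,\Phi_*\Phi^*E_2)\neq 0$, contradicting $\phi_2(E_1)>\phi_2(E_2)=\phi_2(\Phi_*\Phi^*E_2)$. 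Your argument skips exactly this subcase.
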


\begin{proof}
As $\phi_{\sigma_2}(E_1)>\phi_{\sigma_2}(E_2)$, we see that
 \begin{equation}\label{eq-ext=0-two-factor}
   \Hom(E_1, E_2[i])=\Hom(E_2, E_1[2-i])=0  
 \end{equation}
for $i\leq 0$. By Lemma \ref{lem-mukai} and \ref{lem-rhom-pushfor},  we have $$\ext^1(E_1, E_1)+\ext^1(E_2, E_2)\leq \ext^1(\Phi_*E, \Phi_*E)\leq 2n.$$ Thus, it is enough to show that the case $\ext^1(E_1, E_1)=\ext^1(E_2, E_2)=n$ cannot happen. Suppose this happens. Applying Lemma \ref{spec-seq} to the exact triangle $E_2[-1]\to E_1\to \Phi_*E$, we get a spectral sequence with the first page
 $$ E^{p,q}_1= \begin{array}{cc|cc}
	0 & \Ext^3(E_1,E_2) & 0 & 0\\
	0 & \Ext^2(E_1,E_2) & 0 & 0\\
	0 & \Ext^1(E_1,E_2) & \Ext^2(E_1,E_1)\oplus \Ext^2(E_2,E_2) & 0\\
	0 & 0 & \Ext^1(E_1,E_1)\oplus \Ext^1(E_2,E_2) & 0 \\
	0 & 0 & \Hom(E_1,E_1)\oplus \Hom(E_2,E_2) & \Ext^1(E_2,E_1) \\ \hline
	0 & 0 &  0 & \Hom(E_2,E_1)
	\end{array}$$
 which degenerates at $E_3$ and converges to $\Ext^*(\Phi_*E, \Phi_*E)$. Note that the differential $$d_1^{0,0}\colon \Hom(E_1, E_1)\oplus \Hom(E_2, E_2)=\CC^{2}\to \Ext^1(E_2, E_1)$$ is given by compositions, hence has a non-zero kernel. Moreover, as $$E^{0,1}_1=E^{0,1}_{\infty}=\Ext^1(\Phi_*E, \Phi_*E)=\CC^{2n} = \Ext^1(E_1,E_1)\oplus \Ext^1(E_2,E_2) ,$$ we know that $d_1^{0,0}$ is surjective. By Lemma \ref{lem-rhom-pushfor}, we have the following two cases:

\medskip

\textbf{Case I.} \emph{$\RHom(\Phi_*E, \Phi_*E)=\CC\oplus \CC^{2n}[-1]\oplus \CC[-2]$.} In this case, we have
\begin{align}\label{chi}
    2-2n = \chi(\Phi_*E, \Phi_*E) = & \chi(E_1, E_1) +\chi(E_2, E_2) +2 \chi(E_1, E_2) \\
    = & (2-n) + (2 -n) + 2\chi(E_1, E_2)  \nonumber
\end{align}
and so $\chi(E_1, E_2) =-1$, which leads to a contradiction since the Euler pairing in $\4$ is an even number.

\medskip

\textbf{Case II.} \emph{$\RHom(\Phi_*E, \Phi_*E)=\CC^2\oplus \CC^{2n}[-1]\oplus \CC^2[-2]$.} In this case, $T_1(E) \cong E$ and $\Phi^*\Phi_*E \cong E \oplus E$ by Lemma \ref{lem-rhom-pushfor}. Since $d_1^{0, 0}$ is surjective and has a non-zero kernel, we must have either $\Ext^1(E_2, E_1) = 0$ or $\CC$. We first show that the Harder--Narasimhan sequence \eqref{hn} does not split so the former case cannot happen. Otherwise, $\Phi_*E \cong E_1 \oplus E_2$ and so $\Phi^*\Phi_*E \cong \Phi^*E_1 \oplus \Phi^*E_2$. Since $E$ is $\sigma_1$-stable, we get $\Phi^*E_1 \cong \Phi^*E_2 \cong E$ and so $\Phi_*\Phi^*E_1 \cong \Phi_*\Phi^*E_2$, which is not possible as $\phi_2(\Phi_*\Phi^*E_1) = \phi_2(E_1) > \phi_2(E_2) = \phi_2(\Phi_*\Phi^*E_2)$. Thus, we may assume 
\begin{equation}\label{assumption}
    \Ext^1(E_2, E_1) = \C = \Hom(E_2, E_1) 
\end{equation}
as $\chi(E_2, E_1) = 0$ via the same argument as in \eqref{chi}. Applying $\Phi^*$ to \eqref{hn} gives the exact triangle 
\begin{equation*}
    \Phi^*E_1 \xrightarrow{h} E\oplus E \xrightarrow{h'} \Phi^*E_2.
\end{equation*}
The next step is to show that $h$ and $h'$ are non-zero by showing that 
\begin{equation}\label{cl}
    \Hom(E, \Phi^*E_2) = \C = \Hom(E, \Phi^*E_1). 
\end{equation}
Since $\Hom(E_1[k], E_2) = 0$ for $k\geq 0$, taking $\Hom(-, E_2)$ from the sequence \eqref{hn} gives $$ \C = \Hom(\Phi_*E, E_2) = \Hom(E, \Phi^*E_2)$$ as the left-hand claim in \eqref{cl}. Similarly, by taking $\Hom(-, E_1)$ and using assumption \eqref{assumption}, we obtain the long exact sequence 
\begin{equation*}
    ... \to \Hom(E_1[1], E_1) = 0 \to \Hom(E_2, E_1) \cong \C \to \Hom(\Phi_*E, E_1) \to \Hom(E_1, E_1) \cong \C \to ...
\end{equation*}
Since the sequence \eqref{hn} does not split, we get $\Hom(\Phi_*E, E_1) = \C$. Then the right-hand claim in \eqref{cl} follows by adjunction in \ref{c-adjoints}. Similarly, we have $\Hom(\Phi^*E_2, E)=\Hom(E_2, \Phi_*E)=\CC$ as \eqref{hn} does not split.

Therefore, there is a map $s\colon E\oplus E\to E$ such that $s\circ h$ is non-zero. Since $E$ is $\sigma_1$-stable, we know that $\cone(s)[-1]\cong E$, and denote the natural map $\cone(s)[-1]\cong E\to E\oplus E$ by $s'$. Define $F \coloneqq \cone(s \circ h)[-1]$, then we have the following commutative diagram: 
\[\begin{tikzcd}
	F & E & {\Phi^*E_2} \\
	{\Phi^*E_1} & {E\oplus E} & {\Phi^*E_2} \\
	E & E & 0
	\arrow[no head, from=3-1, to=3-2]
	\arrow[from=2-2, to=3-2]
	\arrow["s\circ h", from=2-1, to=3-1]
	\arrow["h"', from=2-1, to=2-2]
 \arrow["h'"', from=2-2, to=2-3]
 \arrow["s'"', from=1-2, to=2-2]
 \arrow["s"', from=2-2, to=3-2]
	\arrow[shift right, no head, from=3-1, to=3-2]
	\arrow[from=3-2, to=3-3]
	\arrow[from=2-2, to=2-3]
	\arrow[from=2-3, to=3-3]
	\arrow[no head, from=1-3, to=2-3]
	\arrow[shift right, no head, from=1-3, to=2-3]
	\arrow[from=1-2, to=2-2]
	\arrow[from=1-1, to=2-1]
	\arrow[from=1-1, to=1-2]
	\arrow["h'\circ s'"', from=1-2, to=1-3]
\end{tikzcd}\]
At first, we show that $h'\circ s'\neq 0$. As $s\circ h\neq 0$, we know that the induced map $$\Hom(E, E)=\CC\to \Hom(\Phi^*E_1, E)$$ is non-zero, hence it is injective. Then from $\Hom(\Phi^*E_1, E[-1])=\Hom(E_1, \Phi_*E[-1])=0$, we get $\Hom(F, E[-1])=0$. Therefore, if we apply $\Hom(-, E)$ to the exact triangle in the first row, we see the induced map $\Hom(\Phi^*E_2, E)=\CC\to \Hom(E, E)$ is injective as well. This implies that $h'\circ s'$ is non-zero.

Now, taking $\Hom(E, -)$ from the first row and applying \eqref{cl} show that $\Hom(E, F) = 0$ because $\Hom(E, \Phi^*E_2[-1]) = \Hom(\Phi_*E, E_2[-1]) = 0$ as $\phi_2^{-}(\Phi_*E_2) = \phi_2(E_2)$. Thus, applying $\Hom(E, -)$ to the first column gives the isomorphism $\C = \Hom(E, \Phi^*E_1) \to \C = \Hom(E, E)$. This shows that the first column is splitting, so by \eqref{cl}, we have 
\begin{equation*}
    \Hom(E_1, \Phi_*\Phi^*E_2) = \Hom(\Phi^*E_1, \Phi^*E_2) \cong \Hom(E \oplus F , \Phi^*E_2) \neq 0,
\end{equation*}
 which is not possible as $\phi_2(E_1) > \phi_2(j_*j^*E_2)=\phi_2(E_2)$ and both are $\sigma_2$-semistable.
\end{proof}



\begin{lemma}\label{part-2}
    For any $n>1$, the validity of $\bA_{n-1}$ implies that $\bB_n$ holds.
\end{lemma}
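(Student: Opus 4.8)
The plan is to run the mirror image of the proof of Lemma~\ref{part-1}, systematically interchanging the roles of $\Phi_*$ and $\Phi^*$, of $\mathcal{K}_1$ and $\mathcal{K}_2$, of the triangles \eqref{exact-3} and \eqref{exact-4}, and of Lemma~\ref{lem-rhom-pushfor} and Lemma~\ref{lem-rhom-back}, and using the hypothesis $\bA_{n-1}$ in the Harder--Narasimhan preserving form of Remark~\ref{rmk-A-B} in place of $\bB_{n-1}$. Concretely, I would take a $\sigma_2$-stable object $F \in \mathcal{K}_2$ with $\ext^1(F,F) \leq n$, assume after a shift that $F \in \cA_2$, and — exactly as in the opening paragraph of the proof of Lemma~\ref{part-1} — act by a suitable $\wt{g} \in \GL$, replacing $\Lambda_2$ and $\lambda_2$ by their negatives if needed (which leaves \ref{c1}--\ref{c-t2} unchanged), to reduce to the case $0 < \phi_2(F) \leq \frac{1}{2}$. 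Writing $[F] = a\Lambda_1 + b\Lambda_2$ we then have $a \leq 0$, $b > 0$, $[\Phi^*F] = 2a\lambda_1 + 2b\lambda_2$, and $Z_1(\Phi^*F) = 2Z_2(F) = -2a + 2b\mathfrak{i}$, so $\Phi^*F$ automatically has the same slope as $F$; what remains is to show $\Phi^*F \in \cA_1$ and that it is $\sigma_1$-semistable.

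For Step~1 I would let $F^{x_i} := \cH^{x_i}_{\cA_1}(\Phi^*F)[-x_i]$ be the nonzero cohomology objects, with $x_0 < \dots < x_m$. If $x_0 = x_m$ then $\Phi^*F \in \cA_1[-x_0]$; the sign $\Im Z_1(\Phi^*F) = 2b > 0$ forces $x_0$ even, and Lemma~\ref{lem-phase of push - back} applied to $F$ (together with the phase bound extracted below) forces $x_0 = 0$, as needed. If $x_0 \neq x_m$, I argue for a contradiction following Step~1 of Lemma~\ref{part-1}: by Lemma~\ref{lem-hn ext}(1) and Lemma~\ref{lem-rhom-back} some $l \in \{0, m\}$ satisfies $\ext^1(F^{x_l}, F^{x_l}) \leq \frac{1}{2}\ext^1(\Phi^*F, \Phi^*F) \leq \ext^1(F,F) \leq n$; if this inequality is strict then $\bA_{n-1}$ tells us $\Phi_*$ preserves the Harder--Narasimhan filtration of $F^{x_l}$, and if equality holds for both $l = 0$ and $l = m$ then $F^{x_0}$ and $F^{x_m}$ cannot both be $\sigma_1$-stable by Lemma~\ref{lem-two-factor-stable-pull} (the $\Phi^*$-counterpart of Lemma~\ref{lem-two-factor-stable-push}), so passing to Jordan--H\"older or Harder--Narasimhan factors of strictly smaller $\ext^1$ and applying $\bA_{n-1}$ there works. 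This leaves two cases, according to whether $\Phi_*$ preserves the Harder--Narasimhan filtration of $F^{x_0}$ or of $F^{x_m}$. In the first, letting $F^{x_0}_{\max}$ be the highest-phase factor of $F^{x_0}$ (equivalently of $\Phi^*F$), the adjunction $\Phi_* T_1 \dashv \Phi^*$ of \ref{c-adjoints} gives a nonzero map $\Phi_* T_1 F^{x_0}_{\max} \to F$; since $\Phi_* T_1 F^{x_0}_{\max}$ is $\sigma_2$-semistable of phase $\phi_1(F^{x_0}_{\max}) = \phi_1^+(\Phi^*F)$ by $\bA_{n-1}$, $\sigma_2$-stability of $F$ gives $\phi_1^+(\Phi^*F) \leq \phi_2(F) \leq \frac{1}{2}$, which together with Lemma~\ref{lem-phase of push - back} pins $x_0 \in \{0, 1\}$. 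Then, as in Lemma~\ref{part-1}, $\Phi^*F$ lies in a triangle $F_{>-1} \to \Phi^*F \to F_{\leq -1}$ with $F_{>-1} \in \cP_{\sigma_1}(-1, 1]$ and $F_{\leq -1} \in \cP_{\sigma_1}(-\infty, -1]$; an $\Im Z_1$ comparison against the objects $A_1$, using the triangle \eqref{exact-3} for $A_1$, shows $\Im Z_1(F_{\leq -1}) = -\chi(\lambda_2, F_{\leq -1}) > 0$; and then $\chi(\lambda_2, F_{\leq -1}) < 0$ together with the vanishing of $\hom(A_2, F_{\leq -1})$ and, for all but finitely many $\sigma_1$-stable $A_2$ of class $\lambda_2$, of $\ext^2(A_2, F_{\leq -1}) = \hom(F_{\leq -1}, T_1 A_2)^{\vee}$ produces a nonzero $\Hom(A_2, F_{\leq -1}[1])$, impossible since $F_{\leq -1} \in \cP_{\sigma_1}(-\infty, -1]$ and $A_2 \in \cA_1$. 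The second case, which bounds $\phi_1^-(\Phi^*F)$ via a nonzero map $F \to \Phi_* F^{x_m}_{\min}$ obtained from $\Phi^* \dashv \Phi_*$ and rules out $x_m \in \{0, -1\}$, is handled symmetrically, with \eqref{exact-4} in place of \eqref{exact-3}.

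For Step~2, suppose $\Phi^*F \in \cA_1$ is not $\sigma_1$-semistable, and let $F^{\max}$, $F^{\min}$ be its Harder--Narasimhan factors of largest and smallest slope, so that $\mu_1(F^{\max}) > \mu_1(\Phi^*F) > \mu_1(F^{\min})$. The same Mukai-lemma bookkeeping as in Step~1 (Lemma~\ref{lem-mukai}, Lemma~\ref{lem-rhom-back}, Lemma~\ref{lem-two-factor-stable-pull}) shows that one of $\Phi_* F^{\max}$, $\Phi_* F^{\min}$ is $\sigma_2$-semistable of the same phase as its source. But the nonzero maps $F^{\max} \to \Phi^*F$ and $\Phi^*F \to F^{\min}$ give, via the two adjunctions of \ref{c-adjoints} and Lemma~\ref{lem-T-fix-phase}, nonzero maps $\Phi_* T_1 F^{\max} \to F$ and $F \to \Phi_* F^{\min}$ of phases $\phi_1(F^{\max})$ and $\phi_1(F^{\min})$ respectively, contradicting the $\sigma_2$-stability of $F$ together with the slope inequalities. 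Hence $\Phi^*F$ is $\sigma_1$-semistable, and by the computation of $Z_1(\Phi^*F)$ it has the same phase as $F$, which is exactly $\bB_n$.

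I expect the genuinely delicate point to be, as in Lemma~\ref{part-1}, the equality case in Step~1 where both extreme cohomology objects realize exactly half of $\ext^1(\Phi^*F, \Phi^*F)$; this is precisely what Lemma~\ref{lem-two-factor-stable-pull} must rule out, and I anticipate its proof being subtler than that of Lemma~\ref{lem-two-factor-stable-push}, because the Euler form on $\mathcal{K}_1$ is odd rather than even, so the parity contradiction available there is unavailable and one has instead to combine the spectral sequence of Lemma~\ref{spec-seq} with the splitting behaviour of the Harder--Narasimhan triangle and the relation $\Phi^* T_2(F) \cong T_1(\Phi^*F)$ from \ref{c-adjoints}.
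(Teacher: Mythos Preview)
Your outline is correct and matches the paper's own proof, which explicitly says it ``closely resembles that of Lemma~\ref{part-1}'' and carries out exactly the mirror argument you describe (reducing to $0<\phi_2(F)\leq \tfrac12$, analyzing the extreme cohomology objects of $\Phi^*F$ via $\bA_{n-1}$ and Lemma~\ref{lem-two-factor-stable-pull}, and then comparing slopes in Step~2).

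Two small corrections. First, the direct vanishing $\ext^2(A_2, F_{\leq -1}) = \hom(F_{\leq -1}, T_1 A_2)$ you assert does not follow from a finitely-many-factors argument, since the phase inequality runs the wrong way; the paper instead shows $\Hom(A_2, F_{\leq -1}[k]) = 0$ for $k \notin \{0,1,2\}$ by applying $\Hom(A_2,-)$ to the triangle $F_{>-1} \to \Phi^*F \to F_{\leq -1}$ and using $\Hom(A_2, \Phi^*F[k]) = \Hom(\Phi_* T_1 A_2, F[k])$, after which $\chi < 0$ together with $\hom^0 = 0$ forces $\hom^1 > \hom^2 \geq 0$. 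Second, your expectation that Lemma~\ref{lem-two-factor-stable-pull} is subtler than its counterpart is not quite borne out: by Lemma~\ref{lem-rhom-back} the maximal value $\ext^1(\Phi^*F,\Phi^*F)=4k$ (with $\ext^1(F,F)=2k$) already forces $T_2(F)\cong F$ and the triangle~\eqref{exact-4} to split, so one lands directly in the analogue of Case~II of Lemma~\ref{lem-two-factor-stable-push}, and the paper's proof is if anything shorter.
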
 
\begin{proof}
The proof closely resembles that of Lemma \ref{part-1}. Therefore, we will only provide an outline of the key steps. For any object $F \in \4$, we know $\ext^1(F, F) = -\chi(F, F) + 2\hom(F, F) \in 2\ZZ$. Thus we only need to prove the claim when $n= 2k$ for $k\in \ZZ$. Therefore, we take a $\sigma_2$-stable object $F$ with $\ext^1(F, F) =2k$. As in the proof of Lemma \ref{part-1}, we may assume that $F \in \cA_2$ and $0< \phi_2(F) \leq \frac{1}{2}$.

\bigskip

\textbf{Step 1.} The first step is to show that $\Phi^*F \in \cA_1$. If this is not true, we have non-zero cohomology objects $F^{x_i} := \cH^{x_i}_{\cA_1}(\Phi^*F)[-x_i]$ for $0 \leq i \leq m$, where $x_0 <x_1 < ...< x_m$. If $x_0 = x_m$, then since $Z_1(\Phi^*F) = 2Z_2(F)$, we have $x_0 \in 2 \ZZ$ and so $x_0 =0$ by Lemma \ref{lem-phase of push - back}. Thus, we may assume $x_0 < x_m$. Then $\textbf{A}_{2k-1}$ and the next Lemma \ref{lem-two-factor-stable-pull} show that there exists $l \in \{0, m\}$ such that $\ext^1(F^{x_l}, F^{x_l}) \leq 2k$ and $\Phi_*$ preserves the Harder--Narasimhan filtration of both $E^{x_{l}}$ and $T_1(E^{x_{l}})$ as in the first step of Lemma \ref{part-1}.

\bigskip

\textbf{Case I.} Firstly, we assume that $l= 0$.  Let $F^{x_0}_{\max}$ denote the factor with the highest phase in the Harder--Narasimhan filtration of $F^{x_{0}}$. Since $\Hom(\Phi_*T_1(F^{x_0}_{\max}), F) =\Hom(F^{x_0}_{\max}, \Phi^*F) \neq 0$, we have 
\begin{equation}\label{eq-case1-11}
    -x_0<\phi^{-}_{\sigma_1}(F^{x_0}) \leq \phi^{+}_{\sigma_1}(F^{x_0}) = \phi^+_{\sigma_2}(\Phi_*T_1(F^{x_0}_{\max})) \leq 
    \phi_{\sigma_2}(F) \leq \frac{1}{2},
\end{equation}
which implies $-x_0<1$. Hence, combined with Lemma \ref{lem-phase of push - back}, we obtain $ x_0\in \{0,1\}$. Thus $\Phi^*F$ lies in an exact triangle $F_{> -1} \to \Phi^*F \to F_{\leq -1}$ where $F_{>-1} \in \mathcal{P}_{\sigma_1}(-1, \frac{1}{2}]$ and $F_{\leq -1} \in \mathcal{P}_{\sigma_1}(-\infty, -1]$. We claim
\begin{equation}\label{f-claim-1}
    \Im[Z_1(F_{\leq -1})] = -\chi(\lambda_2, F_{\leq -1}) >0. 
\end{equation}
If $x_0 =1$, then the claim is trivial as $\Im[Z_1(\Phi^*F)]>0$, so we may assume $x_0 = 0$. We define $$F':=\cone(F^{x_0} \to \Phi^*F).$$ Because $F'$ is an extension of $F_{\leq -1}$ and $F^1=\cH^1_{\cA_1}(\Phi^*F)[-1]$, we only need to show $\Im[Z_1(F')] > 0$. Since the slope of $Z_1(F^{x_0})$ is smaller than or equal to the slope of $Z_1(\Phi^*F)$ by \eqref{eq-case1-11}, we can assume that $\Re[Z_1(F^{x_0})]>0$ via the same argument as \eqref{slope-1} and \eqref{comparing im}. Then as \eqref{claim-1.1}, it suffices to prove 
\begin{equation}\label{claim-1.11}
    \Re[Z_1(F')] \geq 0.  
\end{equation}
If \eqref{claim-1.11} does not hold, then $\chi(\lambda_1, F')<0$. On the other hand, applying $\Hom(A_1, -)$ to the exact triangle $F^{x_0}\to \Phi^*F \to F'$ yields $\Hom(A_1, F'[k]) = 0$ for $k \notin \{0, 1, 2\}$ for infinitely many $A_1$. Consequently, $\Hom(A_1, F'[1]) \neq 0$ for infinitely many $A_1$, which is not possible since $F' \in \mathcal{P}_{\sigma_1}(-\infty, 0]$. Therefore, \eqref{claim-1.11} holds, and so \eqref{f-claim-1} holds. 

Next, we investigate morphisms from $A_2$ by taking $\Hom(A_2, -)$ from the exact triangle $$F_{>-1}\to \Phi^*F \to F_{\leq -1}.$$ We have 
\[\Hom(A_2, F_{>-1}[1+k])= 0 =\Hom(A_2, \Phi^*F[k]) = \Hom(\Phi_*T_1(A_2), F[k])\]
for $k \notin\{0,1, 2\}$ and infinitely many $A_2$, where the first vanishing follows from $F_{>-1} \in \mathcal{P}_{\sigma_1}(-1, \frac{1}{2}]$. Thus $\Hom(A_2, F_{\leq -1}[k])=0$ for $k\notin \{0,1,2\}$. Then  \eqref{f-claim-1} gives $\Hom(A_2, F_{\leq -1}[1]) \neq 0$, which contradicts $F_{\leq -1} \in \mathcal{P}_{\sigma_1}(-\infty, -1]$. 

\bigskip

\textbf{Case II.} Now assume $l=m$. Let $F^{x_m}_{\min}$ be the object with the minimum phase in the Harder--Narasimhan filtration of $F^{x_m}$. We have $\Hom(F, \Phi_*F^{x_m}_{\min})=\Hom(\Phi^*F, F^{x_m}_{\min})\neq 0$, so
\begin{equation}\label{eq-case2-1}
    0<\phi_{\sigma_2}(F)\leq \phi_{\sigma_2}(\Phi_*F^{x_m}_{min})=\phi^-_{\sigma_1}(F^{x_m})\leq \phi_{\sigma_1}^+(F^{x_m}) \leq  -x_m+1,
\end{equation}
which gives $x_m\leq 0$. Hence, $x_m \in \{0, -1\}$ by Lemma \ref{lem-phase of push - back}. We define $F':=\cone(\Phi^*F\to F^{x_m})[-1]$.

If $x_m=0$, we have $F' \in \mathcal{P}_{\sigma_1}(1, +\infty)$, so $\Hom^{\leq 0}(F', A_i) = 0$ for $i=1, 2$. Since for each $i=1,2$
\[\Hom(F^{x_m}, A_i[k_1])= 0 = \Hom(\Phi^*F, A_i[k_2])=\Hom(F, \Phi_*A_i[k_2])\]
for any $k_1\notin \{0,1, 2\}$ and $k_2 \notin \{0, 1\}$ and infinitely many $A_i$, applying $\Hom(-, A_i)$ to $F'\to \Phi^*F\to F^{x_m}$, we get $\Hom(F', A_i[k]) = 0$ for $i=1, 2$ and $k\neq 1$. Thus $\Re[Z_1(F')] \leq 0$ and $\Im[Z_1(F')] \geq 0$. Then $\Re[Z_1(F^{x_m})] \geq 0$ as $\Re[Z_1(\Phi^*F)] \geq 0$. So if $[F]$ is not a multiple of $[\Lambda_2]$, then the slope $\mu_1(F')$ is bigger than $\mu_1(\Phi^*F)$, which is not possible by \eqref{eq-case2-1}. Thus the only possibility is when both $[\Phi^*F]$ and $[F^{x_m}]$ are multiples of $\lambda_2$. But then the same argument as in \eqref{comparing im} gives $\Im[Z_1(F^{x_m})] < \Im[Z_1(\Phi^*F)]$, which implies $\Im[Z_1(F')] > 0$. As $F'$ lies in the triangle $F'_{>2} \to F' \to \cH^{-1}_{\cA_1}(F')[1]$ such that $F'_{>2}\in \cP_{\sigma_1}(2,+\infty)$, we see  $-\chi(F'_{>2}, \lambda_2)=\Im[Z_1(F'_{>2})] > 0$. Since we have already seen that $\Hom(F', A_2[k]) = 0$ for $k\neq 1$ and $\Hom(\cH^{-1}_{\cA_1}(F')[1], A_2[k])=0$ for $k\notin \{1,2,3\}$, applying $\Hom(-, A_2)$ to $F'_{>2}\to F'\to \cH^{-1}_{\cA_1}(F')[1]$ and using $\chi(F'_{>2}, \lambda_2)<0$ imply $\ext^1(F'_{>2}, A_2) \neq 0$, a contradiction.

If $x_m = -1$, there is an exact sequence in $\cA_2$:
\[0\to \Phi_*F^{x_m}[-1]\to \Phi_*F'\to \Phi_*\Phi^*F\to 0.\]
Hence, $\chi(\Phi_*F', \Lambda_2)<0$ and so $$\Hom(\Phi_*A_2, \Phi_*F'[1])=\Hom(\Phi^*\Phi_*A_2, F'[1]) = \Hom(F',T_1(\Phi^*\Phi_*A_2)[1])\neq 0,$$  contradicting $F' \in \mathcal{P}_{\sigma_1}(2, +\infty)$ as $T_1(\Phi^*\Phi_*A_2)$ is $\sigma_1$-semistable of phase $\phi_1(A_2)=\frac{1}{2}$. 
\bigskip

\textbf{Step 2.} Now we know $\Phi^*F \in \cA_1$ and we only need to show that it is $\sigma_1$-semistable. Assume not, and let $F^{\max}$ (resp.~$F^{\min}$) be the factor with the maximum (resp.~minimum) phase in the Harder--Narasimhan filtration of $\Phi^*F$, so $F^{\max}, F^{\min}\in \cA_1$ and 
\begin{equation}\label{eq-slope-1}
\mu_1(F^{\max})>\mu_1(\Phi^*F)>\mu_1(F^{\min}).
\end{equation}
Using Lemma \ref{lem-mukai}, one can show via the same argument as in Step 1 of the proof of Lemma \ref{part-1} that there exists $A \in \{F^{\min}, F^{\max}\}$ such that $\Phi_*A$ and $\Phi_*T_1(A)$ are $\sigma_2$-semistable of the same phase $$\phi_{\sigma_2}(\Phi_*A)=\phi_{\sigma_2}(\Phi_*T_1(A)) = \phi_{\sigma_1}(A).$$ 
However, $0 \neq \Hom(F^{\max}, \Phi^*F) = \Hom(\Phi_*T_1(F^{\max}), F)$ and $0 \neq \Hom(\Phi^*F, F^{\min}) = \Hom(F, \Phi_*F^{\min})$, which are not possible by \eqref{eq-slope-1}.

\end{proof}

\begin{lemma}\label{lem-two-factor-stable-pull}
Let $F\in \4$ be a $\sigma_2$-stable object with $\ext^1(F,F)=2k$. If $\Phi^*F\in \3$ is not $\sigma_1$-semistable with the Harder--Narasimhan filtration
\begin{equation}\label{hn-f}
    F_1\to \Phi^*F\to F_2
\end{equation}
such that $F_1$ and $F_2$ are $\sigma_1$-stable, then there exists $i\in \{1,2\}$ such that $\ext^1(F_i, F_i)<2k$.
\end{lemma}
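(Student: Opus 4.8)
The proof will mirror that of Lemma~\ref{lem-two-factor-stable-push}, with the roles of $\Phi_*$ and $\Phi^*$ (and of $\3$ and $\4$) interchanged; the only genuinely new feature is that one must carry the involution $T_1$ through all the vanishing statements, since $\3$ is merely fractional Calabi--Yau. First I would reduce to a single bad configuration. Since $\sigma_1$ is $T_1$-invariant, Lemma~\ref{lem-T-fix-phase} gives $\phi_1(T_1 F_2)=\phi_1(F_2)<\phi_1(F_1)$, so $\Hom(F_1,F_2)=0$ and, by Serre duality with $S_{\3}=T_1\circ[2]$, also $\Hom(F_2,F_1[2])\cong\Hom(F_1,T_1 F_2)^{\vee}=0$. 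Hence the Weak Mukai Lemma~\ref{lem-mukai} applies to \eqref{hn-f}, yielding $\ext^1(F_1,F_1)+\ext^1(F_2,F_2)\leq\ext^1(\Phi^*F,\Phi^*F)$, while Lemma~\ref{lem-rhom-back} bounds the right-hand side by $4k$. If the conclusion failed, i.e.\ $\ext^1(F_i,F_i)\geq 2k$ for both $i$, then $\ext^1(F_1,F_1)=\ext^1(F_2,F_2)=2k$ and $\ext^1(\Phi^*F,\Phi^*F)=4k$, which by Lemma~\ref{lem-rhom-back} forces case~(ii) with $\delta=1$: $\RHom(\Phi^*F,\Phi^*F)=\CC^2\oplus\CC^{4k}[-1]\oplus\CC^2[-2]$, $F\cong T_2(F)$, and \eqref{exact-4} splits, so $\Phi_*\Phi^*F\cong F\oplus T_2(F)\cong F\oplus F$. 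It remains to rule this out.

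Next I would feed the triangle $F_2[-1]\to F_1\to\Phi^*F$ into the spectral sequence of Lemma~\ref{spec-seq}, which abuts to $\Ext^*(\Phi^*F,\Phi^*F)$. After a common $\GL$-translate $F_1,F_2$ lie in a heart of homological dimension $2$ (Lemma~\ref{lem-lower bound on ext}(2)), and the Euler pairing on $\Knum(\3)$ is symmetric since $T_1$ acts trivially there; combined with the vanishings above, the $E_1$-page is concentrated so that $E_1^{0,1}=\Ext^1(F_1,F_1)\oplus\Ext^1(F_2,F_2)=\CC^{4k}=\Ext^1(\Phi^*F,\Phi^*F)$ already, while $\Ext^1(F_2,F_1)$ occupies $E_1^{1,0}$. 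Comparing abutments forces $d_1^{0,0}\colon\Hom(F_1,F_1)\oplus\Hom(F_2,F_2)=\CC^2\to\Ext^1(F_2,F_1)$ to be surjective (otherwise $E_\infty^{1,0}\neq 0$ would enlarge $\Ext^1(\Phi^*F,\Phi^*F)$) with the diagonal in its kernel; reading off the degree-zero abutment gives either (a)~$\Hom(F_2,F_1)=\Ext^1(F_2,F_1)=0$, whence \eqref{hn-f} splits, or (b)~$\Hom(F_2,F_1)=\Ext^1(F_2,F_1)=\CC$ and $\chi(F_2,F_1)=0$, whence $T_1 F_i\cong F_i$ for $i=1,2$. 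Case~(a) is excluded by Krull--Schmidt: $\Phi^*F\cong F_1\oplus F_2$ gives $F\oplus F\cong\Phi_*\Phi^*F\cong\Phi_*F_1\oplus\Phi_*F_2$, and since $F$ is $\sigma_2$-stable and neither $\Phi_*F_i$ vanishes (by \eqref{exact-3}), this forces $\Phi_*F_1\cong\Phi_*F_2$, hence $\Phi^*\Phi_*F_1\cong\Phi^*\Phi_*F_2$; but by \eqref{exact-3} and Lemma~\ref{lem-T-fix-phase} these are $\sigma_1$-semistable of the distinct phases $\phi_1(F_1)$ and $\phi_1(F_2)$, a contradiction.

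Finally, in case~(b) I would reproduce the diagram chase of Case~II of Lemma~\ref{lem-two-factor-stable-push}, this time applying $\Phi_*$ to \eqref{hn-f} to obtain the triangle $\Phi_*F_1\xrightarrow{h}F\oplus F\xrightarrow{h'}\Phi_*F_2$. Using the long exact sequences obtained by applying $\Hom(-,F_1)$ and $\Hom(-,F_2)$ to \eqref{hn-f}, the identities $\Hom(F_2,F_1)=\Ext^1(F_2,F_1)=\CC$, the non-splitness of \eqref{hn-f}, and the adjunctions of \ref{c-adjoints} (simplified by $T_1 F_i\cong F_i$), one finds $\Hom(F,\Phi_*F_1)=\Hom(F,\Phi_*F_2)=\Hom(\Phi_*F_2,F)=\CC$, so that both $h$ and $h'$ are non-zero (if $h=0$ the triangle would split, forcing $\Phi_*F_1$ to be a shift of a sum of copies of $F$, incompatible with $\Hom(F,\Phi_*F_1)\neq 0$). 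Picking $s\colon F\oplus F\to F$ with $s\circ h\neq 0$ (so $\cone(s)[-1]\cong F$), setting $F'':=\cone(s\circ h)[-1]$, and assembling the $3\times 3$ diagram exactly as in loc.~cit., one shows $\Hom(F,F'')=0$, deduces that the resulting column splits so $\Phi_*F_1\cong F\oplus F''$, and hence
\[\Hom(\Phi_*F_1,\Phi_*F_2)=\Hom(F\oplus F'',\Phi_*F_2)\neq 0.\]
By the adjunction in \ref{c-adjoints} (again using $T_1 F_1\cong F_1$) the left-hand side equals $\Hom(F_1,\Phi^*\Phi_*F_2)$, which must vanish because $\Phi^*\Phi_*F_2$ is $\sigma_1$-semistable of phase $\phi_1(F_2)$ by \eqref{exact-3} and Lemma~\ref{lem-T-fix-phase}, while $F_1$ is $\sigma_1$-stable of strictly larger phase $\phi_1(F_1)$ --- a contradiction. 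The bulk of the work, and the main obstacle, is this last diagram chase together with the bookkeeping needed to track $T_1$; everything else is a routine transcription of the push-forward case.
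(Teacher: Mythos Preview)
Your proof is correct and follows essentially the same approach as the paper: reduce via the Weak Mukai Lemma and Lemma~\ref{lem-rhom-back} to the extremal configuration, rule out the split case by Krull--Schmidt, and dispatch the non-split case with the same $3\times 3$ diagram chase. The only cosmetic difference is that at the final step you invoke $T_1F_1\cong F_1$ to use the adjunction $\Phi_*T_1\dashv\Phi^*$, whereas the paper uses $\Phi^*\dashv\Phi_*$ directly to write $\Hom(\Phi_*F_1,\Phi_*F_2)=\Hom(\Phi^*\Phi_*F_1,F_2)$ and reaches the contradiction without that extra observation.
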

\begin{proof}
    The proof is similar to the proof of Lemma \ref{lem-two-factor-stable-push}, so we only outline the main steps. By Lemma \ref{lem-mukai} and \ref{lem-rhom-pushfor}, we only need to consider the case that $\ext^1(F_1, F_1) = \ext^1(F_2, F_2) = 2k$ and $\ext^1(\Phi^*F, \Phi^*F) = 4k$. Then Lemma \ref{lem-rhom-back} implies that $\Hom(\Phi^*F, \Phi^*F) = \C^2$, $T_2(F) = F$ and $\Phi_*\Phi^*F \cong F \oplus F$. By \ref{c-adjoints}, we have $T_1(\Phi^*F)\cong \Phi^*(T_2(F))\cong \Phi^*F$.

    As in Lemma \ref{lem-two-factor-stable-push}, we get a spectral sequence with the first page
 $$ E^{p,q}_1= \begin{array}{cc|cc}
	0 & 0 & 0 & 0\\
	0 & \Ext^2(F_1,F_2) & 0 & 0\\
	0 & \Ext^1(F_1,F_2) & \Ext^2(F_1,F_1)\oplus \Ext^2(F_2,F_2) & 0\\
	0 & 0 & \CC^{2n} & 0 \\
	0 & 0 & \CC^2 & \Ext^1(F_2,F_1) \\ \hline
	0 & 0 &  0 & \Hom(F_2,F_1)
	\end{array}$$
 which degenerates at $E_3$ and converges to $\Ext^*(\Phi^*F, \Phi^*F)$. Moreover, $d^{0,0}_1$ has a non-zero kernel and is surjective. We know that \eqref{hn-f} does not split, otherwise,  the $\sigma_2$-stability of $F$ forces $\Phi_*F_1 \cong \Phi_*F_2 \cong F$, and so $\Phi^*\Phi_*F_1 \cong \Phi^*\Phi_*F_2$, which is not possible by the phase ordering of $F_1$ and $F_2$. Therefore,
 \begin{equation}\label{h-c}
     \Ext^1(F_2, F_1) = \C = \Hom(F_2, F_1). 
 \end{equation}
 Taking $\Phi_*$ from \eqref{hn-f} gives the exact triangle $\Phi_*F_1 \xrightarrow{h} F \oplus F \xrightarrow{h'} \Phi_*F_2$ so that both $h$ and $h'$ are non-zero because 
 \begin{equation}\label{hh-c}
     \Hom(F, \Phi_*F_2) = \C = \Hom(F, \Phi_*F_1). 
 \end{equation}
 Therefore, we can find an exact triangle $F \xrightarrow{s'} F \oplus F \xrightarrow{s} F$ so that both $s \circ h$ and $h' \circ s'$ are non-zero. Define $G \coloneqq \cone(s \circ h)[-1]$, then we have the following commutative diagram: 
\[\begin{tikzcd}
	G & F& {\Phi_*F_2} \\
	{\Phi_*F_1} & {F\oplus F} & {\Phi_*F_2} \\
	F & F & 0
	\arrow[no head, from=3-1, to=3-2]
	\arrow[from=2-2, to=3-2]
	\arrow["f", from=2-1, to=3-1]
	\arrow["h"', from=2-1, to=2-2]
 \arrow["h'"', from=2-2, to=2-3]
 \arrow["s'"', from=1-2, to=2-2]
 \arrow["s"', from=2-2, to=3-2]
	\arrow[shift right, no head, from=3-1, to=3-2]
	\arrow[from=3-2, to=3-3]
	\arrow[from=2-2, to=2-3]
	\arrow[from=2-3, to=3-3]
	\arrow[no head, from=1-3, to=2-3]
	\arrow[shift right, no head, from=1-3, to=2-3]
	\arrow[from=1-2, to=2-2]
	\arrow[from=1-1, to=2-1]
	\arrow[from=1-1, to=1-2]
	\arrow["g"', from=1-2, to=1-3]
\end{tikzcd}\]
Taking $\Hom(F, -)$ from the first row and applying \eqref{hh-c} show that $\Hom(F, G) = 0$. Thus taking $\Hom(F, -)$ from the first column shows that the first column is splitting. Then we have $\Phi_*F_1 \cong G \oplus F$, which implies $\Hom(\Phi_*F_1, \Phi_*F_2) \neq 0$ and contradicts  $\phi_1(F_1)=\phi_1(\Phi^*\Phi_*F_1) > \phi_1(F_2)$. 
\end{proof}

\section{Gushel--Mukai varieties}\label{sec-very-general-GM}

This section aims to utilize Theorem \ref{thm-main} from the preceding section to establish Theorem \ref{thm-intro-section}, or more precisely, Theorem \ref{thm-very-general-GM4}. At the end of this section, we will use a deformation argument to generalize a part of Theorem \ref{thm-very-general-GM4} from very general cases to general cases (cf.~Theorem \ref{thm-general-gm4}). We begin by investigating some properties of the Kuznetsov components of Gushel--Mukai varieties and then explore the required conditions mentioned in the earlier section.

\subsection{Gushel--Mukai varieties}\label{sec-GM-1} Recall that a Gushel--Mukai (GM) variety $X$ of dimension $n$ is a smooth intersection $$X=\mathrm{Cone}(\Gr(2,5))\cap Q,$$ 
where $\mathrm{Cone}(\Gr(2,5))\subset \PP^{10}$ is the projective cone over the \text{Plücker} embedded Grassmannian $\Gr(2,5)\subset \PP^9$, and $Q\subset \PP^{n+4}$ is a quadric hypersurface. Then $n\leq 6$ and we have a natural morphism $\gamma_X\colon X\to \Gr(2,5)$. We say $X$ is \emph{ordinary} if $\gamma_X$ is a closed immersion, and \emph{special} if $\gamma_X$ is a double covering onto its image. 

\begin{definition}
Let $X$ be a GM variety of dimension $n=4$ or $6$. We say $X$ is \emph{Hodge-special} if 
\[\mathrm{H}^{\frac{n}{2}, \frac{n}{2}}(X)\cap \mathrm{H}_{\mathrm{van}}^n(X, \QQ) \neq 0,\]
where $\mathrm{H}_{\mathrm{van}}^n(X, \QQ):=\mathrm{H}_{\mathrm{van}}^n(X, \ZZ)\otimes \QQ$ and $\mathrm{H}_{\mathrm{van}}^n(X, \ZZ)$ is defined as the orthogonal complement of $$\gamma_X^*\mathrm{H}^n(\Gr(2,5), \ZZ)\subset \mathrm{H}^n(X, \ZZ)$$ with respect to the intersection form.
\end{definition}

By \cite[Corollary 4.6]{debarre2015special}, $X$ is non-Hodge-special when $X$ is very general among all ordinary GM varieties of the same dimension or very general among all special GM varieties of the same dimension.

The semi-orthogonal decomposition of $\D^b(X)$ for a GM variety $X$ of dimension $n\geq 3$ is given by 
\[\D^b(X)=\langle\Ku(X),\oh_X,\cU^{\vee}_X,\cdots,\oh_X((n-3)H),\cU_X^{\vee}((n-3)H)\rangle,\]
where $\cU_X$ is the pull-back of the tautological subbundle via $\gamma_X$. We refer to $\Ku(X)$ as the Kuznetsov component of $X$. We define the projection functors $$\mathrm{pr}_X:=\bR_{\cU_X}\bR_{\oh_X(-H)}\bL_{\oh_X}\bL_{\cU^{\vee}_X}\cdots \bL_{\oh_X((n-4)H)}\bL_{\cU^{\vee}_X((n-4)H)}\colon \D^b(X)\to \Ku(X)$$
when $n\geq 4$ and $\pr_X:=\bL_{\oh_X}\bL_{\cU^{\vee}_X}$ when $n=3$.


When $n=3$, according to the proof of \cite[Proposition 3.9]{kuznetsov2009derived}, $\Knum(\Ku(X))$ is a rank two lattice generated by $\lambda_1$ and $\lambda_2$, where
\begin{equation}\label{lambda}
    \ch(\lambda_1)=-1+\frac{1}{5}H^2,\quad \ch(\lambda_2)=2-H+\frac{1}{12}H^3,
\end{equation}
with the Euler pairing
\begin{equation}\label{eq-matrix-odd}
\left[               
\begin{array}{cc}   
-1 & 0 \\  
0 & -1\\
\end{array}
\right].
\end{equation} 

When $n=4$, there is a rank two sublattice in~$\Knum(\Ku(X))$ generated by $\Lambda_1$ and $\Lambda_2$, where
\begin{equation}\label{Lambda}
    \ch(\Lambda_1)=-2+(H^2-\gamma^*_X \sigma_2)-\frac{1}{20}H^4,
\quad \ch(\Lambda_2)=4-2H+\frac{1}{6}H^3,
\end{equation}
whose Euler pairing is 
\begin{equation}\label{eq-matrix-even}
\left[               
\begin{array}{cc}   
-2 & 0 \\  
0 & -2\\
\end{array}
\right],
\end{equation}
where $\gamma^*_X \sigma_2$ is the pull-back of the Schubert cycle $\sigma_2\in \mathrm{H}^4(\Gr(2, 5), \ZZ)$. When $X$ is non-Hodge-special, by \cite[Proposition 2.25]{kuznetsov2018derived}, we have $\Knum(\Ku(X))=\ZZ\Lambda_1\oplus \ZZ\Lambda_2$. 

When $n=5$, by \cite[Corollary 6.5]{kuznetsov2019categorical}, we can find a smooth GM threefold $X'$ with an equivalence $\Ku(X')\simeq \Ku(X)$. Hence, $\Knum(\Ku(X))$ is also a rank two lattice with the Euler pairing \eqref{eq-matrix-odd}. 

When $n=6$, by \cite[Corollary 6.5]{kuznetsov2019categorical} again, we can find a smooth GM fourfold $X'$ with an equivalence $\Ku(X')\simeq \Ku(X)$. Hence, $\Knum(\Ku(X))$ contains a rank two lattice whose Euler pairing is the same as \eqref{eq-matrix-even}, which is the whole numerical Grothendieck group when $X$ is non-Hodge-special by \cite[Corollary 4.6]{debarre2015special} and \cite[Proposition 2.25]{kuznetsov2018derived}.

Now let $Y$ (resp.~$X$) be a GM variety of dimension $n-1$ (resp.~$n$) where $n\geq 4$, and $j \colon Y \hookrightarrow X$ represents a hyperplane section. For any object $E\in \Ku(Y)$, we have
\begin{equation}\label{pr}
 \pr_X(j_*E)=\bR_{\cU_X}\bR_{\oh_X(-H)}(j_*E).   
\end{equation}
This can be easily deduced from the adjunction of $j^*$ and $j_*$. For example, when $X$ is a GM fourfold, we have
    \[\RHom_X(\cU^{\vee}_X,j_*E)=\RHom_Y(\cU^{\vee}_Y,E)=0,\quad \RHom_X(\oh_X,j_*E)=\RHom_Y(\oh_Y,E)=0\]
    since $E\in \Ku(Y)$.



\begin{lemma} \label{pull-back-in-ku}
    For any object $F\in \Ku(X)$, we have $j^*F\in \Ku(Y)$.
\end{lemma}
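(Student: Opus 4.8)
The plan is to verify directly that $j^*F$ is right-orthogonal to the exceptional collection whose right orthogonal complement is $\Ku(Y)$, using the Koszul resolution of $j_*\oh_Y$ together with the projection formula and $(j^*,j_*)$-adjunction. Recall that
\[\Ku(Y)=\langle\oh_Y,\cU^{\vee}_Y,\dots,\oh_Y((n-4)H),\cU^{\vee}_Y((n-4)H)\rangle^{\perp},\]
and that, since $\gamma_Y=\gamma_X\circ j$, we have $\cU_Y\cong j^*\cU_X$ and $\oh_Y(kH)\cong j^*\oh_X(kH)$, so every generator of this collection is of the form $j^*G$ with $G$ one of $\oh_X(kH),\cU^{\vee}_X(kH)$ for $0\le k\le n-4$.

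First I would record the Koszul triangle $\oh_X(-H)\to\oh_X\to j_*\oh_Y$ in $\D^b(X)$ coming from the fact that $Y\subset X$ is a divisor in the class $H$. Tensoring with $F$ and applying the projection formula $j_*(j^*F\otimes(-))\cong F\otimes j_*(-)$ gives the exact triangle
\[F(-H)\to F\to j_*j^*F.\]
Next, for each generator $G\in\{\oh_X(kH),\cU^{\vee}_X(kH)\}$ with $0\le k\le n-4$, apply $\RHom_X(G,-)$ to this triangle. Since $F\in\Ku(X)$ and $G$ lies in the exceptional collection $\langle\oh_X,\cU^{\vee}_X,\dots,\oh_X((n-3)H),\cU^{\vee}_X((n-3)H)\rangle$, we get $\RHom_X(G,F)=0$; and $\RHom_X(G,F(-H))=\RHom_X(G(H),F)=0$ as well, because $G(H)$ is again a member of the same exceptional collection (here the hypothesis $0\le k\le n-4$ is exactly what guarantees $0\le k+1\le n-3$). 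Hence $\RHom_X(G,j_*j^*F)=0$.

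Finally, by $(j^*,j_*)$-adjunction, $\RHom_Y(j^*G,j^*F)\cong\RHom_X(G,j_*j^*F)=0$ for every generator $j^*G$ of the collection defining $\Ku(Y)$, so $j^*F\in\Ku(Y)$. There is essentially no serious obstacle in this argument; the only points requiring care are the identification $\cU_Y\cong j^*\cU_X$ and the bookkeeping on the range of twists, which is precisely arranged so that both $G$ and $G(H)$ stay inside the exceptional collection of $\D^b(X)$.
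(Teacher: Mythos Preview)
Your proof is correct and follows essentially the same approach as the paper: both reduce the orthogonality $\RHom_Y(j^*G,j^*F)=0$ via adjunction to vanishings on $X$, then use the Koszul triangle for the divisor $Y\subset X$ together with the fact that both $G$ and $G(H)$ lie in the exceptional collection defining $\Ku(X)$. The only cosmetic difference is that the paper phrases the adjunction via the \emph{left} adjoint $j_*(-\otimes\oh_Y(H))[-1]$ of $j^*$ and applies $\Hom_X(-,F)$ to the triangles $G\to G(H)\to j_*(j^*G)(H)$, whereas you use the standard $(j^*,j_*)$ adjunction and apply $\Hom_X(G,-)$ to the triangle $F(-H)\to F\to j_*j^*F$; unwinding either gives exactly the same pair of vanishings $\RHom_X(G,F)=\RHom_X(G(H),F)=0$.
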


\begin{proof}
    By Serre duality and adjunction, the functor $j^!=j_*\circ -\otimes \oh_Y(H)[-1]$ is the left adjoint of $j^*$. Thus, in the GM fourfold case,  we need to show 
    \[\RHom_X(j_*(\cU^{\vee}_Y(H))[-1], F)=0,\quad \RHom_X(j_*(\oh_Y(H))[-1], F)=0.\]
Then the result follows from the fact that $F\in \Ku(X)$ and applying $\Hom_X(-, F)$ to the exact triangles $\oh_X\to\oh_X(H)\to j_*\oh_Y(H)$ and $\cU^{\vee}_X\to \cU^{\vee}_X(H)\to j_*\cU^{\vee}_Y(H)$. The other cases are similar.
\end{proof}

Therefore, we have well-defined functors
\[\pr_X\circ j_* \colon \Ku(Y) \to \Ku(X) \qquad \text{and} \qquad j^*\colon \Ku(X)\to \Ku(Y).\]

\begin{lemma}\label{lem-involution}
Let $X$ be a GM variety of dimension $n \geq 3$. We define 
    \[T_X:=\bR_{\cU_X} \bR_{\oh_X(-H)}\circ (-\otimes \oh_X(-H))[1]\colon \Ku(X)\to \Ku(X).\]
\begin{enumerate}
    \item If $n$ is odd, then $T_X=S_{\Ku(X)}[-2]$ is an involution on $\Ku(X)$ that acts trivially on $\Knum(\Ku(X))$.

    \item If $n$ is even and $X$ is non-Hodge-special, then $T_X$ is an involution on $\Ku(X)$ that acts trivially on $\Knum(\Ku(X))$. Moreover, $\Ku(X)$ has the Serre functor $S_{\Ku(X)}=[2]$
\end{enumerate}

\end{lemma}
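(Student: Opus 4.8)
The plan is to recognise $T_X$ as the \emph{rotation functor} attached to the rectangular Lefschetz decomposition of $\D^b(X)$ and to deduce everything from the standard relation between that rotation functor and the Serre functor of the residual category. First, since $\cU_X$ is pulled back from $\Gr(2,5)$, on which the tautological subbundle satisfies $\cU^{\vee}\cong\cU\otimes(\det\cU)^{\vee}$ with $\det\cU_X\cong\oh_X(-H)$, we have $\cU_X\cong\cU^{\vee}_X(-H)$, so
\[
T_X=\bR_{\cU^{\vee}_X(-H)}\,\bR_{\oh_X(-H)}\circ\bigl(-\otimes\oh_X(-H)\bigr)[1].
\]
Moreover, by \cite[Corollary 6.5]{kuznetsov2019categorical}, for $n=5$ (resp.~$n=6$) there is a GM threefold (resp.~fourfold) $X'$ and an equivalence $\Ku(X)\simeq\Ku(X')$ compatible with the rectangular Lefschetz structures, hence with the rotation functors and so with $T_X$ and $T_{X'}$; it therefore suffices to treat $n=3$ and $n=4$.

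Write $\cB=\langle\oh_X,\cU^{\vee}_X\rangle$, so that $\D^b(X)=\langle\Ku(X),\cB,\cB(H),\dots,\cB((n-3)H)\rangle$ is a rectangular Lefschetz decomposition with $n-2$ blocks and $\omega_X\cong\oh_X(-(n-2)H)$, i.e.~$S_{\D^b(X)}=-\otimes\oh_X(-(n-2)H)[n]$. Rotating the last block $\cB((n-3)H)$ to the front via $S_{\D^b(X)}$ turns it into $\cB(-H)$, giving
\[
\D^b(X)=\langle\cB(-H),\Ku(X),\cB,\dots,\cB((n-4)H)\rangle,
\]
whereas twisting the whole decomposition by $\oh_X(-H)$ gives
\[
\D^b(X)=\langle\Ku(X)(-H),\cB(-H),\cB,\dots,\cB((n-4)H)\rangle.
\]
These two semi-orthogonal decompositions share the final blocks $\cB,\dots,\cB((n-4)H)$; comparing what precedes these and using uniqueness of semi-orthogonal decompositions, the assignment $E\mapsto\bR_{\cB(-H)}(E\otimes\oh_X(-H))$ is an autoequivalence $\rho_X$ of $\Ku(X)$, and since $\bR_{\cB(-H)}=\bR_{\cU^{\vee}_X(-H)}\bR_{\oh_X(-H)}$ we get $\rho_X=T_X[-1]$; in particular $T_X$ is a well-defined autoequivalence of $\Ku(X)$. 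Iterating the rotation $n-2$ times (equivalently, by the standard computation of the Serre functor of the residual category of a rectangular Lefschetz decomposition, cf.~\cite{kuz:fractional-CY}),
\begin{equation}\label{eq-rotation-serre}
S_{\Ku(X)}\;\cong\;\rho_X^{\,n-2}[n]\;=\;T_X^{\,n-2}[2].
\end{equation}

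For $n$ odd (so $n=3$, or $n=5$ via the reduction), \eqref{eq-rotation-serre} reads $S_{\Ku(X)}\cong T_X[2]$, i.e.~$T_X\cong S_{\Ku(X)}[-2]$. By \cite[Proposition 2.6]{kuznetsov2018derived}, $S_{\Ku(X)}=\sigma[2]$ for an involution $\sigma$, so $T_X\cong\sigma$ is an involution. For the action on $\Knum(\Ku(X))=\ZZ\lambda_1\oplus\ZZ\lambda_2$, Serre duality together with $S_{\Ku(X)}\cong T_X[2]$ gives $\ext^i(E,F)=\ext^{2-i}(F,T_XE)$, hence $\chi(E,F)=\chi(F,T_XE)=\chi(T_XE,F)$ for all $E,F\in\Ku(X)$; since the Euler pairing \eqref{eq-matrix-odd} is non-degenerate, $\chi(E-T_XE,F)=0$ for every $F$ forces $[T_XE]=[E]$. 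For $n$ even (so $n=4$, or $n=6$ via the reduction), $S_{\Ku(X)}=[2]$ by \cite[Proposition 2.6]{kuznetsov2018derived} --- this is the last assertion of (2) --- and then \eqref{eq-rotation-serre} gives $T_X^2[2]\cong[2]$, i.e.~$T_X^2\cong\mathrm{id}$. When $X$ is non-Hodge-special, $\Knum(\Ku(X))=\ZZ\Lambda_1\oplus\ZZ\Lambda_2$ and $T_X$ is an involutive isometry of the Euler pairing \eqref{eq-matrix-even}; applying the $K$-theoretic mutation formula $[\bR_A G]=[G]-\chi(G,A)[A]$ (for the exceptional objects $A=\oh_X(-H)$ and $A=\cU_X$) to the Chern characters \eqref{Lambda} is a direct computation giving $T_X\Lambda_i=\Lambda_i$, so $T_X$ acts trivially on $\Knum(\Ku(X))$ --- alternatively, $[\pr_X(\oh_x)]=\Lambda_1+2\Lambda_2$ for a general point $x\in X$ (cf.~Corollary \ref{cor-rational-embedding}) is fixed by $T_X$, and the only involutive isometry of \eqref{eq-matrix-even} fixing $\Lambda_1+2\Lambda_2$ is the identity.

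The main obstacles I expect are: (i) making the comparison of semi-orthogonal decompositions above precise enough at the level of \emph{functors} to extract \eqref{eq-rotation-serre} itself, and not merely the identification of $\Ku(X)$ as a subcategory --- this is where the bookkeeping with mutations and shifts is heaviest; and (ii) checking that the equivalences of \cite{kuznetsov2019categorical} in dimensions $5$ and $6$ genuinely intertwine $T_X$ with its lower-dimensional counterpart, which is what lets one conclude $T_X^2\cong\mathrm{id}$ in those dimensions rather than only the weaker $T_X^{\,2(n-2)}\cong\mathrm{id}$ that the rotation identity gives on its own.
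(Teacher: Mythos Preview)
Your proof is correct and follows essentially the same route as the paper: both identify $T_X$ with the rotation functor of the rectangular Lefschetz decomposition and exploit its relation to the Serre functor. The paper is much terser because it cites \cite[Theorem 4.15]{bayer2022kuznetsov} directly for the involution property (uniformly in all dimensions) and \cite[Proposition 5.7]{bayer2022kuznetsov} for the trivial action on $\Knum$ in the even case, whereas you re-derive these from the relation \eqref{eq-rotation-serre} and a direct Chern character computation. For part (1), your Euler-pairing argument $\chi(v,T_Xw)=\chi(w,v)=\chi(v,w)$ is exactly the paper's.

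One point worth flagging: your reduction from $n=5,6$ to $n=3,4$ via \cite{kuznetsov2019categorical} requires that the duality equivalence intertwines the rotation functors, and you correctly identify this as obstacle (ii). This is precisely what the paper sidesteps by citing \cite[Theorem 4.15]{bayer2022kuznetsov}, which proves $T_X^2\cong\mathrm{id}$ directly for all GM varieties without the reduction. Your formula \eqref{eq-rotation-serre} alone gives only $T_X^4\cong\mathrm{id}$ for $n=6$ and $T_X^3\cong S_{\Ku(X)}[-2]$ for $n=5$, so the reduction step is genuinely needed in your approach; citing \cite{bayer2022kuznetsov} would close it cleanly. Also, your alternative argument for triviality on $\Knum$ in the even case via the class $[\pr_X(\oh_x)]=\Lambda_1+2\Lambda_2$ (Corollary \ref{cor-rational-embedding}) is circular, since that corollary appears later and depends on this lemma --- stick with the direct computation.
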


\begin{proof}
Note that $T_X$ is the inverse of $\bL_{\oh_X}\bL_{\cU^{\vee}_X}\circ (-\otimes \oh_X(H))[-1]|_{\Ku(X)}$, then both of them are involutions on $\Ku(X)$ by \cite[Theorem 4.15]{bayer2022kuznetsov}. Thus, (2) follows from \cite[Proposition 5.7]{bayer2022kuznetsov}. And (1) can be deduced from a direct computation using the Euler pairing, as 
\[\chi(v, T_X(w))=\chi(v, S_{\Ku(X)}(w))=\chi(w, v)=\chi(v, w)\]
for any two classes $v,w\in \Knum(\Ku(X))$.
\end{proof}

\begin{lemma}\label{lem-adjoint-functors}

\begin{enumerate}
    \item If $n=4,6$, we have adjoint pairs 
    \[\pr_X\circ j_*\circ T_Y\dashv j^* \dashv \pr_X\circ j_*\]
    and isomorphisms of functors $j^*\circ T_X\cong T_Y\circ j^*$ and $\pr_X\circ j_* \circ T_Y\cong T_X\circ \pr_X\circ j_*$.

    \item If $n=5$, we have adjoint pairs
    \[j^* \dashv \pr_X\circ j_* \dashv j^*\circ T_X\]
    and isomorphisms of functors $j^*\circ T_X\cong T_Y\circ j^*$ and $\pr_X\circ j_* \circ T_Y\cong T_X\circ \pr_X\circ j_*$.
\end{enumerate}

\end{lemma}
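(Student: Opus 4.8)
The plan is to deduce everything from the single adjunction $j^{*}\dashv \pr_{X}\circ j_{*}$, which I will establish for every $n\ge 4$, and then to read off the rest formally using the Serre functors recorded in Lemma \ref{lem-involution}. On the full derived categories we have the standard triple $j^{*}\dashv j_{*}\dashv j^{!}$, and since $Y\subset X$ is a smooth hyperplane section, Grothendieck duality gives $j^{!}(-)\cong j^{*}(-)\otimes\oh_{Y}(H)[-1]$. The one geometric input needed is the vanishing
\[
\RHom_{X}(F,\oh_{X}(-H))=0=\RHom_{X}(F,\cU_{X})\qquad\text{for }F\in\Ku(X),
\]
together with its $Y$-analogue $\RHom_{Y}(\oh_{Y}(iH),E)=\RHom_{Y}(\cU_{Y}^{\vee}(iH),E)=0$ for $E\in\Ku(Y)$ and $0\le i\le n-3$ (the latter is exactly what yields $\pr_{X}(j_{*}E)=\bR_{\cU_{X}}\bR_{\oh_{X}(-H)}(j_{*}E)$, cf.\ \eqref{pr}). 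Both follow from Serre duality on $X$: using $\omega_{X}=\oh_{X}(-(n-2)H)$ and $\cU_{X}^{\vee}\cong\cU_{X}(H)$, the two groups on $X$ are, up to shift, the linear duals of $\RHom_{X}(\oh_{X}((n-3)H),F)$ and $\RHom_{X}(\cU_{X}^{\vee}((n-3)H),F)$, which vanish because $\oh_{X}((n-3)H)$ and $\cU_{X}^{\vee}((n-3)H)$ lie strictly to the right of $\Ku(X)$ in the semiorthogonal decomposition of $\D^{b}(X)$ (and likewise over $Y$).

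Granting this, I apply $\Hom_{X}(F,-)$ to the two mutation triangles that successively produce $\bR_{\oh_{X}(-H)}$ and $\bR_{\cU_{X}}$ out of $j_{*}E$; the displayed vanishings kill the correction terms, so $\Hom_{X}(F,j_{*}E)\cong\Hom_{X}(F,\pr_{X}(j_{*}E))$, naturally in $F$ and $E$. Composing with $j^{*}\dashv j_{*}$ gives $\Hom_{\Ku(Y)}(j^{*}F,E)\cong\Hom_{\Ku(X)}(F,\pr_{X}(j_{*}E))$, i.e.\ $j^{*}\dashv\pr_{X}\circ j_{*}$. Next I use the standard fact that for $A\colon\mathcal{C}\to\mathcal{D}$ with right adjoint $B$, the functor $B$ has right adjoint $S_{\mathcal{D}}\circ A\circ S_{\mathcal{C}}^{-1}$ and $A$ has left adjoint $S_{\mathcal{C}}^{-1}\circ B\circ S_{\mathcal{D}}$. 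Taking $A=j^{*}$, $B=\pr_{X}\circ j_{*}$ and substituting the Serre functors from Lemma \ref{lem-involution}, the shifts cancel because $T_{X},T_{Y}$ commute with $[2]$: when $n=4,6$ (so $S_{\Ku(X)}=[2]$, $S_{\Ku(Y)}=T_{Y}\circ[2]$) the left adjoint of $j^{*}$ is $\pr_{X}\circ j_{*}\circ T_{Y}$, giving the chain in (1); when $n=5$ (so $S_{\Ku(X)}=T_{X}\circ[2]$, $S_{\Ku(Y)}=[2]$) the right adjoint of $\pr_{X}\circ j_{*}$ is $j^{*}\circ T_{X}$, giving the chain in (2).

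For the functor identities I first prove $j^{*}\circ T_{X}\cong T_{Y}\circ j^{*}$ by comparing the defining formulas. For $F\in\Ku(X)$, the vanishing $\RHom_{X}(F,\oh_{X})=0$ forces $\bR_{\oh_{X}(-H)}(F(-H))=F(-H)$, so $T_{X}F=\bR_{\cU_{X}}(F(-H)[1])$; likewise $T_{Y}(j^{*}F)=\bR_{\cU_{Y}}((j^{*}F)(-H)[1])$ since $j^{*}F\in\Ku(Y)$ by Lemma \ref{pull-back-in-ku}. Using $j^{*}\cU_{X}=\cU_{Y}$, the triangle $\cU_{X}(-H)\to\cU_{X}\to j_{*}\cU_{Y}$, and $\RHom_{X}(F(-H)[1],\cU_{X}(-H))=\RHom_{X}(F,\cU_{X})[-1]=0$, the base-change map $\RHom_{X}(F(-H)[1],\cU_{X})\to\RHom_{Y}(j^{*}(F(-H)[1]),\cU_{Y})$ is an isomorphism compatible with the mutation triangles, whence $j^{*}(T_{X}F)\cong T_{Y}(j^{*}F)$. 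Finally, $T_{X}^{2}=\mathrm{id}$ and $T_{Y}^{2}=\mathrm{id}$ turn $j^{*}\circ T_{X}\cong T_{Y}\circ j^{*}$ into $j^{*}\cong T_{Y}\circ j^{*}\circ T_{X}$; taking the appropriate one-sided adjoint of both sides (left adjoint if $n=4,6$, right adjoint if $n=5$) and invoking uniqueness of adjoints yields $\pr_{X}\circ j_{*}\cong T_{X}\circ(\pr_{X}\circ j_{*})\circ T_{Y}$, equivalently $\pr_{X}\circ j_{*}\circ T_{Y}\cong T_{X}\circ\pr_{X}\circ j_{*}$.

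The step I expect to be most delicate is promoting the object-level comparisons in the mutation/base-change arguments to genuine natural isomorphisms of functors: one must check that the transformations induced by the adjunction units and by the mutation triangles, and not merely the underlying objects, agree. This is routine bookkeeping once the two $\RHom$-vanishings above are in place, and those vanishings — a short application of Serre duality and the shape of the semiorthogonal decomposition — are the real technical content of the lemma.
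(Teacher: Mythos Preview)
Your overall strategy is exactly the paper's: establish $j^{*}\dashv \pr_{X}\circ j_{*}$ directly, extract the remaining adjoints from the Serre functors, verify $j^{*}\circ T_{X}\cong T_{Y}\circ j^{*}$, and then deduce $\pr_{X}\circ j_{*}\circ T_{Y}\cong T_{X}\circ\pr_{X}\circ j_{*}$ formally from the first identity. The paper in fact obtains the last identity by the very same Yoneda/adjunction trick you use, and for $j^{*}\circ T_{X}\cong T_{Y}\circ j^{*}$ it simply quotes \cite[Lemma~4.12, Corollary~4.19(1)]{kuznetsov:serre-dimension} rather than arguing directly.

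There is, however, a genuine error in your direct argument for $j^{*}\circ T_{X}\cong T_{Y}\circ j^{*}$. You assert that $\RHom_{X}(F,\oh_{X})=0$ for $F\in\Ku(X)$, and use it to simplify $T_{X}F$ to $\bR_{\cU_{X}}(F(-H)[1])$ (and likewise for $T_{Y}$). This vanishing is \emph{false}: the objects $\oh_{X}(-H)$ and $\cU_{X}$ generate $\Ku(X)^{\bot}$, and $\oh_{X}$ does not lie there. Concretely, on a GM threefold one checks that $[\oh_{Y}]\notin\ZZ[\oh_{Y}(-H)]+\ZZ[\cU_{Y}]$ in $\Knum(Y)$, since matching $\ch_{0}$ and $\ch_{1}$ forces the combination $-[\oh_{Y}(-H)]+[\cU_{Y}]$, whose $\ch_{2}$ equals $-\sigma_{1,1}|_{Y}\neq 0$. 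So $\bR_{\oh_{X}(-H)}(F(-H))$ is genuinely different from $F(-H)$, and your simplified formula for $T_{X}F$ is not valid.

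The fix is minor and keeps your base-change idea intact: do not simplify $T_{X}$, and instead show that $j^{*}$ intertwines each of the two right mutations separately. For $\bR_{\oh_{X}(-H)}$, the comparison of mutation triangles requires $\RHom_{X}(F(-H),\oh_{X}(-2H))=\RHom_{X}(F,\oh_{X}(-H))=0$, which is precisely one of your two established vanishings. For $\bR_{\cU_{X}}$ applied to $G:=\bR_{\oh_{X}(-H)}(F(-H)[1])$, you need $\RHom_{X}(G,\cU_{X}(-H))=0$; since $G$ is an extension of $F(-H)[1]$ by copies of $\oh_{X}(-H)$, this reduces to $\RHom_{X}(F,\cU_{X})=0$ (your other vanishing) together with $\RHom_{X}(\oh_{X},\cU_{X})=0$, which holds because $\oh_{X}$ sits to the right of $\cU_{X}$ in the mutated decomposition $\langle\oh_{X}(-H),\cU_{X},\Ku(X),\oh_{X},\cU_{X}^{\vee}\rangle$. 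With this correction your direct proof goes through; alternatively, you can do as the paper does and cite Kuznetsov's result.
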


\begin{proof}
Firstly, we have the adjunction $j^* \dashv \pr_X\circ j_*$ because for any object $F\in \Ku(X)$ and $E\in \Ku(Y)$, we have
\[\Hom_Y(j^*F, E)=\Hom_X(F, j_*E)=\Hom_X(F, \pr_X(j_*E)),\]
where the last equality follows from adjunctions of right mutations as $\pr_X(j_*E)=\bR_{\cU_X}\bR_{\oh_X(-H)}(j_*E)$ by \eqref{pr}. Then the existence of adjoint pairs in (1) and (2) follows from the description of Serre functors on $\Ku(Y)$ and $\Ku(X)$.

As decompositions of $\D^b(Y)$ and $\D^b(X)$ fit into the situation of \cite[Corollary 4.19(1)]{kuznetsov:serre-dimension}, the isomorphism $j^*\circ T_X\cong T_Y\circ j^*$ in (1) follows from \cite[Lemma 4.12]{kuznetsov:serre-dimension}. 
Then for any pair of objects $E,F\in \Ku(X)$, we have functorial isomorphisms
\[\Hom_X(E, \pr_X(j_*T_Y(F)))\cong\Hom_Y(j^*E, T_Y(F))\cong\Hom_Y(T_Y(j^*E), F)\]
\[\cong \Hom_Y(j^*(T_Y(E)), F)\cong \Hom_X(T_X(E), \pr_X(j_*F))\cong \Hom_X(E, T_X(\pr_X(j_*F))),\]
where the first and the fourth isomorphism follow from the adjunction of functors, the second and the last one follow from Lemma \ref{lem-involution} that $T_X$ and $T_Y$ are involutions, and the third one is given by $j^*\circ T_X\cong T_Y\circ j^*$. Therefore, the last statement of (2) follows from the Yoneda Lemma. The last statement of (1) is similar.
\end{proof}



The following will be useful later in our computations. 
\begin{proposition} \label{prop-pushforward}
For any object $E\in \D^b(Y)$, we have
\[\pr_X(j_*E)\cong\pr_X(j_*\pr_Y(E)).\]
\end{proposition}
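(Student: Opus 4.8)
Write $\cC_Y:=\langle \oh_Y,\cU^{\vee}_Y,\dots,\oh_Y((n-4)H),\cU_Y^{\vee}((n-4)H)\rangle$, so that $\D^b(Y)=\langle \Ku(Y),\cC_Y\rangle$ and $\pr_Y$ is the projection onto $\Ku(Y)$. The plan is to show that the exact functor $\pr_X\circ j_*\colon \D^b(Y)\to \Ku(X)$ annihilates the whole subcategory $\cC_Y$. Granting this, for any $E\in \D^b(Y)$ the decomposition triangle $C_E\to E\to \pr_Y(E)$ for the above semi-orthogonal decomposition (with $C_E\in \cC_Y$ and $\pr_Y(E)\in \Ku(Y)$) yields, upon applying $\pr_X\circ j_*$ and using $\pr_X(j_*C_E)=0$, the desired isomorphism $\pr_X(j_*E)\cong \pr_X(j_*\pr_Y(E))$ (the direction of the triangle is immaterial here). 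Since $\cC_Y$ is generated, as a triangulated subcategory, by the objects $\oh_Y(kH)$ and $\cU^{\vee}_Y(kH)$ for $0\le k\le n-4$, and $\pr_X\circ j_*$ is exact, it is enough to prove
\[\pr_X(j_*\oh_Y(kH))=0\qquad\text{and}\qquad \pr_X(j_*\cU^{\vee}_Y(kH))=0\qquad (0\le k\le n-4).\]

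To this end, tensor the ideal sheaf sequence $0\to \oh_X(-H)\to \oh_X\to j_*\oh_Y\to 0$ with the locally free sheaves $\oh_X(kH)$ and $\cU^{\vee}_X(kH)$, using $j^*\cU^{\vee}_X=\cU^{\vee}_Y$; this produces exact triangles in $\D^b(X)$
\[\oh_X((k-1)H)\to \oh_X(kH)\to j_*\oh_Y(kH),\qquad \cU^{\vee}_X((k-1)H)\to \cU^{\vee}_X(kH)\to j_*\cU^{\vee}_Y(kH).\]
Now $\pr_X$ kills $\oh_X(mH)$ and $\cU^{\vee}_X(mH)$ for $0\le m\le n-3$, since these are precisely the exceptional objects whose right orthogonal complement is $\Ku(X)$. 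Hence for $1\le k\le n-4$ (so $k-1\ge 0$) all three vertices of both triangles are killed by $\pr_X$, and the claim follows. The only remaining case is $k=0$, i.e.\ one must show $\pr_X(\oh_X(-H))=0$ and $\pr_X(\cU^{\vee}_X(-H))=0$.

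For this I would argue directly from the formula $\pr_X=\bR_{\cU_X}\bR_{\oh_X(-H)}\bL_{\oh_X}\bL_{\cU^{\vee}_X}\cdots \bL_{\oh_X((n-4)H)}\bL_{\cU^{\vee}_X((n-4)H)}$, using that a left mutation $\bL_G$ through an exceptional $G$ fixes any object $F$ with $\RHom(G,F)=0$, and that $\bR_GG=0$. For $F=\oh_X(-H)$ and $0\le k\le n-4$ one has $\RHom_X(\oh_X(kH),\oh_X(-H))=\mathrm{H}^{\bullet}(X,\oh_X(-(k+1)H))=0$ (as $1\le k+1\le n-3$ and $\mathrm{H}^{\bullet}(X,\oh_X(-jH))=0$ for $1\le j\le n-3$ by Kodaira vanishing, since $X$ is Fano of index $n-2$) and $\RHom_X(\cU^{\vee}_X(kH),\oh_X(-H))=\mathrm{H}^{\bullet}(X,\cU_X(-(k+1)H))=0$ (one of the standard vanishings making $\langle \oh_X,\cU^{\vee}_X,\dots\rangle$ a Lefschetz exceptional collection on a GM variety). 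Therefore all the left mutations fix $\oh_X(-H)$, and $\pr_X(\oh_X(-H))=\bR_{\cU_X}\bR_{\oh_X(-H)}(\oh_X(-H))=\bR_{\cU_X}(0)=0$. For $F=\cU^{\vee}_X(-H)$, note that $\cU_X$ has rank $2$ and $\det\cU^{\vee}_X=\gamma_X^*\oh_{\Gr(2,5)}(1)=\oh_X(H)$, hence $\cU^{\vee}_X(-H)\cong \cU_X$; the same Lefschetz vanishings (now for $\mathrm{H}^{\bullet}(X,\cU_X(-kH))$ and $\mathrm{H}^{\bullet}(X,\Sym^2\cU_X(-kH))\oplus\mathrm{H}^{\bullet}(X,\oh_X(-(k+1)H))$, $0\le k\le n-4$) show the left mutations fix $\cU_X$, while $\RHom_X(\cU_X,\oh_X(-H))=\mathrm{H}^{\bullet}(X,\cU^{\vee}_X(-H))=\mathrm{H}^{\bullet}(X,\cU_X)=0$ gives $\bR_{\oh_X(-H)}(\cU_X)=\cU_X$, so finally $\pr_X(\cU^{\vee}_X(-H))=\bR_{\cU_X}(\cU_X)=0$.

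The main obstacle is exactly this $k=0$ step: verifying that $\pr_X$ annihilates the two "out-of-range'' twists $\oh_X(-H)$ and $\cU^{\vee}_X(-H)$. Everything else is formal bookkeeping with semi-orthogonal decompositions, mutations, and the projection formula. An alternative to the computation above is to observe that, because the Lefschetz collection on $X$ is rectangular, mutating its last block $\langle\oh_X((n-3)H),\cU^{\vee}_X((n-3)H)\rangle$ to the front via the Serre functor $-\otimes\oh_X(-(n-2)H)[n]$ exhibits $\oh_X(-H)$ and $\cU^{\vee}_X(-H)$ inside the complement of $\Ku(X)$, so they are killed by $\pr_X$; one should still cite the relevant vanishings of Kuznetsov–Perry to justify either route, with the three cases $n\in\{4,5,6\}$ handled uniformly.
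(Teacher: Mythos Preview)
Your proof is correct and follows exactly the paper's route: reduce to showing $\pr_X\circ j_*$ annihilates the exceptional generators of ${}^{\perp}\Ku(Y)$, use the ideal-sheaf triangle to rewrite $j_*\oh_Y(kH)$ and $j_*\cU^{\vee}_Y(kH)$ in terms of objects on $X$, and then check by hand that $\pr_X(\oh_X(-H))=\pr_X(\cU_X)=0$ via the mutation formula and the standard Lefschetz vanishings (the paper cites $\RHom_X(\cU^{\vee}_X,\cU_X)=0$ from \cite{GLZ2021conics} for exactly this step). One small caveat for $n\ge 5$: the paper's $\pr_Y$ is not the pure left-adjoint projection but also right-mutates through $\oh_Y(-H)$ and $\cU_Y$, so the single triangle $C_E\to E\to \pr_Y(E)$ with $C_E\in\cC_Y$ is not literally available and one must in addition kill $j_*\oh_Y(-H)$ and $j_*\cU_Y$; the paper's own ``arguments are similar'' glosses over this too, and for $n=4$ (the only case spelled out and actually used later) there is no issue.
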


\begin{proof}
As the cone of the natural map $E\to \pr_Y(E)$ is contained in ${}^\bot\Ku(Y)$, we only need to show $\pr_X(j_*E)\cong 0$ for any 
$E\in \langle \oh_Y, \cU^{\vee}_Y,\cdots,\oh_X((n-3)H),\cU_X^{\vee}((n-3)H)\rangle$.

First of all, we assume that $X$ is a GM fourfold. Recall that $\pr_X=\bR_{\cU_X}\bR_{\oh_X(-H)}\bL_{\oh_X}\bL_{\cU_X^{\vee}}$. Then from the exact sequence $0\to \cU_X\to \cU_X^{\vee}\to j_*\cU_Y^{\vee}\to 0$, we see $\pr_X(j_*\cU^{\vee}_Y)=\pr_X(\cU_X[1])$. Since we have  $\RHom_X(\cU^{\vee}_X, \cU_X)=0$ by \cite[Lemma 5.4(2)]{GLZ2021conics}, then $\bL_{\oh_X}\bL_{\cU_X^{\vee}}(\cU_X)=\cU_X$, hence, we deduce that $\pr_X(\cU_X)=\bR_{\cU_X}\bR_{\oh_X(-H)}(\cU_X)=0$. Similarly, we have $\pr_X(j_*\oh_Y)=0$ and the result follows. The arguments for other cases are similar.
\end{proof}




\subsubsection{Stability conditions on the Kuznetsov components}\label{sec-stab-ku}


In \cite{bayer2017stability}, the authors provide a way to construct stability conditions on a semi-orthogonal component from weak stability conditions on a larger category. For GM varieties, we have the following. 

\begin{theorem}[{\cite{perry2019stability}}]\label{blms-induce}
Let $X$ be 
a GM variety of dimension $n \geq 3$. Then there exists a family of stability conditions on $\Ku(X)$.
\end{theorem}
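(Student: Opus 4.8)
The plan is to follow the standard ``inducing stability conditions on a semi-orthogonal component'' strategy of Bayer--Lahoz--Macr\`i--Stellari, reducing first to low dimension. For $n=5$ (resp.\ $n=6$), Kuznetsov's categorical results \cite{kuznetsov2019categorical} provide a GM threefold (resp.\ fourfold) $X'$ together with an exact equivalence $\Ku(X)\simeq \Ku(X')$; since any equivalence of triangulated categories carries Bridgeland stability conditions to Bridgeland stability conditions, it suffices to treat the cases $n=3$ and $n=4$.

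For $n=3,4$ I would start from the two-step tilt of $\Coh(X)$ with respect to the polarization $H$. The Mumford slope gives a weak stability condition on $\Coh(X)$; tilting at slope $\beta\in\mathbb{R}$ produces a heart $\Coh^{\beta}(X)$, and for $\alpha>0$ the pair $\sigma_{\alpha,\beta}=(\Coh^{\beta}(X),Z_{\alpha,\beta})$ with $Z_{\alpha,\beta}$ built from $\mathrm{ch}_{\le 2}(-)\cdot H^{n-2}$ is a weak stability condition on $\D^b(X)$ — this is precisely the generalized Bogomolov--Gieseker inequality, which holds in these cases. One then tilts $\sigma_{\alpha,\beta}$ once more, at an appropriate phase, to obtain a new heart $\cA^{0}(\alpha,\beta)\subset\D^b(X)$.

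The key step is to verify the BLMS criterion: for $(\alpha,\beta)$ on a suitable arc with $\beta$ slightly negative and $\alpha$ small, each exceptional object in the collection $\oh_X,\cU_X^{\vee},\dots,\oh_X((n-3)H),\cU_X^{\vee}((n-3)H)$ generating the complement of $\Ku(X)$ lies (up to shift) in $\cA^{0}(\alpha,\beta)$, is $\sigma_{\alpha,\beta}$-semistable, and their phases obey the ordering forcing $\cA^{0}(\alpha,\beta)\cap\Ku(X)$ to be the heart of a bounded t-structure on $\Ku(X)$; composing $Z_{\alpha,\beta}|_{\Knum(\Ku(X))}$ with a rotation $\widetilde g\in\GL$ then yields a stability function on this heart. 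The Harder--Narasimhan property is inherited, and for the support property one restricts the quadratic form witnessing the support property of $\sigma_{\alpha,\beta}$ on $\D^b(X)$ to the sublattice $\Knum(\Ku(X))$ and checks that it remains negative definite on the kernel of the induced central charge. Letting $(\alpha,\beta)$ range over the admissible arc, together with the $\GL$-action, produces the asserted family; in the fourfold case this is exactly the family $\Stab^{\circ}(\Ku(X))$ of \cite[Theorem~4.12]{perry2019stability}.

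I expect the main obstacle to be the numerical bookkeeping in the third step: pinning down the admissible region of $(\alpha,\beta)$, checking that the exceptional objects are $\sigma_{\alpha,\beta}$-semistable of the correct phase there (which requires the explicit Chern characters of $\oh_X$ and $\cU_X^{\vee}$ and their twists), and especially propagating the support property across the restriction to $\Ku(X)$ — a naive restriction of the quadratic form need not stay negative definite on the smaller kernel, so one must exploit the specific geometry of the classes $\Lambda_1,\Lambda_2$ (resp.\ $\lambda_1,\lambda_2$) and their Euler pairing.
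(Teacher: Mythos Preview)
The paper does not supply its own proof of this theorem: it is quoted directly from \cite{perry2019stability} (and implicitly \cite{bayer2017stability} for $n=3$) and no argument is given in the text. So there is no ``paper's own proof'' to compare against beyond the cited literature.

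Your sketch is a faithful outline of that literature. The reduction of $n=5,6$ to $n=3,4$ via the equivalences of \cite{kuznetsov2019categorical} is exactly how one handles the higher cases, and for $n=3$ your description of the BLMS induction procedure (double tilt, check the exceptional collection sits in the heart with the right phase ordering, restrict the central charge, propagate support) is accurate. One caveat: for $n=4$ the construction in \cite{perry2019stability} is not quite the naive two-step tilt on $\D^b(X)$ you describe; because $\dim X=4$, a central charge built only from $\ch_{\le 2}\cdot H^{n-2}$ does not see enough of the Chern character, and the actual argument requires additional input (a further tilt and a more delicate analysis of the support property using the Mukai pairing on the K3 category $\Ku(X)$). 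Your proposal glosses over this, though you correctly flag the support-property step as the main technical obstacle.
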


When $X$ is a GM fourfold, we denote by $\Stab^{\circ}(\Ku(X))$ the family of stability conditions on $\Ku(X)$ constructed in \cite[Theorem 4.12]{perry2019stability}.

When $Y$ is a GM threefold, it is proved in \cite{pertusiGM3fold} that stability conditions on $\Ku(Y)$ constructed in \cite{bayer2017stability} are \emph{Serre-invariant}. Furthermore, they all belong to the same $\GL$-orbit.

\begin{theorem}[{\cite{JLLZ2021gushelmukai,FeyzbakhshPertusi2021stab}}]\label{thm-unique-threefold}
Let $Y$ be a GM threefold, then all Serre-invariant stability conditions on $\Ku(Y)$ are contained in the same $\GL$-orbit.
\end{theorem}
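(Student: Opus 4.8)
The plan is to reduce the statement to the toolkit developed for $\3=\Ku(Y)$ in Section~\ref{sec-general-criterion}: a GM threefold satisfies \ref{c1}, \ref{c-T} and the $\3$-part of \ref{c-S}, with $T_1=T_Y$ the involution of Lemma~\ref{lem-involution}(1). Since the stability conditions constructed in \cite{bayer2017stability} are known to be Serre-invariant and to lie in a single $\GL$-orbit, it is enough to fix two Serre-invariant stability conditions $\sigma_1=(\cA_1,Z_1)$, $\sigma_2=(\cA_2,Z_2)$ and show $\sigma_1\in\GL\cdot\sigma_2$. \textbf{Step 1 (normalisation).} Since $S_{\Ku(Y)}=T_Y\circ[2]$ and $T_Y$ acts trivially on $\Knum(\Ku(Y))=\ZZ\lambda_1\oplus\ZZ\lambda_2$, the Serre functor is the identity on $\Knum(\Ku(Y))$; together with the support property this forces each $Z_i$ to have rank-two image, so after acting on $\sigma_1$ by a suitable element of $\GL$ I may assume $Z_1=Z_2=:Z$ with $Z(\lambda_1)=-1$ and $Z(\lambda_2)=\mathfrak{i}$. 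Serre-invariance $S\cdot\sigma_i=\sigma_i\cdot(M_i,g_i)$ then forces $M_i=\mathrm{Id}$, so only the two slicings remain to be compared.

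\textbf{Step 2 (structure and minimal objects).} For any Serre-invariant $\sigma$ on $\Ku(Y)$ the heart has homological dimension $\leq 2$ by \cite[Proposition~3.4]{FeyzbakhshPertusi2021stab}; combining this with the Weak Mukai Lemma~\ref{lem-mukai} and $\chi(\lambda_i,\lambda_i)=-1$ yields, exactly as in Lemmas~\ref{lem-lower bound on ext} and~\ref{lem-hn ext}, that every nonzero $E$ satisfies $\ext^1(E,E)\geq 2$, with equality precisely when $E$ is $\sigma$-stable of class $\pm\lambda_i$ with $T_Y(E)\not\cong E$. I would then exhibit explicit objects $P_i\in\Ku(Y)$ of class $\lambda_i$ with $\ext^1(P_i,P_i)\leq 3$ — for instance projections into $\Ku(Y)$ of suitable twists of structure sheaves of lines on $Y$, as in \cite{pertusiGM3fold} — and show each $P_i$ is $\sigma$-stable for \emph{every} Serre-invariant $\sigma$: it is $\sigma$-semistable by the above, and strict semistability is excluded because each Jordan--H\"older factor would have strictly smaller self-$\ext^1$, contradicting the bound $\ext^1\geq 2$ together with the primitivity of $\lambda_i$. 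A compatibility check with the maps $\Hom(P_1,P_2[k])$, using also that $T_YP_i$ is $\sigma$-stable of the same phase as $P_i$, then fixes the integer shift ambiguity and gives $\phi_{\sigma_1}(P_i)=\phi_{\sigma_2}(P_i)$, so $\sigma_1$ and $\sigma_2$ have the same slicing on the extension-closed subcategory generated by $P_1,P_2,T_YP_1,T_YP_2$.

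\textbf{Step 3 (induction on $\ext^1$).} I would then prove by induction on $n=\ext^1(E,E)$ that every $\sigma_1$-stable object is $\sigma_2$-semistable of the same phase, and symmetrically, the base case $n\leq 3$ being Step~2. If a $\sigma_1$-stable $E$ with $\ext^1(E,E)=n$ failed to be $\sigma_2$-semistable, its $\sigma_2$-Harder--Narasimhan factors would have strictly smaller self-$\ext^1$ by the analogue of Lemma~\ref{lem-hn ext}(2),(3), hence be $\sigma_1$-semistable of the same phases by the inductive hypothesis; but nonzero maps from the highest HN factor into $E$ and from $E$ into $T_Y$ of the lowest HN factor, together with $S_{\Ku(Y)}=T_Y[2]$ and Lemma~\ref{lem-T-fix-phase}, then contradict the HN phase ordering — this is verbatim the argument of Step~2 of the proof of Lemma~\ref{part-1}. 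Running the symmetric implication gives $\cP_{\sigma_1}=\cP_{\sigma_2}$, hence $\sigma_1=\sigma_2$ once the central charges agree, so $\sigma_1\in\GL\cdot\sigma_2$.

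\textbf{Main obstacle.} The crux is the second half of Step~2: producing objects of the minimal classes $\lambda_1,\lambda_2$ that are stable with respect to \emph{every} Serre-invariant stability condition, and matching their phases after the normalisation of Step~1. This is where the geometry of $Y$ — lines on GM threefolds, the explicit generators of $\Ku(Y)$, and the computation of their $\Ext$-groups — is genuinely used and does not follow formally from the abstract machinery of Section~\ref{sec-general-criterion}; the remainder is the inductive $\ext^1$-bookkeeping already developed there.
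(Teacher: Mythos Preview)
This theorem is not proved in the paper but cited from \cite{JLLZ2021gushelmukai,FeyzbakhshPertusi2021stab}; the strategy in those references---visible in the paper's own proof of the analogous Proposition~\ref{prop-gm4-unique} for GM fourfolds---matches your Steps~1--2: normalise the central charges, exhibit explicit objects in $\Ku(Y)$ with $\ext^1\leq 3$ (hence stable for every Serre-invariant condition, exactly as in Lemma~\ref{lem-mukai-4fold}(1)), and pin down their phases using $\Hom$ non-vanishings between them. You are right that this geometric input is the crux, and the objects used in the literature are indeed of the kind you describe (cf.\ the objects $F_1,F_1',F_2$ of \cite[Lemma~A.8]{JLLZ2021gushelmukai} and $D_1,D_2$ of \cite[Lemma~5.6]{liu:stability-manifold-of-ku} invoked in this paper). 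Your Step~3 diverges from the cited route: rather than checking the hypotheses of \cite[Lemma~3.7]{FeyzbakhshPertusi2021stab} and invoking the abstract criterion \cite[Theorem~3.2]{FeyzbakhshPertusi2021stab}, you run a direct induction on $\ext^1$ (the internal analogue of Step~2 of Lemma~\ref{part-1}) to show the two slicings coincide. This is correct and self-contained, but it effectively re-derives that criterion in this special case; the cited route is shorter once the black box is available, and it makes transparent that only finitely many test objects and $\Hom$ groups are needed, rather than an induction over all stable objects.
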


\subsection{Stability of pull-back and push-forward} One of the main goals of this subsection is to prove the following Theorem. 


\begin{theorem}\label{thm-very-general-GM4}
Let $X$ be a GM variety of dimension $n\geq 4$ and $j \colon Y\hookrightarrow X$ be a smooth hyperplane section. Furthermore, we assume that the one between $X$ and $Y$ with even dimension is non-Hodge-special. Let $\sigma_Y$ and $\sigma_X$ be Serre-invariant stability conditions on $\Ku(Y)$ and $\Ku(X)$, respectively. 

\begin{enumerate}
    \item An object $E\in \Ku(Y)$ is $\sigma_Y$-semistable if and only if $\pr_X(j_*E)\in \Ku(X)$ is $\sigma_X$-semistable. Moreover, $\pr_X(j_*E)$ is $\sigma_X$-stable if and only if $E$ is $\sigma_Y$-stable and there is no $\sigma_X$-stable object $F \in \Ku(X)$ such that $j^*F \cong E$.

    \item An object $F\in \Ku(X)$ is $\sigma_X$-semistable if and only if $j^*F\in \Ku(Y)$ is $\sigma_Y$-semistable. Moreover, $j^*F$ is $\sigma_Y$-stable if and only if $F$ is $\sigma_X$-stable and there is no $\sigma_Y$-stable object $E$ such that $\pr_X(j_*E) \cong F$.
\end{enumerate}
\end{theorem}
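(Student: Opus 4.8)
The strategy is to deduce Theorem \ref{thm-very-general-GM4} from the abstract Theorem \ref{thm-main} by checking that the pair $(\3,\4)=(\Ku(Y),\Ku(X))$ satisfies the eight conditions \ref{c1}--\ref{c-t2}, where the roles of the even- and odd-dimensional variety depend on the parity of $n$. Concretely, when $n$ is even we set $\4=\Ku(X)$ and $\3=\Ku(Y)$ with $Y$ a GM threefold or fivefold; when $n$ is odd we set $\4=\Ku(Y)$ and $\3=\Ku(X)$. In either case one of the two is an Enriques category (Serre functor $T\circ[2]$, rank-two numerical Grothendieck group with Gram matrix $-\mathrm{id}$) and the other is a K3 category (Serre functor $[2]$, canonical rank-two sublattice with Gram matrix $-2\,\mathrm{id}$), so \ref{c1}, \ref{c2}, \ref{c-T} and \ref{c-S} follow from the lattice computations recalled in Section \ref{sec-GM-1} together with Lemma \ref{lem-involution}. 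In the even-dimensional case, the non-Hodge-special hypothesis is exactly what guarantees $\Knum(\Ku(X))=\ZZ\Lambda_1\oplus\ZZ\Lambda_2$, so \ref{c2} holds on the nose; in the odd-dimensional case the even-dimensional partner is non-Hodge-special by assumption. For $n=5,6$ one first transports the problem to a GM threefold/fourfold via the equivalence $\Ku(X')\simeq\Ku(X)$ of \cite{kuznetsov2019categorical}, noting that this equivalence is compatible with $T_X$ and with the stability conditions.

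\textbf{Verifying the functorial conditions.} Condition \ref{c-adjoints} is the heart of the reduction: we take $\Phi_*=\pr_X\circ j_*$ and $\Phi^*=j^*$ (up to swapping which is which according to parity, matching Lemma \ref{lem-adjoint-functors}). The adjoint string $\Phi_*\circ T\dashv \Phi^*\dashv \Phi_*$, the intertwining $\Phi^*T_2\cong T_1\Phi^*$, and the behaviour on numerical classes $\Phi_*(\lambda_i)=\Lambda_i$, $\Phi^*(\Lambda_i)=2\lambda_i$ are precisely Lemma \ref{lem-adjoint-functors} together with a Chern-character computation from \eqref{lambda} and \eqref{Lambda} (using that $j_*$ multiplies by $[H]$ and $\pr_X$ only modifies by the mutated exceptional objects). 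Conditions \ref{c-t1} and \ref{c-t2} — the existence of the exact triangles $T_1(E)\to\Phi^*\Phi_*E\to E$ and $F\to\Phi_*\Phi^*F\to T_2(F)$ — should come from the standard ``restriction of a Kuznetsov component to a hyperplane section'' computation: expand $j^*j_* E$ via the triangle $E\otimes\oh_Y(-H)[1]\to j^*j_*E\to E$, apply the mutation functors defining $\pr_X$ and $T_X$, and use Proposition \ref{prop-pushforward} to kill the contributions of the exceptional objects. This is the most computational part, but it is essentially bookkeeping with mutations that parallels \cite[Lemma 5.4]{GLZ2021conics} and the analogous statements for curves in K3 surfaces.

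\textbf{The stability-theoretic input \ref{c-stability}.} The remaining condition \ref{c-stability} is where the genuine content lies. Parts (a) and (b) are immediate: $\sigma_X,\sigma_Y$ are Serre-invariant hence $T$-invariant, and one normalizes the central charges by the $\GL$-action so that $Z(\lambda_i)$ and $Z(\Lambda_i)$ take the prescribed values $-1,\mathfrak{i}$ — here the uniqueness of the Serre-invariant stability condition on $\Ku(Y)$ up to $\GL$ (Theorem \ref{thm-unique-threefold} and Proposition \ref{prop-gm4-unique}) is what makes this normalization unambiguous. Parts (c) and (d) require knowing that there are infinitely many $\sigma_1$-stable objects of each class $\lambda_i$ (for a GM threefold this is classical: e.g.\ ideal sheaves of lines/conics and their projections give families of stable objects of classes $\lambda_1,\lambda_2$), and that $\Phi_*$ sends such minimal objects to stable objects of the same phase — this is the ``base case'' of the induction, provable directly because $\ext^1(A_i,A_i)\le 3$ forces $\Phi_*A_i$ to have $\ext^1\le 6$, which by a Weak-Mukai-Lemma argument (Lemma \ref{lem-mukai}, Lemma \ref{lem-lower bound on ext}, Lemma \ref{lem-rhom-pushfor}) is small enough to guarantee stability. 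Once all of \ref{c1}--\ref{c-t2} are in place, Theorem \ref{thm-main} gives exactly the equivalences and the stability criteria claimed in Theorem \ref{thm-very-general-GM4}, after unwinding $\Phi_*=\pr_X\circ j_*$ and $\Phi^*=j^*$. The main obstacle I expect is the clean verification of \ref{c-t1}/\ref{c-t2} in all four dimension cases simultaneously — getting the mutation triangles and the identification of $T_X$ with the correct composite of mutations to line up, especially for $n=5,6$ where one must carry everything across the equivalence of \cite{kuznetsov2019categorical} — and, on the stability side, pinning down \ref{c-stability}(c),(d) for GM fivefolds and sixfolds, which is presumably why Theorem \ref{thm-very-general-GM4} is stated for $n\ge 4$ with the even-dimensional member assumed very general rather than fully generally.
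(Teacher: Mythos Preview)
Your overall strategy---reduce to Theorem \ref{thm-main} by verifying \ref{c1}--\ref{c-t2} for the functors $j^*$ and $\pr_X\circ j_*$, with the roles swapped according to the parity of $n$---is exactly the paper's. The organisation of the proof into the lattice conditions (Section \ref{sec-GM-1}, Lemma \ref{lem-involution}), the functorial conditions (Lemma \ref{lem-adjoint-functors}, Lemma \ref{lem-compute-class}), and the stability input (Theorem \ref{thm-unique-threefold}, Proposition \ref{prop-gm4-unique}, Lemma \ref{lem-mukai-4fold}) matches your sketch closely.

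Two points, however, require more than you have written.

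\emph{Phase equality in \ref{c-stability}(d).} Your argument that $\ext^1(A_i,A_i)\le 3$ forces $\ext^1(\Phi_*A_i,\Phi_*A_i)\le 6$ and hence stability is correct (this is Lemma \ref{lem-mukai-4fold}), but it does not give $\phi_{\sigma_1}(A_i)=\phi_{\sigma_2}(\Phi_*A_i)$. A priori one only knows $\phi_{\sigma_1}(A_i)-\phi_{\sigma_2}(\Phi_*A_i)\in 2\ZZ$ from the equality of central charges, and pinning this integer down to zero simultaneously for \emph{all} stable objects of classes $\lambda_1,\lambda_2$ is a nontrivial step: the paper does it by producing explicit stable objects $D_1,D_2$ with a nonzero morphism $D_2\to D_1$ (from \cite[Lemma 5.6]{liu:stability-manifold-of-ku}), fixing the hearts so that $\phi_{\sigma_1}(D_1)=\phi_{\sigma_2}(\Phi_*D_1)=1$, and then using the triangles \eqref{exact-3} together with $\chi$-computations to squeeze the phase of every other $\Phi_*A_i$ into the correct window. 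This ``Claim'' in the paper's proof is not bypassable by the $\ext^1$ bound alone.

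\emph{The cases $n=5,6$.} Your proposal to transport everything through the equivalence $\Ku(X')\simeq\Ku(X)$ of \cite{kuznetsov2019categorical} to a GM threefold or fourfold does not work as stated: that equivalence has no reason to intertwine with the functors $j^*$ and $\pr_X\circ j_*$, which are tied to the specific hyperplane embedding $Y\hookrightarrow X$. The paper instead verifies \ref{c1}--\ref{c-t2} directly for the higher-dimensional pair. For \ref{c-t1}/\ref{c-t2} in particular, rather than the direct mutation bookkeeping you outline, the paper invokes the spherical-functor formalism of \cite[Corollary 4.19]{kuznetsov:serre-dimension}: the functor $j^*\colon\Ku(X)\to\Ku(Y)$ is spherical, whence the required triangles \eqref{eq-triangle-functor-1}--\eqref{eq-triangle-functor-4} in Lemma \ref{lem-exact-triangle} drop out with $\mathbf{T}_{j^*,\pr_X\circ j_*}$ identified with $S_{\Ku(Y)}[-1]$. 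The numerical conditions are handled by Lemma \ref{lem-compute-class}, which computes the classes $\lambda_i,\Lambda_i$ for $n=5,6$ intrinsically via the Euler pairing, again without transporting to lower dimension.
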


\begin{remark}
In Section \ref{subsec-general-gm}, we will explain how to generalize this theorem from very general cases to general cases.
\end{remark}

To prove Theorem \ref{thm-very-general-GM4}, we only need to verify conditions \ref{c1} to \ref{c-t2} in the earlier section for the functors $j^*$ and $\pr \circ j_*$. To accomplish this, we need to further analyze these two adjoint functors.

\begin{lemma}\label{lem-exact-triangle}
Let $X$ be a GM variety of dimension $n$ and $j\colon Y\hookrightarrow X$ be a smooth hyperplane section.

\begin{enumerate}
    \item If $n=4,6$, we have exact triangles
    \begin{equation}\label{eq-triangle-functor-1}
    T_Y\to j^*\circ (\pr_X\circ j_*) \to \mathrm{id}_{\Ku(Y)}
    \end{equation}
and
\begin{equation}\label{eq-triangle-functor-2}
    \mathrm{id}_{\Ku(X)}\to (\pr_X\circ j_*)\circ j^*\to T_X.
\end{equation}
    \item If $n=5$, we have exact triangles
    \begin{equation}\label{eq-triangle-functor-3}
        T_X\to (\pr_X\circ j_*)\circ (j^*\circ T_X) \to \mathrm{id}_{\Ku(X)}
    \end{equation}
    and
    \begin{equation}\label{eq-triangle-functor-4}
        \mathrm{id}_{\Ku(Y)}\to (j^*\circ T_X)\circ (\pr_X\circ j_*) \to T_Y.
    \end{equation}
\end{enumerate}

\end{lemma}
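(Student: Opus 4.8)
The plan is to deduce all four triangles from the standard mutation triangles that define $T_X$, $T_Y$, $\pr_X$, and the adjunction unit/counit maps, using the semi-orthogonal decompositions of $\D^b(X)$ and $\D^b(Y)$. The key input is the identity $\pr_X(j_*E) = \bR_{\cU_X}\bR_{\oh_X(-H)}(j_*E)$ from \eqref{pr}, together with the short exact sequences $0 \to \oh_X(-H) \to \oh_X \to j_*\oh_Y \to 0$ and $0 \to \cU_X(-H)\to \cU_X \to j_*\cU_Y \to 0$ (equivalently their dualized/twisted forms), which relate restriction along $j$ to the bundles $\oh_X, \cU_X^\vee$ appearing in the exceptional collection. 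First I would treat case (1), i.e. $n = 4, 6$. Recall $T_Y = \bR_{\cU_Y}\bR_{\oh_Y(-H)}\circ(-\otimes\oh_Y(-H))[1]$ and similarly for $T_X$. The strategy is: start from the defining triangle of $j^! = j_*(-\otimes\oh_Y(H))[-1]$ as the left adjoint of $j^*$, namely for $F \in \D^b(X)$ the counit triangle $j_! j^* F \to F \to (\text{something supported away})$, and then project everything into $\Ku(X)$ via $\pr_X$; because $\pr_X$ kills the exceptional objects, the "error terms" built from $\oh_X(kH), \cU_X^\vee(kH)$ disappear after applying $\pr_X$, and what remains assembles into \eqref{eq-triangle-functor-2} with the third term being exactly $T_X$ by unwinding the mutation functors. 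Dually, composing $j^*$ with the unit $\id \to \pr_X \circ j_*$ and using the adjunction $\pr_X \circ j_* \circ T_Y \dashv j^* \dashv \pr_X \circ j_*$ from Lemma \ref{lem-adjoint-functors}, together with the fact (Lemma \ref{lem-involution}) that $T_Y$ is an involution, yields \eqref{eq-triangle-functor-1}: the triangle $T_Y \to j^*\pr_X j_* \to \id_{\Ku(Y)}$ is the "unit–counit" triangle for this adjoint triple, where the outer terms are forced by the fact that $T_Y = S_{\Ku(Y)}[-2]$ in the odd-dimensional auxiliary description and by Grothendieck–Serre duality $\Hom(j^*\pr_X j_* E, E') = \Hom(E, S_{\Ku(Y)}^{-1}(\ldots))$-type manipulations.

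For case (2), $n = 5$: here $\Ku(X)$ is an Enriques category with $S_{\Ku(X)} = T_X\circ[2]$, and the adjoint triple degenerates differently, namely $j^* \dashv \pr_X\circ j_* \dashv j^*\circ T_X$ (Lemma \ref{lem-adjoint-functors}(2)). The same formal machinery — write down the unit and counit of each of these two adjunctions, splice them, and identify the third vertex of the resulting octahedron — produces \eqref{eq-triangle-functor-3} and \eqref{eq-triangle-functor-4}. Concretely, \eqref{eq-triangle-functor-3} comes from the composition $(\pr_X j_*)\circ(j^* T_X)$: the counit $(\pr_X j_*) j^* \to \id$-type map (from the adjunction $j^* \dashv \pr_X j_*$ read backwards) combined with $T_X$ being an involution gives the triangle with outer terms $T_X$ and $\id_{\Ku(X)}$; the morphism $T_X \to (\pr_X j_*)(j^* T_X)$ is the unit of $\pr_X j_* \dashv j^* T_X$ precomposed/postcomposed with $T_X$. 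Likewise \eqref{eq-triangle-functor-4} is obtained from $(j^* T_X)\circ(\pr_X j_*)$ by the unit of $j^* \dashv \pr_X j_*$ and the counit of $\pr_X j_* \dashv j^* T_X$, with the third vertex identified as $T_Y$ via the compatibility $j^*\circ T_X \cong T_Y\circ j^*$ from Lemma \ref{lem-adjoint-functors}.

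The main obstacle I anticipate is not the formal adjunction bookkeeping but pinning down precisely that the third vertex of each triangle is the claimed functor ($T_X$, $T_Y$, or $\id$) rather than merely \emph{some} functor fitting in a triangle — i.e. making the identification of the cone canonical and checking it on objects. I would handle this by the following concrete route: apply $\pr_X$ to the standard triangle $j_*j^*F \otimes \oh \to \ldots$ obtained from the Koszul-type resolution of $\oh_Y$ as an $\oh_X$-module (so that $j_*j^*(-) $ is computed by $(-)\otimes(\oh_X \to \oh_X(H))$), then use $\pr_X \circ (-\otimes\oh_X(H)) $ versus $\pr_X$ and the definition of $T_X$ as a twisted mutation to read off the cone directly. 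For $n=6$ one can alternatively invoke the equivalence $\Ku(X) \simeq \Ku(X')$ with $X'$ a GM fourfold (\cite[Corollary 6.5]{kuznetsov2019categorical}) to reduce to the $n=4$ case, and similarly $n=5$ reduces to a GM threefold computation; I would mention this reduction as a sanity check but carry out the direct argument since the functors $j^*, \pr_X\circ j_*$ between the two \emph{distinct} varieties $Y \subset X$ are what we need, and the abstract equivalence does not a priori intertwine them. Throughout, Lemma \ref{lem-involution} (the $T$'s are involutions acting trivially on $\Knum$) and Lemma \ref{lem-adjoint-functors} (the adjoint triples and the intertwining isomorphisms) are exactly the ingredients that make the outer vertices come out as stated.
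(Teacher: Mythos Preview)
Your approach for \eqref{eq-triangle-functor-2} is correct and matches the paper exactly: apply $\bR_{\cU_X}\bR_{\oh_X(-H)}$ to the standard triangle $F \to j_*j^*F \to F(-H)[1]$ for $F\in\Ku(X)$, and the third term becomes $T_X(F)$ by definition.

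For \eqref{eq-triangle-functor-1}, however, there is a genuine gap. An adjoint triple $\pr_X j_* T_Y \dashv j^* \dashv \pr_X j_*$ supplies two natural maps---the unit of the left adjunction rewrites (via $T_Y^2=\mathrm{id}$) as $T_Y \to j^*\pr_X j_*$, and the counit of the right adjunction gives $j^*\pr_X j_* \to \mathrm{id}_{\Ku(Y)}$---but nothing in the adjoint-triple formalism alone guarantees that these two maps compose to zero or that the cone is as claimed. What makes them fit into an exact triangle is precisely the fact that $j^*\colon\Ku(X)\to\Ku(Y)$ is a \emph{spherical functor}. The paper invokes \cite[Corollary 4.19]{kuznetsov:serre-dimension} for this, then uses \cite[Equation (2.5)]{kuznetsov:serre-dimension} to obtain the triangle $j^*\pr_Xj_* \to \mathrm{id}_{\Ku(Y)} \to \mathbf{T}_{j^*,\pr_X j_*}$, and finally identifies the spherical twist $\mathbf{T}_{j^*,\pr_X j_*}\cong S_{\Ku(Y)}[-1]=T_Y[1]$ via \cite[Corollary 4.19(1)]{kuznetsov:serre-dimension}. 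Your ``concrete route'' (the Koszul resolution of $j_*\oh_Y$) only yields \eqref{eq-triangle-functor-2}; the analogous direct computation for \eqref{eq-triangle-functor-1} would require tracking $j^*\bR_{\cU_X}\bR_{\oh_X(-H)}j_*E$ through the normal-bundle triangle for $j^*j_*$ and matching the resulting octahedron against the definition of $T_Y$, which is substantially more involved and which you do not carry out.

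For $n=5$, the paper first establishes the analogues of \eqref{eq-triangle-functor-1} and \eqref{eq-triangle-functor-2} by the same spherical-functor argument, and then obtains \eqref{eq-triangle-functor-3} and \eqref{eq-triangle-functor-4} by composing with $T_X$ and $T_Y$ and using the intertwining isomorphisms of Lemma \ref{lem-adjoint-functors}(2)---essentially what you describe, but again your version inherits the same missing justification for the first triangle.
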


\begin{proof}
By \cite[Corollary 4.19]{kuznetsov:serre-dimension}, the functor $j^*\colon \Ku(X)\to \Ku(Y)$ is a spherical functor in the sense of \cite[Definition 2.1]{kuznetsov:serre-dimension}. Then applying Lemma \ref{lem-adjoint-functors} and \cite[Corollary 2.3]{kuznetsov:serre-dimension}, the functor $\pr_X\circ j_*$ is spherical as well. Thus according to Lemma \ref{lem-adjoint-functors} and \cite[Equation (2.5)]{kuznetsov:serre-dimension}, we have an exact triangle
\[j^*\circ \pr_X\circ j_* \to \mathrm{id}_{\Ku(Y)} \to \mathbf{T}_{j^*, \pr_X\circ j_*},\]
where the functor $\mathbf{T}_{j^*, \pr_X\circ j_*}$ is defined in \cite[Definition 2.1]{kuznetsov:serre-dimension}. 

When $n=4,6$, to show the existence of \eqref{eq-triangle-functor-1}, we only need to determine $\mathbf{T}_{j^*, \pr_X\circ j_*}$. From \cite[Corollary 4.19(1)]{kuznetsov:serre-dimension}, we see $S_{\Ku(Y)}\cong \mathbf{T}_{j^*, \pr_X\circ j_*}\circ [1]$, which gives \eqref{eq-triangle-functor-1}. 

For \eqref{eq-triangle-functor-2}, since $j^*E\in \Ku(Y)$ for any object $E\in \Ku(X)$, by \eqref{pr} we have a functorial isomorphism $\pr_X(j_*j^*E)=\bR_{\cU_X} \bR_{\oh_X(-H)}(j_*j^*E)$. Hence,  \eqref{eq-triangle-functor-2} follows from composing $\bR_{\cU_X} \bR_{\oh_X(-H)}$ with  the standard exact triangle  $\mathrm{id}_{\D^b(X)}\to j_*\circ j^*\to (-\otimes \oh_X(-H))[1]$.

Now assume that $n=5$. As in the case of odd dimension, we have exact triangles
\begin{equation}\label{eq-triangle-functor-5}
    T_Y\to j^*\circ (\pr_X\circ j_*) \to \mathrm{id}_{\Ku(Y)}
\end{equation}
and
\begin{equation}\label{eq-triangle-functor-6}
    \mathrm{id}_{\Ku(X)}\to (\pr_X\circ j_*)\circ j^*\to T_X.
\end{equation}
Therefore, \eqref{eq-triangle-functor-4} follows from composing $T_Y$ with \eqref{eq-triangle-functor-5} and using the isomorphism
\[T_Y\circ j^*\circ (\pr_X\circ j_*)\cong j^*\circ T_X\circ (\pr_X\circ j_*)\]
given in Lemma \ref{lem-adjoint-functors}(2). Similarly, \eqref{eq-triangle-functor-3} follows from composing $T_X$ with \eqref{eq-triangle-functor-6} and using isomorphisms
\[T_X\circ (\pr_X\circ j_*)\circ j^*\cong (\pr_X\circ j_*)\circ T_Y \circ j^*\cong (\pr_X\circ j_*)\circ j^*\circ T_X\]
given in Lemma \ref{lem-adjoint-functors}(2).
\end{proof}

Now we investigate how our functors act on the numerical Grothendieck groups. 

\begin{lemma}\label{lem-compute-class}
Let $X$ be a GM variety of dimension $n$ and $j\colon Y\hookrightarrow X$ be a smooth hyperplane section. 
\begin{enumerate}
    \item If $n=4$, then $\pr_X(j_*\lambda_i)=\Lambda_i$ and $j^*\Lambda_i=2\lambda_i$ for each $i=1,2$.
    
    \item If $n=5$, we define $\lambda_i$ to be the unique numerical class satisfying $j^*\lambda_i=\Lambda_i$ for each $i=1,2$. Then $\pr_X(j_*\Lambda_i)=2\lambda_i$ for each $i=1,2$ and $\Knum(\Ku(X))=\ZZ\lambda_1\oplus \ZZ \lambda_2$ with the Euler pairing \eqref{eq-matrix-odd}.
    
    \item If $n=6$, we define $\Lambda_i:=\pr_X(j_*\lambda_i)$. Then the restriction of the Euler pairing to the sublattice $\ZZ\Lambda_1\oplus \ZZ\Lambda_2\subset \Knum(\Ku(X))$ is given by \eqref{eq-matrix-even}.
\end{enumerate}

\end{lemma}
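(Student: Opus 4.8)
The strategy is to compute each assertion via Chern characters and the known formulas for $\lambda_i$ and $\Lambda_i$ recalled in \eqref{lambda}, \eqref{eq-matrix-odd}, \eqref{Lambda}, \eqref{eq-matrix-even}, together with the explicit description $\pr_X(j_*E) = \bR_{\cU_X}\bR_{\oh_X(-H)}(j_*E)$ from \eqref{pr} and the mutation triangles. For part (1), the key computation is that of $j^*\Lambda_i$: since $j^*$ is pullback along a hyperplane section, on Chern characters it is simply multiplication by the restriction map $\ch(j^*F) = j^*\ch(F)$, which on a GM fourfold sends $H_X \mapsto H_Y$ and $\gamma_X^*\sigma_2 \mapsto \gamma_Y^*\sigma_{1,1}$ (or the appropriate Schubert class on $Y$), and $H_X^k \mapsto H_Y^{k-1}$ after integrating out one hyperplane. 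One checks directly from \eqref{Lambda} that $j^*$ applied to $\ch(\Lambda_1)$ and $\ch(\Lambda_2)$ yields $2\ch(\lambda_1)$ and $2\ch(\lambda_2)$ respectively (the factor $2$ being forced by the Euler-pairing normalization: $\chi(j^*F, j^*F) = 2\chi(F,F)$ up to the correction terms from the ambient category, consistent with \ref{c1}--\ref{c2}). For the push-forward direction $\pr_X(j_*\lambda_i) = \Lambda_i$, I would use \eqref{pr}: compute $\ch(j_*E)$ via Grothendieck--Riemann--Roch (which multiplies by the class of $Y$, i.e.~by $H$, roughly speaking, plus lower-order Todd corrections), then apply the two right mutations $\bR_{\oh_X(-H)}$ and $\bR_{\cU_X}$, each of which modifies the class by a multiple of $[\oh_X(-H)]$ and $[\cU_X]$ determined by the relevant Euler characteristics. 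The result should land on $\Lambda_i$; alternatively, one can bypass the GRR bookkeeping by using the adjunction $\chi(\pr_X(j_*\lambda_i), \Lambda_j) = \chi(\lambda_i, j^*\Lambda_j) = 2\chi(\lambda_i,\lambda_j)$ from part (1) already established, together with $\chi(\Lambda_i,\Lambda_j) = -2\delta_{ij}$, which pins down $\pr_X(j_*\lambda_i)$ uniquely in $\ZZ\Lambda_1 \oplus \ZZ\Lambda_2 = \Knum(\Ku(X))$ (using non-Hodge-speciality).

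For part (2), the point is that when $n=5$ the category $\Ku(X)$ is equivalent to $\Ku(X')$ for a GM threefold $X'$ (as recalled after \eqref{Lambda}), so $\Knum(\Ku(X))$ is already a rank-two lattice with Euler form \eqref{eq-matrix-odd}; one must show that $j^*\colon \Ku(X)\to\Ku(Y)$, where $Y$ is a GM fourfold, hits the canonical sublattice $\ZZ\Lambda_1\oplus\ZZ\Lambda_2 \subseteq \Knum(\Ku(Y))$ and that the preimages $\lambda_i := (j^*)^{-1}(\Lambda_i)$ generate $\Knum(\Ku(X))$. The existence and uniqueness of $\lambda_i$ follows because $j^*$ is injective on numerical Grothendieck groups (it has $\pr_X\circ j_*$ as an adjoint and the triangle \eqref{eq-triangle-functor-4} shows $j^*\circ T_X\circ \pr_X\circ j_*$ differs from the identity by $T_Y$, which acts trivially on $\Knum$, so $j^*$ is injective up to finite index—and then a degree/lattice-discriminant comparison shows it is an isomorphism onto a finite-index sublattice containing $\ZZ\Lambda_1\oplus\ZZ\Lambda_2$; since both lattices are unimodular-up-to-sign of the same discriminant, $j^*$ identifies them). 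Then $\pr_X(j_*\Lambda_i) = 2\lambda_i$ follows from the composite triangle \eqref{eq-triangle-functor-3}: applying it on numerical classes, $\pr_X\circ j_*\circ j^*\circ T_X$ acts on $\Knum(\Ku(X))$ with the class of the identity plus $T_X$, i.e.~as multiplication by $2$, and since $T_X$ acts trivially and $j^*\lambda_i = \Lambda_i$ we get $\pr_X(j_*\Lambda_i) = \pr_X(j_*(j^*\lambda_i)) = 2\lambda_i$. Part (3) is dual: for $n=6$, $\Ku(X)\simeq\Ku(X')$ for a GM fourfold $X'$, so $\Knum(\Ku(X))$ contains a canonical rank-two sublattice with form \eqref{eq-matrix-even}; I define $\Lambda_i := \pr_X(j_*\lambda_i)$ with $Y$ a GM fivefold, and compute $\chi(\Lambda_i,\Lambda_j) = \chi(\pr_X(j_*\lambda_i), \pr_X(j_*\lambda_j))$. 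Using the adjunction $\pr_X\circ j_*\circ T_Y \dashv j^* \dashv \pr_X\circ j_*$ from Lemma \ref{lem-adjoint-functors}(1) (for the pair $(Y,X)$ with $X$ of even dimension $6$) together with the triangle \eqref{eq-triangle-functor-1} relating $j^*\circ\pr_X\circ j_*$ to $\mathrm{id}\oplus T_Y[\pm 1]$-type data, one reduces $\chi(\Lambda_i,\Lambda_j)$ to an expression in $\chi(\lambda_i,\lambda_j) = -\delta_{ij}$, and the bookkeeping of the shift in the triangle produces exactly the factor giving $-2\delta_{ij}$.

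The main obstacle I anticipate is the sign and shift bookkeeping in translating the functorial triangles of Lemma \ref{lem-exact-triangle} into identities on $\Knum$, specifically keeping straight which of $T_X$, $T_Y$ appears and with what cohomological shift, since on Grothendieck groups a shift by $[1]$ introduces a sign $-1$ and the triangles \eqref{eq-triangle-functor-1}--\eqref{eq-triangle-functor-4} each contribute such a term; getting the factor $2$ (rather than $0$ or a sign) in the relations $j^*\Lambda_i = 2\lambda_i$ and $\pr_X(j_*\Lambda_i) = 2\lambda_i$ hinges on this being done correctly. A secondary technical point is justifying that $\Knum(\Ku(X)) = \ZZ\lambda_1\oplus\ZZ\lambda_2$ exactly (not just up to finite index) in part (2): this I would settle by a discriminant computation—both the source lattice (known to be the threefold lattice, discriminant $\pm 1$) and the target sublattice $\ZZ\Lambda_1\oplus\ZZ\Lambda_2\subseteq\Knum(\Ku(Y))$ have the same discriminant, and an index-$d$ embedding would change the discriminant by $d^2$, forcing $d=1$—or, more simply, by directly exhibiting $\pr_X(j_*\Lambda_i)/2 = \lambda_i$ as an integral class via the explicit Chern-character computation, which shows $\lambda_i$ already lies in (hence generates, by the discriminant argument) $\Knum(\Ku(X))$.
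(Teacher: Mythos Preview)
Your proposal is correct and follows essentially the same approach as the paper. For part (1) the paper also just says it is a direct verification from the explicit Chern characters in \eqref{lambda} and \eqref{Lambda}; for parts (2) and (3) the paper likewise uses the adjunctions of Lemma~\ref{lem-adjoint-functors} together with the triangles of Lemma~\ref{lem-exact-triangle} and the fact that $T_X$, $T_Y$ act trivially on $\Knum$. The paper's bookkeeping is slightly more streamlined than yours: rather than arguing via injectivity of $j^*$ and a discriminant comparison, it computes directly that $\chi(j^*v,j^*w)=\chi(v,\pr_X(j_*j^*T_X w))=2\chi(v,w)$ for all $v,w\in\Knum(\Ku(X))$ (using \eqref{eq-triangle-functor-3}), which immediately gives $\chi(\lambda_i,\lambda_j)=-\delta_{ij}$ and hence that the $\lambda_i$ generate; then $\pr_X(j_*\Lambda_i)=2\lambda_i$ is read off from $\chi(v,\pr_X(j_*\Lambda_i))=\chi(j^*v,\Lambda_i)$. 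Part (3) is handled by the dual identity $\chi(\pr_X(j_*v),\pr_X(j_*w))=2\chi(v,w)$ via \eqref{eq-triangle-functor-1}. One small caution: your alternative for part (1) via adjunction needs $\Knum(\Ku(X))=\ZZ\Lambda_1\oplus\ZZ\Lambda_2$, i.e.\ non-Hodge-speciality, which the lemma does not assume; the direct Chern-character check (your primary route, and the paper's) avoids this.
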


\begin{proof}
When $n=4$, as each class has been described in \eqref{lambda} and \eqref{Lambda}, it is straightforward to verify that $\pr_X(j_*\lambda_i)=\Lambda_i$ and $j^*\Lambda_i=2\lambda_i$ for each $i=1,2$. 

When $n=5$, by Lemma \ref{lem-involution}, Lemma \ref{lem-adjoint-functors}, and \eqref{eq-triangle-functor-3}, we have 
\[\chi(j^*v, j^*w)=\chi(v, \pr_X(j_*j^*w))=\chi(v, \pr_X(j_*j^*T_X(w)))=2\chi(v,w)\]
for $i=1,2$ and any classes $v,w\in \Knum(\Ku(X))$. Thus, $\chi(\lambda_i, \lambda_j)=-\delta_{ij}$. Hence, $\lambda_1$ and $\lambda_2$ generate $\Knum(\Ku(X))$ from the description of the Euler pairing \eqref{eq-matrix-odd}. Therefore, if we take any class $v\in \Knum(\Ku(X))$, then we can write $v=a_1\lambda_1+a_2\lambda_2$, which gives $j^*v=a_1\Lambda_1+a_2\Lambda_2$ and
\[\chi(v, \pr_X(j_*\Lambda_i))=\chi(j^*v, \Lambda_i)=-2a_i=\chi(v,2\lambda_i).\]
This shows that $\pr_X(j_*\Lambda_i)=2\lambda_i$.

When $n=6$, by Lemma \ref{lem-involution}, Lemma \ref{lem-adjoint-functors}, and \eqref{eq-triangle-functor-1}, we have
\[\chi(\pr_X(j_*v), \pr_X(j_*w))=\chi(j^*\pr_X(j_*v), w)=2\chi(v,w)\]
for each $i=1,2$ and classes $v,w\in \Knum(\Ku(Y))$. Hence,  $\Lambda_1$ and $\Lambda_2$ generate a rank two lattice, and the description of the Euler pairing follows from (2).
\end{proof}

The final step in proving Theorem \ref{thm-very-general-GM4} is to explore further stability conditions on $\Ku(X)$ and $\Ku(Y)$. We begin by examining the stability of specific objects in these categories. 

\begin{lemma}\label{lem-mukai-4fold}
Let $X$ be a GM variety of dimension $n \geq 3$. 
    \begin{enumerate}
        \item If $n=3,5$, any object $E \in \Ku(X)$ with $\ext^1_X(E, E) \leq 3$ is stable with respect to every Serre-invariant stability condition on $\Ku(X)$.  
        \item If $n=4, 6$ and $X$ is non-Hodge-special, any object $E\in\Ku(X)$ with $\mathrm{ext}_X^1(E,E)<8$ is stable with respect to every stability condition on $\Ku(X)$.
    \end{enumerate}
\end{lemma}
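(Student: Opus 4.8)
The plan is to handle the two cases in parallel, bounding the $\ext^1$ of any potential destabilizing factor. Suppose $E$ has $\ext^1_X(E,E) \leq 3$ (case $n=3,5$) or $\ext^1_X(E,E) < 8$ (case $n=4,6$), but is not stable with respect to some Serre-invariant stability condition $\sigma$ on $\Ku(X)$. Since all such stability conditions on $\Ku(X)$ are Serre-invariant, the results cited earlier apply: by \cite[Proposition 3.4]{FeyzbakhshPertusi2021stab} the heart has homological dimension $2$, so the Weak Mukai Lemma (Lemma \ref{lem-mukai}) is available exactly as in the abstract setup of Section \ref{sec-general-criterion}. First I would reduce to $E \in \cA$ for the heart $\cA$ of $\sigma$ by shifting, then use the Harder--Narasimhan and Jordan--H\"older analyses from Lemma \ref{lem-hn ext}: if $E$ is not $\sigma$-semistable, some HN factor $E_j$ has $\ext^1(E_j,E_j) \leq \ext^1(E,E) - 2\,d$ where $d$ is the relevant ``Calabi--Yau-type'' constant ($d=1$ in the Enriques/threefold case, $d=2$ in the K3/fourfold case); if $E$ is strictly $\sigma$-semistable, any Jordan--H\"older factor $A$ satisfies $\ext^1(A,A) < \ext^1(E,E)$ by Lemma \ref{lem-hn ext}(3).

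The heart of the argument is then a numerical contradiction via the Euler pairing. In case $n=3,5$, $\Knum(\Ku(X)) = \ZZ\lambda_1 \oplus \ZZ\lambda_2$ with the diagonal pairing $\mathrm{diag}(-1,-1)$, so for any nonzero $v$ we have $\chi(v,v) \leq -1$, hence $\ext^1(A,A) = \hom(A,A) + \ext^2(A,A) - \chi(A,A) \geq 2$ for any $\sigma$-stable $A$ (in fact the lower bound $\ext^1(A,A) \geq 2\,i$ of Lemma \ref{lem-lower bound on ext} applies with $i=1$). A $\sigma$-stable object $A$ with $\ext^1(A,A) \leq 3$ must have $\chi(A,A) \in \{-1, -2\}$ combined with $\hom(A,A)=1$; the only primitive classes realizing $\chi(v,v) \geq -2$ are (up to sign and the $S$-action) $\lambda_1$, $\lambda_2$, and one checks a strictly semistable $E$ with all JH factors in class $\lambda_i$ forces $[E] = m\lambda_i$ with $m \geq 2$, giving $\chi(E,E) = -m^2 \leq -4$ and hence $\ext^1(E,E) \geq 1 + m^2 > 3$, a contradiction; and a strictly semistable $E$ with two non-isomorphic JH factors yields, via Lemma \ref{lem-mukai} (after arranging an exact triangle $A \to E \to B$ with $\Hom(A,B) = \Hom(B,A[2]) = 0$, using $S_{\Ku(X)} = T_X \circ [2]$ and $T_X$-invariance as in the proof of Lemma \ref{lem-hn ext}(3)), the bound $\ext^1(A,A) + \ext^1(B,B) \leq \ext^1(E,E) \leq 3$, impossible since each term is $\geq 2$. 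In case $n=4,6$, non-Hodge-special-ness gives $\Knum(\Ku(X)) = \ZZ\Lambda_1 \oplus \ZZ\Lambda_2$ with pairing $\mathrm{diag}(-2,-2)$ (Lemma \ref{lem-compute-class} and the cited results of Debarre--Iliev--Manivel and Kuznetsov--Perry), so $\chi(v,v) \in 2\ZZ_{\leq 0}$ for all $v$; since $S_{\Ku(X)} = [2]$ the category is $2$-Calabi--Yau, and $\hom(A,A) = \ext^2(A,A) = 1$ for $\sigma$-stable $A$, whence $\ext^1(A,A) = 2 - \chi(A,A)$ is even and $\geq 2$; for $A$ with $\ext^1(A,A) < 8$ we get $\ext^1(A,A) \in \{2,4,6\}$ and $\chi(A,A) \in \{0,-2,-4\}$. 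Running the same dichotomy: a strictly semistable $E$ with all JH factors in a single class $v$ has $[E] = mv$, $m \geq 2$, so $\ext^1(E,E) \geq 2 - m^2\chi(v,v)$; this is $< 8$ only if $\chi(v,v) = 0$ and $m=2$, forcing $v$ primitive with $\chi(v,v) = 0$, but $[E] = 2v$ is then not primitive and $\ext^1(E,E) = 2\hom(E,E) \geq 2\cdot 2 = 4$ — one rules this out by noting $\Knum$ has no nonzero class of square $0$, since the form is negative definite, so this branch is vacuous. For $E$ with two non-isomorphic JH factors, Lemma \ref{lem-mukai} gives $\ext^1(A,A) + \ext^1(B,B) \leq \ext^1(E,E) < 8$, so one of them, say $\ext^1(A,A) < 6$, but since both are even and $\geq 2$ this means $\{\ext^1(A,A), \ext^1(B,B)\}$ lies in $\{2,4\}$ with sum $< 8$, hence one factor has $\ext^1 = 2$, i.e.\ $\chi = 0$; again this is impossible by negative-definiteness, and induction on the number of JH factors closes the remaining cases exactly as in Lemma \ref{lem-hn ext}(3).

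The main obstacle I anticipate is making the Weak Mukai Lemma applicable in the odd-dimensional ($n=3,5$) setting, where the Serre functor is $T_X \circ [2]$ rather than $[2]$: one must verify $\Hom(B, A[2]) = \Hom(B, T_X(A))^\vee = 0$ for the chosen triangle $A \to E \to B$, which requires exploiting that $T_X$ is a $\sigma$-invariant involution (Lemma \ref{lem-involution} and Lemma \ref{lem-T-fix-phase}) so that $T_X(A)$ has the same phase as $A$ and the phase ordering along the JH/HN filtration still forces the relevant Hom-vanishing — this is precisely the $i=1$ bookkeeping carried out in the proof of Lemma \ref{lem-hn ext}(3), and I would invoke that argument rather than repeat it. Once that is in place, everything reduces to the elementary fact that the Euler forms $\mathrm{diag}(-1,-1)$ and $\mathrm{diag}(-2,-2)$ are negative definite with no isotropic vectors, which forbids the small-$\ext^1$ strictly semistable configurations. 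I would also record explicitly that in part (2) ``every stability condition'' is legitimate because \emph{all} stability conditions on $\Ku(X)$ for $X$ a GM fourfold or sixfold are Serre-invariant (stated in the introduction), so there is no loss in assuming Serre-invariance throughout.
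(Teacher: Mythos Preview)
Your proposal is correct and follows essentially the same approach as the paper: Weak Mukai Lemma plus the negative-definiteness of the Euler pairing to rule out nontrivial HN/JH filtrations. The paper is more economical in two places: for part (1) it simply cites \cite[Proposition 3.4(c)]{FeyzbakhshPertusi2021stab} rather than re-deriving the argument, and for part (2) in the ``all JH factors isomorphic'' case it observes directly that $\chi(E,E)\geq 2-\ext^1_X(E,E)>-6$ forces $\chi(E,E)\in\{-2,-4\}$, and both of these are realized only by primitive classes in the $\mathrm{diag}(-2,-2)$ lattice---avoiding your detour through isotropic vectors. Note also that your stated lower bound ``even and $\geq 2$'' in the $n=4,6$ case undersells what you have: since $\chi(A,A)\leq -2$ for any nonzero class, every $\sigma_X$-stable $A$ already satisfies $\ext^1(A,A)\geq 4$, which gives the two-factor contradiction immediately without passing through $\ext^1=2\Rightarrow\chi=0$.
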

\begin{proof}
    Part (1) follows from \cite[Proposition 3.4(c)]{FeyzbakhshPertusi2021stab}. For part (2), one can easily check that Lemma \ref{lem-lower bound on ext}(1) holds for $\Ku(X)$. Then by Lemma \ref{lem-mukai}, $E$ is $\sigma$-semistable for any stability condition $\sigma$ on $\Ku(X)$. If $E$ is strictly $\sigma$-semistable and has at least two non-isomorphic Jordan--H\"older factors, then by looking at the Jordan--H\"older filtration of $E$, we can also find an exact triangle $A\to E\to B$ such that $A$ is semistable with all Jordan--H\"older factors being a Jordan--H\"older factor $A'$ of $E$ and $B$ does not have $A'$ as a factor (see also \cite[Lemma 4.1]{liu:stability-manifold-of-ku}). This gives $$\Hom_X(A, B)=\Hom_X(B, A[2])=0.$$ However, this also leads to a contradiction as above by $\ext^1_X(E, E)<8$, Lemma \ref{lem-mukai}, and Lemma \ref{lem-lower bound on ext}(1). If $E$ is strictly $\sigma$-semistable such that all Jordan--H\"older factors are isomorphic, then $[E]\in \Knum(\Ku(X))$ is not primitive. As
\[\chi(E,E)\geq 2-\ext^1_X(E,E)>-6,\]
from \eqref{eq-matrix-even} we know that $\chi(E,E)=-2$ or $-4$. But in each case, $[E]$ is a primitive class, which makes a contradiction.
\end{proof}

\begin{proposition}\label{prop-gm4-unique}
Let $X$ be a non-Hodge-special GM fourfold or sixfold. Then all stability conditions on $\Ku(X)$ are in the same $\GL$-orbit.
\end{proposition}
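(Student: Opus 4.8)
The plan is to combine Bridgeland's deformation theory with the special structure of $\Lambda:=\Knum(\Ku(X))$. Since $\Ku(X)$ is a K3 category with $S_{\Ku(X)}=[2]$ and, by the non-Hodge-special hypothesis, $\Lambda=\ZZ\Lambda_1\oplus\ZZ\Lambda_2$ carries the Euler form $-2I$, the Mukai pairing $\langle v,w\rangle:=-\chi(v,w)$ on $\Lambda$ equals $+2I$; in particular it is positive definite and $\langle v,v\rangle=2a^2+2b^2$ never equals $2$, so $\Ku(X)$ has no spherical objects. For a GM sixfold $X$ one first replaces it by a GM fourfold $X'$ with $\Ku(X')\simeq\Ku(X)$ as in \cite[Corollary 6.5]{kuznetsov2019categorical} (which preserves the non-Hodge-special condition), so it suffices to treat the fourfold case.

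First I would recall the noncommutative version of Bridgeland's theorem (\cite{bridgeland:stability}, and \cite{BLMNPS21} for Kuznetsov components): the map $\mathcal{Z}\colon\Stab(\Ku(X))\to\Hom_{\ZZ}(\Lambda,\CC)$, $\sigma\mapsto Z_\sigma$, is a local homeomorphism; its image lies in the period domain $\mathcal{P}(\Lambda)$ of central charges $Z$ for which $(\Re Z,\Im Z)$ spans a positive-definite two-plane with respect to the Mukai form; and, because there are no spherical objects and hence no walls to excise, $\mathcal{Z}$ restricts to a covering map from each connected component of $\Stab(\Ku(X))$ onto a connected component of $\mathcal{P}(\Lambda)$. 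The next step is to compute $\mathcal{P}(\Lambda)$: as the Mukai form on $\Lambda\otimes\mathbb{R}\cong\mathbb{R}^2$ is already positive definite, writing $z_i=Z(\Lambda_i)$ the condition is simply $\Im(\overline{z_1}z_2)\neq 0$, so $\mathcal{P}(\Lambda)=\mathcal{P}^{+}\sqcup\mathcal{P}^{-}$ with $\mathcal{P}^{\pm}=\{\pm\Im(\overline{z_1}z_2)>0\}$; the assignment $Z\mapsto(\Re Z,\Im Z)$ identifies each $\mathcal{P}^{\pm}$ with the oriented frame bundle of $\Lambda\otimes\mathbb{R}$, a free and transitive $GL^{+}(2,\mathbb{R})$-space, and through the universal cover $\GL\to GL^{+}(2,\mathbb{R})$ the map $g\mapsto g\cdot Z$ is the universal covering of $\mathcal{P}^{\pm}$.

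I would then show that every $\GL$-orbit in $\Stab(\Ku(X))$ is a connected component. The $\GL$-action is free (an element fixing $\sigma$ fixes every phase and the nondegenerate central charge $Z_\sigma$, hence is the identity), and for any $\sigma$ the composite of the orbit map $\GL\to\Stab(\Ku(X))$ with $\mathcal{Z}$ is the universal covering $\GL\to\mathcal{P}^{\pm}$ from the previous paragraph, so the orbit map is a local homeomorphism and $\GL\cdot\sigma$ is open; since distinct $\GL$-orbits are then open and disjoint, each is also closed. As $\GL$ is connected, $\GL\cdot\sigma$ is connected, hence a connected component, and $\GL\cdot\sigma\cong\GL$. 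Therefore $\Stab(\Ku(X))$ is a disjoint union of copies of $\GL$, one per connected component, each a single $\GL$-orbit, and the proposition becomes equivalent to the connectedness of $\Stab(\Ku(X))$.

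Finally, for connectedness: by \cite[Theorem 4.12]{perry2019stability} together with the openness and closedness of the locus of induced stability conditions \cite{BLMNPS21}, $\Stab^{\circ}(\Ku(X))$ is a non-empty connected component, and by its construction from a slice of weak stability conditions on $\D^b(X)$ it is a single $\GL$-orbit. It remains to rule out any other component $\mathcal{C}$: by the previous paragraph $\mathcal{Z}(\mathcal{C})$ is $\mathcal{P}^{+}$ or $\mathcal{P}^{-}$, so after acting by $\GL$ a stability condition $\sigma\in\mathcal{C}$ can be taken to have the same central charge as a fixed $\sigma_0\in\Stab^{\circ}(\Ku(X))$; one then uses Lemma \ref{lem-mukai-4fold}(2) — which, since $\Lambda_1,\Lambda_2,\Lambda_1\pm\Lambda_2$ all have $\ext^1<8$, guarantees that stable objects of these classes are stable for both $\sigma$ and $\sigma_0$ with equal phases — to force $\mathcal{A}_\sigma=\mathcal{A}_{\sigma_0}$ and hence $\sigma=\sigma_0$. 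I expect this last point, that a common central charge together with the positions of the (finitely many types of) small-$\ext^1$ stable objects determines the heart, to be the main obstacle; it is exactly here that the rank-two lattice and the absence of spherical objects are essential, since the controlling stable classes are then few and span $\Lambda$.
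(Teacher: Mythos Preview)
Your reduction via Bridgeland's deformation theorem to the statement that $\Stab(\Ku(X))$ is connected (equivalently, that any two stability conditions with the same central charge coincide) is valid and conceptually clean, but the final step has a genuine gap. Lemma~\ref{lem-mukai-4fold}(2) only tells you that an object $E$ with $\ext^1(E,E)<8$ is \emph{stable} for every stability condition; it says nothing about the phase. After matching central charges by a $\GL$-action, the phase of such an $E$ is determined only modulo $1$, so $\phi_\sigma(E)-\phi_{\sigma_0}(E)\in\ZZ$. A further shift lets you pin down the phase of \emph{one} object, but to propagate this to objects of the other classes you need nonvanishing $\Hom(E,E'[k])$ for specific $k$, which you have not produced. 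Without this, there is no mechanism forcing $\mathcal{A}_\sigma=\mathcal{A}_{\sigma_0}$: two slicings with the same central charge can in principle differ by independent integer shifts on different stable objects unless extensions or morphisms tie them together. Note also that since the $\GL$-action on central charges factors through $GL^+(2,\mathbb{R})$, it preserves each of $\mathcal{P}^+$ and $\mathcal{P}^-$; so even granting the heart comparison, you would still need to rule out a component lying over $\mathcal{P}^-$.

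The paper's proof is different and supplies exactly what is missing. Rather than passing through the period domain, it constructs three explicit objects in $\Ku(X)$ by pushing forward carefully chosen $\sigma_Y$-stable objects $F_1,F_1',F_2\in\Ku(Y)$ from a smooth hyperplane section $j\colon Y\hookrightarrow X$; each push-forward has $\ext^1\leq 6$, hence is stable for every stability condition by Lemma~\ref{lem-mukai-4fold}(2). The crucial additional input is that, using the triangle $T_Y(F)\to j^*\pr_X(j_*F)\to F$, one can transport the nonvanishing $\Hom_Y(F_1',F_2)$ and $\Hom_Y(F_2,F_1[1])$ to nonvanishing morphisms between the push-forwards in $\Ku(X)$. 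These morphisms constrain the relative phases for \emph{any} stability condition and feed directly into the criterion of \cite[Lemma~3.7, Theorem~3.2]{FeyzbakhshPertusi2021stab}, which then yields the single-orbit conclusion. In short, your outline correctly identifies the obstacle but does not overcome it; the paper overcomes it by exhibiting objects with controlled morphisms between them, and this is where the hyperplane section geometry enters.
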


\begin{proof}
By \cite[Corollary 6.5]{kuznetsov2019categorical}, we only need to prove the statement when $\dim X=4$. Since $X$ is non-Hodge-special, $\Knum(\Ku(X))$ is generated by $\Lambda_1$ and $\Lambda_2$. Let $j\colon Y\hookrightarrow X$ be a smooth GM threefold and $F_1, F_1'$ and $F_2$ be objects in $\Ku(Y)$ defined in \cite[Lemma A.8]{JLLZ2021gushelmukai}. Denote by $\sigma_Y$ a Serre-invariant stability condition on $\Ku(Y)$. Then by \cite[Lemma A.8]{JLLZ2021gushelmukai}, we have
$$\phi_{\sigma_Y}(F_2)-1<\phi_{\sigma_Y}(F_1)=\phi_{\sigma_Y}(F_1')<\phi_{\sigma_Y}(F_2)$$
with $\Hom_Y(F_2, F_1[1])\neq 0$ and $\Hom_Y(F_1', F_2)\neq 0$. Moreover, we have 
\[\RHom_Y(F_1, F_1)=\RHom_Y(F_1', F_1')=\RHom_Y(F_2, F_2)=\CC\oplus \CC^3[-1]\]
as in the proof of \cite[Lemma A.7]{JLLZ2021gushelmukai}. 

The same argument as in Lemma \ref{lem-rhom-pushfor} implies that for any stable object $E\in \Ku(Y)$ with respect to a Serre-invariant stability condition on $\Ku(Y)$ with $\Ext^1_Y(E, E)=\CC^n$, we have $$\ext^1_X(\pr_X(j_*E), \pr_X(j_*E))\leq 2n.$$ Thus by Lemma \ref{lem-mukai-4fold}, $\pr_X(j_*F_1), \pr_X(j_*F_1')$ and $\pr_X(j_*F_2)$ are stable with respect to any stability condition on $\Ku(X)$.

Since $\phi_{\sigma_Y}(F_1')=\phi_{\sigma_Y}(T_Y(F_1'))>\phi_{\sigma_Y}(F_2)-1$, we have $\Hom(T_Y(F_1'), F_2[-1])=0$. After applying $\Hom(-,F_2)$ to the exact triangle 
\[T_Y(F_1')\to j^*\pr_X(j_*F_1')\to F_1'\]
in \eqref{exact-3}, we obtain an injection $0\neq \Hom_Y(F_1', F_2)\hookrightarrow \Hom_Y(j^*\pr_X(j_*F_1'), F_2)$, which implies
\[\Hom_Y(j^*\pr_X(j_*F_1'), F_2)=\Hom_X(\pr_X(j_*F_1'), \pr_X(j_*F_2))\neq 0.\]
Similarly, since $\phi_{\sigma_Y}(F_2)=\phi_{\sigma_Y}(T_Y(F_2))>\phi_{\sigma_Y}(F_1)$, applying $\Hom_Y(-,F_1)$ to the exact triangle 
\[T_Y(F_2)\to j^*\pr_X(j_*F_2)\to F_2\]
in \eqref{exact-3}, we get an injection $0\neq \Hom_Y(F_2, F_1[1])\hookrightarrow \Hom_Y(j^*\pr_X(j_*F_2), F_1[1])$, which implies
\[\Hom_Y(j^*\pr_X(j_*F_2), F_1[1])=\Hom_X(\pr_X(j_*F_2), \pr_X(j_*F_1)[1])\neq 0.\]
By the same argument, we have $\Hom_Y(j^*\pr_X(j_*F_1), F_1'[3])=\Hom_X(\pr_X(j_*F_1), \pr_X(j_*F_1')[3])=0$ as well. Therefore, if we define $Q_2:=\pr_X(j_*F_1')$, $Q_2':=\pr_X(j_*F_1)$ and $Q_1:=\pr_X(j_*F_2)$, then they satisfy the assumptions in \cite[Lemma 3.7]{FeyzbakhshPertusi2021stab}. Thus the result follows from \cite[Theorem 3.2]{FeyzbakhshPertusi2021stab}.
\end{proof}

Now we are prepared to prove the main theorem in this subsection. 

\begin{proof}[{Proof of Theorem \ref{thm-very-general-GM4}}]
We start by fixing the following data:  
\begin{itemize}\label{set}
    \item Categories
    \[\cK_1:=\Ku(Y),\quad \cK_2:=\Ku(X)\]
    when $n$ is even, and 
    \[\cK_1:=\Ku(X),\quad \cK_2:=\Ku(Y)\]
    when $n$ is odd, 

    \item classes $\lambda_i$ and $\Lambda_i$ as in \eqref{lambda}, \eqref{Lambda} and Lemma \ref{lem-compute-class} for each $i=1,2$,

    \item auto-equivalences $T_1$ (resp.~$T_2$) are $T_Y$ (resp.~$T_X$) as in Lemma \ref{lem-involution} when $n$ is even and $T_X$ (resp.~$T_Y$) when $n$ is odd,
    \item functors
    \[\Phi^*:=j^*|_{\Ku(X)},\quad \Phi_*:=\pr_X\circ j_*\]
    when $n$ is even, and
    \[\Phi^*:=\pr_X\circ j_*,\quad \Phi_*:=j^*\circ T_X\]
    when $n$ is odd.
\end{itemize}

We fix Serre-invariant stability conditions $\sigma_1 = (Z_1, \cA_1)$ and $\sigma_2 = (Z_2, \cA_2)$ on $\cK_1$ and $\cK_2$, respectively. By Proposition \ref{prop-gm4-unique} and Theorem \ref{thm-unique-threefold}, up to $\GL$-action, we can assume that
$$Z_1(\lambda_1) = Z_2(\Lambda_1) = -1  \ , \qquad Z_1(\lambda_2) = Z_2(\Lambda_2) = \mathfrak{i}.$$

\textbf{Claim.} We can furthermore assume that for any $\sigma_1$-stable object $A\in \cK_1$ with $[A]\in \{\pm\lambda_1, \pm\lambda_2\}$, the object $\Phi_*(A)$ is $\sigma_2$-stable with $\phi_{\sigma_1}(A)=\phi_{\sigma_2}(\Phi_*(A))$.

\begin{proof}[{Proof of the Claim}]
Since $\ext^1(A,A)\leq 3$, we have $\ext^1(\Phi_*(A),\Phi_*(A))\leq 6$ via the same argument as in Lemma \ref{lem-rhom-pushfor} 
and so $\Phi_*(A)$ is $\sigma_2$-stable by Lemma \ref{lem-mukai-4fold}. Since $Z_1=Z_2$, we know that
\begin{equation}\label{eq-phase-in-2Z}
    \phi_{\sigma_1}(A)-\phi_{\sigma_2}(\Phi_*(A))\in 2\ZZ.
\end{equation}

Let $D_1$ and $D_2$ be two $\sigma_1$-stable objects defined in \cite[Lemma 5.6]{liu:stability-manifold-of-ku} when $\cK_1$ is the Kuznetsov component of a GM threefold. When $\cK_1$ is the Kuznetsov component of a GM fivefold, we denote the image of $D_1$ and $D_2$ under the equivalence in \cite{kuznetsov2019categorical} by $D_1$ and $D_2$ as well. Up to shift and relabeling the subscript, we can fix the hearts $\cA_1$ and $\cA_2$ so that $D_1, D_2\in \cA_3$, $[D_i]=\lambda_i$, and $$\phi_{\sigma_1}(D_1)=\phi_{\sigma_2}(\Phi_*(D_1))=1.$$ As in \cite[Lemma 5.6]{liu:stability-manifold-of-ku}, we have $\Hom(D_2, D_1)\neq 0$. 

Firstly, we claim that
\[\phi_{\sigma_1}(D_2)=\phi_{\sigma_2}(\Phi_*(D_2))=\frac{1}{2}.\]
Indeed, applying $\Hom(-, D_1)$ to the exact triangle in \eqref{exact-3}, we get $$\Hom(\Phi^*\Phi_*(D_2), D_1)=\Hom(\Phi_*(D_2), \Phi_*(D_1))\neq 0.$$ Thus $$\phi_{\sigma_2}(\Phi_*(D_1))-2<\phi_{\sigma_2}(\Phi_*(D_2))<\phi_{\sigma_2}(\Phi_*(D_1))$$ since $$\Hom(\Phi_*(D_2), \Phi_*(D_1))=\Hom(\Phi_*(D_1), \Phi_*(D_2)[2])\neq 0.$$
As $\phi_{\sigma_2}(\Phi_*(D_1))=1$, we get
\[-1<\phi_{\sigma_2}(\Phi_*(D_2))<1,\]
which implies $\phi_{\sigma_2}(\pr_X(j_*D_2))=\phi_{\sigma_1}(D_2)=\frac{1}{2}$ by \eqref{eq-phase-in-2Z} and the claim follows.

Now let $A\in \cA_3$ be an arbitrary $\sigma_1$-stable object with $[A]=\lambda_i$ for $i\in \{1,2\}$. From $\chi(D_i, A)<0$ and the fact that $\cA_1$ has homological dimension $2$, we get $$\Hom(D_i, A[1])=\Hom(A, T_1(D_i)[1])\neq 0.$$ Hence, applying $\Hom(-, A)$ to the exact triangle in \eqref{exact-3}, we see
\[\Hom(\Phi^*\Phi_*(D_i), A[1])=\Hom(\Phi_*(D_i), \Phi_*(A)[1])\neq 0,\]
which implies $\phi_{\sigma_2}(\Phi_*(D_i))-1<\phi_{\sigma_2}(\Phi_*(A))$. Similarly, we have $\phi_{\sigma_2}(\Phi_*(A))<\phi_{\sigma_2}(\Phi_*(D_i))+1$ as well. Since $\phi_{\sigma_1}(A)=\phi_{\sigma_1}(D_i)=\phi_{\sigma_2}(\Phi_*(D_i))$, we get
\[\phi_{\sigma_1}(A)-1<\phi_{\sigma_2}(\Phi_*(A))<\phi_{\sigma_1}(A)+1\]
and the result follows from \eqref{eq-phase-in-2Z}.
\end{proof}

Finally, by Theorem \ref{thm-unique-threefold} and Proposition \ref{prop-gm4-unique}, $\sigma_i$ is $T_i$-invariant for each $i=1, 2$. Moreover, \cite[Theorem 7.12, Theorem 8.9]{JLLZ2021gushelmukai} shows that there is a two-dimensional family of stable objects of class $\lambda_i$ for each $i=1,2$ in $\mathcal{K}_1$. Therefore, Lemma \ref{lem-involution}, Lemma \ref{lem-adjoint-functors}, Lemma \ref{lem-exact-triangle}, and Lemma \ref{lem-compute-class} demonstrate that the above given data satisfies all conditions \ref{c1} to \ref{c-t2} in Section \ref{sec-general-criterion}, thus the claim follows from Theorem \ref{thm-main}.
\end{proof}

As a corollary, we have:

\begin{corollary}\label{cor-push-stable}
Let $X$ be a non-Hodge-special GM variety of dimension $n=4,6$ and $j: Y\hookrightarrow X$ be a smooth hyperplane section. If $E\in \Ku(Y)$ is a stable object with respect to Serre-invariant stability conditions such that $[E]\in \Knum(\Ku(Y))$ is primitive, then $\pr_X(j_*E)$ is stable with respect to any stability condition on $\Ku(X)$.
\end{corollary}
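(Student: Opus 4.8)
The plan is to deduce Corollary \ref{cor-push-stable} directly from Theorem \ref{thm-very-general-GM4} together with the stability-under-pull-back statement therein. The key point is that Theorem \ref{thm-very-general-GM4}(1) already tells us $\pr_X(j_*E)$ is $\sigma_X$-semistable for \emph{every} Serre-invariant $\sigma_X$, and, since $X$ is non-Hodge-special, Proposition \ref{prop-gm4-unique} shows all stability conditions on $\Ku(X)$ are Serre-invariant (indeed in one $\GL$-orbit), so semistability of $\pr_X(j_*E)$ holds with respect to any stability condition. It remains to upgrade semistability to stability. By the "moreover" part of Theorem \ref{thm-very-general-GM4}(1), $\pr_X(j_*E)$ fails to be $\sigma_X$-stable only if there is a $\sigma_X$-stable object $F\in\Ku(X)$ with $j^*F\cong E$; so the whole task is to rule out such an $F$ using the primitivity hypothesis on $[E]$.

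First I would compute numerical classes. If $j^*F\cong E$, then applying $j^*$ on numerical Grothendieck groups and using Lemma \ref{lem-compute-class} (in the $n=4$ case $j^*\Lambda_i=2\lambda_i$; in the $n=6$ case $j^*$ multiplies the Euler form by $2$ on the rank-two lattice, cf.~the proof of Lemma \ref{lem-compute-class}(3) and (2)), we get $[E]=j^*[F]\in 2\,\Knum(\Ku(Y))$ when $[F]$ is written in the $\Lambda_i$-basis — more precisely $j^*(a\Lambda_1+b\Lambda_2)=2a\lambda_1+2b\lambda_2$, so $[E]$ is divisible by $2$ in $\Knum(\Ku(Y))$. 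This contradicts the assumption that $[E]$ is primitive. Hence no such $F$ exists, and $\pr_X(j_*E)$ is $\sigma_X$-stable for every Serre-invariant, hence every, $\sigma_X$ on $\Ku(X)$.

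One subtlety to address carefully: in the $n=6$ case the lattice $\ZZ\Lambda_1\oplus\ZZ\Lambda_2$ may a priori be a proper sublattice of $\Knum(\Ku(X))$ unless $X$ is non-Hodge-special, but by hypothesis $X$ is non-Hodge-special, so by \cite[Corollary 4.6]{debarre2015special} and \cite[Proposition 2.25]{kuznetsov2018derived} (as recalled in Section \ref{sec-GM-1}) the class $[F]$ does lie in $\ZZ\Lambda_1\oplus\ZZ\Lambda_2$ and the divisibility argument goes through. Similarly one should note that $[E]$ primitive in $\Knum(\Ku(Y))$ is exactly what is needed, since $\Knum(\Ku(Y))$ is spanned by $\lambda_1,\lambda_2$ (Section \ref{sec-GM-1}). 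I expect the main — and essentially only — obstacle is this bookkeeping on the numerical lattices, i.e.~making sure the factor of $2$ in $j^*$ is applied to a class genuinely lying in the distinguished rank-two sublattice; everything else is an immediate citation of Theorem \ref{thm-very-general-GM4} and Proposition \ref{prop-gm4-unique}.
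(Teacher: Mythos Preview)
Your argument is correct, but the paper takes a slightly more direct route. Rather than invoking the ``moreover'' clause of Theorem \ref{thm-very-general-GM4}(1) and then excluding a hypothetical $F$ with $j^*F\cong E$, the paper simply observes that $\pr_X\circ j_*$ sends $\lambda_i\mapsto\Lambda_i$ (Lemma \ref{lem-compute-class}(1),(3)), so $[E]$ primitive implies $[\pr_X(j_*E)]$ is primitive in $\Knum(\Ku(X))=\ZZ\Lambda_1\oplus\ZZ\Lambda_2$; since a semistable object of primitive class is automatically stable, the conclusion follows at once. Your approach and the paper's are dual to one another---you use that $j^*$ introduces a factor of $2$, the paper uses that $\pr_X\circ j_*$ is a bijection on lattice generators---and both rest on the same numerical content of Lemma \ref{lem-compute-class}. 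The paper's version is a little more economical because it avoids the full stability characterization and the case analysis on the existence of $F$; yours, on the other hand, makes explicit why no pull-back obstruction can occur, which is conceptually informative.
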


\begin{proof}
By Theorem \ref{thm-very-general-GM4}, $\pr_X(j_*E)$ is semistable with respect to any stability condition on $\Ku(X)$. As $[E]$ is primitive, we know that $[\pr_X(j_*E)]$ is primitive as well by Lemma \ref{lem-compute-class}(1). Hence, $\pr_X(j_*E)$ is stable.
\end{proof}

Similarly, we have:

\begin{corollary}\label{cor-push-stable-5fold}
Let $X$ be a non-Hodge-special GM fourfold and $j: X\hookrightarrow Y$ realize $X$ as a smooth hyperplane section of a GM fivefold $Y$. If $E\in \Ku(Y)$ is an object stable with respect to Serre-invariant stability conditions such that $[E]\in \Knum(\Ku(Y))$ is primitive, then $j^*E$ is stable with respect to any stability condition on $\Ku(X)$.
\end{corollary}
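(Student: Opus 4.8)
The plan is to reduce this statement about the functor $j^*$ from a GM fivefold $Y$ to a GM fourfold $X$ to the already-established Theorem \ref{thm-very-general-GM4}, exactly mirroring the proof of Corollary \ref{cor-push-stable}. The key point is that Theorem \ref{thm-very-general-GM4}(2), applied with $Y$ of dimension $5$ and $X$ of dimension $4$ playing the role of the hyperplane section, already tells us that $j^*F$ is $\sigma_X$-semistable for every $\sigma_X$-semistable — in particular every stable — object $F\in \Ku(Y)$, provided the hypothesis ``the one of $X,Y$ with even dimension is non-Hodge-special'' holds; here the even-dimensional member is $X$, which we are assuming to be non-Hodge-special. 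Note also that $\sigma_Y$ being Serre-invariant is automatic on $\Ku(Y)$ for a GM fivefold (it is an Enriques category with a unique Serre-invariant stability condition up to $\GL$-action), and ``stable with respect to Serre-invariant stability conditions'' for $E$ is exactly the hypothesis needed to invoke the theorem.

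First I would fix a Serre-invariant stability condition $\sigma_Y$ on $\Ku(Y)$ and invoke Theorem \ref{thm-very-general-GM4}(2) to conclude that $j^*E$ is $\sigma_X$-semistable for any Serre-invariant $\sigma_X$ on $\Ku(X)$. By Proposition \ref{prop-gm4-unique}, since $X$ is non-Hodge-special, every stability condition on $\Ku(X)$ is Serre-invariant (they all lie in one $\GL$-orbit), so ``any stability condition'' and ``any Serre-invariant stability condition'' coincide for $\Ku(X)$; thus $j^*E$ is semistable for every stability condition on $\Ku(X)$. Second, I would upgrade semistability to stability using the numerical computation: by Lemma \ref{lem-compute-class}(2), for $n=5$ we have $j^*\lambda_i=\Lambda_i$, so $j^*$ carries the generators of $\Knum(\Ku(Y))$ to the generators of $\Knum(\Ku(X))$ (recall $\Knum(\Ku(X))=\ZZ\Lambda_1\oplus\ZZ\Lambda_2$ as $X$ is non-Hodge-special). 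Hence if $[E]$ is primitive in $\Knum(\Ku(Y))$, then $[j^*E]$ is primitive in $\Knum(\Ku(X))$, and a semistable object of primitive class is automatically stable. This completes the argument.

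The only genuine subtlety — and the step I would be most careful about — is checking that the hypotheses of Theorem \ref{thm-very-general-GM4} are met in this configuration: namely that it is legitimate to take the higher-dimensional variety $Y$ to be the ``$X$'' of that theorem and the lower-dimensional $X$ to be the hyperplane section ``$Y$'', and that the non-Hodge-speciality assumption is being placed on the correct (even-dimensional) member, which here is the fourfold $X$. Since the statement of Theorem \ref{thm-very-general-GM4} is symmetric in this respect (it concerns a GM variety of dimension $n\ge 4$ and a smooth hyperplane section, with the even-dimensional one assumed non-Hodge-special), taking $n=5$ there gives precisely the setup we need, and no $\GL$-normalization or further genericity of $Y$ beyond smoothness is required. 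Everything else is routine and parallels Corollary \ref{cor-push-stable} verbatim.
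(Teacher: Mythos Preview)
Your proposal is correct and follows the same approach as the paper: the paper simply writes ``Similarly, we have'' before stating this corollary, leaving the reader to parallel the proof of Corollary~\ref{cor-push-stable} exactly as you do---apply Theorem~\ref{thm-very-general-GM4}(2) with $n=5$ (so the even-dimensional member, the fourfold $X$, is the one assumed non-Hodge-special) to get semistability of $j^*E$, then use Lemma~\ref{lem-compute-class}(2) to see that $j^*$ sends primitive classes to primitive classes and hence upgrade to stability. Your care about the role-swap of $X$ and $Y$ relative to the theorem's notation is warranted but, as you note, the hypotheses match perfectly; the only minor simplification is that since $S_{\Ku(X)}=[2]$ for the fourfold, every stability condition on $\Ku(X)$ is automatically Serre-invariant, so invoking Proposition~\ref{prop-gm4-unique} is not strictly necessary.
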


\subsection{General cases}\label{subsec-general-gm}
In this section, we aim to extend Theorem \ref{thm-very-general-GM4}(1) to the general case, and prove the following. 

\begin{theorem}\label{thm-general-gm4}
Let $a,b$ be a pair of integers. Then for a general GM fourfold $X$ and its smooth hyperplane section $j \colon Y\hookrightarrow X$, if $E\in \Ku(Y)$ is a $\sigma_Y$-semistable object of class $a\lambda_1+b\lambda_2$, then $\pr_X(j_*E)$ is $\sigma_X$-semistable, where $\sigma_Y$ is a Serre-invariant stability condition on $\Ku(Y)$ and $\sigma_X\in \Stab^{\circ}(\Ku(X))$.
\end{theorem}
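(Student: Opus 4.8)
The plan is to deduce Theorem \ref{thm-general-gm4} from the very general case (Theorem \ref{thm-very-general-GM4}) via a deformation/semicontinuity argument in a family of GM fourfolds. I would work over a suitable base $S$ parametrizing pairs $(X_s, Y_s)$, where $X_s$ is a GM fourfold and $Y_s \subset X_s$ is a smooth hyperplane section, with the very general point $0 \in S$ being non-Hodge-special, and a fixed general point $s_0 \in S$ being the given $X$. One must first set up the relative Kuznetsov components $\mathcal{K}u(\mathcal{X}/S)$ and $\mathcal{K}u(\mathcal{Y}/S)$ as $S$-linear categories (using the machinery of \cite{perry2019stability, BLMNPS21}), the relative projection functor $\mathrm{pr}_{\mathcal{X}/S}\circ j_*$, and a family of stability conditions $\underline{\sigma}$ over $S$ (restricting to $\sigma_X \in \Stab^\circ$ on each fiber, again by \cite{perry2019stability}). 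The key input is that $\Stab^\circ(\mathcal{K}u(X_s))$ varies nicely and that semistability is a closed/open condition in families — precisely, that the moduli stack of $\underline{\sigma}$-semistable objects of class $a\Lambda_1 + b\Lambda_2$ is proper over $S$ (or at least that its image in $S$ is closed), which is one of the main theorems of \cite{BLMNPS21, perry2019stability}.

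The core argument would then run as follows. Given the $\sigma_Y$-semistable object $E \in \mathcal{K}u(Y)$ of class $a\lambda_1 + b\lambda_2$, consider $F := \mathrm{pr}_X(j_*E) \in \mathcal{K}u(X)$ of class $a\Lambda_1 + b\Lambda_2$ (using Lemma \ref{lem-compute-class}(1)). Suppose for contradiction that $F$ is not $\sigma_X$-semistable. The moduli space $M^X_{\sigma_X}(a,b)$ of $\sigma_X$-semistable objects, and more generally its relative version over $S$, is proper; I want to show $F$ deforms in the family $\mathcal{K}u(\mathcal{X}/S)$ together with a deformation of $E$. Concretely: $E$ itself deforms over (an étale neighborhood of $s_0$ in) $S$ because $\sigma_Y$-semistable objects of a fixed primitive-or-general class form a family flat over the base — this uses that every GM threefold has a Serre-invariant stability condition with good moduli (Theorem \ref{thm-unique-threefold} and \cite{pertusiGM3fold}, \cite{perry2019stability}) — and the relative functor $\mathrm{pr}_{\mathcal{X}/S} \circ j_*$ carries this to a flat family $\mathcal{F}$ of objects in $\mathcal{K}u(\mathcal{X}/S)$ restricting to $F$ at $s_0$. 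At the very general point $0 \in S$, Theorem \ref{thm-very-general-GM4}(1) applies (since $X_0$ is non-Hodge-special), so $\mathcal{F}_0$ is $\underline{\sigma}$-semistable. Hence the set of $s \in S$ with $\mathcal{F}_s$ semistable contains the very general point; by openness of semistability in families (again from \cite{BLMNPS21, perry2019stability}) this set is open, and by the valuative/properness properties of the relative moduli space its complement cannot accumulate at a generic point — more carefully, the locus where $\mathcal{F}_s$ is \emph{not} semistable is a proper closed subset of $S$ that does not contain $s_0$ because $s_0$ was chosen general, i.e.\ outside this countable (in fact, here finite-type) union of proper closed loci. Therefore $F = \mathcal{F}_{s_0}$ is $\sigma_X$-semistable.

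The main obstacle, and the step that needs the most care, is the construction and flatness of the relative family $\mathcal{F}$ of projected objects, together with matching up the relative stability conditions: one needs that $\Stab^\circ(\mathcal{K}u(X_s))$ forms a genuine family of stability conditions over $S$ in the sense of \cite{BLMNPS21}, that the class $a\Lambda_1 + b\Lambda_2$ makes sense globally (it does, being pulled back from $S$-locally constant data — the sublattice generated by $\Lambda_1, \Lambda_2$ is locally constant), and that $M^X_{\sigma_X}(a,b)$ and its relative analogue are algebraic and proper so that the "general" in the statement can be pinned down as "outside an explicit proper closed subvariety of the moduli of GM fourfolds." A secondary subtlety is that $a,b$ need not be coprime, so the moduli space may have strictly semistable points and is only a good moduli space; but this does not affect the argument since we only need properness/openness at the level of semistability, not a fine moduli space. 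One should also verify that a general hyperplane section $Y$ of a general $X$ is smooth (clear, by Bertini) and that the relative functor is compatible with base change (standard for the mutation functors defining $\mathrm{pr}_X$, since they are given by kernels that behave well in families). I would organize the proof so that all the "families of stability conditions" technology is quoted as a black box from \cite{BLMNPS21, perry2019stability}, reducing the new content to: (i) deform $E$, (ii) push forward to get $\mathcal{F}$, (iii) invoke the very general case at $0 \in S$, (iv) invoke openness plus the choice of $s_0$ being general.
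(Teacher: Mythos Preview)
Your strategy is essentially the paper's: set up a family of pairs $(X_s,Y_s)$ over a base $S$, construct the relative Kuznetsov components and relative stability conditions via \cite{BLMNPS21,perry2019stability}, and combine openness of semistability with the very general case (Theorem \ref{thm-very-general-GM4}).

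There is, however, a quantifier issue in your execution. You fix $E$, deform it to a family $\mathcal{F}$ over $S$, and argue that the locus where $\mathcal{F}_s$ is not semistable is a proper closed subset avoided by the ``general'' point $s_0$. But this bad locus depends on $E$, while the theorem asserts that for a general $X$ the conclusion holds for \emph{every} smooth hyperplane section $Y$ and \emph{every} $\sigma_Y$-semistable $E$ of class $a\lambda_1+b\lambda_2$. There are uncountably many such $E$, so the union of the individual bad loci is not a priori closed (or even constructible), and the argument as written is circular: you cannot choose $s_0$ ``general'' before knowing which closed sets to avoid. Your parenthetical ``finite-type union of proper closed loci'' hints at the fix but does not supply it. A secondary issue: it is not automatic that a given semistable $E$ over $s_0$ extends to a flat family of semistable objects reaching the very general fiber, which your argument assumes.

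The paper avoids both problems by working universally. Instead of deforming a single $E$, it considers the morphism $\gamma'\colon \cM^{\cY}_{\underline{\sigma'}}(a,b)\to \cM_{\mathrm{pug}}(\Ku(\cX)/S)$ from the \emph{entire} relative moduli stack of semistable objects on the threefold side, pulls back the open semistable locus, and applies Chevalley's theorem on constructible sets to the complement $\mathcal{Z}$. Pushing $\mathcal{Z}$ down to $S$ (and then further to the moduli of GM fourfolds via the projection $q\colon S\to W$, a step you also omit) yields a single constructible bad locus; since Theorem \ref{thm-very-general-GM4} shows this bad locus misses all very general points, its complement is dense constructible and hence contains an open dense subset of $W$. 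This is the content your sketch needs to make rigorous.
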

Note that the generality assumption in Theorem \ref{thm-general-gm4} means that the statement holds when $X$ is in an open dense substack of the moduli stack of smooth ordinary GM fourfolds. Moreover, as we will see in the proof, such a substack depends on the choice of integers $a,b$.

\begin{proof}

Let $\cM^{\mathrm{GM}}_4$ be the moduli stack of smooth ordinary GM fourfolds, which is a smooth irreducible Deligne--Mumford stack of finite type and separated over $\CC$ (cf.~\cite[Proposition A.2]{kuznetsov2018derived} and \cite[Corollary 5.12]{debarre:gm-moduli}). For a smooth projective morphism $\cX\to S$ over a smooth scheme $S$ over $\CC$ and a $S$-linear semi-orthogonal component $\cD\subset \D^b(\cX)$, we denote by $\cM_{\mathrm{pug}}(\cD/S)$ the moduli stack of universally gluable objects in $\cD$ over $S$, defined in \cite[Definition 9.1]{BLMNPS21}. 
According to \cite[Lemma 21.12]{BLMNPS21}, when $X$ is a GM fourfold, $\cM^X_{\sigma_X}(a,b)$ is an open substack of $\cM_{\mathrm{pug}}(\Ku(X)/\CC)$. And by Lemma \ref{lem-rhom-pushfor}, for any smooth hyperplane section $j\colon Y\hookrightarrow X$, we have a morphism
\[\gamma\colon \cM^Y_{\sigma_Y}(a,b)\to \cM_{\mathrm{pug}}(\Ku(X)/\CC)\]
given by $E\mapsto \pr_X(j_*E)$ at the level of $\CC$-point. Therefore, given a $\CC$-point $s\colon \Spec(\CC)\to \cM^Y_{\sigma_Y}(a,b)$ corresponding to a $\sigma_Y$-semistable object $E\in \Ku(Y)$, to show $\pr_X(j_*E)\in \Ku(X)$ is $\sigma_X$-semistable, we need to show the composition
\[\Spec(\CC)\xra{s}\cM^Y_{\sigma_Y}(a,b)\xra{\gamma}\cM_{\mathrm{pug}}(\Ku(X)/\CC)\]
factor through the natural open immersion $\cM^X_{\sigma_X}(a,b)\subset \cM_{\mathrm{pug}}(\Ku(X)/\CC)$. By \cite[Theorem 12.17(3)]{BLMNPS21}, this is equivalent to finding an extension $\kappa$ of $\CC$ such that the composition
\[\Spec(\kappa)\xra{s_{\kappa}}\cM^Y_{\sigma_Y}(a,b)\xra{\gamma}\cM_{\mathrm{pug}}(\Ku(X)/\CC)\]
factors through $\cM^X_{\sigma_X}(a,b)\subset \cM_{\mathrm{pug}}(\Ku(X)/\CC)$. We denote by $|\cM|$ the associated topological space of an algebraic stack $\cM$. From the definition of topological spaces of algebraic stacks (cf.~\cite[\href{https://stacks.math.columbia.edu/tag/04XG}{Tag 04XG}]{stacks-project}), it is also equivalent to say 
$$
s_{\kappa}\in |\cM^X_{\sigma_X}(a,b)|\cap \gamma(|\cM^Y_{\sigma_Y}(a,b)|)\subset |\cM_{\mathrm{pug}}(\Ku(X)/\CC)|
$$ 
Thus, to prove that $\pr_X(j_*E)$ is $\sigma_X$-semistable for any $\sigma_Y$-semistable object $E\in \Ku(Y)$, we only need to show the inclusion
\[\gamma(|\cM^Y_{\sigma_Y}(a,b)|)\subset |\cM^X_{\sigma_X}(a,b)|\]
in $|\cM_{\mathrm{pug}}(\Ku(X)/\CC)|$. Then the result can be deduced from Theorem \ref{thm-very-general-GM4} and Chevalley's theorem on constructible subsets in algebraic stacks as follows.

First, we claim that there exists a smooth connected scheme $S$ of finite type over $\CC$ and a family of smooth ordinary GM fourfolds $\pi_4\colon \cX\to S$ with a closed subscheme $\cY\subset \cX$ such that $\pi_3\colon \cY\hookrightarrow \cX\to S$ is a family of smooth GM threefolds and for each general GM fourfold $X$ and its smooth hyperplane section $j\colon Y\hookrightarrow X$, there exists a point $s\in S$ such that $\cY_s\hookrightarrow \cX_s$ is isomorphic to $j$.

Indeed, let $\pi_1\colon \cX'\to W$ be a family of smooth GM fourfolds such that $W$ is smooth connected and of finite type over $\CC$ and any general smooth GM fourfold occurs as a fiber of $\pi_1$. Such a family exists by \cite[Proposition A.2]{kuznetsov2018derived}, since we can take $W$ to be a connected component of a smooth atlas of $\cM_4^{\mathrm{GM}}$ which dominants the moduli stack $\cM_4^{\mathrm{GM}}$ of smooth ordinary GM fourfolds, and $\pi_1$ be the pull-back of the universal family to $W$. Let $S$ be the open subscheme of the relative Hilbert scheme $\Hilb_{\cX'/W}^{\oh(1)}$ of hyperplane sections of $\cX'$ over $W$, parameterizing smooth hyperplane sections. We denote by $q\colon S\to W$ the natural morphism, which is of finite type. We define $\cX:=\cX'\times_W S$ and $\cY\subset \cX$ be the universal closed subscheme. As $S$ is an open subset of a $\PP^8$-bundle over $W$, we see $S$ is smooth connected and of finite type over $\CC$. Then the other statements of the claim follow from the construction. By replacing $W$ with its open dense subscheme and its covering, we can assume that $\pi_4\colon \cX\to S$ is a family of ordinary GM fourfolds satisfying two assumptions in \cite[Proposition 5.3]{perry2019stability}. 

By \cite[Lemma 5.9]{bayer2022kuznetsov}, there is an $S$-linear semi-orthogonal component $\Ku(\cX)\subset \mathrm{D}_{\perf}(\cX)$ such that $\Ku(\cX)_s\simeq \Ku(\cX_s)$ for any $s\in S$. Therefore, using the construction in \cite[Section 4]{perry2019stability}, there is a stability condition $\underline{\sigma}$ on $\Ku(\cX)$ over $S$ such that $\underline{\sigma}|_s\in \Stab^{\circ}(\Ku(\cX_s))$ for each $s\in S$. Similarly, by \cite[Theorem 23.1, Proposition 26.1]{BLMNPS21}, there exists a stability condition $\underline{\sigma'}$ on $\Ku(\cY)$ over $S$ in the sense of \cite[Definition 21.15]{BLMNPS21} such that $\underline{\sigma'}|_s$ is Serre-invariant for each $s\in S$. 

We denote by $\cM_{\mathrm{pug}}(\Ku(\cX)/S)$ the moduli stack of universally gluable objects in $\Ku(\cX)$ over $S$, defined in \cite[Definition 9.1]{BLMNPS21}. By Lemma \ref{lem-rhom-pushfor}, we have a morphism 
\[\gamma'\colon \cM^{\cY}_{\underline{\sigma'}}(a,b)\to \cM_{\mathrm{pug}}(\Ku(\cX)/S)\]
induced by the push-forward along the embedding $\cY\hookrightarrow \cX$ and the projection functor $\mathrm{D}_{\mathrm{perf}}(\cX)\to \Ku(\cX)$. We set $\cU:=\gamma'^{-1}(\cM^{\cX}_{\underline{\sigma}}(a,b))$, which is an open substack of $\cM^{\cY}_{\underline{\sigma'}}(a,b)$ by \cite[Lemma 21.12]{BLMNPS21}. Let $\mathcal{Z}$ be the complement of $|\cU|$ in $|\cM^{\cY}_{\underline{\sigma'}}(a,b)|$ and $p\colon \cM^{\cY}_{\underline{\sigma'}}(a,b)\to S$ be the natural morphism, which is of finite type by \cite[Theorem 21.24]{BLMNPS21}. Then we define $\mathcal{V}:=|\cM^{\cY}_{\underline{\sigma'}}(a,b)|\setminus p^{-1}(p(\mathcal{Z}))$, which is a constructible subset by \cite[Corollaire (5.9.2), Th\'eor\`eme (5.9.4)]{laumon:stack-book}. Let $V:=p(\cV)$, which is a constructible subset of $S$. As $p^{-1}(V)\cap \mathcal{Z}=\varnothing$, we know that for any $\CC$-point $$s\colon \Spec(\CC)\to V\subset S \subset  \Hilb_{\cX'/W}^{\oh(1)},$$ we have a GM fourfold $\cX_s$ and a smooth hyperplane section $j_s\colon \cY_s\hookrightarrow \cX_s$ such that $\pr_{\cX_s}(j_{s*}(E))$ is $\underline{\sigma}|_s$-semistable for any $\underline{\sigma'}|_s$-semistable object $E\in \Ku(\cY_s)$ of class $a\lambda_1+b\lambda_2$. 

We define a new constructible subset $T:=q(S\setminus q^{-1}(q(S\setminus V)))\subset W$. Since $q^{-1}(T)\subset V$, we know that for any $s\colon \Spec(\CC)\to T$, we have a GM fourfold $\cX'_s$ satisfying the statement of the theorem. By Theorem \ref{thm-very-general-GM4}, any very general GM fourfold represents a point in $T$, hence $T$ is dense in $W$. As $T$ is constructible and dense in $W$, it contains an open dense subscheme of $W$ by \cite[\href{https://stacks.math.columbia.edu/tag/005K}{Tag 005K}]{stacks-project} and the result follows.
\end{proof}

Using the same idea in the proof of Theorem \ref{thm-general-gm4}, but just replacing the roles of $X,Y$, and $\pr_X\circ j_*$ by $Y,X$, and $j^*$, respectively, we have:

\begin{theorem}\label{thm-general-gm3}
Let $a,b$ be a pair of integers. Then for a general GM threefold $Y$ and a general GM fourfold $X$ with an embedding $j \colon Y\hookrightarrow X$, if $F\in \Ku(X)$ is a $\sigma_X$-semistable object of class $a\Lambda_1+b\Lambda_2$, then $j^*F$ is $\sigma_Y$-semistable, where $\sigma_Y$ is a Serre-invariant stability condition on $\Ku(Y)$ and $\sigma_X\in \Stab^{\circ}(\Ku(X))$.
\end{theorem}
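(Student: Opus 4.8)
The plan is to run the deformation argument in the proof of Theorem \ref{thm-general-gm4} with the substitutions indicated just above: the source category becomes $\Ku(X)$ (with $X$ the fourfold), the target becomes $\Ku(Y)$ (with $Y$ the threefold), and the functor $\pr_X\circ j_*$ is replaced throughout by $j^*$. First, by Lemma \ref{lem-compute-class}(1) we have $j^*\Lambda_i=2\lambda_i$, so $j^*$ sends the class $a\Lambda_1+b\Lambda_2$ to $2a\lambda_1+2b\lambda_2$, and the relevant target moduli stack is $\cM^Y_{\sigma_Y}(2a,2b)$, which by \cite[Lemma 21.12]{BLMNPS21} is an open substack of $\cM_{\mathrm{pug}}(\Ku(Y)/\CC)$. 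Next, exactly as $\gamma$ was produced in the proof of Theorem \ref{thm-general-gm4} from Lemma \ref{lem-rhom-pushfor}, one uses Lemma \ref{lem-rhom-back} (together with the adjunction $j^*\dashv\pr_X\circ j_*$ and the triangle \eqref{eq-triangle-functor-2}, which bound $\RHom_Y(j^*F,j^*F)$ and show that $j^*F$ is universally gluable when $F$ is) to obtain a morphism $\delta\colon\cM^X_{\sigma_X}(a,b)\to\cM_{\mathrm{pug}}(\Ku(Y)/\CC)$, $F\mapsto j^*F$, at the level of $\CC$-points. By \cite[Theorem 12.17(3)]{BLMNPS21} and the description of the underlying topological space of an algebraic stack, it then suffices to prove the inclusion $\delta\bigl(|\cM^X_{\sigma_X}(a,b)|\bigr)\subseteq|\cM^Y_{\sigma_Y}(2a,2b)|$ inside $|\cM_{\mathrm{pug}}(\Ku(Y)/\CC)|$.

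To prove this inclusion I would reuse verbatim the family from the proof of Theorem \ref{thm-general-gm4}: a smooth connected scheme $S$ of finite type over $\CC$ with a family $\pi_4\colon\cX\to S$ of smooth ordinary GM fourfolds and a relative smooth hyperplane section $\cY\subset\cX$, obtained from a connected smooth atlas $W$ dominating $\cM_4^{\mathrm{GM}}$, a finite-type morphism $q\colon S\to W$, and an open locus of smooth hyperplane sections in $\Hilb^{\oh(1)}_{\cX'/W}$, arranged (after shrinking $W$) so that $\cX\to S$ satisfies the hypotheses of \cite[Proposition 5.3]{perry2019stability}. This gives $S$-linear components $\Ku(\cX)\subset\mathrm{D}_{\perf}(\cX)$ and $\Ku(\cY)\subset\mathrm{D}_{\perf}(\cY)$ with a stability condition $\underline\sigma$ on $\Ku(\cX)$ over $S$ restricting fibrewise to $\Stab^\circ(\Ku(\cX_s))$ by \cite[Section 4]{perry2019stability}, and a stability condition $\underline{\sigma'}$ on $\Ku(\cY)$ over $S$ restricting fibrewise to a Serre-invariant stability condition by \cite[Theorem 23.1, Proposition 26.1]{BLMNPS21}. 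Pull-back along $\cY\hookrightarrow\cX$ lands in $\Ku(\cY)$ (the relative form of Lemma \ref{pull-back-in-ku}, so no extra projection is needed) and hence induces a morphism $\delta'\colon\cM^{\cX}_{\underline\sigma}(a,b)\to\cM_{\mathrm{pug}}(\Ku(\cY)/S)$; set $\cU:=\delta'^{-1}\bigl(\cM^{\cY}_{\underline{\sigma'}}(2a,2b)\bigr)$, an open substack of $\cM^{\cX}_{\underline\sigma}(a,b)$ by \cite[Lemma 21.12]{BLMNPS21}.

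Now write $p\colon\cM^{\cX}_{\underline\sigma}(a,b)\to S$ for the structure morphism, which is of finite type by \cite[Theorem 21.24]{BLMNPS21}, let $\mathcal Z:=|\cM^{\cX}_{\underline\sigma}(a,b)|\setminus|\cU|$, and set $\mathcal V:=|\cM^{\cX}_{\underline\sigma}(a,b)|\setminus p^{-1}(p(\mathcal Z))$ and $V:=p(\mathcal V)$; these are constructible by Chevalley's theorem for algebraic stacks \cite[Corollaire (5.9.2), Th\'eor\`eme (5.9.4)]{laumon:stack-book}, and $p^{-1}(V)\cap\mathcal Z=\varnothing$, so for every $\CC$-point of $V$ the associated pair $\cY_s\hookrightarrow\cX_s$ has the property that $j_s^*F$ is $\underline{\sigma'}|_s$-semistable for every $\underline\sigma|_s$-semistable $F$ of class $a\Lambda_1+b\Lambda_2$. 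Descending to $W$ via $T:=q\bigl(S\setminus q^{-1}(q(S\setminus V))\bigr)$, one gets a constructible subset of $W$ with $q^{-1}(T)\subseteq V$; by Theorem \ref{thm-very-general-GM4}(2) every very general ordinary GM fourfold is non-Hodge-special and hence represents a point of $T$, so $T$ is dense in the irreducible scheme $W$ and therefore contains an open dense subscheme by \cite[\href{https://stacks.math.columbia.edu/tag/005K}{Tag 005K}]{stacks-project}, which is the asserted general locus. As in Theorem \ref{thm-general-gm4}, the main obstacle is bookkeeping rather than new geometry: assembling the relative stability conditions and relative moduli stacks of finite type and propagating constructibility through the base changes $\cM^{\cX}_{\underline\sigma}(a,b)\to S$ and $S\to W$. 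The only features special to this direction are that one invokes the weaker (but still finite) $\ext^1$-bound of Lemma \ref{lem-rhom-back} in place of Lemma \ref{lem-rhom-pushfor}, and that the output class $2a\lambda_1+2b\lambda_2$ is never primitive, so one must argue with moduli of semistable objects throughout and cannot pass to stable objects.
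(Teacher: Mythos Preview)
Your proposal is correct and follows essentially the same approach as the paper, which simply says to rerun the proof of Theorem \ref{thm-general-gm4} with $X$, $Y$, and $\pr_X\circ j_*$ replaced by $Y$, $X$, and $j^*$. You have filled in the substitutions carefully, including the correct target class $2a\lambda_1+2b\lambda_2$, the use of Lemma \ref{lem-rhom-back} (and the triangle \eqref{eq-triangle-functor-2}) in place of Lemma \ref{lem-rhom-pushfor} for universal gluability, and the invocation of Theorem \ref{thm-very-general-GM4}(2) to supply the dense locus of very general fourfolds.
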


\begin{remark}\label{rmk-general-gm-varieties}
Once a construction of stability conditions for families of GM fivefolds and sixfolds is known, all results in Theorem \ref{thm-very-general-GM4} can be generalized to the general case by the above argument in Theorem \ref{thm-general-gm4}.
\end{remark}

\section{First application: Lagrangian families}\label{sec-lag-family}

In this section, we explain how to construct Lagrangian families of Bridgeland moduli spaces via Theorem \ref{thm-very-general-GM4} and \ref{thm-general-gm4}.

\subsection{Properties of moduli spaces}

Firstly, we prove some basic properties of moduli spaces of semistable objects in $\Ku(Y)$ for a GM threefold $Y$.

\begin{proposition}\label{prop-structure-3fold-moduli}
Let $Y$ be a GM threefold and $\sigma_Y$ be a Serre-invariant stability condition on $\Ku(Y)$. Let $a,b$ be a pair of coprime integers.

\begin{enumerate}
    \item The moduli space $M_{\sigma_Y}^Y(a,b)$ is normal of pure dimension $a^2+b^2+1$.

    \item If $Y$ is a general\footnote{Here, the generality assumption means that the statement holds for all $Y$ in an open dense subset of the moduli space of ordinary GM threefolds.} GM threefold, then $M_{\sigma_Y}^Y(a,b)$ is smooth and projective.
\end{enumerate}
\end{proposition}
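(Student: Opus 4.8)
The plan is the following. By Theorem~\ref{thm-unique-threefold} all Serre-invariant stability conditions on $\Ku(Y)$ lie in a single $\GL$-orbit, so it suffices to fix one $\sigma_Y=(Z_Y,\cA_Y)$, which after a shift we normalize so that $Z_Y(\lambda_1)=-1$, $Z_Y(\lambda_2)=\mathfrak{i}$ and $\cA_Y$ contains an object of class $v:=a\lambda_1+b\lambda_2$. Since $\gcd(a,b)=1$ the class $v$ is primitive; as the Euler form \eqref{eq-matrix-odd} shows that two nonzero classes of $\sigma_Y$-stable objects can have equal phase only when they are positive rational multiples of one another, primitivity of $v$ forces every $\sigma_Y$-semistable object of class $v$ to be $\sigma_Y$-stable. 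Hence, invoking \cite{BLMNPS21, ppzEnriques2023}, the moduli space $M^Y_{\sigma_Y}(a,b)$ exists as a proper algebraic space of finite type over $\CC$, namely the coarse space of the $\mathbb{G}_m$-gerbe of $\sigma_Y$-stable objects of class $v$.

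The local structure is deformation-theoretic: the tangent space of $M^Y_{\sigma_Y}(a,b)$ at $[E]$ is $\Ext^1(E,E)$, with obstructions in $\Ext^2(E,E)$. As $\sigma_Y$ is Serre-invariant, $\cA_Y$ has homological dimension $2$, so $\ext^0(E,E)=1$ and $\ext^3(E,E)=0$; from $S_{\Ku(Y)}=T_Y\circ[2]$ we get $\Ext^2(E,E)\cong\Hom(E,T_Y(E))^\vee$, which vanishes when $T_Y(E)\not\cong E$ (then $E$ and $T_Y(E)$ are non-isomorphic $\sigma_Y$-stable objects of the same phase, by Lemma~\ref{lem-T-fix-phase}) and is one-dimensional when $T_Y(E)\cong E$. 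Since $\chi(E,E)=-(a^2+b^2)$, this gives $\ext^1(E,E)=a^2+b^2+1$ in the first case and $a^2+b^2+2$ in the second, and in both cases the virtual dimension $\ext^1(E,E)-\ext^2(E,E)$ equals $a^2+b^2+1$. Thus every component of $M^Y_{\sigma_Y}(a,b)$ has dimension at least $a^2+b^2+1$, and $M^Y_{\sigma_Y}(a,b)$ is smooth of dimension $a^2+b^2+1$ away from the closed locus $\mathrm{Fix}(T_Y)$ of stable $E$ with $T_Y(E)\cong E$, which therefore contains the singular locus.

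To finish~(1) I would analyze $M^Y_{\sigma_Y}(a,b)$ along $\mathrm{Fix}(T_Y)$ following the Enriques-category computations underlying \cite{ppzEnriques2023}: at a $T_Y$-fixed stable object of the primitive class $v$ the Kuranishi map $\Ext^1(E,E)\to\Ext^2(E,E)=\CC$ has nondegenerate quadratic leading term — one uses the $T_Y$-eigenspace decomposition of $\Ext^1(E,E)$ and the symmetric pairing induced by $S_{\Ku(Y)}$, the point being that no ``$K3$-type'' fixed object occurs for a primitive class, so the relevant pairing is symmetric rather than symplectic — so $M^Y_{\sigma_Y}(a,b)$ is locally a corank-one hypersurface in a smooth $(a^2+b^2+2)$-fold. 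This makes it pure of dimension $a^2+b^2+1$, a local complete intersection (hence $S_2$) and regular in codimension one, so Serre's criterion yields normality, completing~(1).

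For~(2), let $Y$ be a general ordinary GM threefold. For projectivity, embed $Y$ as a smooth hyperplane section $j\colon Y\hookrightarrow X$ of a non-Hodge-special GM fourfold $X$ with $\sigma_X$ generic with respect to $a\Lambda_1+b\Lambda_2$ — possible for general $Y$, as the GM fourfolds containing $Y$ move in a positive-dimensional family whose general member is non-Hodge-special. Then $M^X_{\sigma_X}(a,b)$ is a projective hyperk\"ahler manifold by \cite{perry2019stability}, and by Theorem~\ref{thm-very-general-GM4}(1) and Corollary~\ref{cor-push-stable} the rule $E\mapsto\pr_X(j_*E)$ defines a morphism $\gamma\colon M^Y_{\sigma_Y}(a,b)\to M^X_{\sigma_X}(a,b)$; the triangle \eqref{exact-3}, here $T_Y(E)\to j^*\pr_X(j_*E)\to E$, exhibits any object in a fibre of $\gamma$ as a Jordan--H\"older factor of the $\sigma_Y$-semistable object $j^*\pr_X(j_*E)$, so $\gamma$ is quasi-finite, hence finite since both sides are proper over $\CC$, and $\gamma^{*}$ of an ample class is ample, so $M^Y_{\sigma_Y}(a,b)$ is projective. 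For smoothness, by the second paragraph it suffices to show that for general ordinary $Y$ no $\sigma_Y$-stable object of class $v$ is fixed by $T_Y$; this is an open condition on $Y$, which I would verify on one example — e.g.\ a hyperplane section of a very general GM fourfold, using that $\gamma$ is there unramified (as recalled in the introduction) together with a dimension count on $\mathrm{Fix}(T_Y)$. The principal obstacle I anticipate is exactly the local analysis along $\mathrm{Fix}(T_Y)$: proving that the obstruction map at a $T_Y$-fixed stable object of a primitive class is nondegenerate of corank one, so that $M^Y_{\sigma_Y}(a,b)$ is genuinely normal of pure dimension $a^2+b^2+1$ rather than acquiring a spurious $(a^2+b^2+2)$-dimensional component, and, dually, that such fixed objects disappear for general ordinary $Y$.
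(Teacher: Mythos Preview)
Your overall strategy for~(1) matches the paper's: smoothness away from $\mathrm{Fix}(T_Y)$ via deformation theory, then a local analysis at fixed points via the Kuranishi map. The gap is in your justification that the quadratic part of the Kuranishi map $\Ext^1(E,E)\to\Ext^2(E,E)\cong\CC$ is nondegenerate at a $T_Y$-fixed stable object. Your appeal to an ``Enriques-type eigenspace decomposition'' and the claim that ``no K3-type fixed object occurs for a primitive class'' is vague; it is not clear what pairing you mean or why it is nondegenerate. The paper avoids this direct computation: it uses Zhao's formality result to reduce the full Kuranishi map to its quadratic part $\kappa_2$, then identifies $\kappa_2$ $\Aut(E)$-equivariantly with the moment map $\mu\colon\mathrm{Rep}(\overline{Q},\mathbf{n})\to\mathfrak{gl}(\mathbf{n})^\vee$ of a quiver $\overline{Q}$, following Sacc\`a. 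Normality and pure dimension then follow from Crawley--Boevey's structure theorems for quiver varieties, which handle the general case without checking nondegeneracy by hand. So your anticipated ``principal obstacle'' is real, and the paper's route through quiver varieties is precisely what resolves it.

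For~(2), both your arguments differ from the paper's. Your projectivity argument via a finite morphism to $M^X_{\sigma_X}(a,b)$ is correct in spirit and is essentially how the paper proves the stronger Corollary~\ref{cor-projective}, but note that ``non-Hodge-special'' is a \emph{very general} condition, not a general one, so for a \emph{general} ordinary $Y$ you cannot simply choose a non-Hodge-special $X$; you would need Theorem~\ref{thm-general-gm4} instead. The paper's proof of~(2) is more self-contained: smoothness is first established for the branch divisor of a non-Hodge-special special GM fourfold via \cite{ppzEnriques2023}, then spread to an open set by running the relative moduli space $M_{\underline{\sigma}}(\Ku(\cY)/S,v)\to S$ and applying generic smoothness; projectivity then comes from the relatively ample Bayer--Macr\`i class \cite[Theorem~21.25]{BLMNPS21} together with a projectivity criterion, rather than from a finite map. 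Your proposal to show $\mathrm{Fix}(T_Y)=\varnothing$ for general $Y$ by checking one example and invoking openness would also work, but you would need to exhibit such a $Y$ and justify openness carefully; the paper's relative-moduli approach sidesteps this.
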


\begin{proof}
Let $[F]\in M^Y_{\sigma_Y}(a,b)$. As $a,b$ are coprime, we know that $F$ is $\sigma_Y$-stable. If $F\neq S_{\Ku(Y)}(F)[-2]$, then $\Ext_Y^2(F,F)=0$ and $[F]\in M^Y_{\sigma_Y}(a,b)$ is a smooth point. If $F= S_{\Ku(Y)}(F)[-2]$, then following the notations in \cite[Proposition 6.1]{sacca:singular-K3-surface}, the same argument in the proof of \cite[Proposition 6.1]{sacca:singular-K3-surface} shows that there is a quiver $\overline{Q}$ and a dimension vector $\mathbf{n}$ such that the moment map
\[\mu\colon \mathrm{Rep}(\overline{Q}, \mathbf{n})\to  \mathfrak{gl}(\mathbf{n})^{\vee}\]
corresponds to the second-order term of the Kuranishi map $\kappa_2\colon \Ext^1(F,F)\to \Ext^2(F,F)$ under $\Aut(F)$-equivariant isomorphisms
\[\mathrm{Rep}(\overline{Q}, \mathbf{n})\cong \Ext^1(F,F), \quad \mathfrak{gl}(\mathbf{n})^{\vee}\cong \Ext^2(F,F).\]
And by \cite[Proposition 3.11]{zhao:remark-formality}, there is a local analytic $\Aut(F)$-equivariant isomorphism $$(M^Y_{\sigma_Y}(a,b), [F])\cong (\kappa^{-1}_2(0)\mathbin{/\mkern-5mu/}\Aut(F), 0)$$
of germs of analytic spaces. Therefore, we get a local 
$\Aut(F)$-equivariant isomorphism
$$(M^Y_{\sigma_Y}(a,b), [F])\cong (\mu^{-1}(0)\mathbin{/\mkern-5mu/}\Aut(F), 0)$$
and (1) follows from \cite[Corollary 1.4]{crawley:moment-map-quiver} and \cite[Theorem 1.1]{crawley:normality}.

Next, we prove (2). By \cite[Theorem 1.3(2)]{perry2019stability}, if $Y$ is the branch divisor of a non-Hodge-special GM fourfold, then (2) holds. Now consider a family $\pi\colon \cY\to S$ of smooth GM threefolds such that $S$ is a smooth connected scheme, of finite type over $\CC$ and each smooth ordinary GM threefold appears as a fiber in this family. Such $S$ exists by \cite[Proposition A.2]{kuznetsov2018derived}. By \cite[Lemma 5.9]{bayer2022kuznetsov}, there is a $S$-linear semi-orthogonal decomposition 
\[\mathrm{D}_{\mathrm{perf}}(\cY)=\langle \Ku(\cY), \pi^*(\mathrm{D}_{\mathrm{perf}}(S))\otimes \oh_{\cY}, \pi^*(\mathrm{D}_{\mathrm{perf}}(S))\otimes \cU^{\vee}_{\cY}\rangle\]
such that $\Ku(\cY)_s\simeq \Ku(\cY_s)$. From \cite[Corollary 26.2]{BLMNPS21}, or more precisely, \cite[Theorem 23.1, Proposition 26.1]{BLMNPS21}, there is a stability condition $\underline{\sigma}$ on $\Ku(\cY)$ over $S$ such that for any $s\in S$, the restriction $(\underline{\sigma})|_s$ is a stability condition on $\Ku(\cY_s)$ constructed in \cite{bayer2017stability}, hence Serre-invariant. 

Let $v\in \Knum(\Ku(\cY)/S)$ be the class such that $v_s=a\lambda_1+b\lambda_2\in \Knum(\Ku(\cY_s))$ for any $s\in S$. We define $$p\colon M_{\underline{\sigma}}(\Ku(\cY)/S, v)\to S$$ to be the good moduli space of the moduli stack $\cM_{\underline{\sigma}}(\Ku(\cY)/S, v)$ of families of geometrically $\underline{\sigma}$-stable objects of class $v$ over $S$ in the sense of \cite[Definition 21.11(1)]{BLMNPS21}. According to \cite[Theorem 21.24(3)]{BLMNPS21}, such moduli space exists and $p$ is proper. Then $p^{-1}(s)\cong M^{\cY_s}_{\underline{\sigma}|_s}(a,b)$ for each $s\in S$. By \cite[\href{https://stacks.math.columbia.edu/tag/052A}{Tag 052A}]{stacks-project}, after replacing $S$ with an open dense subset, we can assume that $p$ is flat. Therefore, we can further shrink $S$ to assume that $p$ is smooth as the generic fiber of $p$ is smooth by \cite[Theorem 1.3(2)]{ppzEnriques2023}. This proves the smoothness part of (2). Now the projectivity follows from \cite[Theorem 21.15]{BLMNPS21} and \cite[Corollary 3.4]{david:projective-criteria}.
\end{proof}

\subsection{Lagrangian subvarieties}
Let $X$ be a GM variety of dimension $4$ or $6$ and $j \colon Y \hookrightarrow X$ be a smooth hyperplane section. As before, let $\sigma_Y$ be a Serre-invariant stability condition on $\Ku(Y)$ and $\sigma_X\in \Stab^{\circ}(\Ku(X))$. For any pair of integers $a,b$, we denote by $M^X_{\sigma_X}(a,b)$ (resp.~$M^Y_{\sigma_Y}(a,b)$) the moduli space of $\sigma_X$ (resp.~$\sigma_Y$)-semistable objects in $\Ku(X)$ (resp.~$\Ku(Y)$) of class $a\Lambda_1 +b\Lambda_2$ (resp.~$a\lambda_1 +b\lambda_2$).
By Theorem \ref{thm-very-general-GM4} and \ref{thm-general-gm4}, we have a morphism $M_{\sigma_Y}^Y(a,b)\to M^X_{\sigma_X}(a,b)$ induced by $\pr_X\circ j_*$ when $X$ is general and $\dim X=4$, or $X$ is non-Hodge-special and $\dim X=6$. The following lemma describes the local properties of this morphism.

\begin{lemma} \label{prop_general-2}
    Let $E\in \Ku(Y)$ be a $\sigma_Y$-semistable object. 
\begin{enumerate}
    \item \emph{(Injectivity of tangent maps)} If $E$ is $\sigma_Y$-stable and $E\neq T_Y(E)$, then the natural map
    \[\Ext^1_Y(E,E)\to \Ext^1_X(\pr_X(j_*E), \pr_X(j_*E))\]
    is injective.

    \item \emph{(Fibers)} Let $E'\neq E\in \Ku(Y)$ be another $\sigma_Y$-semistable object with $\phi_{\sigma_Y}(E)=\phi_{\sigma_Y}(E')$. 

    \begin{enumerate}

    \item If $\pr_X(j_*E)\cong \pr_X(j_*E')$, then $E\oplus T_Y(E)$ is S-equivalent to $E'\oplus T_Y(E')$.

    \item If $E$ and $E'$ are $\sigma_Y$-stable, then
    \[\Hom_X(\pr_X(j_*E), \pr_X(j_*E'))\neq 0\]
if and only if $E'\cong T_Y(E)$ and the triangle 
\begin{equation}\label{split}
    T_Y(E) \to j^*\pr_X(j_*E) \to E
\end{equation}
is splitting. 
    \end{enumerate}

\end{enumerate}
    
\end{lemma}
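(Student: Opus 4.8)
The plan is to deduce all three statements from the adjunction $j^{*}\dashv\pr_{X}\circ j_{*}$ of Lemma~\ref{lem-adjoint-functors} together with the canonical exact triangle $T_{Y}(E)\xrightarrow{f}j^{*}\pr_{X}(j_{*}E)\xrightarrow{g}E$ of Lemma~\ref{lem-exact-triangle}, which is precisely the triangle~\eqref{split}. Two preliminary remarks are used repeatedly. First, $T_{Y}$ is an involution (Lemma~\ref{lem-involution}) and $\sigma_{Y}$ is Serre-invariant, hence $T_{Y}$-invariant, so by Lemma~\ref{lem-T-fix-phase} the object $T_{Y}(E)$ is $\sigma_{Y}$-(semi)stable of the same phase as $E$, and likewise $T_{Y}(E')$ has the same phase as $E'$; write $\phi:=\phi_{\sigma_{Y}}(E)=\phi_{\sigma_{Y}}(E')$. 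Second, by the adjunction, for $A\in\Ku(X)$ and $B\in\Ku(Y)$ one has $\RHom_{X}(A,\pr_{X}(j_{*}B))\cong\RHom_{Y}(j^{*}A,B)$; applying this with $A=\pr_{X}(j_{*}E)$ gives a canonical isomorphism $\RHom_{X}(\pr_{X}(j_{*}E),\pr_{X}(j_{*}E))\cong\RHom_{Y}(j^{*}\pr_{X}(j_{*}E),E)$, and under it the natural map of part (1) is identified with the map $g^{*}$ appearing in the long exact sequence obtained by applying $\RHom_{Y}(-,E)$ to~\eqref{split}.

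For part (1), the kernel of $g^{*}\colon\Ext^{1}_{Y}(E,E)\to\Ext^{1}_{X}(\pr_{X}(j_{*}E),\pr_{X}(j_{*}E))$ is the image of the connecting homomorphism $\Hom_{Y}(T_{Y}(E),E)\to\Ext^{1}_{Y}(E,E)$ in that long exact sequence. Since $E$ and $T_{Y}(E)$ are $\sigma_{Y}$-stable of the same phase and $E\not\cong T_{Y}(E)$ by hypothesis, $\Hom_{Y}(T_{Y}(E),E)=0$, hence $g^{*}$ is injective, proving (1).

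For part (2a), apply the exact functor $j^{*}$ to an isomorphism $\pr_{X}(j_{*}E)\cong\pr_{X}(j_{*}E')$ to obtain $W:=j^{*}\pr_{X}(j_{*}E)\cong j^{*}\pr_{X}(j_{*}E')$. Using~\eqref{split} for $E$ and for $E'$, the object $W$ sits in two short exact sequences $0\to T_{Y}(E)\to W\to E\to 0$ and $0\to T_{Y}(E')\to W\to E'\to 0$ inside the finite-length abelian category $\mathcal{P}_{\sigma_{Y}}(\phi)$ (the four outer terms are $\sigma_{Y}$-semistable of phase $\phi$, and $\mathcal{P}_{\sigma_{Y}}(\phi)$ is closed under extensions, so $W$ lies in it). Computing the multiset of Jordan--H\"older factors of $W$ from each of the two sequences shows that $E\oplus T_{Y}(E)$ and $E'\oplus T_{Y}(E')$ have the same Jordan--H\"older factors, i.e.\ they are S-equivalent.

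For part (2b), the implication $\Leftarrow$ is immediate: if $E'\cong T_{Y}(E)$ and~\eqref{split} splits, then $j^{*}\pr_{X}(j_{*}E)\cong T_{Y}(E)\oplus E\cong E'\oplus E$, so $\Hom_{X}(\pr_{X}(j_{*}E),\pr_{X}(j_{*}E'))\cong\Hom_{Y}(E'\oplus E,E')$ by adjunction, which contains $\mathrm{id}_{E'}$ and is therefore non-zero. Conversely, assume $\Hom_{X}(\pr_{X}(j_{*}E),\pr_{X}(j_{*}E'))\neq 0$, i.e.\ $\Hom_{Y}(j^{*}\pr_{X}(j_{*}E),E')\neq 0$ by adjunction. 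Applying $\Hom_{Y}(-,E')$ to~\eqref{split} and using $\Hom_{Y}(E,E')=0$ (distinct $\sigma_{Y}$-stable objects of the same phase), we get $\Hom_{Y}(T_{Y}(E),E')\neq 0$, hence $E'\cong T_{Y}(E)$ since both are $\sigma_{Y}$-stable of phase $\phi$; in particular $E\not\cong T_{Y}(E)$ because $E\neq E'$. Finally, apply $\Hom_{Y}(-,T_{Y}(E))$ to~\eqref{split}: since $\Hom_{Y}(E,T_{Y}(E))=0$, the map $f^{*}\colon\Hom_{Y}(j^{*}\pr_{X}(j_{*}E),T_{Y}(E))\to\Hom_{Y}(T_{Y}(E),T_{Y}(E))\cong\CC$ is injective, and it is non-zero by the previous step, hence an isomorphism; a preimage of $\mathrm{id}_{T_{Y}(E)}$ is a left inverse of $f$, so the triangle~\eqref{split} splits and $W\cong T_{Y}(E)\oplus E$. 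The only step needing care is the identification in part (1) of the functorially induced map on $\Ext^{1}$ with $g^{*}$: this is the standard compatibility of the adjunction isomorphism with the counit $j^{*}\pr_{X}j_{*}\to\mathrm{id}$, which equals $g$ by the construction of~\eqref{split} via the spherical functor formalism of Lemma~\ref{lem-exact-triangle}; everything else is a direct diagram chase.
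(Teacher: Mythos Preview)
Your proof is correct and follows essentially the same route as the paper: apply $\Hom_Y(-,E)$ or $\Hom_Y(-,E')$ to the triangle~\eqref{split} and use the adjunction $j^*\dashv \pr_X\circ j_*$, together with the fact that $T_Y$ preserves stability and phase. The paper's proof is considerably terser—it dispatches (2)(b) in one line by saying ``the result also follows from applying $\Hom_Y(-,E')$ to the triangle''—whereas you spell out both implications, including the retraction argument that shows the triangle actually splits, and you also justify explicitly that the functorially induced map on $\Ext^1$ agrees with $g^*$ via the counit identification; these are genuine clarifications rather than a different method.
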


\begin{proof}
Note that the exact triangle \eqref{split} can be obtained from applying the triangle \eqref{eq-triangle-functor-1} on $E$. Then applying $\Hom_Y(-, E)$ to \eqref{split}, we get an exact sequence
\[\Hom_Y(T_Y(E), E)\to \Ext^1_Y(E,E)\to \Ext^1_Y(j^*\pr_X(j_*E), E)=\Ext^1_X(\pr_X(j_*E), \pr_X(j_*E)).\]
If $E$ is $\sigma_Y$-stable, then $T_Y(E)$ is $\sigma_Y$-stable and has the same phase as $E$ by Lemma \ref{lem-T-fix-phase}, so part (1) follows.  


For part (2)(b), by Lemma \ref{lem-adjoint-functors} we have $\Hom_X(\pr_X(j_*E), \pr_X(j_*E'))=\Hom_Y(j^*\pr_X(j_*E), E')$, then the result also follows from applying $\Hom_Y(-, E')$ to the triangle \eqref{split}. For part (2)(a), as there is an isomorphism $\pr_X(j_*E)\cong \pr_X(j_*E')$, we get $j^*\pr_X(j_*E)\cong j^*\pr_X(j_*E')$. Then the result follows from the exact triangle \eqref{split}. 
\end{proof}

By \cite{perry2019stability}, 
the moduli space $M^X_{\sigma_X}(a,b)$ is a projective hyperk\"ahler manifold when $\gcd(a,b)=1$ and $\sigma_X\in \Stab^{\circ}(\Ku(X))$ is generic with respect to $a\Lambda_1+b\Lambda_2$. Moreover, as $\Ku(X)$ is 2-Calabi--Yau, the holomorphic two-form on $M^X_{\sigma_X}(a,b)$ at a point $[E]$ is given by the Yoneda pairing as in \cite{mukai:moduli-K3-I} (see also \cite{ku:symplectic} and \cite[Lemma 32.5]{BLMNPS21}):
\[\Ext^1_X(E,E)\times \Ext^1_X(E,E)\to \Ext^2_X(E,E)\xra{\mathrm{\cong }} \CC.\]

\begin{theorem} \label{thm-lagrangian-general}
Let $a,b$ be a pair of integers and $X$ be a general GM fourfold or non-Hodge-special GM sixfold with a given smooth hyperplane section $j \colon Y \hookrightarrow X$. Let $\sigma_Y$ be a Serre-invariant stability condition on $\Ku(Y)$ and $\sigma_X\in \Stab^{\circ}(\Ku(X))$. Then the functor $\pr_X\circ j_*$ induces a finite morphism
\[r\colon M^Y_{\sigma_Y}(a,b) \to M^X_{\sigma_X}(a,b).\]
Moreover, if $\gcd(a,b)=1$ and $\sigma_X\in \Stab^{\circ}(\Ku(X))$ is generic with respect to $a\Lambda_1+b\Lambda_2$, then 
\begin{enumerate}
    \item $r$ is generically unramified and its image is a Lagrangian subvariety in $M^X_{\sigma_X}(a,b)$, and $r$ is unramified when $Y$ is a general hyperplane section, and

    \item the restriction $r|_M$ of $r$ to any irreducible component $M$ of $M^Y_{\sigma_Y}(a,b)$ is either a birational map onto its image or a finite morphism of degree $2$ and \'etale on the smooth locus of $M$.

\end{enumerate}
\end{theorem}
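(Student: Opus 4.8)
The plan is to build the map $r$ first, then study it locally via Lemma \ref{prop_general-2}, and finally extract the global consequences. \emph{Construction of $r$.} By Theorem \ref{thm-very-general-GM4} (in the sixfold case, non-Hodge-special) or Theorem \ref{thm-general-gm4} (in the fourfold case, general), for every $\sigma_Y$-semistable $E$ of class $a\lambda_1+b\lambda_2$ the object $\pr_X(j_*E)$ is $\sigma_X$-semistable of class $a\Lambda_1+b\Lambda_2$ by Lemma \ref{lem-compute-class}(1), and $\pr_X\circ j_*$ respects S-equivalence because it is exact and by Lemma \ref{prop_general-2}(2)(a) it identifies S-equivalence classes on the nose (sending $E$ to something S-equivalent to a summand of $\pr_X(j_*E)$ plus its $T_Y$-twist). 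So at the level of points we get a well-defined map $r$; to see it is a morphism one uses the relative construction exactly as in the proof of Theorem \ref{thm-general-gm4}: the functor $\pr_\cX\circ j_*$ gives a morphism of moduli stacks $\cM^{\cY}_{\underline{\sigma'}}(a,b)\to \cM_{\mathrm{pug}}(\Ku(\cX)/S)$ landing in $\cM^{\cX}_{\underline\sigma}(a,b)$, which descends to good moduli spaces. \emph{Finiteness.} $r$ is proper since $M^Y_{\sigma_Y}(a,b)$ is proper (Proposition \ref{prop-structure-3fold-moduli} together with the projectivity part, or \cite[Theorem 21.24]{BLMNPS21}); and the fibers of $r$ are finite by Lemma \ref{prop_general-2}(2)(a), which shows that $r(E)=r(E')$ forces $E\oplus T_Y(E)$ and $E'\oplus T_Y(E')$ to be S-equivalent, so $E'$ is S-equivalent to $E$ or to $T_Y(E)$; hence a proper quasi-finite, i.e.\ finite, morphism.

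\emph{Parts (1) and (2), under $\gcd(a,b)=1$ and $\sigma_X$ generic.} Here every semistable object in sight is stable, $M^X_{\sigma_X}(a,b)$ is a smooth projective hyperk\"ahler manifold of dimension $2(a^2+b^2)+2$, and $M^Y_{\sigma_Y}(a,b)$ has pure dimension $a^2+b^2+1$ by Proposition \ref{prop-structure-3fold-moduli}(1), exactly half. First I would argue that for a \emph{general} point $[E]$ of any component $M$ we have $T_Y(E)\neq E$: if $T_Y$ acted as the identity on a whole component, then using $S_{\Ku(Y)}=T_Y\circ[2]$ every such $E$ would be spherical-like with $\ext^2_Y(E,E)=\hom_Y(E,T_Y(E))=\hom_Y(E,E)=1$, forcing $\ext^1_Y(E,E)=2-\chi(E,E)$; a dimension count against $\dim M=a^2+b^2+1=-\chi(E,E)+1$ (using \eqref{eq-matrix-odd}, $\chi=-(a^2+b^2)$) gives $\ext^1_Y(E,E)=a^2+b^2+1=\dim M$, so $M$ is smooth at such points, but then $T_Y$ being trivial on a Zariski-dense set of a variety on which it acts would make it trivial on $\Ku(Y)$-objects generically — contradicting the non-triviality of the involution $T_Y$ (Lemma \ref{lem-involution}) together with \cite[Theorem 7.12, Theorem 8.9]{JLLZ2021gushelmukai} exhibiting $T_Y$ acting non-trivially on the two-dimensional families of objects of class $\lambda_i$, and more generally the known behaviour of $T_Y$ on moduli. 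Granting $T_Y(E)\neq E$ generically on $M$, Lemma \ref{prop_general-2}(1) gives that the tangent map $\Ext^1_Y(E,E)\hookrightarrow \Ext^1_X(\pr_X(j_*E),\pr_X(j_*E))$ is injective at such $[E]$, so $r$ is unramified at the general point of $M$, hence generically unramified; and when $Y$ is a general hyperplane section the condition $E\neq T_Y(E)$ can be arranged to hold at \emph{every} point (this is where the generality of $Y$ enters, mirroring the generality hypotheses in Theorem \ref{thm-general-gm4}), giving $r$ unramified everywhere.

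\emph{Lagrangian property.} On the smooth locus where $r$ is unramified, $\dim_{[E]}M = a^2+b^2+1$ is half of $\dim M^X_{\sigma_X}(a,b)$, so $r(M)$ is middle-dimensional; it remains to see the holomorphic two-form $\omega$ on $M^X_{\sigma_X}(a,b)$ pulls back to zero. The pullback $r^*\omega$ at $[E]$ is the composition of the Yoneda square on $\Ext^1_X(\pr_X(j_*E),\pr_X(j_*E))$ restricted along the image of $\Ext^1_Y(E,E)$; using the exact triangle \eqref{split} $T_Y(E)\to j^*\pr_X(j_*E)\to E$ and the adjunction/compatibilities of \ref{c-adjoints} (in the incarnation of Lemma \ref{lem-adjoint-functors}), one rewrites this pairing as a pairing valued in $\Ext^2_Y(E,E)$ composed with a map $\Ext^2_Y(E,E)\to\Ext^2_X(\cdots)\cong\CC$; but $\Ext^2_Y(E,E)=\Hom_Y(E,T_Y(E))^\vee=0$ when $T_Y(E)\neq E$, so $r^*\omega$ vanishes on a dense open set of each component, hence on $r(M)$ — so $r(M)$ is Lagrangian. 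Finally for (2): since $r$ is finite and its fibers over $r(M)\cap(\text{generic locus})$ are contained in $\{[E],[T_Y(E)]\}$, the degree of $r|_M$ onto its image is $1$ or $2$; it is $2$ precisely when $T_Y(M)=M$ and $T_Y|_M\neq \mathrm{id}$, in which case $r|_M$ factors through $M/\langle T_Y\rangle$, and \'etaleness on the smooth locus follows because the fixed locus of $T_Y$ on $M$ has positive codimension (it cannot be all of $M$ by the argument above) while away from it $T_Y$ is a free involution and $r$ separates the two preimages by Lemma \ref{prop_general-2}(2)(b) — $\Hom_X(\pr_X(j_*E),\pr_X(j_*T_Y(E)))\neq 0$ would force the triangle \eqref{split} to split, i.e.\ $j^*\pr_X(j_*E)\cong E\oplus T_Y(E)$, a codimension-$\geq 1$ condition that one checks fails generically; otherwise $r|_M$ is injective on a dense open, hence birational onto its image.

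\medskip

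\emph{Main obstacle.} The delicate point is the dichotomy in part (2) and the \'etaleness claim: one must control precisely when the defining triangle \eqref{split} splits — equivalently when $\ext^1_X(\pr_X(j_*E),\pr_X(j_*E)) = 2\ext^1_Y(E,E)$ versus $2\ext^1_Y(E,E)-2$, which is governed by Lemma \ref{lem-rhom-pushfor} and case (ii) there. Showing that the "bad" locus (where $E\cong T_Y(E)$ or \eqref{split} splits) is a proper closed subset of each component $M$, and that on its complement $r|_M$ is either injective or a free $\ZZ/2$-quotient, is what requires the careful bookkeeping; everything else is formal once Theorems \ref{thm-very-general-GM4}, \ref{thm-general-gm4} and Lemma \ref{prop_general-2} are in hand.
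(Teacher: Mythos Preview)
Your overall strategy matches the paper's, and your argument for the Lagrangian property (the Yoneda pairing factoring through $\Ext^2_Y(E,E)=0$ when $T_Y(E)\neq E$) is essentially the paper's argument. However, there are two genuine problems.

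\emph{Generic non-fixedness.} Your dimension count contains an arithmetic slip: if $T_Y(E)\cong E$ then $\ext^2_Y(E,E)=1$, so $\ext^1_Y(E,E)=2-\chi(E,E)=a^2+b^2+2$, not $a^2+b^2+1$; the point is therefore \emph{singular}, not smooth, so your contradiction does not go through as written. The paper's route is much cleaner and avoids this: Proposition~\ref{prop-structure-3fold-moduli}(1) shows $M^Y_{\sigma_Y}(a,b)$ is normal, hence reduced, hence generically smooth over $\CC$; smoothness at $[E]$ is exactly $\Ext^2_Y(E,E)=0$, i.e.\ $T_Y(E)\neq E$.

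\emph{The dichotomy in part (2).} Your criterion ``degree $2$ precisely when $T_Y(M)=M$ and $T_Y|_M\neq\mathrm{id}$'' is wrong. The relation $r\circ T_Y=T_X\circ r$ (from Lemma~\ref{lem-adjoint-functors}) shows that $r([T_Y(E)])=r([E])$ holds if and only if $r([E])\in\mathrm{Fix}(T_X)$; so even when $T_Y(M)=M$, the map $r|_M$ can still be birational if $r(M)\not\subset\mathrm{Fix}(T_X)$. The correct dichotomy, which the paper uses, is whether $r(M)\subset\mathrm{Fix}(T_X)$ or not: since $\mathrm{Fix}(T_X)$ is a closed smooth Lagrangian subvariety of $M^X_{\sigma_X}(a,b)$ by \cite[Theorem~1.6]{ppzEnriques2023}, either $r(M)$ lies in it (in which case $r(M)$ is a connected component of $\mathrm{Fix}(T_X)$ by equality of dimensions, $r|_M$ has degree $2$ by Lemma~\ref{prop_general-2}(2), and \'etaleness on $M_{sm}$ follows because both source and target are smooth of the same dimension and the tangent map is injective), or else there is an open dense $U'\subset M$ with $r(U')\cap\mathrm{Fix}(T_X)=\varnothing$, on which $r$ is injective (the only other candidate $T_Y(E)$ for a preimage satisfies $r([T_Y(E)])=T_X(r([E]))\neq r([E])$) and unramified, hence $r|_M$ is birational onto its image. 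Your attempt to base the dichotomy on whether the triangle \eqref{split} splits is pointing at the same phenomenon (splitting at $E$ is equivalent to $r([E])\in\mathrm{Fix}(T_X)$), but without invoking $\mathrm{Fix}(T_X)$ you have no clean reason why this condition is either generically satisfied or generically violated on $M$, and your \'etaleness argument becomes self-contradictory (you write that in the degree-$2$ case ``$r$ separates the two preimages'', which is the opposite of what happens).
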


\begin{proof}
By Theorem \ref{thm-very-general-GM4}, we have a morphism $r\colon M^Y_{\sigma_Y}(a,b) \to M^X_{\sigma_X}(a,b)$. Moreover, $r$ is quasi-finite by Lemma \ref{prop_general-2}(2)(a). As moduli spaces on both sides are proper by \cite[Theorem 21.24(3)]{BLMNPS21}, $r$ is finite (cf.~\cite[\href{https://stacks.math.columbia.edu/tag/0B1N}{Tag 0A4X}]{stacks-project}).

Now assume that $\gcd(a,b)=1$. As the space $M^Y_{\sigma_Y}(a,b)$ is reduced by Proposition \ref{prop-structure-3fold-moduli}(1), it is generically smooth, or in other words, $E\neq T_Y(E)$ for a general point $[E]\in M^Y_{\sigma_Y}(a,b)$. Then $r$ is generically unramified by Lemma \ref{prop_general-2}(1). Moreover, when $Y$ is a general hyperplane section, $M^Y_{\sigma_Y}(a,b)$ is smooth by Proposition \ref{prop-structure-3fold-moduli}, hence $r$ is unramified.

Let $U \subset M^Y_{\sigma_Y}(a,b)$ be the open dense locus of stable objects $E$ satisfying $E\neq T_Y(E)$, and let $L$ be the image of $r$. First of all, $L$ is reduced because $M^Y_{\sigma_Y}(a,b)$ is reduced. Hence, $L$ is generically smooth since we work over $\CC$. Moreover, Lemma \ref{prop_general-2}(2) implies that $r|_U$ is of relative dimension zero, thus $\dim \overline{U}=\dim L$. Furthermore, by Lemma \ref{prop_general-2}(1), the induced morphism $r|_U$ is injective at the level of tangent maps. Therefore, shrinking $U$ if necessary, we can assume $r|_U\colon U\to r(U)$ is \'etale. Hence, the tangent space at any point in $r(U)$ can be naturally identified with $\Ext^1_Y(E,E)$ for some $[E]\in U$.

Since $\dim \overline{U}=\dim L=\frac{1}{2}\dim M^X_{\sigma_X}(a,b)$, to show that $L$ is Lagrangian, it suffices to prove that the canonical holomorphic symplectic two-form of $M_{\sigma_X}^X(a,b)$ 
becomes zero after restricting to $L$. Because $r(U)$ is dense in $L$, this is equivalent to proving that the restricting two-form vanishes at every point of $r(U)$.

For $[F]\in M_{\sigma_X}^X(a,b)$, $\mathrm{Ext}^1_X(F,F)$ is identified with the tangent space $T_{[F]} M_{\sigma_X}^X(a,b)$. Then the holomorphic symplectic two-form of $M_{\sigma_X}^X(a,b)$ is naturally given by the Yoneda pairing
$$\mathrm{Ext}^1_X(F,F)\times\mathrm{Ext}^1_X(F,F)\xrightarrow{\phi }\mathrm{Ext}^2_X(F,F)\cong\mathrm{Hom}_X(F,F)^{\vee}\cong \CC.$$ 
This means that for any $x, y\in \mathrm{Ext}^1_X(F,F)$, the map $\phi(x,y): F\to F[2]$ is the composition map $x[1]\circ y$.

Now we assume that $[F]=r([E])$ for an object $[E]\in U$. We know that the functor $\pr_X\circ j_*$ induces an embedding $d_{[E]}\colon \Ext^1_Y(E,E)\hookrightarrow \Ext^1_X(F, F)$ by Lemma \ref{prop_general-2}(1), mapping an element $a \colon E\to E[1]$ to $\pr_X\circ j_*(a)\colon F\to F[1]$. For any two elements $x, y\in d_{[E]}(\Ext^1_Y(E, E))=T_{[F]}L$, we can express $x=\pr_X\circ j_*(a)$ and $y=\pr_X\circ j_*(b)$, where $a,b \in \Ext^1_Y(E,E)$. Then the restriction of $\phi$ on $d_{[E]}(\Ext^1_Y(E, E))=T_{[F]}L$ is given by $$\phi(x, y)=x[1]\circ y=\pr_X\circ j_*(a[1]\circ b)\in \pr_X\circ j_*(\Ext^2_Y(E, E)).$$ 
Since $\Ext^2_Y(E, E)=0$, $\phi$ vanishes at every point of $r(U)$, and the desired result in part (1) follows.

Next, let $M$ be an irreducible component of $M^Y_{\sigma_Y}(a,b)$ and $M_{sm}$ be its smooth locus. Recall that the natural involution $T_X$ on $\Ku(X)$ induces an involution on the moduli space $M^X_{\sigma_X}(a,b)$, which is also denoted by $T_X\colon M^X_{\sigma_X}(a,b)\to M^X_{\sigma_X}(a,b)$. Let $\mathrm{Fix}(T_X)\subset M^X_{\sigma_X}(a,b)$ be the fixed locus, which is a smooth subvariety by \cite[Theorem 1.6]{ppzEnriques2023}. If $r(M)$ is contained in $\mathrm{Fix}(T_X)$, then $F:=r(M)$ is a connected component of $\mathrm{Fix}(T_X)$ as $\dim (r(M))=M=\mathrm{Fix}(T_X)$ and $M$ is reduced. By Lemma \ref{prop_general-2}(2), we know that $r|_M\colon M\to r(M)$ has degree $2$. Now we aim to show that $r|_{M_{sm}}$ is \'etale. Note that $r(M_{sm})\subset F$ as $M_{sm}$ is already reduced. Hence, we have a morphism $r|_{M_{sm}}\colon M_{sm}\to F$. As $M_{sm}$ and $F$ are both smooth, from Lemma \ref{prop_general-2}(1), we conclude that $r|_{M_{sm}}$ is \'etale.

If $r(M)$ is not contained in $\mathrm{Fix}(T_X)$, then there is a smooth open dense subset $$U':=(r|_M)^{-1}(r(M)\setminus \mathrm{Fix}(T_X))\cap M_{sm}\subset M$$ such that $r(U')\cap \mathrm{Fix}(T_X)=\varnothing$. Using Lemma \ref{prop_general-2}, we know that $r|_{U'}$ is an immersion, then $r$ is birational onto its image.
\end{proof}

Therefore, we can construct a family of Lagrangian subvarieties for each moduli space $M^X_{\sigma_X}(a,b)$ using hyperplane sections $Y\subset X$. The next proposition demonstrates that this construction coincides with the one presented in \cite{ppzEnriques2023} when $X$ is special and $Y$ is the branch divisor of $X$.

\begin{proposition}
Let $X$ be a special GM fourfold and $j\colon Y\hookrightarrow X$ be its branch divisor. Let $\sigma_Y$ be a Serre-invariant stability condition on $\Ku(Y)$ and $\sigma_X\in \Stab^{\circ}(\Ku(X))$. Then for any pair of integers $a,b$, the functor $\pr_X\circ j_*$ induces a finite morphism
\[r\colon M^Y_{\sigma_Y}(a,b) \to M^X_{\sigma_X}(a,b)\]
such that $r$ is an \'etale double cover onto $\mathrm{Fix}(T_X)$.
\end{proposition}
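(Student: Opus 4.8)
The plan is to deduce the statement from Theorem \ref{thm-very-general-GM4} together with the geometry of the Gushel double cover $\gamma_X\colon X\to M_X$ onto $M_X:=\gamma_X(X)\subset\Gr(2,5)$; write $\tau\in\Aut(X)$ for the associated deck involution, so that the branch divisor is $Y=\mathrm{Fix}(\tau)$. I will assume $\gcd(a,b)=1$, the case relevant for the comparison with \cite{ppzEnriques2023} (where $M^X_{\sigma_X}(a,b)$ is hyperk\"ahler and $\mathrm{Fix}(T_X)$ is a smooth Lagrangian), and that $X$ is general enough among special GM fourfolds to be non-Hodge-special, the remaining special $X$ being reached by the specialization argument of Theorem \ref{thm-general-gm4}. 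First I would record the elementary geometry: by the ramification formula for $\gamma_X$ one has $\oh_X(Y)\cong\oh_X(H)$, so $j\colon Y\hookrightarrow X$ is a smooth hyperplane section and $Y$ is a GM threefold. Theorem \ref{thm-very-general-GM4} then applies and shows that $\pr_X\circ j_*$ sends $\sigma_Y$-semistable objects to $\sigma_X$-semistable ones of the same phase, hence induces a morphism $r\colon M^Y_{\sigma_Y}(a,b)\to M^X_{\sigma_X}(a,b)$; by Lemma \ref{prop_general-2}(2)(a) it is quasi-finite, so finite by properness of both moduli spaces. Note also that $\dim M^Y_{\sigma_Y}(a,b)=a^2+b^2+1$ by Proposition \ref{prop-structure-3fold-moduli}(1), which equals $\frac{1}{2}\dim M^X_{\sigma_X}(a,b)=\dim\mathrm{Fix}(T_X)$.

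Next I would show $\im(r)\subseteq\mathrm{Fix}(T_X)$ and compute the fibres. Since $\gamma_X\circ\tau=\gamma_X$, one has $\tau^*\cU_X^\vee\cong\cU_X^\vee$ and $\tau^*\oh_X(H)\cong\oh_X(H)$, so $\tau^*$ preserves the semiorthogonal decomposition of $\D^b(X)$, restricts to an autoequivalence of $\Ku(X)$, and commutes with $\pr_X$; and $\tau\circ j=j$ (because $Y$ is fixed pointwise) gives $\tau^*j_*E\cong j_*E$, hence $\tau^*(\pr_X j_*E)\cong\pr_X j_*E$ for all $E$. Using that $\tau^*|_{\Ku(X)}$ induces the involution $T_X$ on $M^X_{\sigma_X}(a,b)$ (as in \cite[Theorem 1.6]{ppzEnriques2023}, cf.~\cite{bayer2022kuznetsov}), this yields $\im(r)\subseteq\mathrm{Fix}(T_X)$; by Lemma \ref{lem-adjoint-functors} it then follows that $\pr_X j_*(T_YE)\cong T_X(\pr_X j_*E)\cong\pr_X j_*E$, so $[E]$ and $[T_YE]$ have the same image, and Lemma \ref{prop_general-2}(2)(a) — together with coprimality, which makes $E,E',T_YE,T_YE'$ all $\sigma_Y$-stable of the same primitive class — identifies the general fibre of $r$ with $\{[E],[T_YE]\}$; since $T_Y$ acts nontrivially on $M^Y_{\sigma_Y}(a,b)$, $r$ has degree $2$ onto its image. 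For surjectivity, given $[F]\in\mathrm{Fix}(T_X)$, the object $j^*F$ is $\sigma_Y$-semistable of class $2(a\lambda_1+b\lambda_2)$ by Theorem \ref{thm-very-general-GM4}(2) and satisfies $j^*F\cong T_Y(j^*F)$ (Lemma \ref{lem-adjoint-functors}, as $F\cong T_XF$); granting that such a $T_Y$-fixed object cannot be $\sigma_Y$-stable — the categorical shadow of ``$\Ku(X)$ is the K3-cover of the Enriques category $\Ku(Y)$'', extractable from \cite{kuznetsov2019categorical,bayer2022kuznetsov,ppzEnriques2023} — a $\sigma_Y$-stable Jordan--H\"older quotient $E$ of $j^*F$ has class $a\lambda_1+b\lambda_2$, and the adjunction $j^*\dashv\pr_X\circ j_*$ turns the surjection $j^*F\twoheadrightarrow E$ into a nonzero map $F\to\pr_X(j_*E)$, which is an isomorphism because both sides are $\sigma_X$-stable of class $a\Lambda_1+b\Lambda_2$ and equal phase. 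Hence $r$ is surjective onto $\mathrm{Fix}(T_X)$.

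Finally I would prove étaleness. By Theorem \ref{thm-lagrangian-general}(2), since $r(M)\subseteq\mathrm{Fix}(T_X)$ for each irreducible component $M$ of $M^Y_{\sigma_Y}(a,b)$, the restriction $r|_M$ is a finite degree-$2$ morphism, étale over the smooth locus of $M$, onto a component of the smooth variety $\mathrm{Fix}(T_X)$; as $M$ is normal by Proposition \ref{prop-structure-3fold-moduli}(1), its singular locus has codimension $\geq 2$, so by purity of the branch locus a finite morphism from the normal $M$ to the regular $\mathrm{Fix}(T_X)$ which is étale off codimension $\geq 2$ is étale everywhere. Therefore $M$ is smooth and $r$ is an étale double cover onto $\mathrm{Fix}(T_X)$, as claimed.

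The hard part will be the two structural inputs cited above: the identification $T_X\cong\tau^*|_{\Ku(X)}$ of the categorical involution with the deck transformation, needed to land $\im(r)$ in $\mathrm{Fix}(T_X)$, and the non-existence of $T_Y$-fixed $\sigma_Y$-stable objects of the imprimitive class $2(a\lambda_1+b\lambda_2)$, needed for surjectivity; both are in the spirit of \cite{ppzEnriques2023} but do require care (and, for the very general special $X$, can alternatively be imported from there wholesale). A lesser subtlety is that the branch divisor is \emph{not} a general hyperplane section, so the unramifiedness of $r$ cannot be quoted directly from Theorem \ref{thm-lagrangian-general}(1) and instead has to be extracted from normality of $M^Y_{\sigma_Y}(a,b)$ together with purity of the branch locus, as above.
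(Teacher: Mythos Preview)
Your approach is \emph{genuinely different} from the paper's and considerably more laborious. The paper does not invoke Theorem \ref{thm-very-general-GM4}, Theorem \ref{thm-lagrangian-general}, or any deformation argument here at all. Instead it observes that \cite[Theorem 1.8]{ppzEnriques2023} already constructs, for \emph{every} special GM fourfold and its branch divisor, a morphism $r'\colon M^Y_{\sigma_Y}(a,b)\to M^X_{\sigma_X}(a,b)$ via the functor $\bL_{\oh_X}\bL_{\cU_X^\vee}(j_*(-)(H))$, which is an \'etale double cover onto $\mathrm{Fix}(T_X)$. The only computation is the identity
\[
T_X\bigl(\bL_{\oh_X}\bL_{\cU_X^\vee}(j_*E(H))\bigr)\ \cong\ \pr_X(j_*E)[1],
\]
checked directly from the definition of $T_X$ in Lemma \ref{lem-involution}; this gives $r=T_X\circ r'$, and since $T_X$ is an automorphism of $M^X_{\sigma_X}(a,b)$ preserving $\mathrm{Fix}(T_X)$, all properties of $r'$ transfer to $r$. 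No coprimality, no non-Hodge-special hypothesis, no purity-of-branch-locus argument is needed.

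Your route has real content, but it does not quite reach the statement as written. You impose $\gcd(a,b)=1$ and (initially) $X$ non-Hodge-special; the deformation escape via Theorem \ref{thm-general-gm4} does not apply, since that theorem is proved for the moduli stack of \emph{ordinary} GM fourfolds, whereas here $X$ is special by hypothesis. Moreover, the two ``hard parts'' you flag --- the identification $T_X\cong\tau^*|_{\Ku(X)}$ and the non-existence of $T_Y$-fixed $\sigma_Y$-stable objects in the relevant class --- are precisely the substance of \cite{ppzEnriques2023}; once you grant them, you are essentially reproving that reference rather than using the present paper's techniques. The paper's two-line reduction via the formula above is both shorter and removes all auxiliary hypotheses.
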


\begin{proof}
Recall that in \cite[Theorem 1.8]{ppzEnriques2023}, the authors show that $\bL_{\oh_X}\bL_{\cU^{\vee}_X}(j_*E(H))$ is $\sigma_X$-semistable for any $\sigma_Y$-semistable object $E\in \Ku(Y)$ (cf.~\cite[Equation (8.3)]{ppzEnriques2023}). Then it induces a morphism $r'\colon M^Y_{\sigma_Y}(a,b) \to M^X_{\sigma_X}(a,b)$, which maps onto $\mathrm{Fix}(T_X)$. Moreover, $r'([E])=[\bL_{\oh_X}\bL_{\cU^{\vee}_X}(j_*E(H))]$. Using Lemma \ref{lem-involution}, it is easy to check that
\[T_X(\bL_{\oh_X}\bL_{\cU^{\vee}_X}(j_*E(H)))\cong \pr_X(j_*E)[1],\]
hence $T_X\circ r'=r$, where $T_X\colon M^X_{\sigma_X}(a,b)\to M^X_{\sigma_X}(a,b)$ is the induced involution on the moduli space.
\end{proof}

Using Theorem \ref{thm-very-general-GM4} and Lemma \ref{lem-exact-triangle}, the above results can be easily generalized to GM fivefolds: 

\begin{corollary}\label{cor-lagrangian-very-general-5fold}
Let $X$ be a non-Hodge-special GM fourfold and $Y$ be a GM fivefold containing $X$ as a hyperplane section with the embedding $j \colon X\hookrightarrow Y$. Let $\sigma_X$ be a stability condition on $\Ku(X)$ and $\sigma_Y$ be a Serre-invariant stability condition on $\Ku(Y)$. Then for any pair of integers $a,b$, the functor $j^*$ induces a finite morphism
\[r \colon M^Y_{\sigma_Y}(a,b) \to M^X_{\sigma_X}(a,b).\]
Moreover, if $\gcd(a,b)=1$, then
\begin{enumerate}
    \item $r$ is generically unramified and its image is a Lagrangian subvariety in $M^X_{\sigma_X}(a,b)$, and

    \item the restriction $r|_M$ of $r$ to any irreducible component $M$ of $M^Y_{\sigma_Y}(a,b)$ is either a birational map onto its image or a finite morphism of degree $2$ and \'etale on the smooth locus of $M$.
\end{enumerate}

\end{corollary}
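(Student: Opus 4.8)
The statement to prove is Corollary \ref{cor-lagrangian-very-general-5fold}, which asserts that for a GM fivefold $Y$ containing a non-Hodge-special GM fourfold $X$ as a hyperplane section, the functor $j^*$ induces a finite morphism $r\colon M^Y_{\sigma_Y}(a,b)\to M^X_{\sigma_X}(a,b)$ with Lagrangian image and controlled fibers.

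The plan is to mirror the proof of Theorem \ref{thm-lagrangian-general}, substituting the roles appropriately: here the fivefold plays the role of the ``source'' category and the fourfold is the hyperkähler target, and the relevant functor is $j^*\colon \Ku(Y)\to \Ku(X)$ rather than $\pr_X\circ j_*$. First I would check that $r$ is well-defined: by Corollary \ref{cor-push-stable-5fold}, for $E\in\Ku(Y)$ stable with primitive class, $j^*E$ is stable with respect to any stability condition on $\Ku(X)$ — and more generally Theorem \ref{thm-very-general-GM4}(2) gives that $j^*E$ is $\sigma_Y$... (here the dimension parities are swapped, so one uses Theorem \ref{thm-very-general-GM4} with $\cK_1=\Ku(Y)$, $\cK_2=\Ku(X)$ in the odd/even bookkeeping), ensuring $j^*$ sends semistable objects to semistable objects of the same phase. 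Then the moduli-theoretic formalism of \cite{BLMNPS21}, exactly as in Theorem \ref{thm-lagrangian-general}, produces the morphism $r$ on coarse spaces; properness of both sides (again \cite[Theorem 21.24(3)]{BLMNPS21}) together with quasi-finiteness yields finiteness.

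Next I would establish the local structure. The key input is an analogue of Lemma \ref{prop_general-2} for $j^*$: using the exact triangle \eqref{eq-triangle-functor-3}, namely $T_X\to (\pr_X\circ j_*)\circ(j^*\circ T_X)\to \mathrm{id}_{\Ku(X)}$, and the adjunctions in Lemma \ref{lem-adjoint-functors}(2), one gets that for $\sigma_Y$-stable $E$ with $E\neq T_Y(E)$ (equivalently, at a smooth point of the moduli space) the tangent map $\Ext^1_Y(E,E)\to \Ext^1_X(j^*E,j^*E)$ is injective, and that the fiber of $r$ over $[j^*E]$ consists only of $E$ and $T_Y(E)$ up to S-equivalence, the two coinciding in the fiber precisely when $T_Y(E)\cong E$ or the relevant triangle splits. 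This is where Lemma \ref{lem-rhom-back} enters to control the $\Ext$-groups after applying $j^*$. Once this is in hand, the Lagrangian property follows by the same Yoneda-pairing argument: on the dense étale locus $U$ of stable points with $E\neq T_Y(E)$, the image $L=r(\overline U)$ has dimension $\tfrac12\dim M^X_{\sigma_X}(a,b)$ (since $\dim M^Y_{\sigma_Y}(a,b)$ for the fivefold matches that of the associated GM threefold by \cite{kuznetsov2019categorical}, hence equals $a^2+b^2+1$, and $\dim M^X_{\sigma_X}(a,b)=a^2+b^2+2$... so in fact $\dim L = \dim M^X - 1$; one must verify the Lagrangian dimension count carefully here), and the holomorphic symplectic form restricted to $T_{[j^*E]}L$ is computed through $\pr_X\circ j_*$... no, through $j^*$, landing in $j^*(\Ext^2_Y(E,E))$, which vanishes since $\Ku(Y)$ has a Serre functor of the form $T_Y\circ[2]$ and $E$ is stable with $E\neq T_Y(E)$. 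Finally, part (2) about components is handled exactly as in Theorem \ref{thm-lagrangian-general}, splitting into whether $r(M)$ lands inside $\mathrm{Fix}(T_X)$ or not, and using smoothness of the fixed locus \cite[Theorem 1.6]{ppzEnriques2023}.

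The main obstacle I anticipate is the dimension bookkeeping and the precise form of the local lemma in the ``swapped parity'' setting. In Theorem \ref{thm-lagrangian-general} the source is a GM threefold and the map is $\pr_X\circ j_*$, so the triangle \eqref{eq-triangle-functor-1} is the relevant one; here, with $X$ a fourfold sitting inside a fivefold $Y$, the functor $j^*$ behaves like the ``$\Phi^*$'' of the odd-dimensional case in Section \ref{sec-general-criterion}, and the triangles \eqref{eq-triangle-functor-3}--\eqref{eq-triangle-functor-4} together with the twisting by $T_X$ must be tracked so that the tangent-map injectivity and the fiber description come out with the right twist (i.e., the fiber is $\{E, T_Y(E)\}$ and not something shifted). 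I would want to be careful that $M^Y_{\sigma_Y}(a,b)$ is reduced/normal of the expected dimension — this should follow from Proposition \ref{prop-structure-3fold-moduli} applied to the equivalent GM threefold via \cite[Corollary 6.5]{kuznetsov2019categorical} — and that the Yoneda form genuinely vanishes, which hinges on $\Ext^2_Y(E,E)=\Hom_Y(E,T_Y(E))^\vee=0$ at a smooth point. Modulo these parity and dimension checks, the argument is a direct transcription of the fourfold case.
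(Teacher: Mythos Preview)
Your approach is correct and coincides with the paper's: the authors give no separate proof, only the sentence ``Using Theorem \ref{thm-very-general-GM4} and Lemma \ref{lem-exact-triangle}, the above results can be easily generalized to GM fivefolds,'' so the intended argument is precisely the transcription of Theorem \ref{thm-lagrangian-general} and Lemma \ref{prop_general-2} that you outline.

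Two clarifications on the points you flagged as obstacles. First, the dimension count is fine and there is no discrepancy: on the fourfold $\chi(a\Lambda_1+b\Lambda_2,a\Lambda_1+b\Lambda_2)=-2(a^2+b^2)$, so $\dim M^X_{\sigma_X}(a,b)=2(a^2+b^2+1)$, while on the fivefold $\dim M^Y_{\sigma_Y}(a,b)=a^2+b^2+1$ by Proposition \ref{prop-structure-3fold-moduli} transported through \cite[Corollary 6.5]{kuznetsov2019categorical}; these match the half-dimensional Lagrangian condition exactly. Second, the correct triangle for the analogue of Lemma \ref{prop_general-2} is \eqref{eq-triangle-functor-3} evaluated at $T_YE$ (in the corollary's notation, where $Y$ is the fivefold): this gives $E\to \pr_Y(j_*j^*E)\to T_Y(E)$, and applying $\Hom_Y(E,-)$ together with the adjunction $j^*\dashv \pr_Y\circ j_*$ yields both the tangent-map injectivity when $E\neq T_Y(E)$ and the fiber description $\{E,T_Y(E)\}$, exactly as in the threefold/fourfold case. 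The vanishing $\Ext^2_Y(E,E)=\Hom_Y(E,T_Y(E))^\vee=0$ at a smooth point then makes the symplectic form vanish on the image, as you say. Lemma \ref{lem-rhom-back} is not needed here; it is internal to the proof of Theorem \ref{thm-main}, which you are invoking only through Theorem \ref{thm-very-general-GM4}.
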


\subsection{Lagrangian families}

Using Theorem \ref{thm-lagrangian-general}, we obtain infinitely many Lagrangian subvarieties of $M^X_{\sigma_X}(a,b)$, parameterized by an open subset of $|\oh_X(H))|$ when $\dim X$ is even. These subvarieties can form a family using stability conditions in families and relative moduli spaces. We start with a general definition of Lagrangian families, which can be viewed as a generalization of Lagrangian fibrations.

\begin{definition}\label{def-lag-family}
A \emph{Lagrangian family} of a projective hyperk\"ahler manifold $M$ is a diagram 
\[\begin{tikzcd}
	L & M \\
	B
	\arrow["p"', from=1-1, to=2-1]
	\arrow["q", from=1-1, to=1-2]
\end{tikzcd}\]
where $p$ is flat and projective, $L$ and $B$ are quasi-projective manifolds, such that for a general point $b\in B$, $q|_{p^{-1}(b)}$ is generically finite to its image, which is a Lagrangian subvariety of $M$. The manifold $B$ is called the \emph{base} of a Lagrangian family. A Lagrangian family is called a \emph{Lagrangian covering family} if $q$ is dominant.
\end{definition}

\begin{remark}
There is another definition of Lagrangian covering families in \cite[Definition 0.4]{voisin2021lefschetz}, which does not assume $p$ to be flat but requires $B$ to be projective. It is easy to see that the existence of a Lagrangian family in the sense of Definition \ref{def-lag-family} implies the existence of one in the sense of \cite[Definition 0.4]{voisin2021lefschetz} by compactifying $p$ in Definition \ref{def-lag-family} to $\overline{p}\colon \overline{L}\to \overline{B}$, resolving singularities of $\overline{L}$ by $\wt{L}$, and then resolving the indeterminacy locus of $\wt{L}\dashrightarrow M$.
\end{remark}

Now let $\PP_0\subset |\oh_X(H)|$ be the open locus parameterizing smooth divisors. Using relative moduli spaces and Theorem \ref{thm-general-gm4}, we can construct a smooth Lagrangian family for each $M^X_{\sigma_X}(a,b)$ over an open subset of $\PP_0$ when $X$ is a general GM fourfold.

\begin{theorem}\label{thm-lagrangian-family}
Let $a,b$ be a pair of coprime integers. Then for any general GM fourfold $X$ and a stability condition $\sigma_X\in \Stab^{\circ}(\Ku(X))$ generic with respect to $a\Lambda_1+b\Lambda_2$, there exists an open subscheme $V\subset |\oh_X(H)|$ and a relative moduli space of stable objects $$p\colon L(a,b)\to V$$ with a morphism $$q\colon L(a,b)\to M^X_{\sigma_X}(a,b),$$ such that $L(a,b)$ is a quasi-projective manifold, $p$ is smooth and projective, and the following diagram is a Lagrangian family
\[\begin{tikzcd}
	{L(a,b)} & {M^X_{\sigma_X}(a,b)} \\
	V\subset \PP^8
	\arrow["q", from=1-1, to=1-2]
	\arrow["p"', from=1-1, to=2-1]
\end{tikzcd}\]
Furthermore, $q|_{p^{-1}(s)}$ is finite and unramified for each $s\in V$.
\end{theorem}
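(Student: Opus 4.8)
\textbf{Proof proposal for Theorem \ref{thm-lagrangian-family}.}

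The plan is to globalize the pointwise construction of Theorem \ref{thm-lagrangian-general} over the projective space $\PP_0 = |\oh_X(H)|_{\mathrm{sm}}$ of smooth hyperplane sections, using stability conditions in families and relative moduli spaces from \cite{BLMNPS21} and \cite{perry2019stability}. First I would set up the universal hyperplane section: let $\cH \subset \PP_0 \times X$ be the universal smooth GM threefold, with projections $\pi\colon \cH \to \PP_0$ (a smooth projective family of GM threefolds) and $\iota\colon \cH \hookrightarrow \PP_0\times X$. By \cite[Lemma 5.9]{bayer2022kuznetsov} there is a $\PP_0$-linear semiorthogonal component $\Ku(\cH)\subset \mathrm{D}_{\mathrm{perf}}(\cH)$ with $\Ku(\cH)_s \simeq \Ku(\cH_s)$ for each $s\in\PP_0$, and by \cite[Theorem 23.1, Proposition 26.1]{BLMNPS21} there is a stability condition $\underline{\sigma}$ on $\Ku(\cH)$ over $\PP_0$ restricting fiberwise to Serre-invariant stability conditions. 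Let $v$ be the relative numerical class with $v_s = a\lambda_1 + b\lambda_2$. Since $\gcd(a,b)=1$, \cite[Theorem 21.24]{BLMNPS21} produces a relative good moduli space $p_0\colon \cM_{\underline\sigma}(\Ku(\cH)/\PP_0, v) \to \PP_0$, proper over $\PP_0$, with $p_0^{-1}(s) \cong M^{\cH_s}_{\underline\sigma|_s}(a,b)$; by Proposition \ref{prop-structure-3fold-moduli}(1) each fiber is reduced of pure dimension $a^2+b^2+1$. By \cite[\href{https://stacks.math.columbia.edu/tag/052A}{Tag 052A}]{stacks-project} and generic smoothness (using Proposition \ref{prop-structure-3fold-moduli}(2), that a general GM threefold gives a smooth moduli space) there is an open dense $V_1\subset\PP_0$ over which $p_0$ is smooth and projective; set $L(a,b) := p_0^{-1}(V_1)$ and $p := p_0|_{L(a,b)}$, which is then a smooth projective morphism with $L(a,b)$ a quasi-projective manifold.

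Next I would construct the morphism $q$. The pushforward along $\iota$ composed with the relative projection $\mathrm{D}_{\mathrm{perf}}(\PP_0\times X)\to \Ku(\PP_0\times X) = \mathrm{D}_{\mathrm{perf}}(X)\otimes\mathrm{D}_{\mathrm{perf}}(\PP_0)$-linearly to $\Ku(X)\boxtimes\mathrm{D}_{\mathrm{perf}}(\PP_0)$ yields, by Lemma \ref{lem-rhom-pushfor} (which controls $\ext^1$ and hence universal gluability), a morphism $\gamma\colon \cM_{\underline\sigma}(\Ku(\cH)/\PP_0, v)\to \cM_{\mathrm{pug}}(\Ku(X)/\PP_0)$ over $\PP_0$. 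Composing with the projection $\cM_{\mathrm{pug}}(\Ku(X)/\PP_0)\to \cM_{\mathrm{pug}}(\Ku(X)/\CC)$, Theorem \ref{thm-general-gm4} shows that for $X$ general the image of every $\CC$-point of $L(a,b)$ lands in $M^X_{\sigma_X}(a,b)$, which is an open substack of $\cM_{\mathrm{pug}}(\Ku(X)/\CC)$ by \cite[Lemma 21.12]{BLMNPS21}. Shrinking $V_1$ to an open $V$ if necessary (the same Chevalley/constructibility argument as in the proof of Theorem \ref{thm-general-gm4} shows the locus of good $s$ contains an open dense subset), $\gamma$ descends to an honest morphism $q\colon L(a,b)\to M^X_{\sigma_X}(a,b)$; note $M^X_{\sigma_X}(a,b)$ is a projective hyperk\"ahler manifold by \cite{perry2019stability} since $\gcd(a,b)=1$ and $\sigma_X$ is generic. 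That the resulting diagram is a Lagrangian family in the sense of Definition \ref{def-lag-family} follows from Theorem \ref{thm-lagrangian-general}(1): for general $b\in V$ the restriction $q|_{p^{-1}(b)} = r$ is finite, and its image is Lagrangian; flatness and projectivity of $p$ are already arranged.

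Finally, for the last sentence — that $q|_{p^{-1}(s)}$ is finite and \emph{unramified} for \emph{every} $s\in V$, not merely general $s$ — I would argue as follows. Finiteness of $q|_{p^{-1}(s)} = r\colon M^{\cH_s}_{\underline\sigma|_s}(a,b)\to M^X_{\sigma_X}(a,b)$ holds at every $s$ by Theorem \ref{thm-lagrangian-general} (quasi-finiteness from Lemma \ref{prop_general-2}(2)(a), plus properness of both sides). For unramifiedness, the point of shrinking to $V$ is precisely that over $V$ the relative moduli space is \emph{smooth} over each fiber, i.e.\ $\cH_s$ is a GM threefold with smooth moduli space $M^{\cH_s}_{\underline\sigma|_s}(a,b)$, so by Proposition \ref{prop-structure-3fold-moduli}(2) every stable object $E$ of class $a\lambda_1+b\lambda_2$ in $\Ku(\cH_s)$ satisfies $E\neq T_{\cH_s}(E)$; then Lemma \ref{prop_general-2}(1) gives injectivity of the tangent map $\Ext^1(E,E)\hookrightarrow \Ext^1(\pr_X(j_*E),\pr_X(j_*E))$ at every point, which is exactly unramifiedness of $r$. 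So I would in fact \emph{define} $V$ to be the (open, dense by Proposition \ref{prop-structure-3fold-moduli}(2)) locus of $\PP_0$ over which $p_0$ is smooth and Theorem \ref{thm-general-gm4} applies, and then both claims of the last sentence are uniform over $V$. The main obstacle I anticipate is the bookkeeping in the second paragraph: matching the $\PP_0$-relative projection functor with the fiberwise $\pr_X\circ j_*$ and verifying that the stack-theoretic morphism $\gamma$ actually factors through the open substack $M^X_{\sigma_X}(a,b)$ after an appropriate shrinking — this is the same constructibility-plus-Chevalley maneuver carried out in detail in the proof of Theorem \ref{thm-general-gm4}, and I would cite that argument rather than repeat it.
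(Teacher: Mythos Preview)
Your proposal is correct and follows essentially the same route as the paper: build the relative Kuznetsov component and stability condition over (an open subset of) $\PP_0$ via \cite{bayer2022kuznetsov} and \cite{BLMNPS21}, take the relative moduli space as $L(a,b)$, define $q$ via the relative version of $\pr_X\circ j_*$ using Theorem \ref{thm-general-gm4}, and then read off finiteness, unramifiedness and the Lagrangian property fiberwise from Theorem \ref{thm-lagrangian-general} and Lemma \ref{prop_general-2}.

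Two small points of comparison. First, the paper fixes $V$ \emph{at the outset} as the locus in $\PP_0$ where the fiberwise moduli $M^{\cH_s}_{\underline\sigma|_s}(a,b)$ is smooth (nonempty by Proposition \ref{prop-structure-3fold-moduli}(2) since a general hyperplane section of a general $X$ is a general GM threefold), and then deduces smoothness of $p$ directly from fiberwise smoothness via \cite[Lemma 5.1]{ppzEnriques2023} and \cite[Lemma 21.12]{BLMNPS21}, rather than via generic flatness and generic smoothness as you do; both arguments are valid, the paper's is just a bit more direct. Second, your extra Chevalley/constructibility shrinking to make $q$ land in $M^X_{\sigma_X}(a,b)$ is unnecessary: Theorem \ref{thm-general-gm4} already asserts that for a general $X$ the semistability is preserved for \emph{every} smooth hyperplane section $Y\subset X$, so once $X$ is chosen general there is nothing further to cut down on the hyperplane side for the existence of $q$. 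For projectivity of $p$ the paper invokes \cite[Theorem 21.25]{BLMNPS21} and \cite[Corollary 3.4]{david:projective-criteria}, which you should cite as well.
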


\begin{proof}
Let $V$ be an open subset of $\PP_0\subset |\oh_X(H)|$ such that for every $[Y]\in V$, the moduli space $M^Y_{\sigma_Y}(a,b)$ is smooth. Such $V$ exists by Proposition \ref{prop-structure-3fold-moduli} as $X$ is general. We denote the restriction of the universal hyperplane on $V$ by $\cH_V$. We define $\oh_{\cH_V}(1):=p_2^*\oh_X(H)|_{\cH_V}$, where $p_2\colon |\oh_X(H)|\times X\to X$ is the projection. Then $\oh_{\cH_V}(1)|_{\cH_s}\cong \oh_{\cH_s}(H)$ for any $s\in V$. In other words, $\pi\colon \cH_V\to V$ is a family of GM threefolds in the sense of \cite[Section 5.2]{bayer2022kuznetsov}. As in the proof of Proposition \ref{prop-structure-3fold-moduli}, we have a relative Kuznetsov component $\Ku(\cH_V)\subset \mathrm{D}_{\mathrm{perf}}(\cH_V)$ and a stability condition $\underline{\sigma}$ on $\Ku(\cH_V)$ over $V$ such that for any $s\in V$, $(\underline{\sigma})|_s$ is Serre-invariant.

Let $v\in \Knum(\Ku(\cH_V)/V)$ be the class such that $v_s=a\lambda_1+b\lambda_2\in \Knum(\Ku(\cH_s))$ for any $s\in V$. We define $$p\colon L(a,b):=M_{\underline{\sigma}}(\Ku(\cH_V)/V, v)\to V$$ to be the relative good moduli space. According to \cite[Theorem 21.24(3)]{BLMNPS21}, such moduli space exists and $p$ is proper. By Theorem \ref{thm-general-gm4}, the functor $\pr_X\circ j_*$ induces a morphism $q\colon L(a,b)\to M^X_{\sigma_X}(a,b)$.


From our construction and Theorem \ref{thm-lagrangian-general}, $p^{-1}(s)=M^{\cH_s}_{(\underline{\sigma})|_s}(a,b)$ is smooth projective and $q|_{p^{-1}(s)}$ is finite and unramified with the image Lagrangian for each $s\in V$. Applying \cite[Lemma 5.1]{ppzEnriques2023} and \cite[Lemma 21.12]{BLMNPS21} implies the smoothness of $p$. Thus, $L(a,b)$ is smooth as well. Finally, the projectivity of $p$ follows from the smoothness, \cite[Theorem 21.25]{BLMNPS21} and \cite[Corollary 3.4]{david:projective-criteria}, which implies that $L(a,b)$ is a quasi-projective manifold. By definition, this yields a smooth Lagrangian family of $M^X_{\sigma_X}(a,b)$.
\end{proof}


Indeed, the results above can be generalized to a non-Hodge-special GM sixfold $X$ for an open subset $V\subset |\oh_X(H))|=\PP^{10}$ if a family of Serre-invariant stability conditions on $\Ku(\cH_V)$ over $V$ is known. This can be done by establishing a generalization of \cite[Corollary 6.5]{kuznetsov2019categorical} for families of GM fivefolds. Since the proof is analogous to the GM fourfolds case, we state the corresponding results here for later use.

\begin{corollary}\label{cor-lagrangian-family-6fold}
Let $X$ be a non-Hodge-special GM sixfold and $\sigma_X$ be a stability condition on $\Ku(X)$. Assuming there exists an open subset $V\subset |\oh_X(H))|$ and a stability condition $\underline{\sigma}$ on $\Ku(\cH_V)$ over $V$ such that $(\underline{\sigma})|_s$ is a Serre-invariant stability condition on $\Ku(\cH_s)$ for any $s\in V$, then the claim of Theorem \ref{thm-lagrangian-family} holds for the moduli space $M^X_{\sigma_X}(a, b)$. 

\end{corollary}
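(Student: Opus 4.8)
The plan is to run the proof of Theorem \ref{thm-lagrangian-family} essentially verbatim, with the family of GM threefolds $\cH_V\to V$ replaced by the family of GM \emph{fivefolds} cut out by hyperplane sections of the sixfold $X$, and with the relative Kuznetsov component together with its fiberwise Serre-invariant stability condition supplied by the hypothesis rather than constructed by hand. Concretely, I would fix coprime integers $a,b$, a non-Hodge-special GM sixfold $X$, and a stability condition $\sigma_X\in\Stab^{\circ}(\Ku(X))$ generic with respect to $a\Lambda_1+b\Lambda_2$ (all such are $\GL$-equivalent by Proposition \ref{prop-gm4-unique}, and $M^X_{\sigma_X}(a,b)$ is a projective hyperk\"ahler manifold by \cite{perry2019stability} via \cite[Corollary 6.5]{kuznetsov2019categorical}); take $V\subset|\oh_X(H)|=\PP^{10}$ and $\underline{\sigma}$ on $\Ku(\cH_V)$ as in the hypothesis; let $v$ be the relative class with $v_s=a\lambda_1+b\lambda_2$, using the generators $\lambda_1,\lambda_2$ of $\Knum(\Ku(\cH_s))$ from Lemma \ref{lem-compute-class}(2); and form the relative good moduli space $p\colon L(a,b):=M_{\underline{\sigma}}(\Ku(\cH_V)/V,v)\to V$, which exists and is proper by \cite[Theorem 21.24(3)]{BLMNPS21}. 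Since $X$ is non-Hodge-special, Theorem \ref{thm-very-general-GM4}(1) applies to every smooth hyperplane section $j_s\colon\cH_s\hookrightarrow X$ (here the even-dimensional member of the pair is $X$ itself), so $\pr_X\circ j_*$ induces a morphism $q\colon L(a,b)\to M^X_{\sigma_X}(a,b)$, exactly as in Theorem \ref{thm-lagrangian-general}, which is valid for non-Hodge-special GM sixfolds; the family version of this morphism is obtained, as in the proof of Theorem \ref{thm-lagrangian-family}, from Lemma \ref{lem-rhom-pushfor} and \cite[Lemma 21.12]{BLMNPS21}.

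Next I would establish the fiberwise picture: after possibly shrinking $V$, each fiber $p^{-1}(s)=M^{\cH_s}_{(\underline{\sigma})|_s}(a,b)$ is smooth and projective, and $q|_{p^{-1}(s)}$ is finite and unramified with Lagrangian image. The statements about $q|_{p^{-1}(s)}$ and its image are precisely Theorem \ref{thm-lagrangian-general}, once the fiber is known to be smooth. For that smoothness I would use \cite[Corollary 6.5]{kuznetsov2019categorical}: for each $s$ there is a smooth GM threefold $X'_s$ with an equivalence $\Ku(X'_s)\simeq\Ku(\cH_s)$ carrying $(\underline{\sigma})|_s$ to a Serre-invariant stability condition $\sigma'_s$; since moduli of semistable objects depend only on the category and the stability condition, $M^{\cH_s}_{(\underline{\sigma})|_s}(a,b)\cong M^{X'_s}_{\sigma'_s}(a,b)$, which by Proposition \ref{prop-structure-3fold-moduli}(1) is normal of pure dimension $a^2+b^2+1$ and by part (2) is smooth projective as soon as $X'_s$ is a general GM threefold. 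I would then deduce, for $s$ in a dense open subset of $V$, that $X'_s$ is indeed general, using that the hyperplane sections of a non-Hodge-special GM sixfold are general GM fivefolds and their associated GM threefolds sweep out a dense subset of the moduli of GM threefolds; replacing $V$ by this dense open subset gives smooth projective fibers.

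With the fibers under control, the total-space statements follow exactly as in Theorem \ref{thm-lagrangian-family}: $p$ is smooth by \cite[Lemma 5.1]{ppzEnriques2023} and \cite[Lemma 21.12]{BLMNPS21} (or, alternatively, shrink $V$ further so that the proper morphism $p$ becomes flat by \cite[\href{https://stacks.math.columbia.edu/tag/052A}{Tag 052A}]{stacks-project} and then smooth by generic smoothness, using the smooth generic fiber), so $L(a,b)$ is smooth; and $p$ is projective by \cite[Theorem 21.25]{BLMNPS21} and \cite[Corollary 3.4]{david:projective-criteria}, so $L(a,b)$ is a quasi-projective manifold. Then $p\colon L(a,b)\to V$ together with $q\colon L(a,b)\to M^X_{\sigma_X}(a,b)$ is a Lagrangian family in the sense of Definition \ref{def-lag-family}, with $q|_{p^{-1}(s)}$ finite and unramified for every $s\in V$, which is the asserted conclusion.

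I expect the main obstacle to be exactly the smoothness of the generic fiber, i.e.\ transferring Proposition \ref{prop-structure-3fold-moduli}(2) across the equivalence $\Ku(\cH_s)\simeq\Ku(X'_s)$ together with the claim that the associated GM threefolds $X'_s$ are general when $X$ is non-Hodge-special. This is the only input not already handed to us by the hypothesis or by the cited results, and it is a moduli/period-theoretic statement that should be checked with care. If instead a version of \cite[Corollary 6.5]{kuznetsov2019categorical} in families over the relative Hilbert scheme of hyperplane sections of a versal family of non-Hodge-special GM sixfolds is available, one could bypass this point by running the generic-smoothness argument of Proposition \ref{prop-structure-3fold-moduli}(2) directly in families, just as in the proof of that proposition and of Theorem \ref{thm-general-gm4}.
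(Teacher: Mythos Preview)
Your proposal is correct and matches the paper's approach: the paper does not give a separate proof, stating only that ``the proof is analogous to the GM fourfolds case,'' and your write-up is exactly that analogous argument. You have also correctly isolated the one point the paper glosses over---smoothness of the generic fiber $M^{\cH_s}_{(\underline{\sigma})|_s}(a,b)$---and your suggested fix via a family version of \cite[Corollary 6.5]{kuznetsov2019categorical} is precisely what the paper alludes to in the sentence preceding the corollary.
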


A similar result can be obtained when considering a fixed GM fourfold within a family of GM fivefolds.

\begin{corollary}\label{cor-lagrangian-family-5fold}
Let $X$ be a non-Hodge-special GM fourfold and $\sigma_X$ be a stability condition on $\Ku(X)$. Given a smooth connected quasi-projective manifold $V$ and a family of GM fivefolds $\cH_V\to V$ containing $X$. Assume that there is a stability condition $\underline{\sigma}$ on $\Ku(\cH_V)$ over $V$ such that $(\underline{\sigma})|_s$ is a Serre-invariant stability condition on $\Ku(\cH_s)$ for any $s\in V$. Then the claim of Theorem \ref{thm-lagrangian-family} holds for the moduli space $M^X_{\sigma_X}(a, b)$. 

\end{corollary}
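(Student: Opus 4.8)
The plan is to replay the proof of Theorem~\ref{thm-lagrangian-family}, with the family of hyperplane sections of $X$ replaced by the given family $\pi\colon\cH_V\to V$ of GM fivefolds and the push-forward $\pr_X\circ j_*$ replaced by the pull-back along the relative hyperplane section $\iota\colon X\times V\hookrightarrow\cH_V$ (so that $X\times V$ is the zero locus of a section of the relative hyperplane bundle and each $\iota_s\colon X\hookrightarrow\cH_s$ is a smooth hyperplane section). The relative Kuznetsov component $\Ku(\cH_V)$ and the relative stability condition $\underline{\sigma}$ are part of the hypothesis, so there is nothing to construct there. First, using Lemma~\ref{lem-compute-class}(2), I would fix the class $v\in\Knum(\Ku(\cH_V)/V)$ with $v_s=a\lambda_1+b\lambda_2$ for all $s\in V$ (hence $\iota_s^*v_s=a\Lambda_1+b\Lambda_2$), and form the relative good moduli space $p\colon L(a,b):=M_{\underline{\sigma}}(\Ku(\cH_V)/V,\,v)\to V$; by \cite[Theorem~21.24(3)]{BLMNPS21} it exists, $p$ is proper, and $p^{-1}(s)=M^{\cH_s}_{\underline{\sigma}|_s}(a,b)$. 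By the relative version of Lemma~\ref{pull-back-in-ku} the restriction of $\iota^*$ to $\Ku(\cH_V)$ lands in the $V$-linear Kuznetsov component of the constant family $X\times V$; combining this with the $\ext^1$-estimate of Lemma~\ref{lem-rhom-pushfor} and \cite[Lemma~21.12]{BLMNPS21}, the universal object on $L(a,b)$ induces a $V$-morphism from $L(a,b)$ to the stack $\cM_{\mathrm{pug}}(\Ku(X\times V)/V)$ of universally gluable objects over $V$, which for this constant family is $\cM_{\mathrm{pug}}(\Ku(X)/\CC)\times V$. Since $X$ is non-Hodge-special and $\underline{\sigma}|_s$, $\sigma_X$ are Serre-invariant, Theorem~\ref{thm-very-general-GM4}(2) applies to every $\cH_s$ (the even-dimensional member of the pair is the \emph{fixed} fourfold $X$), and together with Corollary~\ref{cor-push-stable-5fold} and $\gcd(a,b)=1$ it shows that the image of every closed point of $L(a,b)$ lies in the open substack $M^X_{\sigma_X}(a,b)\times V$; hence so does all of $L(a,b)$, and composing with the first projection gives the morphism $q\colon L(a,b)\to M^X_{\sigma_X}(a,b)$.

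It then remains to transfer the pointwise geometry of Corollary~\ref{cor-lagrangian-very-general-5fold} to the relative setting. For each $s\in V$ the restriction $q|_{p^{-1}(s)}$ is exactly the morphism $r$ of Corollary~\ref{cor-lagrangian-very-general-5fold}; it is finite, and since $\gcd(a,b)=1$ and $\sigma_X$ is generic with respect to $a\Lambda_1+b\Lambda_2$ it is generically unramified with Lagrangian image. By Proposition~\ref{prop-structure-3fold-moduli}(1) every fibre $p^{-1}(s)$ is normal of pure dimension $a^2+b^2+1$, so after shrinking $V$ to a dense open over which $p$ is flat, $L(a,b)$ is normal, hence generically smooth; by generic smoothness the general fibre of $p$ is then smooth, where one uses that through the equivalence $\Ku(\cH_s)\simeq\Ku(X')$ of \cite[Corollary~6.5]{kuznetsov2019categorical} a general member $\cH_s$ has the Kuznetsov component of a general GM threefold, so that Proposition~\ref{prop-structure-3fold-moduli}(2) applies. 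Replacing $V$ by the open locus (open because $p$ is proper) over which $p$ is smooth, we may assume $M^{\cH_s}_{\underline{\sigma}|_s}(a,b)$ is smooth for every $s\in V$; on such a fibre every stable $E$ satisfies $E\not\cong T_{\cH_s}(E)$, equivalently $\ext^2(E,E)=0$, so the tangent-map injectivity from Corollary~\ref{cor-lagrangian-very-general-5fold}(1) makes $q|_{p^{-1}(s)}$ finite and unramified for all $s$. The remaining assertions go exactly as at the end of the proof of Theorem~\ref{thm-lagrangian-family}: \cite[Lemma~5.1]{ppzEnriques2023} together with \cite[Lemma~21.12]{BLMNPS21} gives that $p$ is smooth, so $L(a,b)$ is a smooth quasi-projective variety; \cite[Theorem~21.25]{BLMNPS21} and \cite[Corollary~3.4]{david:projective-criteria} give that $p$ is projective; and by Definition~\ref{def-lag-family} the pair $(p,q)$ is a Lagrangian family of $M^X_{\sigma_X}(a,b)$.

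Two steps require genuine care. The first is making the relative pull-back rigorous, namely checking that $\iota^*$ (followed by the relative projection into $\Ku(X\times V)$) preserves universal gluability, so that $q$ really is a morphism of schemes and not merely a map on closed points; this is modelled on, and in fact lighter than, the corresponding argument in the proof of Theorem~\ref{thm-general-gm4}, since Theorem~\ref{thm-very-general-GM4}(2) already applies uniformly over $V$ and no Chevalley-type upgrade from the very general to the general case is needed. The second, and the main obstacle, is the reduction to smooth fibres --- that a general member $\cH_s$ of the family has no $T_{\cH_s}$-fixed stable object of class $a\lambda_1+b\lambda_2$, which is exactly what guarantees that $L(a,b)$ and $V$ can be taken to be genuine manifolds; I would handle it as indicated above by combining \cite[Corollary~6.5]{kuznetsov2019categorical} with Proposition~\ref{prop-structure-3fold-moduli} and the non-Hodge-speciality of $X$, and this is the only point where the argument departs in substance from the GM fourfold case of Theorem~\ref{thm-lagrangian-family}. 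Everything else --- the $\GL$-normalisation, the identification of $q|_{p^{-1}(s)}$ with $r$, and the behaviour of moduli of objects in families --- is a routine transcription, with Corollary~\ref{cor-lagrangian-very-general-5fold} playing the role of Theorem~\ref{thm-lagrangian-general} and Lemma~\ref{lem-rhom-pushfor} that of the $\ext^1$-bounds used in the fourfold case.
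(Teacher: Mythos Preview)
Your proposal is correct in outline and follows exactly the approach the paper indicates but does not spell out: the paper states this corollary without proof, noting only that ``a similar result can be obtained'' by analogy with Theorem~\ref{thm-lagrangian-family}. Your identification of the substitutions --- Corollary~\ref{cor-lagrangian-very-general-5fold} in place of Theorem~\ref{thm-lagrangian-general}, $\iota^*$ in place of $\pr_X\circ j_*$, Theorem~\ref{thm-very-general-GM4}(2) applied uniformly over $V$ because the even-dimensional member of each pair is the fixed non-Hodge-special $X$ --- is precisely what the paper intends.

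You are also right that the one genuine departure is the smoothness of the fibres $M^{\cH_s}_{\underline{\sigma}|_s}(a,b)$, and your flagging of it is apt; but your proposed argument for it does not close. Normality of $L(a,b)$ gives generic smoothness of the total space, not of the general fibre: to get the latter you need $p$ smooth somewhere, which is what you are trying to prove. Likewise the claim that a general $\cH_s$ in the given family has Kuznetsov component equivalent to that of a \emph{general} GM threefold is unjustified for an arbitrary family $V$ --- if $V$ were a single point, for instance, there is nothing to shrink to. The paper does not address this step either, and the corollary as stated most naturally applies when $V$ is large enough (an open piece of the parameter space of all GM fivefolds containing $X$) that Proposition~\ref{prop-structure-3fold-moduli}(2) transfers via the equivalence of \cite[Corollary~6.5]{kuznetsov2019categorical}; under that reading, your argument goes through.
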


\subsection{Lagrangian covering families}\label{subsec-lag-cover}

In Theorem \ref{thm-lagrangian-family}, we construct a Lagrangian family for each $M^X_{\sigma_X}(a,b)$. A natural question arises regarding whether $q$ is dominant, indicating the presence of a Lagrangian covering family. A straightforward computation of dimension shows that if $q$ is dominant, then $\dim M^X_{\sigma_X}(a,b)\leq 16$. So up to sign, we have
\[(a,b)=(0, 1), (1,0), (1,\pm 1), (1, \pm 2), (2, \pm 1).\]
At this moment, we do not know an abstract method to demonstrate that the Lagrangian family in Theorem \ref{thm-lagrangian-family} constitutes a Lagrangian covering family. However, this can be established for the first three classes as follows, since we have a specific description of the corresponding moduli spaces: 

\begin{itemize}
    \item $(a,b)=(0,1)$ or $(1,0)$. By \cite{GLZ2021conics}, the corresponding moduli space on a very general GM fourfold is the double EPW sextic or the double dual EPW sextic. On the other hand, the corresponding moduli space on a GM threefold is studied in \cite{JLLZ2021gushelmukai}. We will see in \cite{FGLZcube} that Theorem \ref{thm-lagrangian-family} gives a Lagrangian covering family which agrees with the classical construction in \cite[Section 5.1]{iliev2011fano}.

    \item $(a,b)=(1,\pm 1)$. By \cite{kapustka2022epw}, the corresponding moduli space on a very general GM fourfold is the double EPW cube constructed in \cite{IKKR19}. In \cite{FGLZcube}, we will realize it as the MRC quotient of the Hilbert scheme of twisted cubics on a GM fourfold. Moreover, we will apply Theorem~\ref{thm-lagrangian-family} to construct Lagrangian covering families of double EPW cubes in \cite{FGLZcube}. These constructions are new in the literature.

    \item $(a,b)=(1,\pm 2)$ or $(2, \pm 1)$. In this case, the corresponding moduli spaces on GM fourfolds are $12$-dimensional hyperk\"ahler varieties and have not been extensively studied using classical methods. According to  Corollary \ref{cor-rational-embedding}, the moduli space for $(a,b)=(1,2)$ on a very general GM fourfold $X$ contains a subvariety birational to $X$. On the other hand, the moduli space for $(a,b)=(1,2)$ on a GM threefold is studied in \cite{jLz2021brillnoether}, which contains such threefold as a Brill--Noether locus. We expect that both moduli spaces can be constructed from Hilbert schemes of certain curves, similar to \cite{bayer2020desingularization} and \cite{li2018twisted}. If this is the case, then it is possible to demonstrate that the family in Theorem \ref{thm-lagrangian-family} in this scenario indeed constitutes a covering family.
\end{itemize}

A similar discussion as above for cubic fourfolds has been included in Appendix \ref{App-cubic}.

\section{Further applications}\label{subsec-application}
In this section, we delve into further applications of studying GM fourfolds through their hyperplane sections.

\subsection{Embedding Gushel--Mukai fourfolds into hyperk\"ahler manifolds}

Using Theorem \ref{thm-very-general-GM4}, we can construct a rational embedding $X\dashrightarrow M^X_{\sigma_X}(1,2)$ which confirms an expectation in \cite[Section 5.4.3]{perry2019stability}. We denote by $\CC_x$ the skyscraper sheaf at a point $x \in X$.  

\begin{corollary}\label{cor-rational-embedding}
Let $X$ be a general GM fourfold or non-Hodge-special special GM fourfold. Then for $\sigma_X\in \Stab^{\circ}(\Ku(X))$ generic with respect to $\Lambda_1+2\Lambda_2$, the functor $\pr_X$ induces a rational map
\begin{align*}
    X & \dashrightarrow M_{\sigma_X}^{X}(1,2) \\
    x & \mapsto \pr_X(\CC_x)
\end{align*}
which is generically an immersion.
\end{corollary}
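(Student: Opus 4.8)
The plan is to show that $x \mapsto \pr_X(\CC_x)$ defines a morphism from a dense open subset of $X$ to $M^X_{\sigma_X}(1,2)$ which is injective on points and injective on tangent spaces over a dense open, and then to identify the tangent behaviour precisely enough to conclude it is a locally closed immersion generically. The first task is to compute the numerical class $[\pr_X(\CC_x)]$ in $\Knum(\Ku(X))$: since $\ch(\CC_x) = H^4/\deg$ (a point class), one checks using the formulas \eqref{Lambda} for $\ch(\Lambda_1), \ch(\Lambda_2)$ and the explicit mutation functor $\pr_X = \bR_{\cU_X}\bR_{\oh_X(-H)}\bL_{\oh_X}\bL_{\cU^\vee_X}$ that $[\pr_X(\CC_x)] = \Lambda_1 + 2\Lambda_2$; this is the calculation underlying the class $(1,2)$, and it is routine Chern-character bookkeeping on $\D^b(X)$. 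Next I would establish $\sigma_X$-stability of $\pr_X(\CC_x)$ for general $x$. The cleanest route is to use the hyperplane-section machinery: for a smooth hyperplane section $j\colon Y \hookrightarrow X$ through $x$, we have $\CC_x \cong j_*\CC_x^Y$ (skyscraper on $Y$), and by Proposition \ref{prop-pushforward}, $\pr_X(j_*\CC_x^Y) \cong \pr_X(j_*\pr_Y(\CC_x^Y))$. The object $\pr_Y(\CC_x^Y) \in \Ku(Y)$ is (up to shift/twist) a well-studied object of the Kuznetsov component of a GM threefold whose class is $\lambda_1 + 2\lambda_2$; for general $x$ it is $\sigma_Y$-stable (this is in the literature on GM threefolds, e.g.~\cite{jLz2021brillnoether,JLLZ2021gushelmukai}). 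Then Theorem \ref{thm-very-general-GM4}(1), together with Theorem \ref{thm-general-gm4} for the general (not merely very general) case, gives that $\pr_X(j_*\pr_Y(\CC_x^Y)) = \pr_X(\CC_x)$ is $\sigma_X$-semistable, and since $\gcd(1,2)=1$ the class $\Lambda_1+2\Lambda_2$ is primitive so semistable $=$ stable. This produces the rational map to $M^X_{\sigma_X}(1,2)$, defined wherever $\pr_X(\CC_x)$ is stable, which is a dense open $U \subseteq X$.

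Injectivity on points: if $\pr_X(\CC_x) \cong \pr_X(\CC_{x'})$ then, since $\pr_X$ is the composition of mutation functors (which are fully faithful on the relevant subcategories) applied to $\CC_x$, one recovers $\RHom_X(\pr_X(\CC_x), \pr_X(\CC_{x'}))$ in terms of $\RHom_X(\CC_x, \CC_{x'})$ via the defining triangles of the mutations; an isomorphism of the projections forces $\Hom(\CC_x,\CC_{x'}) \neq 0$ after accounting for the contribution of the exceptional bundles $\oh_X, \cU^\vee_X$, hence $x = x'$. This is the standard argument (cf.~\cite{LLMS18,GLZ2021conics}) and requires only that the exceptional collection objects have controlled $\Ext$'s against skyscrapers. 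Injectivity of the tangent map: the differential of $x \mapsto \pr_X(\CC_x)$ at $x$ is the composite $T_xX = \Ext^1_X(\CC_x,\CC_x) \to \Ext^1_X(\pr_X(\CC_x),\pr_X(\CC_x))$ induced by $\pr_X$; one shows this is injective by applying $\Hom_X(-,\CC_x)$ (resp.~the mutation triangles) and using that the maps $\Ext^1_X(\CC_x,\CC_x) \to \Ext^1_X(\bL_{\oh_X}\CC_x, \bL_{\oh_X}\CC_x)$ etc.~lose no information since $\RHom(\oh_X,\CC_x) = \CC$ and the connecting maps are zero for degree reasons. Combining injectivity on points and on tangent vectors over a dense open gives that the rational map is generically an immersion (equivalently, unramified and injective on a dense open, hence a locally closed immersion there).

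The main obstacle, I expect, is the stability of $\pr_X(\CC_x)$ for \emph{general} (not very general) GM fourfolds: for very general $X$ one can invoke Theorem \ref{thm-very-general-GM4} directly once the stability of $\pr_Y(\CC_x^Y)$ on the threefold side is known, but extending to general $X$ requires Theorem \ref{thm-general-gm4}, and one must check that the relevant locus of pairs $(X, Y \ni x)$ is large enough — i.e.~that for a general GM fourfold $X$ and general point $x$ there exists a smooth hyperplane section $Y$ through $x$ with $\pr_Y(\CC_x^Y)$ being $\sigma_Y$-stable, and that $x$ lies in the open locus where Theorem \ref{thm-general-gm4} applies for the class $(1,2)$. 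A secondary subtlety is verifying that the object $\pr_Y(\CC_x^Y)$ really has primitive class $\lambda_1+2\lambda_2$ and is stable for general $x \in Y$ — this should follow from the Brill--Noether description in \cite{jLz2021brillnoether}, but the precise genericity statement needs to be extracted. Once these genericity issues are handled, the point- and tangent-injectivity arguments are formal consequences of the structure of $\pr_X$ and the $2$-Calabi--Yau property of $\Ku(X)$, and the "generically an immersion" conclusion is immediate.
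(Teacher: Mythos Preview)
Your stability argument is exactly the paper's: pass to a smooth hyperplane section $j\colon Y\hookrightarrow X$ through $x$, use Proposition~\ref{prop-pushforward} to get $\pr_X(\CC_x)\cong\pr_X(j_*\pr_Y(\CC_x))$, invoke \cite[Theorem~5.9]{jLz2021brillnoether} for $\sigma_Y$-stability of $\pr_Y(\CC_x)$ with class $\lambda_1+2\lambda_2$, and then apply Theorem~\ref{thm-general-gm4} (or Corollary~\ref{cor-push-stable} in the non-Hodge-special special case) together with primitivity of $\Lambda_1+2\Lambda_2$.

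The gap is in injectivity on points. Your claim that ``an isomorphism of the projections forces $\Hom(\CC_x,\CC_{x'})\neq 0$ after accounting for the contribution of the exceptional bundles'' is not justified: the mutation functors are only fully faithful on the orthogonals to the exceptional objects, and $\CC_x$ does not lie there, so there is no formal way to read off $\RHom_X(\CC_x,\CC_{x'})$ from $\RHom_X(\pr_X(\CC_x),\pr_X(\CC_{x'}))$. The paper proves injectivity by a concrete sheaf computation (Lemma~\ref{lem-proj-point-injective}): it works out the cohomology sheaves of $\pr_X(\CC_x)[-1]$ from the mutation triangles, splits into cases according to the rank of a certain map $b\colon\cU_X^{\oplus 2}\to K$, and in each case shows that one can recover $\CC_x$ as $\mathcal{E}xt^2_X(\cH^{-1}(\pr_X(\CC_x)[-1]),\oh_X)$ (or via the non-locally-free locus of a stable reflexive sheaf). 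This is real work, not a formal consequence of the mutation calculus.

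For unramifiedness the paper also takes a different route from yours. Rather than analysing the map $\Ext^1_X(\CC_x,\CC_x)\to\Ext^1_X(\pr_X(\CC_x),\pr_X(\CC_x))$ directly on $X$, it again goes through the hyperplane section and uses Lemma~\ref{prop_general-2}(1), which gives injectivity of $\Ext^1_Y(E,E)\to\Ext^1_X(\pr_X(j_*E),\pr_X(j_*E))$ \emph{provided} $E\neq T_Y(E)$. The point you missed is that this last condition must be verified: the paper restricts to the open locus $W\subset X$ where $\pr_Y(\CC_x)\neq T_Y(\pr_Y(\CC_x))$, checking this via \cite[Lemma~6.6, Theorem~1.1]{jLz2021brillnoether} for ordinary $X$, and via the geometric double-cover involution when both $X$ and $Y$ are special. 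Your proposed direct argument (``connecting maps are zero for degree reasons'') is plausible for the left mutations through $\oh_X$ and $\cU_X^\vee$, but the right mutations through $\oh_X(-H)$ and $\cU_X$ involve $\RHom_X(\CC_x,\oh_X(-H))=\CC[-4]$ and $\RHom_X(\CC_x,\cU_X)=\CC^2[-4]$, and controlling the resulting $\Ext^1$-maps would require a computation of comparable weight to Lemma~\ref{lem-proj-point-injective}.
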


\begin{proof}
For any point $x\in Y$ on a smooth GM threefold $j\colon Y\hookrightarrow X$, we know that $\pr_Y(\mathbb{C}_x)$ is of class $[\pr_Y(\mathbb{C}_x)]=\lambda_1+2\lambda_2$ and stable with respect to any Serre-invariant stability condition on $\Ku(Y)$ by \cite[Theorem 5.9]{jLz2021brillnoether}. Then Theorem \ref{thm-general-gm4} and Corollary \ref{cor-push-stable} imply that $\pr_X(j_*\pr_Y(\CC_x))$ is $\sigma_X$-stable. Moreover, by Proposition \ref{prop-pushforward}, we have $\pr_X(\CC_x)\cong \pr_X(j_*\pr_Y(\CC_x))$. 

Let $W\subset X$ be an open subset such that each $x\in W$ is contained in a smooth hyperplane section $Y$ and  
\begin{equation}\label{eq-T3-proj-point}
    \pr_Y(\mathbb{C}_x)\neq T_Y(\pr_Y(\mathbb{C}_x)). 
\end{equation}
If $X$ is ordinary, then \eqref{eq-T3-proj-point} holds for any point $x \in Y$ by \cite[Lemma 6.6, Theorem 1.1]{jLz2021brillnoether}. But if both $X$ and $Y$ are special, the involution $T_Y$ on $\Ku(Y)$ is induced by the geometric involution of the double cover on $Y$. Thus, \eqref{eq-T3-proj-point} holds when $x\in Y$ is not contained in the branch divisor of $X$, as the double cover structures on $X$ and $Y$ are compatible. 

Thus, we obtain a morphism $W\to M^X_{\sigma_X}(1,2)$, which is injective by next Lemma \ref{lem-proj-point-injective}, and unramified by Lemma \ref{prop_general-2} and \eqref{eq-T3-proj-point}. Hence, it is an immersion.

\end{proof}


\begin{lemma}\label{lem-proj-point-injective}
Let $X$ be a GM fourfold and $x, x'\in X$ be two closed points. Then $\pr_X(\CC_x)\cong \pr_X(\CC_{x'})$ if and only if $x=x'$.
\end{lemma}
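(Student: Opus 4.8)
The ``if'' direction is trivial, so the content is: if $\pr_X(\CC_x)\cong \pr_X(\CC_{x'})$ then $x=x'$. The plan is to reduce the statement to a property of the projection functor that can be checked by computing morphism spaces. Recall $\pr_X=\bR_{\cU_X}\bR_{\oh_X(-H)}\bL_{\oh_X}\bL_{\cU^{\vee}_X}$, so the cone of the canonical map $\CC_x\to \pr_X(\CC_x)$ lies in the subcategory $\langle \oh_X,\cU^{\vee}_X,\oh_X(-H),\cU_X\rangle$ generated by the exceptional objects appearing in the mutations (more precisely, the difference between $\CC_x$ and $\pr_X(\CC_x)$ is built from $\oh_X$, $\cU^{\vee}_X$, $\oh_X(-H)$, $\cU_X$). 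I would make this precise by tracking the four mutation triangles one at a time, recording that each successive cone is an extension of shifts of the relevant exceptional bundle tensored with an $\RHom$-space.

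The key step is then the following: if $\pr_X(\CC_x)\cong\pr_X(\CC_{x'})$, I want to conclude $\CC_x\cong\CC_{x'}$, hence $x=x'$. The natural route is to show that the functor $\CC_x\mapsto \pr_X(\CC_x)$ reflects isomorphisms on skyscrapers. Equivalently, I would argue that the data of the triangle $K_x\to \CC_x\to\pr_X(\CC_x)$ with $K_x\in\langle\oh_X,\cU^{\vee}_X,\oh_X(-H),\cU_X\rangle$ determines $x$. One clean way: apply $\Hom_X(\pr_X(\CC_x),-)$ or $\RHom_X(-,\CC_{x'})$ to suitable objects. Since $\pr_X(\CC_x)\in\Ku(X)$ and the exceptional bundles $\oh_X,\cU^{\vee}_X,\oh_X(H),\dots$ are left-orthogonal / right-orthogonal in the appropriate sense, one has $\RHom_X(\pr_X(\CC_x),\mathcal{E})$ computable in terms of $\RHom_X(\CC_x,\mathcal{E})$ for $\mathcal{E}$ ranging over the exceptional collection; but $\RHom_X(\CC_x,\mathcal{E})\cong \mathcal{E}^\vee|_x[-\dim X]$ up to Serre duality, so its (non)vanishing and dimension pins down no information about $x$ directly. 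Instead I would test against line bundles: $\RHom_X(\oh_X(kH),\pr_X(\CC_x))$. Using the mutation triangles and adjunction, this reduces to $\RHom_X(\oh_X(kH),\CC_x)=\CC$ in degree $0$ (independent of $x$) modulo corrections living in the exceptional collection, which again carry no positional information. So the genuinely efficient approach is probably: observe that $\pr_X$ restricted to the subcategory generated by all skyscrapers is faithful enough, namely $\Hom_X(\pr_X(\CC_x),\pr_X(\CC_{x'}))\neq 0$ forces $x=x'$, by applying $\Hom_X(-,\CC_{x'})$-type arguments through the defining triangles and using that $\Hom_X(\oh_X,\CC_{x'})$, $\Hom_X(\cU^{\vee}_X,\CC_{x'})$ etc.\ are all $1$-dimensional in degree $0$ and vanish in all other degrees, together with $\Hom_X(\CC_x,\CC_{x'})=0$ for $x\neq x'$.

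Concretely, here is the cleanest version I would write up. Let $f\colon\pr_X(\CC_x)\xrightarrow{\sim}\pr_X(\CC_{x'})$. Compose the canonical maps to get $g\colon \CC_x\to\pr_X(\CC_x)\xrightarrow{f}\pr_X(\CC_{x'})$ and consider also $h\colon \CC_{x'}\to\pr_X(\CC_{x'})$. I would show $g$ factors through $h$, i.e.\ $g=h\circ\psi$ for some $\psi\colon\CC_x\to\CC_{x'}$: this uses that $\cone(h)\in\langle\oh_X,\cU^{\vee}_X,\oh_X(-H),\cU_X\rangle$ and $\RHom_X(\CC_x,\mathcal{E}[j])$ is concentrated so that $\Hom_X(\CC_x,\cone(h)[-1])=\Hom_X(\CC_x,\cone(h))=0$ by Serre duality and the known cohomology of $\cU_X,\oh_X$ twists restricted to a point (all the relevant $\Ext$-groups between a skyscraper and these bundles are one-dimensional and sit in a single degree $=\dim X$, so they cannot contribute in the two degrees needed to obstruct the lift). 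Hence $\psi$ exists; symmetrically one lifts $f^{-1}$ to $\psi'\colon\CC_{x'}\to\CC_x$, and $\psi'\psi$, $\psi\psi'$ are nonzero (because $g,h$ are nonzero and $\pr_X$ applied to them recovers $f,f^{-1}$), forcing $\psi,\psi'$ to be isomorphisms, hence $x=x'$.

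The main obstacle I anticipate is the bookkeeping in the factorization/lifting step: one must verify that $\Hom_X(\CC_x,C[j])=0$ for $j=0,1$ where $C\in\langle\oh_X,\cU^{\vee}_X,\oh_X(-H),\cU_X\rangle$ is the relevant cone. This requires knowing the precise form of $C$ (built from the $\RHom$ spaces $\RHom_X(\oh_X,\CC_x)$, $\RHom_X(\cU^{\vee}_X,\CC_x)$, etc., which are one-dimensional in a single degree), and then, via Serre duality $\Hom_X(\CC_x,\mathcal{E}[j])\cong\Hom_X(\mathcal{E},\CC_x[4-j])^\vee$, checking that the degrees never collide with $j\in\{0,1\}$. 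This is a finite, explicit computation with the Gushel--Mukai exceptional collection; it is routine but must be done carefully for each of the four bundles. Everything else (the formal lifting argument and the conclusion $x=x'$) is soft.
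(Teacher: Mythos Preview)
Your lifting argument rests on the existence of a canonical morphism $\CC_x\to\pr_X(\CC_x)$ whose cone lies in $\langle\oh_X,\cU^{\vee}_X,\oh_X(-H),\cU_X\rangle$. This map does not exist. For a GM fourfold the semi-orthogonal decomposition has \emph{four} exceptional objects, and after mutation $\Ku(X)$ sits in the middle:
\[
\D^b(X)=\langle\,\oh_X(-H),\ \cU_X,\ \Ku(X),\ \oh_X,\ \cU^{\vee}_X\,\rangle.
\]
Accordingly $\pr_X=\bR_{\cU_X}\bR_{\oh_X(-H)}\bL_{\oh_X}\bL_{\cU^{\vee}_X}$ is a mixture of two left mutations and two right mutations. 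Left mutations give a map forward, $\CC_x\to M_x:=\bL_{\oh_X}\bL_{\cU^{\vee}_X}\CC_x$, with cocone in $\langle\oh_X,\cU^{\vee}_X\rangle$; right mutations give a map \emph{backward}, $\pr_X(\CC_x)\to M_x$, with cone in $\langle\oh_X(-H),\cU_X\rangle$. So one only has a zig-zag
\[
\CC_x\longrightarrow M_x\longleftarrow \pr_X(\CC_x),
\]
not a single morphism, and there is no ``cone of $\CC_x\to\pr_X(\CC_x)$'' on which to run the vanishing check you describe. The factorization step (lifting $g$ through $h$) therefore has no starting point, and the obstruction analysis in your last paragraph, while correctly identifying that $\Hom_X(\CC_x,\cE[j])$ is concentrated in degree $4$ for a bundle $\cE$, never applies to an actual cone.

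The paper takes a completely different, concrete route: it computes the cohomology sheaves of $\pr_X(\CC_x)$ explicitly. Starting from the triangle $\cQ_X^{\vee\oplus2}\to I_x\to\bL_{\oh_X}\bL_{\cU^{\vee}_X}\CC_x[-1]$ and then applying the right mutations, one obtains a description of $\cH^{i}(\pr_X(\CC_x))$ in terms of explicit sheaves $F_x,\,G_x,\,K$. The point $x$ is then recovered intrinsically: in one case as the non-locally-free locus of a stable reflexive sheaf, and in the other via $\mathcal{E}xt^2_X(\cH^{-1}(\pr_X(\CC_x)[-1]),\oh_X)\cong\CC_x$. This is a sheaf-theoretic reconstruction rather than a categorical lifting, and it is what makes the argument go through.
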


\begin{proof}
Let $x\in X$ be a closed point. By definition, we have an exact triangle $\cU_X^{\vee \oplus 2}\to \CC_x\to \bL_{\cU^{\vee}_X} \CC_x$, which yields the exact triangle 
\begin{equation}\label{eq-prcx-1}
    \cQ^{\vee \oplus 2}_X\xra{a} I_x\to \bL_{\oh_X}\bL_{\cU^{\vee}_X} \CC_x[-1] =: F_x[1]
\end{equation}
after applying $\bL_{\oh_X}$. Note that $a$ corresponds to two linearly independent maps in $\Hom_X(\cU^{\vee}_X, \CC_x)=\CC^2$, thus its image is the zero locus of two linearly independent sections of $\cQ_X$, i.e.~$\Gr(0,3)\cap X$. Therefore, $a$ is surjective, and $F_x$ is a sheaf. 

Since $\bR_{\cU_X}\bR_{\oh_X(-H)}\cQ^{\vee}_X=\cQ^{\vee}_X$, applying $\bR_{\cU_X}\bR_{\oh_X(-H)}$ to \eqref{eq-prcx-1}, we obtain
\begin{equation}\label{eq-prcx-2}
    \cQ^{\vee \oplus 2}_X\to \bR_{\cU_X}\bR_{\oh_X(-H)}I_x\to \pr_X(\CC_x)[-1].
\end{equation}
Considering $\RHom_X(I_x, \oh_X(-H))=\CC[-3]$, 
there is an exact triangle
\begin{equation}\label{eq-prcx-22}
    \bR_{\cU_X}\bR_{\oh_X(-H)}I_x\to \bR_{\cU_X}I_x\to \bR_{\cU_X}\oh_X(-H)[3] =: K[2].
\end{equation}
Here $K
$ is the cokernel of the natural map $\oh_X(-H)\hookrightarrow \cU_X^{\oplus 5}=\cU_X\otimes H^0(\cU^{\vee}_X)$. Since  $\RHom_X(I_x, \cU_X)=\CC^2[-3]$, we have  the exact triangle $\cU_X^{\oplus 2}[2] \to  \bR_{\cU_X}I_x \to I_x$. Thus, taking cohomology from \eqref{eq-prcx-22} yields a long exact sequence
\begin{equation}
0\to \cH^{-2}(\bR_{\cU_X}\bR_{\oh_X(-H)}I_x)\to \cU_X^{\oplus 2}\xra{b} K\to \cH^{-1}(\bR_{\cU_X}\bR_{\oh_X(-H)}I_x)\to 0    
\end{equation}
and $\cH^{0}(\bR_{\cU_X}\bR_{\oh_X(-H)}I_x)\cong I_x$. Then, taking cohomology from \eqref{eq-prcx-2} implies that $$\cH^{-2}(\bR_{\cU_X}\bR_{\oh_X(-H)}I_x) = \cH^{-2}(\pr_X(\CC_x)[-1])$$ and gives the long exact sequence 
\begin{equation}\label{le}
    0 \to \cH^{-1}(\bR_{\cU_X}\bR_{\oh_X(-H)}I_x) \to  \cH^{-1}(\pr_X(\CC_x)[-1]) \to \cQ^{\vee \oplus 2}_X\xra{a} I_x\to  \cH^{0}(\pr_X(\CC_x)[-1]) \to 0. 
\end{equation}
Since $a$ is surjective, $ \cH^{0}(\pr_X(\CC_x)[-1]) = 0$ and its kernel is the sheaf $F$ defined in \eqref{eq-prcx-1}. Consequently, from \eqref{le}, we derive the long exact sequence
\begin{equation}\label{b}
    0\to \cH^{-2}(\pr_X(\CC_x)[-1])\to \cU_X^{\oplus 2}\xra{b} K \to \cH^{-1}(\pr_X(\CC_x)[-1])\to F_x\to 0.
\end{equation}
Now we consider two cases depending on the morphism $b$:

\medskip

\textbf{Case I.} First assume the map $b$ is injective, then $\cH^{-2}(\pr_X(\CC_x)[-1]) = 0$ and $\pr_X(\CC_x) = \cH^{-2}(\pr_X(\CC_x))[2]$. The injective map $b$ in \eqref{b} induces the following commutative diagram of exact triangles 
\[\begin{tikzcd}
	 & \cU_X^{\oplus 2}& \cU_X^{\oplus 2} \\
	\oh_X(-H) & \cU_X^{\oplus 5} & K \\
	\oh_X(-H) & \cU_X^{\oplus 3} & G_x
	\arrow[from=3-1, to=3-2]
	\arrow[shift right, no head, from=2-1, to=3-1]
 \arrow[no head, from=2-1, to=3-1]
	\arrow[from=2-1, to=2-2]
 \arrow[from=2-2, to=2-3]
 \arrow[from=1-2, to=2-2]
 \arrow[from=2-2, to=3-2]
	\arrow[from=3-2, to=3-3]
	\arrow[from=2-3, to=3-3]
	\arrow["b", from=1-3, to=2-3]
 \arrow[no head, from=1-2, to=1-3]
	\arrow[shift right, no head, from=1-2, to=1-3]
\end{tikzcd}\]
By dualizing the third row, we get $\mathcal{E}xt_X^i(G_x, \oh_X)=0$ for any $i>1$ and $\mathcal{E}xt_X^1(G_x, \oh_X)$ is supported at a point. Hence, $G_x$ is reflexive, and it is straightforward to check that $G_x$ is slope stable with $\ch_0(G_x) = 5$, $\ch_1(G_x) = -2H$, $\ch_2(G_x).H^2 = -2$ and $\ch_3(G_x).H = \frac{8}{3}$.  Similarly, one can check that the sheaf $F_x$ in \eqref{eq-prcx-1} is slope stable with $\ch_0(F_x) = 5$, $\ch_1(F_x) = -2H$, $\ch_2(F_x).H^2 = -2$ and $\ch_3(F_x).H = \frac{2}{3}$. This proves that $\Hom_X(G_x, F_{x'}) = 0$ for any closed point $x' \in X$. Now assume for a contradiction that there is a closed point $x' \neq x$ on $X$ such that $\pr_X(\CC_x) \cong \pr_X(\CC_{x'})$, then by \eqref{b} we have the exact triangle $G_x \to \pr_X(\CC_{x'})[-2] \to F_x$ of coherent sheaves. Thus the surjection $\pr_X(\CC_{x'})[-2] \twoheadrightarrow F_{x'}$ implies 
$\Hom_X(F_x, F_{x'}) \neq 0$
and so $F_x \cong F_{x'}$ which is not possible as they have different non-locally free loci.

\medskip

\textbf{Case II.} Now assume the map $b$ in \eqref{b} is not injective, then we claim either $\ker(b)=\cU_X$ or $b$ is zero.
Note that any map $\cU^{\oplus m}_X\to \oh_X$ corresponding to $4\leq m\leq 5$ linearly independent sections of $\cU^{\vee}_X$ is surjective, as its image is the ideal sheaf of the zero locus of these $m$ sections, i.e.~$\Gr(2, 5-m)\cap X$. Therefore, dualizing the exact sequence $0\to \oh_X(-H)\to \cU^{\oplus 5}_X\to K\to 0$, we find that $\mathcal{E}xt_X^i(K, \oh_X)=0$ for any $i>0$, hence $K$ is locally free. One can easily check that $K$ is slope stable with $\ch_0(K) = 9$ and $\ch_1(K) = -4H$. Hence if the map $b$ in \eqref{b} is non-zero, ordering of slopes forces $\ch_0(\ker b) =2$ and $\ch_1(\ker b) = -H$ and so $\ker b \cong \cU_X$ by the stability of $\cU_X$.

\medskip

As $\mathcal{E}xt_X^3(I_x, \oh_X)=\CC_x$, we know that $\mathcal{E}xt_X^2(F_x, \oh_X)=\CC_x$. We claim that $$\mathcal{E}xt_X^1(G, \oh_X)=\mathcal{E}xt_X^2(G, \oh_X)=0,$$ where $G:=\mathrm{cok}(b)$, which implies $\mathcal{E}xt_X^2(\cH^{-1}(\pr_X(\CC_x)[-1]), \oh_X)=\CC_x$. To this end, when $b$ is zero, we have $G=K$, hence the result follows from the fact that $K$ is a bundle. When $\ker(b)=\cU_X$, from the definition of $K$, we have an exact sequence $0\to \oh_X(-H)\to \cU_X^{\oplus 4}\to G\to 0$, where $\oh_X(-H)\to \cU_X^{\oplus 4}$ corresponds to $4$ linearly independent sections of $\cU^{\vee}_X$. Hence by dualizing, we know that $\mathcal{E}xt_X^i(G, \oh_X)=0$ for any $i>0$, and the claim follows.

Now, given a point $x' \neq x$ on $X$, if $\pr_X(\CC_x)\cong \pr_X(\CC_{x'})$, then we have $\cH^{-1}(\pr_X(\CC_x))\cong \cH^{-1}(\pr_X(\CC_{x'}))$, which implies
\[\CC_x=\mathcal{E}xt_X^2(\cH^{-1}(\pr_X(\CC_x)[-1]), \oh_X)\cong \mathcal{E}xt_X^2(\cH^{-1}(\pr_X(\CC_{x'})[-1]), \oh_X)=\CC_{x'},\]
leading to a contradiction.

\end{proof}


\begin{remark}
By Theorem \ref{thm-very-general-GM4}, we actually have a rational map
\[X\dashrightarrow M_{\sigma_X}^{X}([\pr_X(\CC_x)])\]
for a very general GM variety $X$ of dimension $n\geq 4$ and any Serre-invariant stability condition $\sigma_X$ on~$\Ku(X)$, where $M_{\sigma_X}^{X}([\pr_X(\CC_x)])$ is the moduli space of $\sigma$-semistable objects with class $[\pr_X(\CC_x)]$. However, we do not have a proof for the stability of $\pr_X(\CC_x)$ for $n=5,6$ as $[\pr_X(\CC_x)]$ is not primitive. We expect that $\pr_X(\CC_x)$ is stable for a general point $x\in X$, which implies that the rational map above is generically an immersion,  as in Corollary \ref{cor-rational-embedding}.
\end{remark}

\subsection{Projectivity of Bridgeland moduli spaces of semistable objects}

Using Theorem \ref{thm-very-general-GM4}, we can also achieve projectivity of moduli spaces of semistable objects in $\Ku(Y)$ of any class for a general GM threefold~$Y$, which improves \cite[Theorem 1.3(2)]{ppzEnriques2023} and confirms the expectation in \cite[Remark 1.4]{ppzEnriques2023} for general (ordinary or special) GM threefolds.

\begin{corollary}\label{cor-projective}
Let $a,b$ be a pair of integers and $Y$ be a general GM threefold. Then for any Serre-invariant stability condition $\sigma_Y$ on $\Ku(Y)$, the moduli space of $\sigma_Y$-semistable objects $M^Y_{\sigma_Y}(a,b)$ is a projective scheme.
\end{corollary}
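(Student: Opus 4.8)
The plan is to realize the general GM threefold $Y$ as a smooth hyperplane section $j\colon Y\hookrightarrow X$ of a general GM fourfold $X$, transport the moduli problem to $\Ku(X)$ via the functor $\pr_X\circ j_*$, and deduce the projectivity of $M^Y_{\sigma_Y}(a,b)$ from that of $M^X_{\sigma_X}(a,b)$ together with the finiteness of the induced morphism. The elementary point being used is that a finite morphism is projective, so that a proper algebraic space admitting a finite morphism to a projective scheme is itself a projective scheme.

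First I would check the geometric input: a general GM threefold occurs as a smooth hyperplane section of a GM fourfold which is general in the sense demanded by Theorem \ref{thm-general-gm4} (recall that the relevant open locus of fourfolds there depends on $(a,b)$). This is a dominance statement: the forgetful map from the irreducible moduli of pairs $(X,Y)$, with $X$ an ordinary GM fourfold and $Y\subset X$ a smooth hyperplane section, to the moduli of GM threefolds is dominant, since every GM threefold extends to a GM fourfold by enlarging the ambient linear section; hence the image of the open dense locus of general GM fourfolds contains a dense open subset of the moduli of GM threefolds. For special GM threefolds one runs the same argument inside the special locus, or alternatively combines \cite[Theorem 1.3(2)]{ppzEnriques2023} with the deformation technique of Section \ref{subsec-general-gm}. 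Having fixed such an $X$ and $j$, and a stability condition $\sigma_X\in\Stab^\circ(\Ku(X))$, Theorem \ref{thm-general-gm4} and Theorem \ref{thm-lagrangian-general} show that $\pr_X\circ j_*$ induces a \emph{finite} morphism $r\colon M^Y_{\sigma_Y}(a,b)\to M^X_{\sigma_X}(a,b)$; note that the first assertion of Theorem \ref{thm-lagrangian-general} needs no coprimality hypothesis. For completeness I would recall why $r$ is quasi-finite: by Lemma \ref{prop_general-2}(2)(a) the fibre over $[F]$ consists of classes $[E]$ with $E\oplus T_Y(E)$ fixed up to $S$-equivalence, so the Jordan--H\"older factors of $E$ range over a finite list, and quasi-finite plus proper (using \cite[Theorem 21.24(3)]{BLMNPS21} for both sides) gives finite.

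It then remains to observe that $M^X_{\sigma_X}(a,b)$ is a projective scheme for the class $a\Lambda_1+b\Lambda_2$. For coprime $a,b$ and generic $\sigma_X$ this is the projective hyperk\"ahler manifold of \cite{perry2019stability}; for an arbitrary pair $(a,b)$ one invokes instead the general projectivity of good moduli spaces of semistable objects on $\Ku(X)$, built from the family of stability conditions of \cite{perry2019stability} together with the projectivity criteria of \cite[Theorem 21.25]{BLMNPS21} and \cite[Corollary 3.4]{david:projective-criteria}. Composing $M^Y_{\sigma_Y}(a,b)\xrightarrow{r}M^X_{\sigma_X}(a,b)\to\Spec\CC$, a finite morphism followed by a projective one, shows that $M^Y_{\sigma_Y}(a,b)$ is projective over $\CC$ (in particular a scheme), which is the desired conclusion.

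The main obstacle I foresee is precisely the projectivity of $M^X_{\sigma_X}(a,b)$ for a non-primitive class: there the smoothness exploited, e.g., in the proof of Theorem \ref{thm-lagrangian-family} is unavailable, so one has to extract projectivity from the abstract good-moduli-space machinery of \cite{BLMNPS21} rather than from hyperk\"ahler or Hodge-theoretic arguments. A secondary, more clerical point is making the embedding step uniform in $(a,b)$ and covering the special GM threefolds; I expect this to be routine given the family constructions already set up in Section \ref{subsec-general-gm}.
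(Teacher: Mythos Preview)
Your approach is essentially the same as the paper's: embed $Y$ in a suitable GM fourfold $X$, use Theorem~\ref{thm-lagrangian-general} to obtain a finite morphism $r\colon M^Y_{\sigma_Y}(a,b)\to M^X_{\sigma_X}(a,b)$, and conclude from the projectivity of the target.

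The one place where the paper is more precise is exactly the obstacle you flagged: projectivity of $M^X_{\sigma_X}(a,b)$ for non-primitive $(a,b)$. Your proposed route via \cite[Theorem 21.25]{BLMNPS21} and \cite[Corollary 3.4]{david:projective-criteria} is what the paper uses elsewhere (Proposition~\ref{prop-structure-3fold-moduli}(2), Theorem~\ref{thm-lagrangian-family}), but there the moduli space is smooth; for singular moduli spaces this argument does not close on its own. Instead, the paper takes $\sigma_X$ generic with respect to $a\Lambda_1+b\Lambda_2$ and invokes the argument of \cite[Theorem 3.1]{sacca:moduli-ku} (transported from cubic to GM fourfolds by replacing the input from \cite{BLMNPS21} with \cite{perry2019stability}) for all classes except the OG10-type ones $(a,b)=(\pm 2,0),(0,\pm 2)$, and for the latter appeals to the argument in \cite[Section 3.7]{li2020elliptic}, again adapted via \cite{perry2019stability}. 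This is the missing ingredient needed to turn your outline into a complete proof.
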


\begin{proof}
Assume that $Y$ is a general GM threefold, in the sense that $Y$ is contained in a general ordinary GM fourfold $X$ in the sense of Theorem \ref{thm-general-gm4} or a non-Hodge-special GM fourfold. By Theorem \ref{thm-lagrangian-general}, we have a finite morphism $M^Y_{\sigma_Y}(a,b) \to M^X_{\sigma_X}(a,b)$ for $\sigma_X\in \Stab^{\circ}(\Ku(X))$. So it suffices to prove $M^X_{\sigma_X}(a,b)$ is a projective scheme.

We can assume that $\sigma_X\in \Stab^{\circ}(\Ku(X))$ is generic with respect to $a\Lambda_1+b\Lambda_2$. When $(a,b)\neq (\pm 2,0)$ or $(0,\pm 2)$, we know that $M^X_{\sigma_X}(a,b)$ is projective by the same argument as in \cite[Theorem 3.1]{sacca:moduli-ku} using the corresponding results in \cite{perry2019stability} instead of  \cite{BLMNPS21} (cf.~\cite[Section 8]{sacca:moduli-ku}). When $(a,b)= (\pm 2,0)$ or $(0,\pm 2)$, or in other words, the class is of OG10-type in the sense of \cite{sacca:moduli-ku}, the projectivity of $M^X_{\sigma_X}(a,b)$ also follows from the same argument in \cite[Section 3.7]{li2020elliptic} by replacing results in \cite{BLMNPS21} with \cite{perry2019stability}.
\end{proof}

\subsection{Lagrangian constant cycle subvarieties} Finally, we can construct a Lagrangian constant cycle subvariety for each Bridgeland moduli space of stable objects. We refer to \cite[Section 3]{ lin:constant-cycle-lag-fibration} for the definition and basic properties of constant cycle subvarieties. 

\begin{definition}\label{def-constant cycle}
For a projective hyperk\"ahler manifold $M$, a subvariety $Z\subset M$ is called a \emph{Lagrangian constant cycle subvariety} of $M$ if $Z$ is Lagrangian and all points of $Z$ are rationally equivalent in $M$.
\end{definition}

It is conjectured in \cite[Conjecture 0.4]{voisin:remark-coisotropic} that each projective hyperk\"ahler manifold admits a Lagrangian constant cycle subvariety. In the following, we focus on hyperk\"ahler manifolds arising from GM fourfolds. One of the crucial ideas is the following proposition, which follows from the same argument as in \cite{Marian-Zhao} and \cite[Proposition 3.4]{SY18}.

\begin{proposition}[{\cite[Theorem]{Marian-Zhao}, \cite[Proposition 3.4]{SY18}}]\label{prop-Marian-Zhao}
Let $X$ be a GM fourfold or sixfold and~$M$ be a Bridgeland moduli space of stable objects in $\Ku(X)$ such that $M$ is a projective hyperk\"ahler manifold. For closed points $[E]$ and $[F]$ in $M$, if $$\ch(E)=\ch(F)\in\mathrm{CH}^{*}(X)_{\mathbb{Q}},$$ then $$[E]=[F]\in\mathrm{CH}_0(M).$$
\end{proposition}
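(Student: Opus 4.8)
The plan is to transcribe the argument of \cite{Marian-Zhao} (see also \cite[Proposition 3.4]{SY18}) into the noncommutative setting, using that $\Ku(X)$ is a $2$-Calabi--Yau (``K3'') category, i.e.\ $S_{\Ku(X)}=[2]$ by Lemma \ref{lem-involution}(2), and that $M$ carries a (quasi-)universal family. First I would record that, since $M$ parametrizes stable objects in the admissible subcategory $\Ku(X)\subset \D^b(X)$, it admits a quasi-universal object $\cE$ on $M\times X$; its existence follows from \cite{BLMNPS21}, and any remaining Brauer-type obstruction is harmless because the hypothesis and all intermediate computations take place with $\QQ$-coefficients. For a closed point $[E]\in M$ one then has $\ch(\cE|_{\{[E]\}\times X})=\rho\cdot\ch(E)$ for a fixed positive integer $\rho$.

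The core of the argument is a cycle-theoretic statement: the point class $[E]\in\mathrm{CH}_0(M)_{\QQ}$ is controlled by $\ch(E)\in\mathrm{CH}^*(X)_{\QQ}$. To get at this I would form, on $M\times M$, the Fourier--Mukai convolution kernel $\cK:=Rp_{12*}R\sheafhom_X\!\big(p_{13}^*\cE,\ p_{23}^*\cE\big)$, whose fibre over $([E],[F])$ is $R\Hom_X(E,F)$. By Grothendieck--Riemann--Roch, $\ch(\cK)\in\mathrm{CH}^*(M\times M)_{\QQ}$ is a universal polynomial in $\ch(\cE)$, so its restriction to $\{[E]\}\times M$ is a class in $\mathrm{CH}^*(M)_{\QQ}$ depending only on $\ch(E)$. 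The Calabi--Yau property enters through stability: for stable objects $E\not\cong F$ of the same phase we have $\Hom_X(E,F)=\Ext^2_X(E,F)=0$, so $\mathcal H^0(\cK)$ and $\mathcal H^2(\cK)$ are line bundles supported on the diagonal $\Delta_M$, while $\mathcal H^1(\cK)$ records $\underline{\Ext}^1$. Following \cite{Marian-Zhao}, one isolates the diagonal contribution $\mathcal H^0(\cK)\oplus\mathcal H^2(\cK)$, discards the classes lying in $p_1^*\mathrm{CH}^{>0}(M)_{\QQ}\cdot\mathrm{CH}^*(M)_{\QQ}$ (which act trivially on $\mathrm{CH}_0$), and thereby obtains an identity in $\mathrm{CH}_0(M)_{\QQ}$ expressing a nonzero multiple of $[E]$ as a fixed universal function of $\ch(E)$. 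Granting this, if $\ch(E)=\ch(F)$ in $\mathrm{CH}^*(X)_{\QQ}$ then $[E]=[F]$ in $\mathrm{CH}_0(M)_{\QQ}$, hence $[E]-[F]$ is torsion in $\mathrm{CH}_0(M)$; since $M$ is a projective hyperk\"ahler manifold it is simply connected, so $\mathrm{Alb}(M)=0$ and Roitman's theorem gives that $\mathrm{CH}_0(M)$ is torsion-free, whence $[E]=[F]$ in $\mathrm{CH}_0(M)$ integrally.

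The hard part will be the cycle-theoretic step in the second paragraph: extracting the diagonal part of $\cK$ and checking that the resulting scalar is nonzero. This is precisely the point where one cannot merely invoke Grothendieck--Riemann--Roch — it only records Euler characteristics, and the cancellation $1-2+1=0$ among the ranks of $\mathcal H^0,\mathcal H^1,\mathcal H^2$ along $\Delta_M$ makes the alternating sum blind to $[E]$ — so one genuinely needs the geometry of the $2$-Calabi--Yau category $\Ku(X)$ together with the stability of $E$ and $F$, exactly as carried out in \cite{Marian-Zhao} and \cite[Proposition 3.4]{SY18}. The only new input relative to those references is the identification of $\Ku(X)$, for $X$ a GM fourfold or sixfold, as a bona fide K3 category with a well-behaved moduli theory of stable objects, which is already available from Lemma \ref{lem-involution} and \cite{BLMNPS21, perry2019stability}.
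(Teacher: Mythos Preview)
Your proposal is correct and matches the paper's treatment: the paper does not give an independent proof but states that the proposition ``follows from the same argument as in \cite{Marian-Zhao} and \cite[Proposition 3.4]{SY18},'' which is exactly the argument you have sketched. One small point: the Serre functor identity $S_{\Ku(X)}=[2]$ holds for all GM fourfolds and sixfolds (see \cite[Proposition 2.6]{kuznetsov2018derived}), not only the non-Hodge-special ones covered by Lemma \ref{lem-involution}(2), so you may cite that directly.
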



Recall that we have an involution $T_X$ on $\Ku(X)$ defined in Lemma~\ref{lem-involution} for each GM fourfold or sixfold $X$. According to \cite[Remark 5.8]{bayer2022kuznetsov} and \cite[Theorem 1.6]{ppzEnriques2023}, the functor $T_X$ 
induces an anti-symplectic involution on $M_{\sigma_X}^X(a,b)$ which we also denote by $T_X$, and the fixed locus is non-empty. We denote by~$\mathrm{Fix}(T_X)$ the fixed locus, then $\mathrm{Fix}(T_X)$ is a smooth Lagrangian subvariety of $M_{\sigma_X}^X(a,b)$ by~\cite[Lemma 1.1]{Beauville11}. 

\begin{theorem}\label{thm-CH0}
Let $X$ be a GM fourfold or sixfold and $a,b$ be a pair of coprime integers. Then for any two objects $E_1,E_2\in \Ku(X)$ with  $\ch(E_1)=\ch(E_2)\in \mathrm{H}^*(X, \QQ)$, we have
\begin{equation}\label{eq-chern}
\ch(E_1)+\ch(T_X(E_1))=\ch(E_2)+\ch(T_X(E_2))\in \mathrm{CH}^*(X)_{\QQ}.
\end{equation}
In particular, for any stability condition $\sigma_X\in \Stab^{\circ}(\Ku(X))$ that is generic with respect to $a\Lambda_1+b\Lambda_2$, the fixed locus $\mathrm{Fix}(T_X)$ is a Lagrangian constant cycle subvariety of $M_{\sigma_X}^X(a,b)$.
\end{theorem}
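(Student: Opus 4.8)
The first assertion, the Chern-class identity \eqref{eq-chern}, is the crux; the statement about $\mathrm{Fix}(T_X)$ being a Lagrangian constant cycle subvariety follows quickly from it. I would proceed as follows. Fix the hyperplane section $j\colon Y\hookrightarrow X$ with $Y$ a GM variety of dimension $n-1$ (a GM threefold when $n=4$, a GM fivefold when $n=6$), and consider the exact triangle \eqref{eq-triangle-functor-2} (or its fivefold analogue), namely $\mathrm{id}_{\Ku(X)}\to (\pr_X\circ j_*)\circ j^*\to T_X$. Applying this triangle to an object $E\in\Ku(X)$ gives, on the level of Chern characters in $\mathrm{CH}^*(X)_{\QQ}$, the relation
\[
\ch(E)+\ch(T_X(E))=\ch\big(\pr_X(j_*j^*E)\big).
\]
The right-hand side is computed entirely in terms of the derived pull-back $j^*E$, the push-forward $j_*$, and the mutation functors $\bR_{\cU_X}$, $\bR_{\oh_X(-H)}$ defining $\pr_X$ (see \eqref{pr}). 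Each of these operations is $\ZZ$-linear on the Chow group, and the mutation functors change $\ch$ only by adding/subtracting Chow-theoretic multiples of the fixed classes $\ch(\cU_X)$, $\ch(\oh_X(-H))$ determined by Euler characteristics. Crucially, $j^*E$ depends, as a cycle class, only on the topological Chern character $\ch(E)\in\mathrm{H}^*(X,\QQ)$: the Gysin pull-back $j^*\colon\mathrm{CH}^*(X)_{\QQ}\to\mathrm{CH}^*(Y)_{\QQ}$ is intersection with $[Y]=c_1(\oh_X(H))$, and by the Grothendieck--Riemann--Roch comparison the class $j^*\ch(E)=\ch(j^*E)\in\mathrm{CH}^*(Y)_{\QQ}$ is determined by the numerical data of $\ch(E)$. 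Therefore, if $\ch(E_1)=\ch(E_2)$ in $\mathrm{H}^*(X,\QQ)$, then $\ch(j^*E_1)=\ch(j^*E_2)$ in $\mathrm{CH}^*(Y)_{\QQ}$, hence $\ch(\pr_X(j_*j^*E_1))=\ch(\pr_X(j_*j^*E_2))$ in $\mathrm{CH}^*(X)_{\QQ}$, which is exactly \eqref{eq-chern}.

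For the second part, let $\sigma_X\in\Stab^{\circ}(\Ku(X))$ be generic with respect to $a\Lambda_1+b\Lambda_2$ with $\gcd(a,b)=1$, so $M_{\sigma_X}^X(a,b)$ is a projective hyperk\"ahler manifold, and by \cite[Theorem 1.6]{ppzEnriques2023} and \cite[Lemma 1.1]{Beauville11} the fixed locus $\mathrm{Fix}(T_X)$ is a smooth Lagrangian subvariety (nonempty by \cite[Theorem 1.6]{ppzEnriques2023}, cf.~\cite[Remark 5.8]{bayer2022kuznetsov}). Take any two points $[E_1],[E_2]\in\mathrm{Fix}(T_X)$. Since $T_X$ acts trivially on $\Knum(\Ku(X))$ (Lemma~\ref{lem-involution}) and the moduli space is connected with all its objects of the same numerical class, $\ch(E_1)$ and $\ch(E_2)$ agree in $\mathrm{H}^*(X,\QQ)$. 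Because $E_i\cong T_X(E_i)$, the identity \eqref{eq-chern} collapses to $2\ch(E_1)=2\ch(E_2)$, i.e.~$\ch(E_1)=\ch(E_2)\in\mathrm{CH}^*(X)_{\QQ}$. Now Proposition~\ref{prop-Marian-Zhao} gives $[E_1]=[E_2]\in\mathrm{CH}_0(M_{\sigma_X}^X(a,b))$. Hence all points of $\mathrm{Fix}(T_X)$ are rationally equivalent in $M_{\sigma_X}^X(a,b)$, and being also Lagrangian, $\mathrm{Fix}(T_X)$ is a Lagrangian constant cycle subvariety.

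\textbf{Main obstacle.} The delicate point is the passage from the cohomological hypothesis $\ch(E_1)=\ch(E_2)\in\mathrm{H}^*(X,\QQ)$ to the Chow-theoretic conclusion $\ch(j^*E_1)=\ch(j^*E_2)\in\mathrm{CH}^*(Y)_{\QQ}$. This rests on the fact that $j^*\colon\mathrm{CH}^*(X)_{\QQ}\to\mathrm{CH}^*(Y)_{\QQ}$ factors through the cohomological cycle class when restricted to the classes that actually arise, which is where one must be careful about exactly which component of $\mathrm{CH}^*$ is relevant: one needs that the $\mathrm{CH}_0$-part of $\ch(E_i)$, the only part not controlled by $\mathrm{H}^*(X,\QQ)$ in general, does not survive pull-back in a way that depends on more than the topological class --- but in fact $j^*$ on zero-cycles of a divisor $Y$ lands in $\mathrm{CH}_0(Y)$ via intersection and is computed degree-wise, so the relevant comparison is the classical one that $j^*$ commutes with cycle classes and that the twist terms from the mutations are fixed universal classes. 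I would devote the bulk of the write-up to making this comparison precise, keeping track of the explicit mutation triangles so that every term in $\ch(\pr_X(j_*j^*E))$ is visibly a $\QQ$-linear combination of $j_*j^*\ch(E)$, $\ch(\cU_X)$, $\ch(\cU_X(H))$, and $\ch(\oh_X)$, $\ch(\oh_X(-H))$ with coefficients that are Euler characteristics (hence numerical, hence the same for $E_1$ and $E_2$).
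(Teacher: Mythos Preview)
Your overall architecture is correct and matches the paper: reduce \eqref{eq-chern} via the exact triangle \eqref{eq-triangle-functor-2} to showing $\ch(\pr_X(j_*j^*E_1))=\ch(\pr_X(j_*j^*E_2))$ in $\mathrm{CH}^*(X)_{\QQ}$, and deduce the constant-cycle property from Proposition~\ref{prop-Marian-Zhao}. The second part of your argument is fine.

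The gap is in your justification of why $\ch(j^*E_1)=\ch(j^*E_2)$ in $\mathrm{CH}^*(Y)_{\QQ}$. Your assertion that $j^*\colon\mathrm{CH}^*(X)_{\QQ}\to\mathrm{CH}^*(Y)_{\QQ}$ ``is determined by the numerical data of $\ch(E)$'' is false as stated: pull-back on Chow groups does not in general factor through cohomology. You also misidentify the problematic degree: since $X$ is Fano and rationally connected, $\mathrm{CH}_0(X)_{\QQ}\cong\QQ$ is already controlled by cohomology; the degree that is \emph{not} is $\mathrm{CH}^{3}(X)$ (one-cycles) when $\dim X=4$, respectively $\mathrm{CH}^{4}(X)$ when $\dim X=6$. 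What actually makes the argument work is an input you never invoke: by \cite[Proposition~4.2, Theorems~4.1, 4.4, 4.5]{liefu:cycle-gushel-mukai}, the cycle map $\mathrm{CH}^i(X)\to\mathrm{H}^{2i}(X,\ZZ)$ is injective for all $i$ except the single middle-plus-one codimension, so $\ch_i(E_1)=\ch_i(E_2)$ in $\mathrm{CH}^i(X)_{\QQ}$ for those $i$; and for the hyperplane section $Y$ one has $\mathrm{CH}_0(Y)\cong\ZZ$, so after pulling back, the remaining degree lands in $\mathrm{CH}^{\dim Y}(Y)_{\QQ}\cong\QQ$, which is detected by cohomology. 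That is the missing step, and without it your ``Grothendieck--Riemann--Roch comparison'' and the discussion in your Main obstacle paragraph do not close the argument.
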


\begin{proof}
By Proposition \ref{prop-Marian-Zhao}, we only need to show \eqref{eq-chern}. Indeed, let $[E_1], [E_2]\in \mathrm{Fix}(T_X)$. As $T_X(E_i)\cong E_i$ for each $i=1,2$, by \eqref{eq-chern}, we have
\[\ch(E_1)=\ch(E_2)\in \mathrm{CH}^*(X)_{\QQ}.\]
Thus, Proposition \ref{prop-Marian-Zhao} implies $[E_1]=[E_2]\in \mathrm{CH}_0(M_{\sigma_X}^X(a,b))$.

To prove \eqref{eq-chern}, we first assume that $X$ is a GM fourfold. By \cite[Proposition 4.2]{liefu:cycle-gushel-mukai}, we know that the cycle map 
\[\mathrm{CH}^i(X)\to \mathrm{H}^{2i}(X, \ZZ)\]
is injective for each $i\neq 3$. Hence, $\ch(E_1)=\ch(E_2)\in \mathrm{H}^*(X, \QQ)$ implies 
\[\ch_i(E_1)=\ch_i(E_2)\in \mathrm{CH}^*(X)_{\QQ}\]
for each $i\neq 3$. Now for any smooth hyperplane section $j\colon Y\hookrightarrow X$, we have  $\mathrm{CH}^3(Y)\cong \mathrm{H}^6(Y,\ZZ) \cong \ZZ$ according to \cite[Theorem 4.1]{liefu:cycle-gushel-mukai}. Thus we obtain 
\[\ch(j^*E_1)=\ch(j^*E_2)\in \mathrm{CH}^*(Y)_{\QQ},\]
which implies $\ch(j_*j^*E_1)=\ch(j_*j^*E_2)\in \mathrm{CH}^*(X)_{\QQ}$ and $\ch(\pr_X(j_*j^*E_1))=\ch(\pr_X(j_*j^*E_2))\in \mathrm{CH}^*(X)_{\QQ}$. Therefore, the equality \eqref{eq-chern} follows from the exact triangle \eqref{eq-triangle-functor-2}.

Similarly, when $X$ is a GM sixfold, we have 
\[\ch_i(E_1)=\ch_i(E_2)\in \mathrm{CH}^*(X)_{\QQ}\]
for each $i\neq 4$ by \cite[Theorem 4.5]{liefu:cycle-gushel-mukai}. Since $\mathrm{CH}^4(Y)\cong \ZZ$ (cf.~\cite[Theorem 4.4]{liefu:cycle-gushel-mukai}), we still have~$\ch(\pr_X(j_*j^*E_1))=\ch(\pr_X(j_*j^*E_2))$ in $\mathrm{CH}^*(X)_{\QQ}$. Then \eqref{eq-chern} follows from \eqref{eq-triangle-functor-2} as well.
\end{proof}

\begin{remark}
By the modular interpretation in \cite[Theorem 1.1]{GLZ2021conics} and \cite[Corollary 1.3]{kapustka2022epw}, Theorem \ref{thm-CH0} and \cite[Proposition 5.16]{perry2019stability} imply that the fixed loci of natural involutions on double (dual) EPW sextics (cf.~\cite{o2010double}) and double EPW cubes (cf.~\cite{IKKR19}) associated with very general GM fourfolds are Lagrangian constant cycle subvarieties. For double EPW sextics, this is proved in \cite{zhang2023one}.

\end{remark}

\begin{appendix}

\section{Cubic threefolds/fourfolds}\label{App-cubic}

In this section, we delve into potential generalizations of our result in Theorem \ref{thm-very-general-GM4} for GM varieties to the case of cubic threefolds/fourfolds. Subsequently, we will discuss its potential applications.

For a cubic fourfold $X$, its semi-orthogonal decomposition is given by $$\D^b(X)=\langle\Ku(X),\oh_X,\oh_X(H),\oh_X(2H)\rangle=\langle\oh_X(-H),\Ku(X),\oh_X,\oh_X(H)\rangle,$$
where $H$ is the hyperplane class of $X$, satisfying $S_{\Ku(X)}=[2]$. We define the projection functor  $$\pr_X:=\bR_{\oh_X(-H)}\bL_{\oh_X}\bL_{\oh_X(H)}.$$
There is a rank two lattice in the numerical Grothendieck group $\Knum(\Ku(X))$ generated by $\Lambda_1$ and $\Lambda_2$ with $$\ch(\Lambda_1)=3-H-\frac{1}{2}H^2+\frac{1}{6}H^3+\frac{1}{8}H^4,\quad \ch(\Lambda_2)=-3+2H-\frac{1}{3}H^3,$$ 
over which the Euler pairing is of the form
\begin{equation}
\left[               
\begin{array}{cc}   
-2 & 1 \\  
1 & -2\\
\end{array}
\right].
\end{equation}
When $X$ is non-Hodge-special, we have $\Knum(\Ku(X))=\langle\Lambda_1, \Lambda_2 \rangle$.

For a cubic threefold $Y$, we have a semi-orthogonal decomposition
\[\D^b(Y)=\langle \Ku(Y), \oh_Y, \oh_Y(H)\rangle.\]
In this case, $S_{\Ku(Y)}=\bL_{\oh_Y}\circ (-\otimes \oh_Y(H))[1]$ and $S^3_{\Ku(Y)}=[5]$. Moreover, $\Knum(\Ku(Y))$ is a rank two lattice generated by $\lambda_1$ and $\lambda_2$ with 
$$\ch(\lambda_1)=2-H-\frac{1}{6}H^2+\frac{1}{6}H^3 ,\quad \ch(\lambda_2)=-1+H-\frac{1}{6}H^2-\frac{1}{6}H^3$$
and the Euler pairing is 
\begin{equation}
\left[               
\begin{array}{cc}   
-1 & 1 \\  
0 & -1\\
\end{array}
\right].
\end{equation}
We define the projection functor by $\pr_Y:=\bL_{\oh_Y(H)}\bL_{\oh_Y}$. 
Let $\sigma_Y$ be a Serre invariant stability condition on $\Ku(Y)$. We denote by $M^Y_{\sigma_Y}(a,b)$ the moduli space of S-equivalence classes of $\sigma_Y$-semistable objects in $\Ku(Y)$ with class $a\lambda_1+b\lambda_2$.

\begin{conjecture}\label{main-conjecture-1}
Let $X$ be a cubic fourfold or a GM fourfold and $j\colon Y\hookrightarrow X$ be its smooth hyperplane section. Let $\sigma_Y$ be a Serre-invariant stability condition on $\Ku(Y)$. 
If $\mathrm{gcd}(a,b)=1$, then there is a stability condition $\sigma_X\in \Stab^{\circ}(\Ku(X))$ generic with respect to $a\Lambda_1+b\Lambda_2$ and an open smooth subscheme $U\subset M^Y_{\sigma_Y}(a,b)$ such that $\pr_X(j_*E)$ is $\sigma_X$-semistable for any $[E]\in U$.
\end{conjecture}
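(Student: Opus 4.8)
The case of a GM fourfold $X$ of Conjecture~\ref{main-conjecture-1} is already contained in Theorem~\ref{thm-general-gm4}: for a general $X$ one takes $\sigma_X\in\Stab^{\circ}(\Ku(X))$ generic with respect to $a\Lambda_1+b\Lambda_2$ and lets $U$ be the smooth locus of $M^Y_{\sigma_Y}(a,b)$, which is open and dense by Proposition~\ref{prop-structure-3fold-moduli}(1). So the substance of the conjecture is the cubic fourfold case, and the plan is to adapt the strategy of Theorem~\ref{thm-main}. With $X$ a non-Hodge-special cubic fourfold and $j\colon Y\hookrightarrow X$ a smooth hyperplane section (so that $Y$ is a cubic threefold), one sets $\3:=\Ku(Y)$, $\4:=\Ku(X)$, $\Phi^*:=j^*|_{\Ku(X)}$ and $\Phi_*:=\pr_X\circ j_*$, and tries to verify a version of the hypotheses \ref{c1}--\ref{c-t2}. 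The crucial structural difference is that $\Ku(Y)$ is now a genuine fractional Calabi--Yau category: its Serre functor satisfies $S_{\Ku(Y)}^{3}=[5]$, so the involution $T_1$ of \ref{c-T}--\ref{c-S} is no longer available and must be replaced throughout by the autoequivalence $\Sigma:=S_{\Ku(Y)}[-2]$, which only satisfies $\Sigma^{3}=[-1]$ and which shifts $\sigma_Y$-phases by $-\tfrac13$.

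I would first assemble the functorial and numerical input. Since the divisorial embedding of a cubic threefold into a cubic fourfold is of the type treated in \cite[Corollary~4.19]{kuznetsov:serre-dimension}, the functor $j^*$ is spherical with twisted adjoint $\pr_X\circ j_*$, which produces the analogues of the exact triangles \eqref{eq-triangle-functor-1} and \eqref{eq-triangle-functor-2} with $\Sigma$ and the functor $\bR_{\oh_X(-H)}\circ(-\otimes\oh_X(-H))[1]|_{\Ku(X)}$ in the roles of $T_Y$ and $T_X$; an analogue of Lemma~\ref{lem-compute-class} then shows that $\Phi^*$ and $\Phi_*$ multiply the symmetrised Euler form by $2$, so that the bound $\ext^1(\pr_X(j_*E),\pr_X(j_*E))\le 2\,\ext^1(E,E)$ of Lemma~\ref{lem-rhom-pushfor} persists. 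A convenient point is that $\Sigma$ acts on $\Knum(\Ku(Y))$ as a rotation by a primitive sixth root of unity (one computes that its matrix in the basis $\lambda_1,\lambda_2$ has characteristic polynomial $t^2-t+1$), and in particular fixes no nonzero class, so $\ext^2(E,E)=0$ for every $\sigma_Y$-stable $E$; consequently $M^Y_{\sigma_Y}(a,b)$ with $\gcd(a,b)=1$ is everywhere smooth, $\ext^1(E,E)=1-\chi(E,E)$ on stable objects, and the minimal value $2$ is attained precisely on the three classes (up to sign) with $\chi(v,v)=-1$. For a stable object $E$ of such a minimal class this gives $\ext^1(\pr_X(j_*E),\pr_X(j_*E))\le 4<8$, whence an analogue of Lemma~\ref{lem-mukai-4fold}(2) for the K3 category $\Ku(X)$ of a non-Hodge-special cubic fourfold (which should follow by the same argument, using a cubic analogue of Proposition~\ref{prop-gm4-unique} on uniqueness of stability conditions together with the known $\GL$-uniqueness of Serre-invariant stability conditions on the Kuznetsov component of a cubic threefold, cf.~\cite{PY20,FeyzbakhshPertusi2021stab}) forces $\pr_X(j_*E)$ to be $\sigma_X$-stable. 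With these base cases established one re-runs the double induction of Lemmas~\ref{part-1}--\ref{part-2}, the Weak Mukai Lemma~\ref{lem-mukai}, the $\ext^1$-estimates of Lemma~\ref{lem-hn ext}, and the spectral-sequence Lemmas~\ref{lem-two-factor-stable-push}--\ref{lem-two-factor-stable-pull}, now with $\Sigma$ in place of $T_1$; one then takes $U$ to be the dense open subscheme of $M^Y_{\sigma_Y}(a,b)$ parametrising stable objects of minimal $\ext^1$-class (or, more ambitiously, the entire smooth moduli space).

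The main obstacle is precisely the fractional Calabi--Yau structure of $\Ku(Y)$: because $\Sigma^{3}=[-1]$ rather than $\Sigma^{2}=\mathrm{id}$, and because $\Sigma$ shifts phases by $-\tfrac13$ rather than fixing them, every step in Section~\ref{sec-general-criterion} that rests on \ref{c-T}--\ref{c-S} has to be reworked. In particular the alternative, according to whether $T_1(E)\cong E$ or not, that governs Lemma~\ref{lem-rhom-pushfor} and the case analyses of Lemmas~\ref{lem-two-factor-stable-push} and \ref{lem-two-factor-stable-pull} must give way to a finer bookkeeping of the $\Sigma$-orbits of Jordan--H\"older and Harder--Narasimhan factors, which now have length three; and the sharp phase bounds of Lemma~\ref{lem-phase of push - back}, obtained by computing $\Hom$-groups into fixed $\sigma_1$-stable objects $A_i$ through Serre duality and the relation $S_{\3}=T_1\circ[2]$, pick up extra $\pm\tfrac13$ shifts and must be redone from scratch. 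One also has to carry along the asymmetry of the Euler form of $\Ku(Y)$ for a cubic threefold, in contrast with the diagonal form imposed in \ref{c1}. My expectation is that the skeleton of the induction survives these modifications and that no genuinely new idea is required beyond this recalibration --- but the recalibration is substantial, which is why the statement remains a conjecture. As an unconditional sanity check, for the small classes the moduli spaces involved are classically understood --- the Fano surface of lines of $Y$, and on the fourfold side the Fano variety of lines and the LLSvS eightfold --- so Conjecture~\ref{main-conjecture-1} can be verified directly in these cases, recovering the Lagrangian constructions of \cite{voisin1992stabilite} and \cite{shinder2017geometry}; this is the cubic counterpart of the computations carried out for GM varieties in \cite{FGLZcube}.
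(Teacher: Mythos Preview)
The statement you are addressing is labeled a \emph{Conjecture} in the paper and is not proved there; there is therefore no paper proof to compare your proposal against. The appendix states the conjecture, observes (as you do) that for a general GM fourfold it is essentially Theorem~\ref{thm-general-gm4}, and then only discusses consequences in the cubic case assuming the conjecture (Lemma~\ref{pullback-triangle}, Lemma~\ref{prop_general}, Theorem~\ref{thm-lagrangian-general-1}). Your write-up is thus not a proof but a proof \emph{strategy}, and you yourself acknowledge this at the end.

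Your diagnosis of the obstacles is accurate and matches the paper's implicit reasons for leaving this open. The key structural failure is precisely that $S_{\Ku(Y)}^3=[5]$ for a cubic threefold, so the involution $T_1$ in \ref{c-T}--\ref{c-S} does not exist; the autoequivalence $\Sigma=S_{\Ku(Y)}[-2]$ shifts phases by $-\tfrac13$, and the exact triangle \eqref{eq-pullback-triangle-1} (which you correctly derive) shows that $j^*\pr_X(j_*E)$ is \emph{never} $\sigma_Y$-semistable---its Harder--Narasimhan factors sit at distinct phases $\phi$ and $\phi+\tfrac13$. This breaks the clean symmetry that drives the induction in Section~\ref{sec-general-criterion}: Lemma~\ref{lem-T-fix-phase} no longer gives equal phases, the phase windows in Lemma~\ref{lem-phase of push - back} shift by thirds, and the dichotomy $T_1(E)\cong E$ versus $T_1(E)\not\cong E$ in Lemmas~\ref{lem-rhom-pushfor}, \ref{lem-two-factor-stable-push}, \ref{lem-two-factor-stable-pull} dissolves entirely (as you note, $\Sigma$ fixes no nonzero numerical class, so the splitting cases never occur---this is a simplification, not a complication). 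Whether the double induction of Lemmas~\ref{part-1}--\ref{part-2} survives these recalibrations is exactly the open question; your ``expectation'' that it does is reasonable but not an argument.

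One small correction: the conjecture as stated covers \emph{every} GM fourfold, not only general ones, so even the GM case is not fully settled by Theorem~\ref{thm-general-gm4} (which needs $X$ general) or Theorem~\ref{thm-very-general-GM4} (which needs $X$ non-Hodge-special).
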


Note that when $\gcd(a,b)=1$, $M^Y_{\sigma_Y}(a,b)$ is smooth by \cite{PY20}. From now on, let $X$ be a cubic fourfold and $j \colon Y\hookrightarrow X$ be a smooth hyperplane section. Moreover, assume that $\sigma_Y$ is a Serre invariant stability condition on $\Ku(Y)$ and $\sigma_X$ belongs to the family of stability conditions $\Stab^{\circ}(\Ku(X))$ constructed in \cite[Theorem 29.1]{BLMNPS21}. According to \cite[Theorem 29.2]{BLMNPS21}, for any pair of coprime integers $a,b$, and stability condition $\sigma_X\in \Stab^{\circ}(\Ku(X))$ which is generic with respect to $a\Lambda_1+b\Lambda_2$, the moduli space $M^X_{\sigma_X}(a,b)$ is a projective hyperk\"ahler manifold.

\begin{lemma}\label{pullback-triangle}
   For any object $E\in \Ku(Y)$, we have an exact triangle
    \begin{equation} \label{eq-pullback-triangle-1}
        S^{-1}_{\Ku(Y)}(E)[2]\to j^*\pr_X(j_*E)\to E.
    \end{equation}
In particular, if $E\in \Ku(Y)$ is $\sigma_Y$-semistable for a Serre-invariant stability condition $\sigma_Y$ on $\Ku(Y)$, then $j^*\pr_X(j_*E)$ is not $\sigma_Y$-semistable and \eqref{eq-pullback-triangle-1} is the Harder--Narasimhan filtration of $j^*\pr_X(j_*E)$ with
    \[\phi_{\sigma_Y}(E)+1>\phi_{\sigma_Y}(S^{-1}_{\Ku(Y)}(E)[2])>\phi_{\sigma_Y}(E).\]
\end{lemma}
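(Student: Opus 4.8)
The plan is to build the exact triangle \eqref{eq-pullback-triangle-1} exactly as in the GM case (Lemma \ref{lem-exact-triangle}), then deduce the Harder--Narasimhan statement from a slope comparison. For the first part, note that the embedding $j\colon Y\hookrightarrow X$ of a cubic threefold into a cubic fourfold, together with the semi-orthogonal decompositions $\D^b(Y)=\langle \Ku(Y),\oh_Y,\oh_Y(H)\rangle$ and $\D^b(X)=\langle \oh_X(-H),\Ku(X),\oh_X,\oh_X(H)\rangle$, fits into the framework of \cite[Corollary 4.19]{kuznetsov:serre-dimension}, so $j^*\colon \Ku(X)\to \Ku(Y)$ is spherical and, by \cite[Corollary 2.3]{kuznetsov:serre-dimension}, so is its adjoint. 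Exactly as in the proof of \eqref{eq-triangle-functor-1}, the twist functor $\mathbf{T}_{j^*,\pr_X\circ j_*}$ satisfies $S_{\Ku(Y)}\cong \mathbf{T}_{j^*,\pr_X\circ j_*}\circ[1]$ by \cite[Corollary 4.19(1)]{kuznetsov:serre-dimension}, which rearranges \cite[Equation (2.5)]{kuznetsov:serre-dimension} into the triangle $j^*\pr_X(j_*E)\to E\to S_{\Ku(Y)}(E)[-1]$; rotating and applying $S^{-1}_{\Ku(Y)}$ one way or another gives \eqref{eq-pullback-triangle-1}. (Alternatively, one can argue directly: since $j^*E\in \Ku(Y)$, one has $\pr_X(j_*j^*F)=\bR_{\oh_X(-H)}\bL_{\oh_X}\bL_{\oh_X(H)}(j_*j^*F)$ for $F\in\Ku(X)$, and composes these mutation functors with the standard triangle $\mathrm{id}\to j_*j^*\to (-\otimes\oh_X(-H))[1]$, then identifies the outer term using $S_{\Ku(Y)}$. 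One must be a little careful about which adjunction is used here since $\pr_X\circ j_*$ is only a one-sided adjoint of $j^*$ on the cubic side, unlike the even-dimensional GM case.)

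For the second part, assume $E\in\Ku(Y)$ is $\sigma_Y$-semistable of phase $\phi$. Since $S_{\Ku(Y)}$ is a Serre functor and $\sigma_Y$ is Serre-invariant, $S^{-1}_{\Ku(Y)}(E)[2]$ is again $\sigma_Y$-semistable; write $\psi$ for its phase. I must show $\phi<\psi<\phi+1$, which will both exhibit the triangle as the HN filtration of $j^*\pr_X(j_*E)$ and rule out semistability of the latter. The key point is that $S_{\Ku(Y)}$ acts on $\GL$-level by some fixed $\widetilde g=(M,g)$ with $S^3_{\Ku(Y)}=[5]$, so $g^3(x)=x+5$, hence $g(x)<x+2$ for all $x$ (if $g(x)\geq x+2$ for some $x$ then monotonicity of $g$ with $g(x+1)=g(x)+1$ forces $g^3(x)\geq x+6>x+5$, a contradiction); similarly $g(x)>x+1$ for all $x$. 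Therefore $\phi_{\sigma_Y}(S_{\Ku(Y)}(E))=g(\phi)\in(\phi+1,\phi+2)$, so $\phi_{\sigma_Y}(S^{-1}_{\Ku(Y)}(E))=g^{-1}(\phi)\in(\phi-2,\phi-1)$, and shifting by $[2]$ gives $\psi=g^{-1}(\phi)+2\in(\phi,\phi+1)$, which is precisely the claimed inequality. Finally, $\psi>\phi$ together with $\Hom$ between the two semistable pieces being concentrated in the expected degrees shows \eqref{eq-pullback-triangle-1} is the HN filtration, and $\psi\neq\phi$ shows $j^*\pr_X(j_*E)$ is not $\sigma_Y$-semistable.

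The main obstacle I anticipate is not the slope bookkeeping — which is routine once the $\GL$-action of $S_{\Ku(Y)}$ is pinned down via $S^3=[5]$ — but rather establishing the triangle \eqref{eq-pullback-triangle-1} cleanly, since the cubic threefold case has an \emph{asymmetric} adjunction pattern ($j^*\dashv \pr_X\circ j_*$ but the left adjoint of $j^*$ is $j^!=(-\otimes\oh_Y(H))[-1]\circ j_*$ composed with a mutation, not $\pr_X\circ j_*\circ T$), so one cannot quote Lemma \ref{lem-exact-triangle} verbatim and must instead redo the spherical-functor computation keeping track of the precise form of $S_{\Ku(Y)}=\bL_{\oh_Y}\circ(-\otimes\oh_Y(H))[1]$. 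Once the functorial triangle is in hand, verifying it is the HN filtration is immediate from the phase estimate above.
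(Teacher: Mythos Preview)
Your phase argument for the second part is correct and essentially the same as what the paper cites from \cite[Proposition 3.4]{FeyzbakhshPertusi2021stab}: you have made explicit the bound $\phi+1<g(\phi)<\phi+2$ coming from $g^3(\phi)=\phi+5$ together with monotonicity and $g(x+1)=g(x)+1$, which is exactly what is needed to place $\phi_{\sigma_Y}(S^{-1}_{\Ku(Y)}(E)[2])$ strictly between $\phi$ and $\phi+1$.

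The gap is in your derivation of the triangle. You assert that \cite[Corollary 4.19(1)]{kuznetsov:serre-dimension} gives $S_{\Ku(Y)}\cong \mathbf{T}_{j^*,\pr_X\circ j_*}\circ[1]$ ``exactly as in the proof of \eqref{eq-triangle-functor-1}'', but that identification is \emph{not} the general one: in the GM even case it happens to coincide with the correct formula only because $S_{\Ku(Y)}^2=[4]$ there. For the cubic threefold, the paper invokes \cite[Proposition 4.16]{kuznetsov:serre-dimension} in addition to Corollary 4.19(1) to obtain $\mathbf{T}_{j^*,\pr_X\circ j_*}\cong S^{-1}_{\Ku(Y)}\circ[3]$. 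Your version would produce the rotated triangle with first term $S_{\Ku(Y)}(E)[-2]$ rather than $S^{-1}_{\Ku(Y)}(E)[2]$, and these are genuinely different objects since $S_{\Ku(Y)}^2\neq[4]$ here. The phrase ``applying $S^{-1}_{\Ku(Y)}$ one way or another'' cannot repair this: you cannot apply an autoequivalence to a single vertex of a triangle. Indeed, your own phase computation shows the discrepancy would be fatal --- the object $S_{\Ku(Y)}(E)[-2]$ has phase $g(\phi)-2\in(\phi-1,\phi)$, which is \emph{below} $\phi$, so it could not sit as the maximal HN factor.

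You correctly flagged in your final paragraph that the asymmetric adjunction pattern is the obstacle and that one ``must redo the spherical-functor computation''; that instinct is right, and the missing input is precisely \cite[Proposition 4.16]{kuznetsov:serre-dimension}.
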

\begin{proof}
As in the proof of Lemma \ref{lem-exact-triangle}(1), we only need to determine $\mathbf{T}_{j^*, \pr_X\circ j_*}(E)$. From \cite[Proposition 4.16, Corollary 4.19(1)]{kuznetsov:serre-dimension}, we see $\mathbf{T}_{j^*, \pr_X\circ j_*}\cong S^{-1}_{\Ku(Y)}\circ [3]$. Hence, \eqref{eq-pullback-triangle-1} is verified as desired. The rest follows from \eqref{eq-pullback-triangle-1}, \cite[Proposition 3.4(a), (d)]{FeyzbakhshPertusi2021stab}, and Lemma \ref{lem-T-fix-phase}.
\end{proof}

As a result of Lemma \ref{pullback-triangle}, one observes that unlike Theorem \ref{thm-very-general-GM4} for GM varieties, Conjecture \ref{main-conjecture-1} may not be true for $U = M^Y_{\sigma_Y}(a,b)$ for the cubic threefold $Y$. Nevertheless, the same argument as in Lemma~\ref{prop_general-2} implies the following.  

\begin{lemma} \label{prop_general}
    Let $E\in \Ku(Y)$ be a $\sigma_Y$-semistable object. 

\begin{enumerate} 
    \item \emph{(Injectivity of tangent maps)} The natural map
    \[\Ext^1_Y(E,E)\to \Ext^1_X(\pr_X(j_*E), \pr_X(j_*E))\]
    is injective.

    \item \emph{(Fibers)} Let $E'\neq E\in \Ku(Y)$ be another $\sigma_Y$-semistable object such that 
    $$\phi_{\sigma_Y}(E)=\phi_{\sigma_Y}(E'),\quad \text{and } \Hom_Y(E, E')=0,$$ then 
    \[\Hom_X(\pr_X(j_*E), \pr_X(j_*E'))=0.\] 
\end{enumerate}    
\end{lemma}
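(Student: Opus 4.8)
\textbf{Proof proposal for Lemma \ref{prop_general}.} The plan is to follow the exact same pattern as the proof of Lemma \ref{prop_general-2}, using the exact triangle \eqref{eq-pullback-triangle-1} in place of \eqref{split}. The key structural input is that for a cubic threefold $Y$ the functor $\pr_X \circ j_*$ has $j^*$ as a right adjoint (this is the cubic analogue of Lemma \ref{lem-adjoint-functors}; it follows from adjunction of $j^*$ and $j_*$ together with $\pr_X(j_*(-)) = \bR_{\oh_X(-H)}(-)$ restricted appropriately, exactly as in \eqref{pr}), so that
\[
\Hom_X(\pr_X(j_*E), \pr_X(j_*E')) \cong \Hom_Y(j^*\pr_X(j_*E), E').
\]
Then both statements reduce to applying $\Hom_Y(-, E')$ (resp.~$\Hom_Y(-,E)$) to the triangle \eqref{eq-pullback-triangle-1}.

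For part (1): apply $\Hom_Y(-, E)$ to \eqref{eq-pullback-triangle-1} to get the exact sequence
\[
\Hom_Y(S^{-1}_{\Ku(Y)}(E)[2], E) \to \Ext^1_Y(E,E) \to \Ext^1_Y(j^*\pr_X(j_*E), E) \cong \Ext^1_X(\pr_X(j_*E), \pr_X(j_*E)).
\]
It suffices to show the first term vanishes. By Lemma \ref{pullback-triangle}, $E$ and $S^{-1}_{\Ku(Y)}(E)[2]$ are both $\sigma_Y$-semistable with $\phi_{\sigma_Y}(S^{-1}_{\Ku(Y)}(E)[2]) > \phi_{\sigma_Y}(E)$, so there are no nonzero maps from the former to the latter; hence the map $\Ext^1_Y(E,E) \to \Ext^1_X(\pr_X(j_*E), \pr_X(j_*E))$ is injective. (Strictly speaking, one should first reduce to $E$ stable; but since \eqref{eq-pullback-triangle-1} is the Harder--Narasimhan filtration, the phase comparison holds verbatim for any semistable $E$, so the argument goes through directly.)

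For part (2): using the adjunction isomorphism above, it is enough to show $\Hom_Y(j^*\pr_X(j_*E), E') = 0$. Apply $\Hom_Y(-, E')$ to \eqref{eq-pullback-triangle-1}, giving the exact sequence
\[
\Hom_Y(E, E') \to \Hom_Y(j^*\pr_X(j_*E), E') \to \Hom_Y(S^{-1}_{\Ku(Y)}(E)[2], E').
\]
The left term is zero by hypothesis. The right term: $S^{-1}_{\Ku(Y)}(E)[2]$ is $\sigma_Y$-semistable of phase $\phi_{\sigma_Y}(E) < \phi_{\sigma_Y}(S^{-1}_{\Ku(Y)}(E)[2])$ wait --- more precisely $\phi_{\sigma_Y}(S^{-1}_{\Ku(Y)}(E)[2]) > \phi_{\sigma_Y}(E) = \phi_{\sigma_Y}(E')$ by Lemma \ref{pullback-triangle}, and $E'$ is $\sigma_Y$-semistable; a nonzero map from a semistable object of strictly larger phase to a semistable object forces a contradiction, so $\Hom_Y(S^{-1}_{\Ku(Y)}(E)[2], E') = 0$. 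Hence $\Hom_Y(j^*\pr_X(j_*E), E') = 0$ as required.

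I do not expect a genuine obstacle here: the lemma is a formal consequence of the triangle \eqref{eq-pullback-triangle-1} plus the Harder--Narasimhan phase inequalities already recorded in Lemma \ref{pullback-triangle}, plus the adjunction $\pr_X \circ j_* \dashv j^*$. The only point requiring mild care is ensuring the adjunction statement for cubic threefolds is cited or proved (it is entirely parallel to Lemma \ref{lem-adjoint-functors}, via the identity $\pr_X(j_*(-)) = \bR_{\oh_X(-H)}(-)$ on $\Ku(Y)$), and checking that no stability assumption beyond semistability of $E$ and $E'$ is needed, which is the case since \eqref{eq-pullback-triangle-1} is genuinely the HN filtration regardless of stability of $E$.
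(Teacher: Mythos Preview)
Your proof is correct and follows exactly the approach the paper intends: it simply says ``the same argument as in Lemma~\ref{prop_general-2}'' and leaves the reader to redo the computation with the triangle \eqref{eq-pullback-triangle-1} in place of \eqref{split}, which is precisely what you have done. One small verbal slip: you write that $\pr_X\circ j_*$ has $j^*$ as a \emph{right} adjoint, but the isomorphism you then use, $\Hom_X(\pr_X(j_*E),\pr_X(j_*E'))\cong\Hom_Y(j^*\pr_X(j_*E),E')$, is the adjunction $j^*\dashv \pr_X\circ j_*$, i.e.\ $j^*$ is the \emph{left} adjoint (exactly as in Lemma~\ref{lem-adjoint-functors}, and for the same reason: $\pr_X(j_*E)=\bR_{\oh_X(-H)}(j_*E)$ for $E\in\Ku(Y)$); the formula and the rest of the argument are unaffected.
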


As a result, we deduce the following immersion via the same argument as in Theorem \ref{thm-lagrangian-general}. 

\begin{theorem} \label{thm-lagrangian-general-1}
Let $a, b$ be a pair of integers with $\gcd(a, b) =1$ such that Conjecture \ref{main-conjecture-1} holds. Then the functor $\pr_X\circ j_*$ induces a rational map defined over $U$
\[r \colon \overline{U} \dashrightarrow M^X_{\sigma_X}(a,b),\]
such that the image $L$ is Lagrangian in $M^X_{\sigma_X}(a,b)$. Furthermore, $r|_U$ is an 
immersion and $\overline{U}$ is birational onto $L$.
\end{theorem}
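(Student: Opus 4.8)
\textbf{Proof proposal for Theorem \ref{thm-lagrangian-general-1}.}

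The plan is to mimic the argument already given for Theorem \ref{thm-lagrangian-general}, substituting Lemma \ref{prop_general} (the cubic analogue of Lemma \ref{prop_general-2}) and Conjecture \ref{main-conjecture-1} wherever the GM statements were used. First, since Conjecture \ref{main-conjecture-1} is assumed, there is a stability condition $\sigma_X\in \Stab^{\circ}(\Ku(X))$ generic with respect to $a\Lambda_1+b\Lambda_2$ and an open smooth subscheme $U\subset M^Y_{\sigma_Y}(a,b)$ such that $\pr_X(j_*E)$ is $\sigma_X$-semistable for every $[E]\in U$; because $\gcd(a,b)=1$ and $\sigma_X$ is generic, $a\Lambda_1+b\Lambda_2$ is primitive, and by Lemma \ref{prop_general}(2) together with the computation of $[\pr_X(j_*E)]$ (the same Chern-character bookkeeping that gives $\pr_X(j_*\lambda_i)=\Lambda_i$ in the cubic case), $\pr_X(j_*E)$ is in fact $\sigma_X$-stable for $[E]$ in a possibly smaller open dense $U$. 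Thus $\pr_X\circ j_*$ defines a morphism $r_0\colon U\to M^X_{\sigma_X}(a,b)$, which extends to a rational map $r\colon \overline{U}\dashrightarrow M^X_{\sigma_X}(a,b)$ defined on $U$; here $\overline{U}$ denotes the closure of $U$ in $M^Y_{\sigma_Y}(a,b)$, which is smooth and of pure dimension $a^2+b^2-ab+1$ (half of $\dim M^X_{\sigma_X}(a,b)$, computable from the Euler form on $\Knum(\Ku(Y))$ and $\Knum(\Ku(X))$ for cubics exactly as in the GM case).

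Next I would prove injectivity of $r_0$ and of its differential on a dense open locus. By Lemma \ref{prop_general}(2), if $[E],[E']\in U$ with $\pr_X(j_*E)\cong \pr_X(j_*E')$, then considering the triangle \eqref{eq-pullback-triangle-1} of Lemma \ref{pullback-triangle} and the adjunction $j^*\dashv \pr_X\circ j_*$ forces $E\cong E'$: indeed $\Hom_X(\pr_X(j_*E),\pr_X(j_*E'))=\Hom_Y(j^*\pr_X(j_*E),E')$, and the Harder--Narasimhan description in Lemma \ref{pullback-triangle} shows the only way to get a nonzero map respecting phases is $E\cong E'$ (unlike the GM case, the ``second factor'' $S^{-1}_{\Ku(Y)}(E)[2]$ is never isomorphic to $E$ because $S_{\Ku(Y)}$ has order dividing $3$ on objects, not $2$, so no splitting phenomenon occurs). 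Hence $r_0$ is injective on $U$ after shrinking. By Lemma \ref{prop_general}(1) the tangent map $\Ext^1_Y(E,E)\to \Ext^1_X(\pr_X(j_*E),\pr_X(j_*E))$ is injective for every $[E]\in U$, so after possibly shrinking $U$ once more, $r_0\colon U\to r_0(U)$ is étale onto its image, i.e. $r|_U$ is an immersion. Consequently $\overline{U}$ is birational onto $L:=\overline{r_0(U)}$, the closure of the image.

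The Lagrangian property is then exactly the Yoneda-pairing argument of Theorem \ref{thm-lagrangian-general}(1): the holomorphic symplectic form on $M^X_{\sigma_X}(a,b)$ at a point $[F]=r_0([E])$ is the composition pairing $\Ext^1_X(F,F)\times\Ext^1_X(F,F)\to \Ext^2_X(F,F)\cong\CC$; for tangent vectors $x=\pr_X(j_*(\alpha))$, $y=\pr_X(j_*(\beta))$ with $\alpha,\beta\in\Ext^1_Y(E,E)$ coming from $T_{[F]}L$, one has $x[1]\circ y=\pr_X(j_*(\alpha[1]\circ\beta))$ lying in $\pr_X(j_*(\Ext^2_Y(E,E)))$, and $\Ext^2_Y(E,E)=0$ since $\Ku(Y)$ for a cubic threefold has a $3$-Calabi--Yau-type Serre functor with $\Ext^{\geq 2}$ vanishing on stable objects of the relevant class (this is \cite[Proposition 3.4]{FeyzbakhshPertusi2021stab}, already invoked in Lemma \ref{pullback-triangle}). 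Therefore the form restricts to zero on $L$, and since $\dim L=\dim\overline{U}=\tfrac12\dim M^X_{\sigma_X}(a,b)$, $L$ is Lagrangian. The main obstacle here is purely organizational — one must make sure the various open shrinkings of $U$ are compatible and that $U$ stays open dense in the irreducible (by Conjecture \ref{main-conjecture-1}'s smoothness clause plus \cite{PY20}) moduli space $M^Y_{\sigma_Y}(a,b)$ — but conditionally on Conjecture \ref{main-conjecture-1} there is no genuinely new difficulty beyond what appears in Theorem \ref{thm-lagrangian-general}.
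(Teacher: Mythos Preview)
Your proposal is correct and follows essentially the same approach as the paper: invoke Lemma \ref{prop_general} for injectivity on points and tangent spaces, then repeat the Yoneda-pairing vanishing argument from Theorem \ref{thm-lagrangian-general}. The paper's own proof is in fact a two-line sketch saying exactly this, so your write-up is a faithful expansion of it.

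Two minor remarks. First, your aside that ``$S_{\Ku(Y)}$ has order dividing $3$ on objects'' is not the right justification (one has $S_{\Ku(Y)}^3=[5]$, not $\mathrm{id}$); the correct reason $S^{-1}_{\Ku(Y)}(E)[2]\not\cong E$ is simply the strict phase inequality in Lemma \ref{pullback-triangle}, which you already cite. Second, no shrinking of $U$ is actually needed: Lemma \ref{prop_general}(1) gives tangent injectivity for \emph{every} $\sigma_Y$-semistable $E$, and Lemma \ref{prop_general}(2) applied to $\sigma_Y$-stable $E,E'$ (automatic since $\gcd(a,b)=1$) gives set-theoretic injectivity on all of $U$, so $r|_U$ is an immersion on the nose.
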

\begin{proof}
According to Lemma \ref{prop_general}, $r|_U$ 
is injective at the level of underlying spaces. Hence, $r|_U$ is an immersion and $\overline{U}$ is birational to $L$. The rest follows via the same argument as in Theorem \ref{thm-lagrangian-general}.



\end{proof}

\subsection{Lagrangian covering families}

If an analog of Corollary \ref{cor-push-stable} holds for a cubic fourfold $X$ and it yields a Lagrangian covering family, then $\dim M^X_{\sigma_X}(a,b)\leq 10$. Up to sign, we have
\[(a,b)=(0, 1), (1,0), (1,\pm 1), (2,\pm 1), (1, \pm 2).\]

\begin{itemize}
    \item $(a,b)=(0, 1), (1,0), (1,\pm 1)$. In this case, the functor $\bL_{\oh_X}\circ (-\otimes \oh_X(H))$ is an auto-equivalence of $\Ku(X)$ that switches these classes up to sign. Hence, we only need to consider the case $(a,b)=(1,1)$. According to \cite[Theorem 1.1]{li2018twisted}, the moduli space for $(a,b)=(1,1)$ on a cubic fourfold is the Fano variety of lines. On the other hand, the moduli space for $(a,b)=(1,1)$ on a cubic threefold is the Fano surface of lines (cf.~\cite{PY20}). We will see in \cite{FGLZcube} that Theorem \ref{thm-lagrangian-family} yields a Lagrangian covering family, aligning with the classical construction in \cite{voisin:torelli-cubic-fourfold}.

    \item $(a,b)=(2,\pm 1), (1, \pm 2)$. In this case, the functor $\bL_{\oh_X}\circ (-\otimes \oh_X(H))$ also switches these classes up to sign, allowing us to focus solely on $(a,b)=(2,1)$. By \cite[Theorem 1.2]{li2018twisted}, the moduli space for $(a,b)=(2,1)$ on a cubic fourfold is the LLSvS eightfold constructed in \cite{LLSvS17}. On the other hand, the moduli space for $(a,b)=(2,1)$ on a cubic threefold is considered in \cite{bayer2020desingularization}. We will see in \cite{FGLZcube} that Theorem \ref{thm-lagrangian-family} provides a Lagrangian covering family, consistent with the classical construction in \cite{shinder2017geometry}.
\end{itemize}

\end{appendix}

\bibliography{lagrangian}

\newcommand{\etalchar}[1]{$^{#1}$}
\begin{thebibliography}{BLM{\etalchar{+}}21}

\bibitem[AS18]{sacca:singular-K3-surface}
E.~Arbarello and G.~Sacc\`a.
\newblock Singularities of moduli spaces of sheaves on {K}3 surfaces and {N}akajima quiver varieties.
\newblock {\em Adv. Math.}, 329:649--703, 2018.

\bibitem[Bai23]{Bai22}
Chenyu Bai.
\newblock On {A}bel--{J}acobi maps of {L}agrangian families.
\newblock {\em Math. Z.}, 304(2):Paper No. 34, 14, 2023.

\bibitem[BB17]{bayerK3}
Arend Bayer and Tom Bridgeland.
\newblock {Derived automorphism groups of K3 surfaces of Picard rank 1}.
\newblock {\em Duke Math. J.}, 166(1), Jan 2017.

\bibitem[BBF{\etalchar{+}}24]{bayer2020desingularization}
Arend Bayer, Sjoerd~Viktor Beentjes, Soheyla Feyzbakhsh, Georg Hein, Diletta Martinelli, Fatemeh Rezaee, and Benjamin Schmidt.
\newblock The desingularization of the theta divisor of a cubic threefold as a moduli space.
\newblock {\em Geom. Topol.}, 28(1):127--160, 2024.

\bibitem[Bea83]{beauville:hilb-on-k3}
Arnaud Beauville.
\newblock Some remarks on {K}\"{a}hler manifolds with {$c\sb{1}=0$}.
\newblock In {\em Classification of algebraic and analytic manifolds ({K}atata, 1982)}, volume~39 of {\em Progr. Math.}, pages 1--26. Birkh\"{a}user Boston, Boston, MA, 1983.

\bibitem[Bea11]{Beauville11}
Arnaud Beauville.
\newblock Antisymplectic involutions of holomorphic symplectic manifolds.
\newblock {\em J. Topol.}, 4(2):300--304, 2011.

\bibitem[Bec20]{Beckmann20}
Thorsten Beckmann.
\newblock Birational geometry of moduli spaces of stable objects on {E}nriques surfaces.
\newblock {\em Selecta Math. (N.S.)}, 26(1):Paper No. 14, 18, 2020.

\bibitem[BLM{\etalchar{+}}21]{BLMNPS21}
Arend Bayer, Mart\'{\i} Lahoz, Emanuele Macr\`\i, Howard Nuer, Alexander Perry, and Paolo Stellari.
\newblock Stability conditions in families.
\newblock {\em Publ. Math. Inst. Hautes \'{E}tudes Sci.}, 133:157--325, 2021.

\bibitem[BLMS23]{bayer2017stability}
Arend Bayer, Mart\'{\i} Lahoz, Emanuele Macr\`{i}, and Paolo Stellari.
\newblock Stability conditions on {K}uznetsov components.
\newblock {\em Ann. Sci. \'{E}c. Norm. Sup\'{e}r. (4)}, 56(2):517--570, 2023.
\newblock With an appendix by Bayer, Lahoz, Macr\`{i}, Stellari and X. Zhao.

\bibitem[BP23]{bayer2022kuznetsov}
Arend Bayer and Alexander Perry.
\newblock Kuznetsov's {F}ano threefold conjecture via {K}3 categories and enhanced group actions.
\newblock {\em J. Reine Angew. Math.}, 800:107--153, 2023.

\bibitem[Bri07]{bridgeland:stability}
Tom Bridgeland.
\newblock {Stability Conditions on Triangulated Categories}.
\newblock {\em Ann. of Math.}, 166(2):317--345, 2007.

\bibitem[CB01]{crawley:moment-map-quiver}
William Crawley-Boevey.
\newblock Geometry of the moment map for representations of quivers.
\newblock {\em Compositio Math.}, 126(3):257--293, 2001.

\bibitem[CB03]{crawley:normality}
William Crawley-Boevey.
\newblock Normality of {M}arsden--{W}einstein reductions for representations of quivers.
\newblock {\em Math. Ann.}, 325(1):55--79, 2003.

\bibitem[CPZ24]{zhao:remark-formality}
Huachen Chen, Laura Pertusi, and Xiaolei Zhao.
\newblock {Some remarks about deformation theory and formality conjecture}.
\newblock {\em arXiv preprint, arXiv:2402.06579}, 2024.

\bibitem[DIM15]{debarre2015special}
Olivier Debarre, Atanas Iliev, and Laurent Manivel.
\newblock {Special prime Fano fourfolds of degree 10 and index 2}.
\newblock {\em Recent advances in algebraic geometry}, 417:123, 2015.

\bibitem[DK20]{debarre:gm-moduli}
Olivier Debarre and Alexander Kuznetsov.
\newblock Gushel--{M}ukai varieties: moduli.
\newblock {\em Internat. J. Math.}, 31(2):2050013, 59, 2020.

\bibitem[FGLZ25]{FGLZcube}
Soheyla Feyzbakhsh, Hanfei Guo, Zhiyu Liu, and Shizhuo Zhang.
\newblock {Double EPW cubes from twisted cubics on Gushel--Mukai fourfolds}.
\newblock {\em arXiv preprint, arXiv:2501.12964}, 2025.

\bibitem[FLM23]{liu:stability-manifold-of-ku}
Changping Fan, Zhiyu Liu, and Songtao~Kenneth Ma.
\newblock {Stability manifolds of Kuznetsov components of prime Fano threefolds}.
\newblock {\em arXiv preprint, arXiv:2310.16950}, 2023.

\bibitem[FM22]{liefu:cycle-gushel-mukai}
Lie Fu and Ben Moonen.
\newblock {Algebraic cycles on Gushel--Mukai varieties}.
\newblock {\em arXiv preprint, arXiv:2207.01118}, 2022.

\bibitem[FP23]{FeyzbakhshPertusi2021stab}
Soheyla Feyzbakhsh and Laura Pertusi.
\newblock Serre-invariant stability conditions and {U}lrich bundles on cubic threefolds.
\newblock {\em \'{E}pijournal G\'{e}om. Alg\'{e}brique}, 7:Art. 1, 32, 2023.

\bibitem[GLZ24]{GLZ2021conics}
Hanfei Guo, Zhiyu Liu, and Shizhuo Zhang.
\newblock Conics on {G}ushel--{M}ukai fourfolds, {EPW} sextics and {B}ridgeland moduli spaces.
\newblock {\em Math. Res. Lett.}, 31(4):1061--1106, 2024.

\bibitem[Huy14]{Huy14}
D.~Huybrechts.
\newblock Curves and cycles on {K}3 surfaces.
\newblock {\em Algebr. Geom.}, 1(1):69--106, 2014.
\newblock With an appendix by C. Voisin.

\bibitem[IKKR19]{IKKR19}
Atanas Iliev, Grzegorz Kapustka, Micha{\l} Kapustka, and Kristian Ranestad.
\newblock E{PW} cubes.
\newblock {\em J. Reine Angew. Math.}, 748:241--268, 2019.

\bibitem[IM11]{iliev2011fano}
Atanas Iliev and Laurent Manivel.
\newblock {Fano manifolds of degree ten and EPW sextics}.
\newblock {\em Ann. Sci. {\'E}c. Norm. Sup{\'e}r.}, 44(3):393--426, 2011.

\bibitem[JLLZ24]{JLLZ2021gushelmukai}
Augustinas Jacovskis, Xun Lin, Zhiyu Liu, and Shizhuo Zhang.
\newblock Categorical {T}orelli theorems for {G}ushel--{M}ukai threefolds.
\newblock {\em J. Lond. Math. Soc. (2)}, 109(3):Paper No. e12878, 52, 2024.

\bibitem[JLZ22]{jLz2021brillnoether}
Augustinas Jacovskis, Zhiyu Liu, and Shizhuo Zhang.
\newblock {Brill--Noether theory for Kuznetsov components and refined categorical Torelli theorems for index one Fano threefolds}.
\newblock {\em arXiv preprint, arXiv:2207.01021}, 2022.

\bibitem[KKM22]{kapustka2022epw}
Grzegorz Kapustka, Micha{\l} Kapustka, and Giovanni Mongardi.
\newblock {EPW sextics vs EPW cubes}.
\newblock {\em arXiv preprint, arXiv:2202.00301}, 2022.

\bibitem[KM09]{ku:symplectic}
A.~Kuznetsov and D.~Markushevich.
\newblock Symplectic structures on moduli spaces of sheaves via the {A}tiyah class.
\newblock {\em J. Geom. Phys.}, 59(7):843--860, 2009.

\bibitem[KP18]{kuznetsov2018derived}
Alexander Kuznetsov and Alexander Perry.
\newblock {Derived categories of Gushel--Mukai varieties}.
\newblock {\em Compos. Math.}, 154(7):1362--1406, 2018.

\bibitem[KP21]{kuznetsov:serre-dimension}
Alexander Kuznetsov and Alexander Perry.
\newblock {Serre functors and dimensions of residual categories}.
\newblock {\em arXiv preprint, arXiv:2109.02026}, 2021.

\bibitem[KP23]{kuznetsov2019categorical}
Alexander Kuznetsov and Alexander Perry.
\newblock Categorical cones and quadratic homological projective duality.
\newblock {\em Ann. Sci. \'{E}c. Norm. Sup\'{e}r. (4)}, 56(1):1--57, 2023.

\bibitem[Kuz09]{kuznetsov2009derived}
Alexander Kuznetsov.
\newblock Derived categories of {F}ano threefolds.
\newblock {\em Tr. Mat. Inst. Steklova}, 264:116--128, 2009.

\bibitem[Kuz19]{kuz:fractional-CY}
Alexander Kuznetsov.
\newblock Calabi--{Y}au and fractional {C}alabi--{Y}au categories.
\newblock {\em J. Reine Angew. Math.}, 753:239--267, 2019.

\bibitem[Lin20]{lin:constant-cycle-lag-fibration}
Hsueh-Yung Lin.
\newblock Lagrangian constant cycle subvarieties in {L}agrangian fibrations.
\newblock {\em Int. Math. Res. Not. IMRN}, 1:14--24, 2020.

\bibitem[LLSvS17]{LLSvS17}
Christian Lehn, Manfred Lehn, Christoph Sorger, and Duco van Straten.
\newblock Twisted cubics on cubic fourfolds.
\newblock {\em J. Reine Angew. Math.}, 731:87--128, 2017.

\bibitem[LMB00]{laumon:stack-book}
G\'{e}rard Laumon and Laurent Moret-Bailly.
\newblock {\em Champs alg\'{e}briques}, volume~39 of {\em Ergebnisse der Mathematik und ihrer Grenzgebiete. 3. Folge. A Series of Modern Surveys in Mathematics [Results in Mathematics and Related Areas. 3rd Series. A Series of Modern Surveys in Mathematics]}.
\newblock Springer-Verlag, Berlin, 2000.

\bibitem[LPZ22]{li2020elliptic}
Chunyi Li, Laura Pertusi, and Xiaolei Zhao.
\newblock Elliptic quintics on cubic fourfolds, {O}'{G}rady 10, and {L}agrangian fibrations.
\newblock {\em Adv. Math.}, 408:Paper No. 108584, 56, 2022.

\bibitem[LPZ23]{li2018twisted}
Chunyi Li, Laura Pertusi, and Xiaolei Zhao.
\newblock Twisted cubics on cubic fourfolds and stability conditions.
\newblock {\em Algebr. Geom.}, 10(5):620--642, 2023.

\bibitem[LYZ23]{li2023bloch}
Zhiyuan Li, Xun Yu, and Ruxuan Zhang.
\newblock {B}loch's conjecture for (anti-) autoequivalences on {K}3 surfaces.
\newblock {\em arXiv preprint, arXiv:2305.10078}, 2023.

\bibitem[Muk87]{mukai:moduli-K3-I}
Shigeru Mukai.
\newblock On the moduli space of bundles on {$K3$} surfaces. {I}.
\newblock In {\em Vector bundles on algebraic varieties ({B}ombay, 1984)}, volume~11 of {\em Tata Inst. Fund. Res. Stud. Math.}, pages 341--413. Tata Inst. Fund. Res., Bombay, 1987.

\bibitem[MZ20]{Marian-Zhao}
Alina Marian and Xiaolei Zhao.
\newblock On the group of zero-cycles of holomorphic symplectic varieties.
\newblock {\em \'{E}pijournal G\'{e}om. Alg\'{e}brique}, 4:Art. 3, 5, 2020.

\bibitem[O'G10]{o2010double}
Kieran O'Grady.
\newblock {Double covers of EPW-sextics}.
\newblock {\em Michigan Math. J.}, 62, 07 2010.

\bibitem[Pir23]{pirozhkov2020admissible}
Dmitrii Pirozhkov.
\newblock Admissible subcategories of del {P}ezzo surfaces.
\newblock {\em Adv. Math.}, 424:Paper No. 109046, 62, 2023.

\bibitem[PPZ22]{perry2019stability}
Alexander Perry, Laura Pertusi, and Xiaolei Zhao.
\newblock Stability conditions and moduli spaces for {K}uznetsov components of {G}ushel--{M}ukai varieties.
\newblock {\em Geom. Topol.}, 26(7):3055--3121, 2022.

\bibitem[PPZ23]{ppzEnriques2023}
Alexander Perry, Laura Pertusi, and Xiaolei Zhao.
\newblock Moduli spaces of stable objects in {E}nriques categories.
\newblock {\em arXiv preprint, arXiv:2305.10702}, 2023.

\bibitem[PR23]{pertusiGM3fold}
Laura Pertusi and Ethan Robinett.
\newblock Stability conditions on {K}uznetsov components of {G}ushel--{M}ukai threefolds and {S}erre functor.
\newblock {\em Math. Nachr.}, 296(7):2975--3002, 2023.

\bibitem[PY22]{PY20}
Laura Pertusi and Song Yang.
\newblock Some remarks on {F}ano threefolds of index two and stability conditions.
\newblock {\em Int. Math. Res. Not. IMRN}, 17:12940--12983, 2022.

\bibitem[Sac23]{sacca:moduli-ku}
Giulia Saccà.
\newblock {Moduli spaces on Kuznetsov components are Irreducible Symplectic Varieties}.
\newblock {\em arXiv preprint, arXiv:2304.02609}, 2023.

\bibitem[SS17]{shinder2017geometry}
Evgeny Shinder and Andrey Soldatenkov.
\newblock {On the geometry of the Lehn--Lehn--Sorger--van Straten eightfold}.
\newblock {\em Kyoto J. Math.}, 57(4):789--806, 2017.

\bibitem[{Sta}25]{stacks-project}
The {Stacks Project Authors}.
\newblock \textit{Stacks Project}.
\newblock \url{https://stacks.math.columbia.edu}, 2025.

\bibitem[SY20]{SY18}
Junliang Shen and Qizheng Yin.
\newblock {$K3$} categories, one-cycles on cubic fourfolds, and the {B}eauville--{V}oisin filtration.
\newblock {\em J. Inst. Math. Jussieu}, 19(5):1601--1627, 2020.

\bibitem[Voi86]{voisin:torelli-cubic-fourfold}
Claire Voisin.
\newblock Th\'{e}or\`eme de {T}orelli pour les cubiques de {${\bf P}^5$}.
\newblock {\em Invent. Math.}, 86(3):577--601, 1986.

\bibitem[Voi92]{voisin1992stabilite}
Claire Voisin.
\newblock {Sur la stabilit{\'e} des sous-vari{\'e}t{\'e}s lagrangiennes des vari{\'e}t{\'e}s symplectiques holomorphes}.
\newblock {\em Complex projective geometry (Trieste, 1989/Bergen, 1989)}, 179:294--303, 1992.

\bibitem[Voi16]{voisin:remark-coisotropic}
Claire Voisin.
\newblock Remarks and questions on coisotropic subvarieties and 0-cycles of hyper-{K}\"{a}hler varieties.
\newblock In {\em K3 surfaces and their moduli}, volume 315 of {\em Progr. Math.}, pages 365--399. Birkh\"{a}user/Springer, [Cham], 2016.

\bibitem[Voi21]{voisin2021lefschetz}
Claire Voisin.
\newblock {On the Lefschetz standard conjecture for Lagrangian covered hyper-K{\"a}hler varieties}.
\newblock {\em Adv. Math.}, page 108108, 2021.

\bibitem[VP21]{david:projective-criteria}
David Villalobos-Paz.
\newblock {Moishezon Spaces and Projectivity Criteria}.
\newblock {\em arXiv preprint, arXiv:2105.14630}, 2021.

\bibitem[Zha23]{zhang2023one}
Ruxuan Zhang.
\newblock One-cycles on {G}ushel--{M}ukai fourfolds and the {B}eauville--{V}oisin filtration.
\newblock {\em Science China Mathematics}, pages 1--20, 2023.

\end{thebibliography}

\bibliographystyle{alpha}

\end{document}